\documentclass
{amsart}
\usepackage{amssymb,amsthm,amsmath,amsfonts,euscript,qsymbols,delarray}
\usepackage{latexsym,amstext,amscd}
\usepackage{enumerate,hyperref}
\usepackage{color}

\newtheorem{theorem}{Theorem}[section]

\newtheorem{proposition}[theorem]{Proposition}
\newtheorem{corollary}[theorem]{Corollary}
\newtheorem{lemma}[theorem]{Lemma}

\newtheorem{introtheorem}{Theorem}
\newtheorem{introcorollary}{Corollary}
\newtheorem{claim}
{Claim}
\newtheorem*{claim*}{Claim}
\theoremstyle{definition}
\newtheorem{example}[theorem]{Example}
\newtheorem{remark}[theorem]{Remark}
\newtheorem{definition}[theorem]{Definition}
\newtheorem{fact}[theorem]{Fact}

\newtheorem{question}[theorem]{Question}

\newcommand{\C}{\mathfrak{c}}
\newcommand{\T}{\mathbb{T}}
\newcommand{\Z}{\mathbb{Z}}
\newcommand{\N}{\mathbb{N}}
\def\hull#1{\langle#1\rangle}

\usepackage{vmargin}
\setmarginsrb{14mm}{6mm}{14mm}{9mm}%
{14mm}{7mm}{14mm}{9mm}
\catcode`\@=12

\def\T{{\mathbb T}}

\def\Z{{\mathbb Z}}
\def\N{{\mathbb N}}
\def\R{{\mathbb R}}
\def\Q{{\mathbb Q}}
\def\P{{\mathbb P}}
\def\C{{\mathbb C}}

\def\K{{\mathbb K}}

\def\Prm{\P}

\def\Soc{\mathrm{Soc}}
\def\nbd{neighbourhood}
\newcommand{\cc}{countably\ compact}
\newcommand{\mi}{minimal}

\newcommand{\lm}{locally minimal}
\newcommand{\co}{connected}
\newcommand{\sco}{sequentially complete}

\def\KK{\mathcal K}
\def\CCC{\mathcal C_{Clm}}
\parskip=3pt

\begin{document}

\title[Sequentially complete locally minimal groups]{The Structure of Sequentially Complete Locally Minimal Groups}

\author[D. Dikranjan, W. He and D. Peng]{Dikran Dikranjan, Wei He and Dekui Peng}

\keywords{(locally) minimal group; 
 \sco \ group, Ulam-measurable cardinal, Open Mapping Theorem.}
\subjclass[2010]{Primary 22D05; Secondary  54H11}

\date{}

\begin{abstract} 
Generalizing results from \cite{DTk,DU} we study the fine structure of locally minimal (locally) precompact Abelian groups 
(these are the locally essential subgroups $G$ of LCA groups $L$, i.e., such that 
$G$ non-trivially meets all ``small" closed subgroup of $L$). More precisely we prove that if $G$ is a dense locally minimal and 
sequentially closed subgroup of a LCA group $L$, then the connected component $c(G)$ of $G$ has the same weight as $c(L)$.
Moreover, when $w(c(G))$ is not Ulam measurable, then $c(G) = c(L)$. We provide an extended discussion  illustrating 
how this result fails in various ways in the non-abelian case (even for nilpotent groups of class 2).

Motivated by the above result, we study further those locally minimal precompact Abelian groups $G$, termed {\em critical locally minimal},
such that $c(G) =c(K)$ (where $K$ is the compact completion of $G$) and 
$G/c(G)$ is not locally minimal.
Such a group cannot be compact, neither connected, nor totally disconnected. 
We provide a proper class of critical locally minimal 
groups with additional compactness-like properties  and we study the class $\CCC$ of compact Abelian groups with a dense critical locally minimal subgroup. In particular, we completely describe the connected components of the finite-dimensional 
groups belonging to $\CCC$. 
\end{abstract}

\thanks{$^\ast$The second and third authors acknowledge the support of NSFC grant Nos.~12271258 and 12301089.}
\maketitle

%

\section{Introduction} 

\subsection*{Notation and terminology} 
We denote by ${\N}$ and ${\Prm}$ the sets of naturals ($\{0,1,\dots\}$) and primes, 
respectively,  by ${\Z}$ the integers, by ${\Q}$  the rationals, by  ${\R}$ the reals,  by ${\T}$  the unit circle
group in ${\C}$,  by $\Z_p$ the $p$-adic integers ($p\in \Prm$), by $\Z(n)$ the cyclic group of
order $n$ ($n\in \N$). The cardinality of continuum $2^\omega$ will be denoted also by ${\mathfrak c}$.

A cardinal $|X|$ is {\it Ulam-measurable} if there exists a free ultrafilter on $X$ stable under countable intersections. The assumption that there exist no Ulam-measurable cardinals is  known to be consistent with ZFC, while their existence implies consistency of ZFC and hence no proof is to be expected \cite{J}. 


All groups considered in this paper, unless otherwise stated (e.g., in \S 6), will be Abelian, so additive notation will be always used. Let $G$  be a group and  $A$  be a subset of $G$. We denote  by $\langle A\rangle$ the subgroup of $G$ generated by $A$. The group $G$ is {\it divisible}  if for every $g\in G$ and positive $n\in \N$ the equation $nx=g$ has a solution in $G$. For an  Abelian group $G$ we  set $G[n] = \{x \in  G: nx=0\}$, for $n\in \N$. We set $\Soc(G) = \bigoplus_{p\in \Prm}G[p]$, and $t(G)$ the subgroup consisting of all torsion elements; while 
$r(G)$ will denote the free rank of $G$ and $r_p(G)$ -- its $p$ rank, when $p\in \P$.

All topological groups considered in this paper are Hausdorff, so Tychonoff spaces. We denote by $w(G)$ the weight of a topological group $G$, so if
$G$ is finite one has $w(G) = |G|$.  The Pontryagin dual of a topological group $G$ will be denoted by $\widehat G$, while  $\widetilde G$
will denote its completion and 
$c(G)$ denotes the \co \ component of the neutral element of a group $G$ (for brevity, we call $c(G)$ the connected component of $G$). 
Moreover, $G$ is said to be \sco\  if every Cauchy sequence  in $G$ converges (equivalently, $G$ is sequentially closed in $\widetilde G$). 
We denote by ${\mathbb K}$ the (compact)  Pontryagin dual of the discrete group $\Q$. A topological group is said to be \emph{monothetic} if it contains a dense cyclic subgroup. In particular, every monothetic group is Abelian. Moreover, a monothetic group is called \emph{$p$-monothetic} if it is either a cyclic $p$-group or is topologically isomorphic to a subgroup of $\Z_p$.

Recall that a
Tychonoff space $X$ is {\it pseudocompact} if every continuous real-valued function on $G$ is bounded, and 
{\it countably compact} if every countable open cover has a finite subcover.  A topological group  $G$  is {\it (locally) precompact} if its  completion $\widetilde G$ is (locally) compact. 
For an infinite cardinal $\alpha$, a group $G$ is {\it $\alpha$-bounded } if every subset of cardinality  $\leq\alpha$  of $G$  is contained in a compact subgroup of $G$.
So a group  $G$ is $\omega$-bounded precisely when all closed separable subgroups of $G$ are compact. 
 Then $\omega$-boundedness implies countable compactness, and countable compactness  implies precompactness. 
  
For undefined symbols or notions see \cite{DPS,Eng,Fuchs}.

 \subsection{Historical background on minimal groups and their structure}\hfill
 
 A topological group $G$ is called {\it minimal\/} if whenever  $H$ is a Hausdorff group and $f:G\to H$ is a continuous isomorphism, then $f$ is open. 
 Minimal groups were introduced by Choquet at the 1970 ICM in Nice, as an obvious generalization of compact groups.
  The first examples of non-compact minimal groups were found by Do\"\i chinov \cite{Do1} and Stephenson \cite{St}. 
A topological group $G$ is called {\em totally minimal}, if all its Hausdorff quotients are minimal \cite{DP1}.  
Clearly, these are the groups satisfying the Open Mapping Theorem. The research in this field was quite intensive for more than five decades (for a quite incomplete list see
\cite{B,DS_PAMS,DS3,Meg,P1,PS,RS}, 
as well as the surveys or monographs \cite{D,D:omt,DMeg,DPS}). 
 
 In some cases, minimal groups may have some additional compactness property, e.g., 
  every minimal Abelian group is precompact according to the celebrated Prodanov-Stoyanov Theorem \cite{PS}. This motivates the study of the \mi \ groups satisfying additional compactness properties (like pseudocompactness, sequential completeness, countable compactness, $\omega$-boundedness, etc.).
   This trend proved to  
be very fruitful and has been carried out by many authors (\cite{DS_PAMS}, \cite{DS3}, \cite{DTk}, \cite{DT}, \cite{DU}, \cite{RS}). 
For example, the structure of \cc\ \mi\ groups was one of the main topics of the surveys \cite[\S 5]{D}, \cite[\S 2.4]{D-ConComp} and \cite[\S 4]{D:omt}. 
Here ``structure" has to be understood as a reduction to the compact case (i.e., showing that the minimal groups with those additional properties
is either compact, or have a large portion that is compact), e.g., totally minimal \cc \ Abelian groups were shown to be compact in \cite{DS_PAMS} and this fact was extended to all nilpotent groups in \cite[Theorem 5.3]{D}. On the other hand, only minimality and countable compactness do not give such a strong immediate impact (\cite{DS3} contains some results related to the minimality of products of such groups).
 The above mentioned surveys reported also some deeper results on the structure of the  \cc\ \mi\ groups from the unpublished manuscript  \cite{D-cc}, more specifically the following two regarding connected groups: 

(i) \cite{D-cc}  the countably compact \mi\ connected Abelian groups of non-Ulam-measurable weight are compact;  

(ii)  \cite{D-cc} for every Ulam-measurable cardinal $\kappa$ there exists a non-compact $\omega$-bounded \mi\ connected Abelian group of weight $\kappa$. 

The fact (covering (i) and (ii)) that the Ulam-measurable cardinals are precisely the weights of \cc \ 
\mi\ connected non-compact Abelian groups was quoted in \cite[Theorem 5.7]{D}. In \cite[Theorem 4.3]{D:omt} ``countable compactness" was significantly weakened to ``sequential completeness",
 a proof of (ii) appeared
 only 
 recently in \cite{DU}.

(i) was announced without proof in \cite[Theorem 5.10]{D}, \cite[Theorem 4.6]{D:omt} and  \cite[Theorem 2.4.2]{D-ConComp}. In \cite[Corollary 2.4.3]{D-ConComp} and  \cite[Th. 5.12(b)]{D} the following stronger result is deduced from (i): if $G$ is a countably compact \mi\ connected Abelian group and $w(c(G))$ is not Ulam-measurable, then $c(G)$ is compact and $G/c(G)$ is minimal. 
Combining this fact with Fact \ref{Fact:June6}, one obtains also the equality $c(G) = c(\widetilde G)$ when $w(c(G))$ is not Ulam-measurable. The proof of (i) appeared much later in \cite[Theorem 3.2]{DTk} in the following sharper form: 

\begin{theorem}\label{Thm:DTk} {\rm \cite{DTk}} If $G$ is a minimal sequentially complete Abelian group, then  $w(c(G)) = w(c(\widetilde G))$. If $w(c(G))$ is not Ulam-measurable, then $c(G) = c(\widetilde G)$ is compact and $G/c(G)$ is still minimal.
\end{theorem}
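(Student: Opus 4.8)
The plan is to pass to the completion, reduce to the connected component, and then push the problem into an Ulam-measurable cardinal dichotomy for sequentially closed dense subgroups of products of second-countable compact groups. First, by the Prodanov--Stoyanov theorem $G$ is precompact, so $K:=\widetilde G$ is compact, and by the standard description of minimality for precompact Abelian groups (see \cite{DPS}) $G$ is a dense \emph{essential} subgroup of $K$, i.e.\ $G\cap N\neq\{0\}$ for every nontrivial closed subgroup $N$ of $K$; sequential completeness means precisely that $G$ is sequentially closed in $K$. Since $c(G)$ is a connected subset of $K$ through $0$, we have $c(G)\sq c(K)$, hence $w(c(G))\le w(c(K))$, so the first assertion reduces to the reverse inequality. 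For the final clause it suffices to obtain $c(G)=c(K)$ when $w(c(G))$ is not Ulam-measurable: then $c(G)$ is compact, and since $c(G)$ is \emph{connected} one checks that essentiality descends to the quotient $K/c(G)$, so $G/c(G)$ is dense and essential in the compact group $K/c(G)$, hence minimal, while $c(G)=c(K)=c(\widetilde G)$ is automatic. Finally, replacing $G$ by $G\cap c(K)$ leaves $c(G)$ and all the hypotheses unchanged (and reduces $c(G)=c(K)$ to $c(K)\sq G$), so we may and do assume $K$ is connected; then $K$ is divisible and $\widehat K$ is torsion-free.

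Next I would extract the structural information forced by essentiality together with sequential closedness. Essentiality is equivalent to $G$ meeting every \emph{minimal} nontrivial closed subgroup of $K$, and these are exactly the copies of $\Z(p)$ and of $\Z_p$; meeting all the copies of $\Z(p)$ gives $\Soc(K)\sq G$, while for a copy $C\cong\Z_p$ the intersection $G\cap C$ is nonzero and, being sequentially closed in the metrizable group $C$, is in fact a closed, hence open, subgroup of $C$. More generally, for a second-countable closed subgroup $M$ of $K$ the group $G\cap M$ is closed in $M$, so $M\sq G$ as soon as $\Soc(M)=\Soc(K)\cap M$ is dense in $M$; in particular $G$ contains every closed copy of $\T$ in $K$. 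Now realize $K$ (or a suitable large piece of it, e.g.\ via the extension $0\to D\to K\to\T^{\rho}\to 0$ coming from a maximal free subgroup of $\widehat K$, with $D$ profinite and $\rho=w(K)$) as an inverse limit $\varprojlim_{i\in I}M_i$ of second-countable groups with $|I|=w(K)=w(c(G))$, giving a closed embedding into $P:=\prod_{i\in I}M_i$. The goal of this step is to show that $G$ contains the ``finitary part'' $\big(\bigoplus_{i\in I}M_i\big)\cap K$; this is where the preceding information about $\Soc(K)$, about tori, and about $\Z_p$-copies being contained up to finite index is spent.

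Granting that, the heart of the matter is an Ulam-measurability dichotomy. Since $G$ contains $\big(\bigoplus_{i}M_i\big)\cap K$ and is sequentially closed, a diagonal-sequence argument upgrades this to $G\supseteq\big(\Sigma\prod_{i\in I}M_i\big)\cap K$, i.e.\ $G$ contains every element (of $K$) of countable support. Suppose $G\neq K$ and pick $x\in K\stm G$. The family $\mathcal A_x:=\{S\sq I: x\res S\in G\}$ contains every countable subset of $I$ but not $I$ itself, and the classical argument that detects Ulam-measurable cardinals (as in the theory of $\om$-bounded and of sequentially complete topological groups) extracts from $\mathcal A_x$ a free, countably complete ultrafilter on a set of cardinality $\le|I|=w(c(G))$; hence $w(c(G))$ would be Ulam-measurable, contrary to hypothesis. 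Therefore $G=K$, and so $c(G)=K=c(\widetilde G)$. The \emph{unconditional} first statement is obtained by the same scheme carried out inside the connected quotient $K/\overline{c(G)}$ whenever $\overline{c(G)}$ is a proper closed subgroup: there $G$ still has dense image and still contains the relevant socle and tori, which would force a $\Sigma$-type limit outside $G$ unless $c(G)$ was already dense in $c(K)$, giving $w(c(G))=w(c(K))$; in particular, after the reduction to $K$ connected this makes ``$w(c(G))$ not Ulam-measurable'' equivalent to ``$w(K)$ not Ulam-measurable'', which is what is used above.

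The step I expect to be the main obstacle is the inclusion $\big(\bigoplus_{i\in I}M_i\big)\cap K\sq G$ together with the unconditional density of $c(G)$ in $c(K)$. Two issues make these subtle. First, the closed subgroups witnessing essentiality (the $\Z_p$-copies, and more generally the nontrivial closed subgroups of $K$) need not be aligned with any coordinate resolution of $K$: they may be ``skew'', spread across infinitely many of the $M_i$, so essentiality cannot be exploited by coordinatewise bookkeeping and must be played against the whole product structure at once; one must also arrange the resolution so that truncations $x\res S$ stay inside the group in play (so that the $\Sigma$-product/ultrafilter machinery applies), which is why working with an honest torus quotient and a profinite kernel, rather than with $K$ itself, seems necessary. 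Second, the $\T$-subgroups of $K$ by themselves need not generate a dense subgroup (e.g.\ when $\widehat K$ admits no nonzero homomorphism to $\Z$, such as $\widehat K\cong\Q^{(\kappa)}$), so extracting enough \emph{connected} second-countable subgroups of $K$ to force $c(G)$ dense requires a more delicate use of essentiality and sequential closedness than the naive one. A minor but essential point is the bookkeeping that keeps the Ulam-measurable cardinal produced bounded by $w(c(G))$ rather than by $w(\widetilde G)$, which is the reason for performing the reduction to $G\cap c(K)$ at the very beginning.
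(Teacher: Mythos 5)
A preliminary remark: the paper does not actually prove Theorem \ref{Thm:DTk} --- it is imported verbatim from \cite[Theorem 3.2]{DTk} and used as a black box in the proof of Theorem \ref{IntroA} --- so there is no in-paper argument to compare yours against step by step. The fragments of the statement that the paper does establish are obtained by different means: the weight equality for essential subgroups of an infinite-dimensional $K$ comes from Proposition \ref{NEW:proposition}(b) via a purely dual argument (superfluous subgroups of $\widehat K$ have finite free rank, Lemma \ref{Le:finiterank}), with no use of sequential completeness; and the minimality of $G/c(G)$ once $c(G)=c(\widetilde G)$ is Fact \ref{New:Fact} together with Claim \ref{LAST:claim}, which your third sentence reproduces correctly.

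The central step of your proposal, however, is broken. After reducing to $G$ sequentially closed in $K=\varprojlim M_i\le\prod_{i\in I}M_i$ with $G$ containing every element of countable support and $G\ne K$, you claim that ``the classical argument'' extracts a free countably complete ultrafilter from $\mathcal A_x=\{S\sq I:\ x\res S\in G\}$ merely because $\mathcal A_x$ contains all countable subsets of $I$ but not $I$ itself. No such argument exists: the $\Sigma$-product $\Sigma\T^{\omega_1}$ is a dense, sequentially closed (indeed $\omega$-bounded), \emph{proper} subgroup of $\T^{\omega_1}$ containing every element of countable support, and $\omega_1$ is not Ulam-measurable; a family of subsets of $I$ containing all countable sets and missing $I$ has no set-theoretic strength whatsoever (the collection of countable sets is already such a family). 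What rules out the $\Sigma$-product for a minimal $G$ is essentiality --- $\Sigma\T^{\omega_1}$ misses the diagonal copy of $\T$ --- and in the genuine proof essentiality must be spent \emph{inside} the measure construction, on the ``skew'' monothetic subgroups spread over uncountably many coordinates, not only (as in your outline) to seed the finitary part $\bigl(\bigoplus_i M_i\bigr)\cap K$. This is exactly why the hypothesis is sharp: for Ulam-measurable $\kappa$ the groups of \cite{DU} are essential, $\omega$-bounded, dense and proper. Two further gaps: your opening reduction to connected $K$ replaces $G$ by $G\cap c(K)$ and tacitly needs $\overline{G\cap c(K)}=c(K)$, which is not automatic for minimal groups (cf.\ Fact \ref{qmini}, where precisely this density appears as an extra, non-vacuous hypothesis); and the inclusion $\bigl(\bigoplus_i M_i\bigr)\cap K\sq G$ is asserted but never derived. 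You flag both of these as obstacles, but flagging them leaves the proof incomplete at exactly the places where \cite{DTk} does the work.
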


The aim of this paper is to extend this theorem to a larger class of Abelian groups specified in \S \ref{Sec:MR}.

\subsection{Main results}\label{Sec:MR}
\hfill 

To find a correct counterpart of Theorem \ref{Thm:DTk} for a larger class of groups naturally embracing the minimal ones we make recourse
to a class introduced by Morris and Pestov \cite{MP} with the motivation that the locally compact groups need not be minimal 
(as a locally compact Abelian group is minimal precisely when it is compact \cite{St}). They (and independently, Banakh \cite{Ban})  introduced 
the notion of locally minimal group as follows: a topological group $G$ is {\it locally minimal\/} if there exists a neighbourhood $V$ of the identity of $G$ such that whenever  $H$ is a Hausdorff group and $f:G\to H$ is a continuous isomorphism such that $f(V)$ is a neighbourhood of the identity in $H$, then $f$ is open. 
It turned out that  locally compact groups (and  normed vector spaces) are locally minimal \cite{MP}. Further details on locally minimal groups can be found in \cite{DM,ACDD1,ACDD2}. 
The relevant permanence properties of local minimality related to the passage to closed or dense subgroups were largely studied in these papers
as well as in \cite{DHXX,XDST,XDST1}. In particular, local minimality is preserved by the passage to dense  locally essential
subgroups (see also \S \ref{SecBack} and Definition \ref{Def:crit:loc:min} for local essentiality). 

 We prove that the first part of Theorem \ref{Thm:DTk} (up to and including ``$c(G) = c(\widetilde G)$") holds even for locally minimal locally precompact \sco \ Abelian groups.  
 

\medskip

\begin{introtheorem}\label{IntroA}
Let $G$ be a locally precompact, sequentially complete, locally minimal  Abelian group. Then 
\begin{itemize}
      \item[(a)] $w(c(G))=w(c(\widetilde{G}))$.
      \item[(b)] if $w(c(G))$ is not Ulam-measurable, then $c(G)=c(\widetilde{G})$.
\end{itemize}
\end{introtheorem}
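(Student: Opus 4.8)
The plan is to reduce, using the structure theory of LCA groups, first to the case of a compact completion and then to Theorem~\ref{Thm:DTk}, by upgrading local essentiality to genuine essentiality on a large closed subgroup; sequential completeness is what makes both reductions work. By the structure theorem for locally compact abelian groups, $\widetilde G$ has an open subgroup of the form $\mathbb R^{n}\times U$ with $U$ compact, so that $c(\widetilde G)=\mathbb R^{n}\times K$ with $K:=c(U)$ compact connected. Replacing $G$ by the open (hence closed) subgroup $G\cap(\mathbb R^{n}\times U)$ changes neither $c(G)$ nor $c(\widetilde G)$ and preserves all three hypotheses (open subgroups of locally minimal groups are locally minimal, and closed subgroups of sequentially complete groups are sequentially complete), so we may assume $\widetilde G=\mathbb R^{n}\times U$. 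Next I would peel off the Euclidean summand: since $\mathbb R^{n}$ is metrizable and a dense sequentially complete subgroup of a metrizable group equals that group, one obtains $\mathbb R^{n}\times\{0\}\subseteq G$, whence $G=\mathbb R^{n}\times H$ with $H:=G\cap U$ and $c(G)=\mathbb R^{n}\times c(H)$. As $H$ is again a dense, sequentially complete, locally minimal subgroup of the \emph{compact} group $U$, it now suffices to prove $\overline{c(H)}=K$, which gives (a), and $c(H)=K$ when $w(c(H))$ is not Ulam-measurable, which gives (b).

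For the compact case I would use that compact groups possess arbitrarily small closed subgroups with metrizable (indeed Lie) quotient: choose a closed $U'\le U$ contained in a neighbourhood of the identity witnessing the local essentiality of $H$ in $U$, and with $U/U'$ metrizable. Every closed subgroup of $U'$ then lies in that witness, so $H\cap U'$ meets all of them; thus $H\cap U'$ is \emph{essential} in $U'$, and being dense in its (compact) closure and sequentially complete it is a minimal sequentially complete group, to which Theorem~\ref{Thm:DTk} applies: $w\big(c(H\cap U')\big)=w\big(c(\widetilde{H\cap U'})\big)$, and when this cardinal is not Ulam-measurable, $c(H\cap U')=c(\widetilde{H\cap U'})$ is compact. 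Since $c(H)\supseteq c(H\cap U')$, this already places a large compact connected subgroup inside $c(H)$. To recover all of $K$ I would play this against the metrizable quotient $U/U'$, where the image of $H$ is dense and, by sequential completeness, equal to $U/U'$, so that $H+U'=U$, and let $U'$ vary over a downward-directed family of such subgroups with trivial intersection; matching weights, and in the non-measurable case connected components, along this family, and finally invoking Fact~\ref{Fact:June6} to pass from ``$c(H)$ is compact and of full weight'' to the equality $c(H)=K=c(\widetilde H)$, completes the compact case. Undoing the reductions gives (a) and (b).

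The step I expect to be the main obstacle is precisely this passage through \emph{non-metrizable} compact groups. Local minimality by itself controls only the small closed subgroups of $\widetilde G$ — indeed every dense subgroup of a group without small subgroups is locally minimal, so local minimality alone says essentially nothing about $c(G)$ — and it is sequential completeness that must carry the argument, through the single mechanism that dense sequentially complete subgroups of metrizable groups are everything, applied across the many metrizable sub-quotients of $\widetilde G$. Organizing the interaction of the Euclidean summand, the connected compact part and the profinite part so that no weight is lost (and, when $w(c(G))$ is not Ulam-measurable, so that one gets the exact equality rather than mere density) is the technical heart of the proof. The Ulam-measurable cardinals intervene exactly as in Theorem~\ref{Thm:DTk}: they constitute the sole obstruction to a sequentially closed, connected, precompact group being closed — hence compact — in its completion.
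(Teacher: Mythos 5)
Your top-level strategy --- reduce via the structure theorem to $\widetilde G=\R^n\times U$ with $U$ compact, use a small closed subgroup $U'$ with $U/U'$ NSS to upgrade local essentiality of $H=G\cap U$ to genuine essentiality of $H\cap U'$ in $U'$, and then invoke Theorem~\ref{Thm:DTk} --- is exactly the paper's route to part (a) in the infinite-dimensional case. But there are three concrete gaps. First, the step ``peeling off the Euclidean summand'' is unjustified and essentially circular: sequential completeness tells you that $G\cap(\R^n\times\{0\})$ is a \emph{closed} subgroup of $\R^n$, not that it is dense in $\R^n$, so you cannot conclude $\R^n\times\{0\}\subseteq G$. (Indeed $\R^n\times\{0\}\subseteq c(\widetilde G)$, so this containment is essentially part of conclusion (b), not an input.) The paper avoids this by choosing $g_1,\dots,g_n\in G$ projecting to a lattice $F$ in $\R^n$ and passing to the precompact quotient $G/F$ of $\widetilde G/F$, using Lemma~\ref{Le:qsco} to preserve sequential completeness. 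Second, your phrase ``matching weights along this family'' glosses over the actual technical heart of (a): after Theorem~\ref{Thm:DTk} gives $w(c(H\cap U'))=w(c(\overline{H\cap U'}))$, one still must show $w(c(\overline{H\cap U'}))=w(c(U))$, and an essential closed subgroup $M$ of a compact connected group can have $c(M)=\{0\}$ (the paper's example with $K=\widehat{\Q}^n$ and $M=(\Z^n)^\perp$). The paper proves the needed equality in Proposition~\ref{NEW:proposition}(b) by a duality argument with superfluous subgroups, and this works \emph{only when} $\dim U$ is infinite; the finite-dimensional case of (a) is then deduced \emph{from} (b), since there $w(c(\widetilde G))=\omega$ is automatically non-Ulam-measurable. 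Your proposal has no substitute for this dichotomy.

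Third, your mechanism for (b) does not work as described. Letting $U'$ shrink through a downward-directed family makes $c(U')$ (and hence $c(H\cap U')$) \emph{smaller}, so it cannot recover all of $c(U)$; the identity $H+U'=U$ controls the totally disconnected direction, not the connected component; and Fact~\ref{Fact:June6} applies to countably compact \emph{minimal} groups, not to sequentially complete locally minimal ones, and in any case ``compact connected of full weight'' does not imply ``equal to $c(U)$.'' The paper's key idea here, which is absent from your proposal, is to pass to the torsion-free cover: with $X=\widehat U$, $Y=D(X)$ and $L=\widehat Y$, the pullback $G_1=f^{-1}(H)$ along $f:L\to U$ is locally essential in the \emph{torsion-free} compact group $L$, where weak essentiality already implies essentiality (Corollary~\ref{New:Corollary}); hence $G_1$ is genuinely minimal and sequentially complete, Theorem~\ref{Thm:DTk} yields $c(L)\subseteq G_1$, and pushing forward gives $c(U)=f(c(L))\subseteq H$. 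Without this lifting step (or an equivalent), conclusion (b) is not established.
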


This theorem 
(see \S \ref{Sec:LocMin:vs:Min} for the proof) extends Theorem \ref{Thm:DTk} not only in the direction of minimality
(which implies precompactness due to Prodanov-Stoyanov Theorem). Here the (implicit) precompactness in Theorem \ref{Thm:DTk} is weakened to 
local precompactness (which may be missing in the case of local minimality in general). 

 

\begin{introcorollary}\label{CoroA} Let $G$ be a (locally) precompact, sequentially complete, locally minimal Abelian group. 

(a) If $w(c(G))$ is not Ulam-measurable, then $c(G)=c(\widetilde{G})$ is (locally) compact. 
 
 (b) If $G$ is hereditarily disconnected $($i.e., $c(G) = \{0\})$, then so is $\widetilde G$; in particular, $\widetilde G$ and $G$
 have linear topology.
%
%
\end{introcorollary}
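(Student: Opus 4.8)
The plan is to derive Corollary~\ref{CoroA} directly from Theorem~\ref{IntroA}, treating (a) and (b) separately. For part~(a): assuming $w(c(G))$ is not Ulam-measurable, Theorem~\ref{IntroA}(b) gives $c(G)=c(\widetilde G)$ as an equality of subgroups. Since $G$ is (locally) precompact, $\widetilde G$ is (locally) compact, hence $c(\widetilde G)$ is a closed subgroup of a (locally) compact group and is therefore itself (locally) compact; transporting this through the equality $c(G) = c(\widetilde G)$ yields that $c(G)$ is (locally) compact. One minor point to address is that $c(\widetilde G)$ is automatically closed in $\widetilde G$ (the connected component of any topological group is closed), so it inherits local compactness from $\widetilde G$ cleanly, and in the plainly precompact case $\widetilde G$ is compact so $c(\widetilde G)$ is compact.

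For part~(b), suppose $G$ is hereditarily disconnected, i.e.\ $c(G) = \{0\}$. Then $w(c(G)) = w(\{0\}) = 0$, which is certainly not Ulam-measurable, so Theorem~\ref{IntroA}(b) applies and gives $c(\widetilde G) = c(G) = \{0\}$, i.e.\ $\widetilde G$ is hereditarily disconnected as well. The final clause — that $\widetilde G$ (and hence $G$, as a topological subgroup) has linear topology — will follow from the structure theory of locally compact abelian groups: a hereditarily disconnected locally compact abelian group is totally disconnected (connectedness and total disconnectedness coincide at the level of quasi-components in locally compact groups, a classical fact), and a totally disconnected locally compact group has a base of neighbourhoods of $0$ consisting of open (compact) subgroups by van Dampen / van der Waerden's structure theorem, i.e.\ it has linear topology. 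Since linearity of the topology passes to subgroups, $G$ inherits it.

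I expect the only genuine content here to be bookkeeping: matching the hypotheses of Theorem~\ref{IntroA} and Corollary~\ref{CoroA} (both require locally precompact, sequentially complete, locally minimal; the ``(locally)'' parentheses must be read consistently in hypothesis and conclusion), and invoking the correct classical facts about locally compact abelian groups — namely that the connected component is closed, that closed subgroups of (locally) compact groups are (locally) compact, and that totally disconnected locally compact groups have linear topology. The main (very mild) obstacle is simply ensuring that ``hereditarily disconnected'' for $G$ really forces $c(G)=\{0\}$ — but this is exactly the parenthetical definition given in the statement, so there is nothing to prove there. No step requires a new argument beyond Theorem~\ref{IntroA} plus textbook locally compact group theory, so the proof will be short.
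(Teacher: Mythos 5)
Your proof is correct and follows essentially the same route as the paper: part (a) is the immediate consequence of Theorem~\ref{IntroA}(b) together with closedness of the connected component, and part (b) is exactly the argument of Corollary~\ref{Coro:Oct23} --- apply Theorem~\ref{IntroA}(b) to $c(G)=\{0\}$ (whose weight is trivially non-Ulam-measurable) and then invoke the classical structure theorem for hereditarily disconnected locally compact groups. The only slip is attributional: that structure theorem is van Dantzig's, not ``van Dampen / van der Waerden's''.
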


Here {\em linear topology} means that the group has a local base of open subgroups (see Corollary \ref{Coro:Oct23} for a proof of (b)). 

 
The counterpart for local minimality of  the missing (final) part of Theorem \ref{Thm:DTk} (namely, ``$G/c(G)$ is locally minimal") spectacularly fails 
 even for {\em precompact} (locally minimal,  \sco)\ Abelian groups. The second part of the paper (namely, \S \ref{Sec:Clm}) is entirely dedicated to clarifying in full detail this
 remarkable  phenomenon. 

Quotients of locally minimal groups need not be locally minimal in general  (see \cite{DDHXX} for more details). In order to better pursue our plan we introduce a class of seemingly very exotic locally minimal groups, where the desired property fails in a strong way. 
Namely, we are interested only in the preservation of local minimality under very special quotients of a given a locally minimal group $G$, namely those of the form $G/c(G)$ when $c(G)$ is compact! As we prove in Corollary \ref{CoroA}, if the  locally minimal group $G$ is additionally precompact, \sco \ and $w(c(G))$ is not Ulam-measurable, then $c(G) = c(\widetilde G)$ is compact.  So it is natural to impose compactness of $c(G)$ in the following notion:

\begin{definition} Call a locally minimal group $G$ {\em critical}, if $c(G)= c(\widetilde G)$ is compact and $G/c(G)$ is not locally minimal. 
\end{definition}

Obviously, a critical locally minimal group can be neither connected nor totally disconnected, so $\{0\} \ne c(G) \ne G$. Moreover, critical locally minimal Abelian groups  cannot be minimal either (indeed, if $G$ is a minimal, then $G/c(G)$ is even minimal (see Corollary \ref{Coro:MinG/c(G)}).
This explains why critical locally minimal Abelian groups are quite hard to come by even beyond the class of precompact locally minimal groups
 (e.g., a large class of non-precompact locally minimal Abelian groups is provided by the additive groups of normed spaces $V$, which obviously cannot be critical
 as $c(V)=V$). Theorem \ref{IntroB} produces a surprizingly large class of critical precompact locally minimal Abelian groups, some of them with strikingly strong compact-like properties (e.g., $\alpha$-boundedness, with arbitrarily large $\alpha$).

In this direction we study the completions of the critical \lm \ precompact Abelian groups, in other words, the class $\CCC$ of compact Abelian group with a dense (necessarily proper) critical locally minimal subgroup.  In order to provide sufficient conditions for $K \in \CCC$ we study also the  class $\KK$ of compact Abelian groups $K$ with locally essential connected component $c(K)$. Then we provide in Theorem \ref{Thm:New}  a very simple characterization of the groups 
 from $\KK\cap \CCC$ used in the proof of Theorem \ref{IntroB}. 

We show that $K \not \in \CCC$ when $c(K)$ is a hybrid torus (see Definition \ref{hybridtori}) or when it splits as a topological direct summand (see Theorem \ref{coro:tor_free**} for other sufficient conditions for $K \not \in \CCC$ ). Combined with other intermediate results this gives a complete description of the connected components of the finite-dimensional compact Abelian groups $K\in \CCC$: 

\begin{introtheorem}\label{IntroB}  Let $C$ be a finite-dimensional connected compact Abelian group. Then the following are equivalent:
\begin{itemize}
\item[(a)] $C$ is a not a hybrid torus;
\item[(b)] there is a compact Abelian group $K\in \CCC$ with $c(K)\cong C$; moreover, the dense critical locally minimal subgroup $G$ of $K$ witnessing $K\in \CCC$
can be chosen to be either metrizable or $\omega$-bounded of weight $\kappa$, for any 
uncountable cardinal $\kappa$. 
\end{itemize}
\end{introtheorem}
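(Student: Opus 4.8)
\textbf{Proof proposal for Theorem \ref{IntroB}.} The plan is to establish both implications of the equivalence, with the main work going into the construction needed for $(a)\Rightarrow(b)$. For the direction $(b)\Rightarrow(a)$, I would proceed by contraposition: assume $C$ is a hybrid torus (Definition \ref{hybridtori}) and show that no compact Abelian group $K$ with $c(K)\cong C$ lies in $\CCC$. This is exactly the content of the announced result that $K\notin\CCC$ when $c(K)$ is a hybrid torus, so the real task here is to isolate the obstruction: a hybrid torus has a topological-group-theoretic feature (roughly, a splitting or a ``torsion-like'' rigidity in the relevant $p$-adic directions) that forces any dense subgroup $G$ of $K$ with $c(G)=c(K)=C$ compact to have $G/c(G)$ already locally minimal, contradicting criticality. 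I expect this to reduce, via the structure theory of finite-dimensional connected compact Abelian groups (a quotient of a power of $\mathbb K$ and of $\mathbb T$, controlled by the $p$-adic components), to checking that local essentiality of $c(K)$ in $K$ fails, or that $c(K)$ splits off topologically, in which case Theorem \ref{coro:tor_free**} and the splitting result cited before the statement apply directly.

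For the substantive direction $(a)\Rightarrow(b)$, suppose $C$ is finite-dimensional, connected, compact, Abelian and \emph{not} a hybrid torus. First I would realize $C$ concretely via the duality $\widehat C$ is a finite-rank torsion-free discrete group, and analyze where the $p$-adic ``thickening'' occurs; the hypothesis that $C$ is not a hybrid torus should translate into the existence of at least one prime $p$ where the relevant local component is a non-split extension (equivalently, $\mathbb K$-like rather than $\mathbb T\times\mathbb Z_p$-like), which is precisely what allows $c(K)$ to be locally essential in a strictly larger $K$. Using this, I would build a compact Abelian group $K$ with $c(K)\cong C$ and $K\in\KK$, i.e.\ with $c(K)$ locally essential in $K$; the characterization of $\KK\cap\CCC$ supplied by Theorem \ref{Thm:New} then tells us exactly when such a $K$ additionally lies in $\CCC$, and the plan is to arrange $K$ so that the simple criterion of Theorem \ref{Thm:New} is met. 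Concretely, $K/c(K)$ should be chosen totally disconnected but \emph{not} locally minimal (e.g.\ a suitable product involving $\mathbb Z_p$'s and cyclic $p$-groups whose ``defect'' of local minimality is supported precisely on the coordinates where $C$ fails to split), while local essentiality of $c(K)$ guarantees that a carefully chosen dense $G\le K$ with $c(G)=c(K)$ remains locally minimal.

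The final clause — that $G$ can be taken either metrizable or $\omega$-bounded of any prescribed uncountable weight $\kappa$ — I would handle by two parallel constructions of the dense critical subgroup $G$ inside (a possibly enlarged) $K$. For the metrizable case one takes $K$ itself metrizable (so $w(C)\le\omega$, which is automatic since $C$ is finite-dimensional hence metrizable compact) and picks $G$ to be a countable-index dense subgroup obtained by removing enough of the totally disconnected part to kill local minimality of the quotient while keeping $c(G)=c(K)$; local essentiality of $c(K)$ in $K$ keeps $G$ locally minimal by the permanence property cited in the introduction (local minimality passes to dense locally essential subgroups). For the $\omega$-bounded case I would instead work in $C\times P$ where $P$ is a large $\Sigma$-product-type $\omega$-bounded totally disconnected group of weight $\kappa$ (built from $\mathbb Z_p$'s and cyclic $p$-groups), use that $\omega$-boundedness is inherited by $\Sigma$-products and that the connected component of $C\times P$ is $C\times\{0\}\cong C$, then extract a dense $G$ with the required properties exactly as before. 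The main obstacle, I expect, is the coordination in $(a)\Rightarrow(b)$: simultaneously ensuring (i) $c(K)\cong C$, (ii) $c(K)$ locally essential in $K$ (so $G$ stays locally minimal), (iii) $G/c(G)$ genuinely fails local minimality, and (iv) the prescribed compactness-like property on $G$ — with all four constraints pulling on the same totally disconnected ``bulk,'' and the non-hybrid-torus hypothesis being the one delicate ingredient that makes (ii) and (iii) compatible. Here the reduction to $\KK\cap\CCC$ via Theorem \ref{Thm:New} is what makes the bookkeeping manageable.
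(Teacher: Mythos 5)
Your direction (b)$\Rightarrow$(a) and the metrizable half of (a)$\Rightarrow$(b) follow the paper's route: the paper's proof of Theorem \ref{IntroB} is literally ``combine Proposition \ref{PropSuff} with Theorem \ref{coro:tor_free**}(d)'', and your plan of building $K\in\KK$ with $c(K)\cong C$ and then invoking Theorem \ref{Thm:New} is exactly how Proposition \ref{PropSuff}(a) proceeds (via the gluing $K=(N\times C)/\Gamma_j$ along an essential subgroup of a totally disconnected $N$). One inaccuracy even here: you translate ``$C$ is not a hybrid torus'' into ``there is at least one prime $p$ where the local component is a non-split extension''. That is the wrong dichotomy. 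By Proposition \ref{prop:ht} the correct translation is that $\pi^*(N)$ is \emph{infinite} for a $\delta$-subgroup $N$ of $C$; the $p$-adic solenoid $\widehat{\Z[1/p]}$ is a non-split extension of $\T$ by $\Z_p$ yet \emph{is} a hybrid torus, so your criterion would misclassify it. The infinitude of $\pi^*(N)$ is what makes $N/L$ (hence $K/C$) infinite and lets Theorem \ref{Thm:New} apply.

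The genuine gap is in the $\omega$-bounded case, and it is fatal as stated, for two independent reasons. First, you propose to work inside $C\times P$ with $P$ a $\Sigma$-product-type totally disconnected group; the completion of any dense $G\le C\times P$ is then $C\times\widetilde P$ with $\widetilde P$ totally disconnected compact, so $c(\widetilde G)=C\times\{0\}$ splits topologically, and Theorem \ref{coro:tor_free**}(c) says precisely that such a $K$ is \emph{never} in $\CCC$: since $G\supseteq c(K)$, one gets $G=c(K)\oplus G_1$ with $G/c(G)\cong G_1$ a closed subgroup of the locally minimal $G$, hence locally minimal, so $G$ cannot be critical. Second, your bookkeeping device --- reduction to $\KK\cap\CCC$ via Theorem \ref{Thm:New} --- is unavailable here on principle: if $K=\widetilde G\in\KK$ then $K/c(K)$ is metrizable by Lemma \ref{D:claim}, so an $\omega$-bounded $G$ would have $G/c(K)$ metrizable and $\omega$-bounded, hence compact, forcing $G$ itself compact. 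This is why Proposition \ref{PropSuff}(b) takes $K=K_0\times B$ with $K_0$ the \emph{non-split} gluing from part (a), and builds $G$ from $\Sigma$-products \emph{together with diagonal subgroups} $D=\prod_p\Delta_p$ tying the $K_0$-coordinate to the $B$-coordinates; the diagonals are what make $G$ locally essential in $K$ while ensuring $q(G)$ misses $(K/c(K))[p]$ for infinitely many $p$, and the criticality is then verified directly via Theorem \ref{claim4}(c) rather than through Theorem \ref{Thm:New}. Your proposal is missing this mechanism entirely.
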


This theorem will be proved at the end of \S \ref{Sec:Clm}. It not only characterizes the connected components of the critical \lm \ precompact Abelian groups, but
also shows that,  for every arbitrarily chosen infinite cardinal $\alpha$, such a (necessarily, connected) group can be also the connected component of a critical \lm \ $\alpha$-bounded Abelian group. 

This theorem provides a large class of finite-dimensional groups $K \in \CCC$ (in particular all those with torsion-free $c(K)$, see Corollary \ref{exa:Thm:qt*}). In particular,
we show that there is a proper class of pairwise non-homeomorphic groups $K\in \CCC$ with some further properties of their connected components 
(see Remark \ref{Homotop}). 



Our choice to concentrate the paper on the Abelian case is motivated by the fact that all of our principal results (as well as Theorem \ref{Thm:DTk}) fail in
the non-Abelian case (relevant examples are provided in the final section 6) which was not discussed in \cite{DTk,DU}.

Section \ref{background} provides 
background on general properties of locally minimal groups. It is focused on one of the main technical tools for the proof of Theorems \ref{IntroA} and \ref{IntroB} , namely the notion of (locally) essential subgroup along with the new notion of weakly essential subgroup 
(see \S \ref{SecBack} and Definition \ref{Def:crit:loc:min}). The relevance of these notions comes from the crucial fact that the locally essential subgroups of the (locally) compact groups are precisely the (locally) precompact locally minimal groups (this follows from the criterion  \ref{Crit} for (local) minimality of dense subgroups). In particular, every subgroup of a Lie group is locally essential, hence locally minimal.
To enhance and understand better the local essentiality we make use of distinguished subgroups of the compact groups 
(as co-NSS  subgroups and $\delta$-subgroups, see Definition \ref{Def:D}) which are used also to introduce in \S \ref{Sec:delta}  some classes of compact Abelian groups  
which are frequently used in the proofs (Archimedean, Non-Archimedean, hybrid and exotic tori). In relation with critical locally minimal groups, \S \ref{Special:quot} is focused on hereditarily disconnected quotients of (locally) minimal groups.


In \S \ref{Subsec:approx}  we discuss various ways of approximating locally minimal Abelian groups by (large) minimal
subgroups. Since minimal Abelian groups are precompact, this 
 shows that complete locally minimal Abelian groups contain compact $G_\delta$-subgroups (Corollary \ref{Coro2:May27}), 
 so in the non-metrizable case they are ``close to being" locally compact (see also Question \ref{QuesX1}). 
 Theorem \ref{claim4}
provides necessary conditions and sufficient conditions for local essentiality of subgroups which are extensively used in the sequel. 

In \S \ref{Sec4} we add sequential completeness
 to local minimality and prove Theorem \ref{IntroA}, extending in this way 
 the first part of Theorem \ref{Thm:DTk} to  locally minimal locally precompact Abelian groups.   In this section we provide also a structure theorem for the hereditarily disconnected locally minimal locally precompact \sco\ Abelian groups. 
 
 In Section \ref{Sec:Clm} we show that the counterpart of the second part of Theorem \ref{Thm:DTk} strongly fails in many ways, for locally minimal groups. Here we prove Theorem \ref{IntroB}  which describes the connected component of a locally minimal locally precompact  \sco\ Abelian group. 
 
 
 The final Section \ref{Sec:Final} is focused mainly on discussion of the non-Abelian case. Here we show that most of our principal results
 fail in this case, which motivated our choice to concentrate the paper on the Abelian case. 

\section{Preliminaries}\label{background}


\subsection{General properties of  (locally) minimal groups}\label{SecBack}

\hfill

In the next definition we recall the well known concepts of (local) essentiality and total density of a subgroup. Moreover, in items (i) and (ii) we provide also weaker versions of essentiality. 

\begin{definition}\label{Def:crit:loc:min}
For a topological group $G$, a subgroup $H$ is said to be 
\begin{itemize}
  \item[(i)]  {\em (locally)} {\em essential} in $G$ if (there exists a \nbd \ $V$ of $e$ such that) $H$ non-trivially meets every non-trivial closed normal subgroup of $G$ (contained in $V$);
  \item[(ii)] {\em weakly essential} in $G$ if for every prime $p$,  $H$ non-trivially meets every  infinite closed $p$-monothetic subgroup of $G$;
  \item[(iii)] {\em totally dense} in $G$ if $H\cap N$ is dense in $N$ for any closed normal subgroup $N$ of $G$.
\end{itemize}
\end{definition}

In general it is hard to directly verify (local) essentiality since it involves {\em all} closed normal subgroups. A natural idea is to find a family $\mathcal{C}$ of non-trivial closed normal subgroups of $G$ such that every non-trivial closed normal subgroup of $G$ contains some element in $\mathcal{C}$. So to check the (local) essentiality or total density of $H$ in $G$, one can focus only $H\cap N$ for all $N\in \mathcal{C}$. This method is particularly useful when studying (total) minimality and local minimality of precompact Abelian groups.  
In the proof of Lemma \ref{Neeew}(b) we show that the family of all closed $p$-monothetic subgroups of $G$ when $p$ varies among all primes, can be taken for such a family $\mathcal{C}$, when $G$ is a compact Abelian group. For every prime $p$, the union $td_p(G)$ of the family of all $p$-monothetic subgroups of an Abelian group $G$ is a subgroup of $G$ \cite{S}.


\begin{lemma}\label{Neeew} Let $G$ be a compact Abelian group. Then for every $H\leq G$

\begin{itemize}
   \item[(a)] ``essential'' $\to $ ``locally essential" $\to $ ``weakly essential";  
   \item[(b)]  essential is equivalent to  weakly essential and $\Soc(G) \leq H$;
   \item[(c)] local essentiality of $H$ is equivalent to weak essentiality of $H$ and local essentiallity of $\Soc(H)$ in $\Soc(G)$.
\end{itemize}
\end{lemma}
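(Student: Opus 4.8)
The plan is to isolate a single structural fact about compact Abelian groups and reduce all three items to it. \emph{Key Fact:} every non-trivial compact Abelian group $N$ contains either a non-zero element of prime order, or a closed subgroup topologically isomorphic to $\Z_p$ for some prime $p$. To prove it, I pick $0\ne x\in N$ and set $C=\overline{\langle x\rangle}$, a non-trivial compact monothetic subgroup of $N$: if $C$ has a non-zero torsion element $t$ of order $n$ then $(n/p)t$ has order $p$ for any prime $p\mid n$; otherwise $C$ is torsion-free, hence infinite, so by Pontryagin duality $\widehat C$ is a non-zero divisible discrete Abelian group, which by the classification of divisible groups has a quotient isomorphic to $\Z(p^\infty)$ for some prime $p$, and dualizing that quotient gives a closed embedding of $\Z_p=\widehat{\Z(p^\infty)}$ into $C$. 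The relevance of the Fact is that, an infinite closed subgroup of $\Z_p$ being again $\cong\Z_p$, the infinite closed $p$-monothetic subgroups of a compact group are exactly its closed copies of $\Z_p$ (these are precisely what is controlled by weak essentiality), whereas elements of prime order are precisely what is controlled by the socle.

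Part (a): essential implies locally essential trivially ($V=G$); for the implication that local essentiality implies weak essentiality, if $V$ witnesses local essentiality and $C\cong\Z_p$ is an infinite closed $p$-monothetic subgroup, the open subgroups $p^nC\cong\Z_p$ form a neighbourhood base at $0$ in $C$, so some $p^nC$ lies in $V$; being a non-trivial closed subgroup of $G$ inside $V$, it meets $H$ non-trivially, hence so does $C$. Part (b): by (a) essential implies weakly essential, and if $H$ is essential then meeting $\langle x\rangle$ (of order $p$) for each $0\ne x\in G[p]$ forces $x\in H$, so $\Soc(G)\le H$; conversely, given $H$ weakly essential with $\Soc(G)\le H$ and a non-trivial closed subgroup $N\le G$, the Key Fact provides inside $N$ either a non-zero element of prime order (lying in $\Soc(G)\le H$) or a closed copy of $\Z_p$ (meeting $H$ by weak essentiality), so $H\cap N\ne\{0\}$ and $H$ is essential.

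Part (c) is handled by transporting the witnessing neighbourhood between $G$ and $\Soc(G)$, using $\Soc(H)=H\cap\Soc(G)$, $H[p]=H\cap G[p]$, and that every element of $\Soc(G)$ has square-free order. If $H$ is locally essential in $G$ via a neighbourhood $V$, then $V\cap\Soc(G)$ witnesses local essentiality of $\Soc(H)$ in $\Soc(G)$: a non-trivial closed subgroup $M$ of $\Soc(G)$ with $M\subseteq V\cap\Soc(G)$ contains an element $y$ of prime order $p$, and since $\langle y\rangle\le M\subseteq V$ is a non-trivial closed subgroup of $G$, local essentiality gives $y\in H$, hence $y\in H[p]\subseteq\Soc(H)$ and $0\ne y\in\Soc(H)\cap M$ (weak essentiality of $H$ itself follows from (a)). Conversely, if $H$ is weakly essential in $G$ and $\Soc(H)$ is locally essential in $\Soc(G)$ via a neighbourhood $W=V\cap\Soc(G)$, then $V$ witnesses local essentiality of $H$ in $G$: for a non-trivial closed subgroup $N\subseteq V$ of $G$, the Key Fact yields inside $N$ either a non-zero element $y$ of prime order $p$, with $\langle y\rangle\le N\cap\Soc(G)\subseteq W$ a non-trivial closed subgroup of $\Soc(G)$ so that $\{0\}\ne\Soc(H)\cap\langle y\rangle\subseteq H\cap N$, or a closed copy $D$ of $\Z_p$, with $\{0\}\ne H\cap D\subseteq H\cap N$ by weak essentiality.

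The main obstacle is the Key Fact, and within it the torsion-free case: this is the only point where elementary manipulations with small subgroups do not suffice and one must invoke Pontryagin duality together with the classification of divisible Abelian groups. Once it is available, the three items reduce to bookkeeping with neighbourhoods and with the socle.
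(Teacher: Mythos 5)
Your proof is correct and follows essentially the same strategy as the paper: both arguments hinge on the fact that every non-trivial compact Abelian group contains a closed copy of $\Z(p)$ or of $\Z_p$, established via Pontryagin duality and the structure of divisible groups (the paper applies the divisible/non-divisible dichotomy directly to $\widehat N$, whereas you first pass to a monothetic subgroup and split on torsion versus torsion-free — an immaterial difference). Your treatment of (c) is in fact slightly more complete than the paper's, which only writes out the sufficiency direction and leaves the case analysis via the key fact implicit.
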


\begin{proof} 
(a) To see that  local essentiality $\to $ weak essentiality, assume that $V$ witnesses local essentiality of $H$ in $G$. Then 
for every non-trivial closed $p$-monothetic subgroup $N$ of $G$ the intersection 
$N \cap V$ contains an open subgroup $p^nN \cong N$ of $N$, which is contained in $V$, so $p^nN \cap G \ne \{0\}$. Therefore, $N\cap G \geq p^nN \cap G \ne \{0\}$.
The remaining implications in (a) are obvious. 

(b) The necessity is obvious. The sufficiency immediately follows from the fact that every non-trivial compact Abelian group $N$ 
contains a closed subgroup topologically isomorphic to either $\Z(p)$ or $\Z_p$ for some prime $p$. Indeed, let $X = \widehat N \ne \{0\}$. If $X$ is divisible, then by the structure theorem for divisible Abelian groups \cite{Fuchs}, 
$X$ admits a quotient group isomorphic to $\Z(p^\infty)$ for some prime $p$. Then $N$ has subgroup topologically isomorphic to $\Z_p$. 
If $X$ is not divisible, then $pX \neq X$ for some prime $p$,  so  the quotient $X/pX$ is non-trivial, and hence $X/pX$, as well as $X$, 
admit a quotient group isomorphic to $\Z(p)$. Then $N$ has subgroup topologically isomorphic to $\Z(p)$. 

 
(c) If $H$ is weakly essential in $G$ and $\Soc(H)$ is locally essential in $\Soc(G)$, then to check local essentiallity of $H$ in $G$ it suffices to fix a \nbd \ $U$ of 0 in $\Soc(G)$ witnessing local essentiallity of $\Soc(H)$ in $\Soc(G)$. Pick a \nbd \ $W$ of 0 in $G$ such that $W \cap \Soc(G) = U$. If $S \leq G$, $S \subseteq W$ and $S \cong \Z(p)$ for some prime $p$, then $S \leq \Soc(G)$ as well, so $S \subseteq U$. Hence, $S \leq \Soc(H)\leq H$, as desired. 
\end{proof}

The next lemma shows that weak essentiality is preserved under  inverse images and under some images: 

\begin{lemma}\label{Rem:29} Let $q: K \to K_1$ be a continuous homomorphism of compact Abelian groups.
\begin{itemize}
  \item[(a)] if $H$ is a weakly essential subgroup  of $K_1$, then $H_1:= f^{-1}(H)$ is a weakly essential subgroup  of $K$; 
  \item[(b)] if $q(K)$ is weakly essential in $K_1$, then  $q(G)$ is weakly essential in $K_1$ for any weakly essential subgroup $G$ of $K$.
\end{itemize}
\end{lemma}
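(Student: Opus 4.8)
The plan is to reduce everything to one elementary observation: in a compact Abelian group, an \emph{infinite} closed $p$-monothetic subgroup is automatically topologically isomorphic to $\Z_p$ (a finite cyclic $p$-group is excluded by infiniteness, and the only infinite closed subgroups of $\Z_p$ are the $p^n\Z_p\cong\Z_p$). So ``weakly essential'' simply means: for every prime $p$, the subgroup meets every closed copy of $\Z_p$ non-trivially. I also want to use freely that $\ker q$ is closed, that $q$ carries closed (hence compact) subgroups to closed subgroups, and that a continuous injection from a compact group into a Hausdorff group is a topological embedding.

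For (a), fix a closed $N\le K$ with $N\cong\Z_p$ and set $H_1=q^{-1}(H)$; the goal is $H_1\cap N\neq\{0\}$. I would split into two cases. If $\ker q\cap N\neq\{0\}$ we are done at once, since $\ker q\subseteq H_1$. Otherwise $q|_N$ is injective, hence an embedding, so $q(N)$ is a closed subgroup of $K_1$ topologically isomorphic to $\Z_p$; weak essentiality of $H$ in $K_1$ provides $0\neq z\in H\cap q(N)$, and its unique preimage $x\in N$ satisfies $x\neq 0$ and $q(x)=z\in H$, i.e.\ $x\in (H_1\cap N)\setminus\{0\}$. This settles (a).

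For (b), fix a closed $N\le K_1$ with $N\cong\Z_p$; the goal is $q(G)\cap N\neq\{0\}$. Since $q(K)$ is closed and weakly essential in $K_1$, the subgroup $N':=q(K)\cap N$ is a non-trivial closed subgroup of $N\cong\Z_p$, hence $N'\cong\Z_p$. Put $M:=q^{-1}(N)=q^{-1}(N')$, a closed (compact) subgroup of $K$; then $q|_M\colon M\twoheadrightarrow N'$ is a continuous surjection of compact Abelian groups. The heart of the matter is the splitting fact: \emph{such a surjection splits}, i.e.\ there is a closed $Q\le M$ with $Q\cong\Z_p$ and $q|_Q\colon Q\to N'$ an isomorphism. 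Granting it, $Q$ is an infinite closed $p$-monothetic subgroup of $K$, so weak essentiality of $G$ in $K$ gives $0\neq x\in G\cap Q$; then $q(x)\in q(G)$ is non-zero (as $q|_Q$ is injective) and lies in $q(Q)=N'\subseteq N$, so $q(x)\in (q(G)\cap N)\setminus\{0\}$, proving (b).

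To prove the splitting fact I would dualize $q|_M\colon M\twoheadrightarrow N'$ to an injection $\widehat{N'}\hookrightarrow\widehat M$ with image $D\cong\widehat{\Z_p}=\Z(p^\infty)$. Being divisible, $D$ is a direct summand of the discrete group $\widehat M$, say $\widehat M=D\oplus R$; the projection $\pi\colon\widehat M\twoheadrightarrow D$ is the identity on $D$, so $\pi$ precomposed with $\widehat{N'}\hookrightarrow\widehat M$ is an isomorphism $\widehat{N'}\to D$. Dualizing back (using reflexivity of compact Abelian groups), $\pi$ yields a closed subgroup $Q\le M$ with $\widehat Q\cong D\cong\Z(p^\infty)$, hence $Q\cong\Z_p$, and the isomorphism $\widehat{N'}\to D=\widehat Q$ dualizes to the desired isomorphism $q|_Q\colon Q\to N'$. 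The main obstacle is exactly this step: one must lift a $\Z_p$-quotient of $M$ to a closed \emph{$p$-monothetic} subgroup of $M$ on which $q$ remains injective with infinite image, and a naïve lift such as $\overline{\langle x\rangle}$ for a point $x$ mapping onto a generator of $N'$ need not be $p$-monothetic; it is the divisibility of $\Z(p^\infty)$ (equivalently, the projectivity of $\Z_p$ among compact Abelian groups) that makes the clean lift possible. Everything outside this step is routine bookkeeping with preimages and closed subgroups.
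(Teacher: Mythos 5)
Your proof is correct, and both parts reach the conclusion by the same overall route as the paper, but the key lifting step in (b) is handled by a genuinely different mechanism. In (a) your case split on whether $\ker q$ meets the given copy of $\Z_p$ is exactly the paper's dichotomy between ``$f(L)$ finite'' and ``$f(L)\cong L$'', so there is nothing to compare there. In (b) the skeleton is also the same --- intersect the given $N\cong\Z_p$ with $q(K)$ to get $N'\cong\Z_p$, lift $N'$ to a closed copy of $\Z_p$ in $K$ on which $q$ is injective, and apply weak essentiality of $G$ to that lift --- but the justification of the lift differs. The paper invokes the identity $q(td_p(K))=td_p(q(K))$ (cited from the literature), lifts a topological generator of $N'$ to an element $y\in td_p(K)$, and observes that the closed subgroup generated by $y$ is forced to be a copy of $\Z_p$ mapping isomorphically onto $N'$ (any nontrivial kernel would make the image finite). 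You instead prove outright that the compact surjection $q^{-1}(N')\to N'$ splits, by dualizing and using that the divisible group $\Z(p^\infty)\cong\widehat{\Z_p}$ is a direct summand of any discrete Abelian group containing it. The two devices encode the same underlying fact (projectivity of $\Z_p$ among compact Abelian groups, equivalently surjectivity of $td_p$ under quotient maps); your version is self-contained at the cost of a slightly longer duality computation, and your remark that a naive lift $\overline{\langle x\rangle}$ of a generator need not be $p$-monothetic correctly identifies why some such device is unavoidable.
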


\begin{proof} (a) If $L$ be closed infinite $p$-monothetic subgroup of $K$, then its image $f(L)$ is either finite and isomorphic to $\Z(p^n)$
for some $n\in \N$ (since all non-trivial closed subgroups of $L$ have the form $p^nL$), or isomorphic to $L$. 
In the former case $p^nL = \ker f \cap L$ is contained in $H_1$, so $L \cap H_1 \ne \{0\}$.  
In the latter case  $f(L) \cong L$ is a  closed infinite $p$-monothetic subgroup of $K_1$, so $f(L) \cap N \ne \{0\}$.
 Pick a non zero element $f(x) \in f(L) \cap N$, with $x\in L$. 
 Then obviously $x \in H_1$ as well witnessing again $L \cap H_1 \ne \{0\}$. 

(b) 
%
 Let $N$ be  closed infinite $p$-monothetic subgroup of $K_1$. 
By the weak essentiality of $q(K)$ in $K_1$, $N_1:= N \cap q(K)\ne \{0\}$ is a closed $p$-monothetic subgroup of $q(K)$, so $N_1 \leq td_p(q(K))$. Since $q: K\to q(K)$ is surjective, $td_p(K) = td_p(q(K))$ (\cite{DLLM,DPS}). Pick a topological generator $x$ of $N_1$, so $x\in td_p(q(K))$. Then there exists $y\in td_p(K)$ with $q(y) = x$. Then for closed subgroup $N_2$ of $K$ generated by $y$, $q\restriction_{N_2}:N_2 \to N_1$ is an isomorphism. Since $N_2 \cap G\ne \{0\}$, it follows that $N_1\cap q(G)\ne \{0\}$ as $q\restriction_{N_1}$ is an isomorphism.
%
\end{proof} 


 Now we show that also local essentiality is preserved under inverse images: 

\begin{lemma}\label{Claim1} If $f: K \to L$ is a continuous surjective homomorphism of compact Abelian groups and $G$ is a (dense) locally essential subgroup of $L$, then $G_1:= f^{-1}(G)$ is a  (dense) locally essential subgroup of $K$.
\end{lemma}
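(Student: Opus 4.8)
The plan is to dispose of the density clause separately (it is almost immediate) and then to establish local essentiality by choosing the obvious candidate neighbourhood $W := f^{-1}(V)$ in $K$ and splitting into two cases according to whether a given closed subgroup of $K$ meets $\ker f$.

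First I would record that $f$, being a continuous surjective homomorphism of compact groups, is open: the induced continuous bijection $K/\ker f \to L$ from the compact group $K/\ker f$ onto the Hausdorff group $L$ is a topological isomorphism, so $f$ factors as the (open) quotient map followed by a homeomorphism. Hence, if $G$ is dense in $L$, then $G_1 = f^{-1}(G)$ is dense in $K$: $f$ carries any non-empty open subset $U$ of $K$ onto a non-empty open subset of $L$, which meets $G$, and any point of $U$ mapping into $G$ lies in $G_1 \cap U$.

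Next I would fix a neighbourhood $V$ of $0$ in $L$ witnessing local essentiality of $G$ in $L$, set $W := f^{-1}(V)$ (a neighbourhood of $0$ in $K$), and aim to show that $W$ witnesses local essentiality of $G_1$ in $K$. Given a non-trivial closed subgroup $N \subseteq W$ of $K$, there are two cases. If $N \cap \ker f \neq \{0\}$, then, since $\ker f \subseteq f^{-1}(G) = G_1$ (as $0 \in G$), already $N \cap G_1 \supseteq N \cap \ker f \neq \{0\}$. If instead $N \cap \ker f = \{0\}$, then $f\restriction_N$ is an injective continuous homomorphism from the compact group $N$ onto its image, hence a topological isomorphism onto the non-trivial closed subgroup $f(N)$ of $L$; moreover $f(N) \subseteq f(f^{-1}(V)) = V$ by surjectivity of $f$. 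The choice of $V$ then yields a non-zero $y \in f(N) \cap G$, and its preimage $x \in N$ under $f\restriction_N$ is a non-zero element of $N \cap G_1$. Either way $N \cap G_1 \neq \{0\}$, as required.

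I do not expect a serious obstacle here; the only delicate point is the case $N \cap \ker f = \{0\}$, where one must invoke compactness of $N$ (and Hausdorffness of $L$) to conclude that $f\restriction_N$ is a topological isomorphism onto a \emph{closed} non-trivial subgroup of $L$ that is still contained in $V$ — once this is in place, the local essentiality of $G$ applies verbatim. Note also that openness of $f$ is only needed for the density clause, not for the local essentiality argument.
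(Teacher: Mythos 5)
Your proposal is correct and follows essentially the same route as the paper: both take $W=f^{-1}(V)$ and argue by cases, the only cosmetic difference being that you split on whether $N\cap\ker f$ is trivial while the paper splits on whether $f(N)$ is trivial (and the paper settles density via $G_1\supseteq\ker f$ rather than via openness of $f$). In fact your second case needs slightly less than you claim: one does not need $f\restriction_N$ to be a topological isomorphism, only that $f(N)$ is a non-trivial closed subgroup of $L$ contained in $V$, which follows from compactness of $N$ and injectivity of $f\restriction_N$.
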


\begin{proof} Let $V$ be a \nbd \ of 0 in $L$ witnessing local essentiality of $G$. Let us see that $U = f^{-1}(V)$ witnesses local essentiality of $G_1$ in $K$. Suppose that $N$ is a closed subgroup of $K$ contained in $U$. Then $f(N) \subseteq V$ is a closed subgroup $L$. If $f(N) = \{0\}$, then $N \leq \ker f \subseteq G_1$, so $N \cap G_1 \ne \{0\}$. If $f(N) \ne \{0\}$, there exists $0\ne f(x) \in f(N) \cap G$ with $x\in N$. Obviously, $x \in G_1 $ as well. Hence, again $N \cap G _1\ne \{0\}$.

 Concerning density, if $G$ is dense in $L$, then $G_1$ is dense in $K$ since $G_1 \geq \ker f$. 
\end{proof}

The following {\em (Total) Minimality Criterion} (item (i) due to Prodanov \cite{P1} and Stephenson, Jr. \cite{St} in the case of a compact group $G$), plays a vital role in the study of minimal groups (proofs can be found in \cite{P1,St,DPS}):

\begin{theorem}\label{Crit} {\rm \cite{B}} Let $G$ be a topological group and $H$ a dense subgroup of $G$. Then 
\begin{itemize}
  \item[(i)] {\rm \cite{ACDD2,B}}  $H$ is {(locally)}  minimal if and only if $G$ is minimal and $H$ is (locally) essential in $G$;
  \item[(ii)] {\rm \cite{DP2}}  $H$ is totally minimal if and only if $G$ is totally minimal and $H$ is totally dense in $G$.
\end{itemize}
\end{theorem}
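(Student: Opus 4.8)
The plan is to reduce everything to part (i), whose engine is a comparison of completions. Throughout, recall that a Hausdorff topological group $(G,\tau)$ is minimal precisely when it admits no strictly coarser Hausdorff group topology, and that, $H$ being dense in $G$, the two groups share one completion, $\widetilde H=\widetilde G$. The basic device: given a coarser Hausdorff group topology $\sigma$ on the underlying group of $H$, form $L_\sigma:=\widetilde{(H,\sigma)}$; the continuous composite $(H,\tau)\to(H,\sigma)\hookrightarrow L_\sigma$ extends, by functoriality of the completion, to a continuous homomorphism $j\colon\widetilde G\to L_\sigma$ with $j\restriction_H$ injective. Put $N_\sigma:=\ker j$, a closed normal subgroup of $\widetilde G$; then $N_\sigma\cap H=\{e\}$, so $N_\sigma\cap G$ is a closed normal subgroup of $G$ that meets $H$ trivially. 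All of (i) consists in playing $N_\sigma$ against essentiality on one side and minimality of $G$ on the other; (ii) is then bootstrapped from (i) by passing to suitable quotients, and the local versions are obtained as refinements.

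\emph{Part (i).} For the substantive implication, assume $G$ minimal and $H$ essential and dense in $G$, and let $\sigma\subseteq\tau$ be a Hausdorff group topology on $H$. Essentiality forces $N_\sigma\cap G=\{e\}$, so $j\restriction_G\colon G\to L_\sigma$ is a continuous injective homomorphism into a Hausdorff group; by minimality of $G$ it is a topological embedding, and restricting it to $H$ identifies $(H,\tau)$ with $(H,\sigma)$. Hence $\sigma=\tau$ and $H$ is minimal. Conversely, let $H$ be minimal and dense in $G$. Any Hausdorff $\sigma\subseteq\tau_G$ restricts to a coarser Hausdorff topology on $H$, which equals $\tau_H$ by minimality of $H$; thus $(H,\tau_H)$ is a common dense subgroup of $(G,\tau_G)$ and $(G,\sigma)$, their completions agree canonically, and the extension of $\mathrm{id}_G\colon(G,\tau_G)\to(G,\sigma)$ to completions, coinciding with this canonical isomorphism on $H$, is an isomorphism, so $\sigma=\tau_G$ and $G$ is minimal. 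If some nontrivial closed normal $N\trianglelefteq G$ met $H$ trivially, the quotient map $\pi\colon G\to G/N$ would restrict to a continuous injective homomorphism of $H$ onto the dense subgroup $\pi(H)$ of $G/N$; minimality of $H$ would make it a topological embedding, whence $\widetilde{G/N}=\widetilde{\pi(H)}=\widetilde G$ canonically, and then the extension $\widetilde\pi$ of $\pi$ (coinciding with this isomorphism on $H$) would be an isomorphism, forcing $N\subseteq\ker\widetilde\pi=\{e\}$ --- a contradiction. So $H$ is essential in $G$.

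\emph{Part (ii).} Total density of $H$ in $G$ implies density and essentiality (a dense subgroup of a nontrivial closed normal subgroup of $G$ is itself nontrivial). A Hausdorff quotient of $H$ has the form $H/M$ with $M$ closed normal in $H$; put $N:=\overline{M}^{\,G}$, which is closed and --- by continuity of conjugation together with $\overline H=G$ --- normal in $G$, with $H\cap N=M$ since $M$ is closed in $H$. As $HN=\pi^{-1}(\pi(H))$ is $\pi$-saturated for $\pi\colon G\to G/N$, the subspace topology on $\pi(H)=HN/N$ from $G/N$ equals its quotient topology from $HN$; the quotient topology on $H/M\cong HN/N$ from $H$ is a priori only finer, and the coincidence of the two --- which identifies $H/M$ with $\pi(H)$ topologically --- is exactly where total density enters, via a short density argument on basic neighbourhoods. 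Granting this, if $G$ is totally minimal and $H$ totally dense, then $G/N$ is minimal, $\pi(H)$ is essential in $G/N$ (again by total density), and part (i) yields $\pi(H)\cong H/M$ minimal; hence $H$ is totally minimal. The converse runs on the same circle of ideas: $H$ minimal forces (via (i)) $G$ minimal and $H$ essential; for closed normal $N\trianglelefteq G$, $\pi(H)\cong H/(H\cap N)$ is a Hausdorff quotient of $H$, hence minimal, so its a priori coarser subspace topology in $G/N$ must coincide with it, and (i) then gives $G/N$ minimal --- so $G$ is totally minimal; running the argument with $M:=\overline{H\cap N}^{\,G}$ in place of $N$ shows $\pi'(H)$ essential in $G/M$ (for $\pi'\colon G\to G/M$), so the nontrivial closed normal subgroup $N/M$, which $\pi'(H)$ would meet trivially when $M\subsetneq N$, cannot occur; hence $H\cap N$ is dense in $N$, and $H$ is totally dense.

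The local versions of (i) follow the same lines, with the sole extra ingredient being the bookkeeping of witnessing neighbourhoods: local minimality of $G$ is witnessed by a neighbourhood $V$ of $e$ and gives openness only for comparison maps that are homeomorphisms on $V$, while local essentiality of $H$ concerns only the closed normal subgroups of $G$ contained in a fixed neighbourhood $W$; the neighbourhood witnessing local minimality of $H$ is assembled from $V$ and $W$, and the point is that, for an admissible $\sigma$, the kernel $N_\sigma$ is --- up to this choice of neighbourhoods --- small enough that $N_\sigma\cap G$ is caught inside $W$, so that local essentiality alone forces $N_\sigma\cap G=\{e\}$. I expect the main obstacles to be exactly this neighbourhood bookkeeping in the local case and the topology-matching step $H/M\cong HN/N$ in (ii) --- the single point where genuine total density (not merely density plus essentiality) is used; the remainder is a routine, if lengthy, chase through completions, saturated subgroups, and uniqueness of continuous extensions across dense subgroups.
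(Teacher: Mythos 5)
The paper does not actually prove Theorem \ref{Crit}: it is quoted from the literature (Prodanov, Stephenson, Banaschewski, Dikranjan--Prodanov, Au\ss enhofer--Chasco--Dikranjan--Dom\'inguez), with the remark that ``proofs can be found in \cite{P1,St,DPS}''. So there is no in-paper argument to compare against; what you have written is a reconstruction of the classical proof, and in its global parts it is sound. Your treatment of (i) without the parentheticals is the standard completion argument: the kernel $N_\sigma=\ker j$ gives a closed normal subgroup $N_\sigma\cap G$ of $G$ avoiding $H$, essentiality kills it, minimality of $G$ upgrades $j\restriction_G$ to an embedding, and the converse runs through uniqueness of continuous extensions over dense subgroups. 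Your part (ii) is also correct, and you rightly isolate the one step where genuine total density (not density plus essentiality) enters, namely that $H/(H\cap N)\to HN/N\leq G/N$ is open; the ``short density argument'' you allude to (given $h=vn\in VN\cap H$, use density of $h(H\cap N)$ in $hN$ to replace $n$ by an element of $H\cap N$ landing $h$ inside the prescribed open set) does work. One editorial point: as printed, item (i) says ``$G$ is minimal'' even in the local reading, which cannot be right ($\R$ is locally minimal and dense in itself but not minimal); the intended statement, and the one the paper uses, e.g.\ in Proposition \ref{Prop:lin}, is ``$G$ is (locally) minimal'', and your sketch implicitly proves that version.

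The one place where your proposal is genuinely incomplete is the local half of (i) -- which happens to be the part of the theorem this paper leans on most heavily. You reduce it to ``neighbourhood bookkeeping'', but two verifications there are real and are nowhere indicated. First, to trap $N_\sigma\cap G$ inside the local-essentiality witness $W$ you must use that $U\cap H$ is a $\sigma$-neighbourhood of $e$ to produce an open $\widetilde O\subseteq L_\sigma$ with $j^{-1}(\widetilde O)\cap H\subseteq W\cap H$, and then invoke the fact that an open subset of $\widetilde G$ whose trace on the dense subgroup $H$ lies in $W\cap H$ is contained in $\overline{W\cap H}$; this forces a preliminary shrinking of $W$ so that closed normal subgroups inside $\overline{W_1}$ are still caught by local essentiality. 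Second, and more importantly, to apply local minimality of $G$ to $j\restriction_G$ you must check that $j(V)$ is a neighbourhood of $e$ in $j(G)$ -- not merely in $H$ -- and since $j(G)$ may properly contain $H$, this requires another density/closure argument of the same flavour. Finally, the converse direction in the local case (that local minimality of a dense $H$ forces local minimality of $G$ and local essentiality of $H$, the latter via quotients $G\to G/N_W$ for small $N_W$ and the compatibility $VW\cap H\subseteq U_0$ of witnesses) is not addressed at all. None of these steps fails, but as written they are announcements rather than proofs, so you should either carry them out or cite \cite{ACDD2} for the local statement as the paper does.
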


If $G$ is a (locally) minimal Abelian group, then a in (i) density is not necessary in order to conclude that a subgroup $H$ of $G$ is (locally) minimal whenever it is (locally) essential in $G$. (Argue with the (locally) minimal closed subgroup $\overline{H}$ of $G$ in place of $G$ using the fact that (local) essentiality of $H$ in $G$ implies 
(local) essentiality of $H$ in the (locally) minimal group $\overline{H}$.) 

 By Lemma \ref{Neeew}, we have

\begin{theorem}\label{Crit1}{\rm \cite{DPS}} Let $G$ be a compact Abelian group and $H$ a dense subgroup of $G$. Then 
\begin{itemize} 
    \item[(i)] $H$ is (locally) minimal if and only if $H$ is weakly essential in $G$ and  
    (there exists a \nbd \ $V$ of $e$ such that)  $H$ contains all subgroups of $G$ isomorphic to $\Z(p)$ (and contained in $V$); 
    \item[(ii)] $H$ is totally minimal if and only if $H$ contains all finite subgroup of $G$ and $G\cap N\nsubseteq pN$ for all subgroup $N\cong \Z_p$ of $G$,
\end{itemize}
where $p$ runs over all prime numbers.
\end{theorem}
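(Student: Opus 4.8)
The plan is to reduce both equivalences, via the (Total) Minimality Criterion (Theorem~\ref{Crit}), to purely intrinsic descriptions of the essentiality conditions on the dense subgroup $H$ of the \emph{compact} group $G$ --- which is automatically minimal and, all its Hausdorff quotients being compact, totally minimal --- and then to unwind these using Lemma~\ref{Neeew} together with the elementary fact recorded inside the proof of Lemma~\ref{Neeew}(b): every non-trivial compact Abelian group has a closed subgroup topologically isomorphic to $\Z(p)$ or to $\Z_p$ for some prime $p$.

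For (i): by Theorem~\ref{Crit}(i), $H$ is (locally) minimal if and only if it is (locally) essential in $G$. In the global case Lemma~\ref{Neeew}(b) gives that this is equivalent to $H$ being weakly essential and $\Soc(G)\subseteq H$, and $\Soc(G)\subseteq H$ is precisely the assertion that $H$ contains every element of prime order, i.e. every subgroup of $G$ isomorphic to some $\Z(p)$. For the local case I would argue directly (this is essentially Lemma~\ref{Neeew}(c) specialised): if a \nbd\ $V$ of $e$ witnesses local essentiality and $S\cong\Z(p)$ with $S\subseteq V$, then $S\cap H\ne\{0\}$ forces $S\subseteq H$ by simplicity of $S$, while weak essentiality follows from Lemma~\ref{Neeew}(a); conversely, if $H$ is weakly essential and $V$ is a \nbd\ of $e$ containing in $H$ all copies of the groups $\Z(p)$ lying inside $V$, then $V$ witnesses local essentiality, for any non-trivial closed subgroup $N\subseteq V$ contains, by the fact above, a closed $S\cong\Z(p)$ --- then $S\subseteq H$ --- or a closed $S\cong\Z_p$ --- then $S\cap H\ne\{0\}$ by weak essentiality --- so $N\cap H\ne\{0\}$.

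For (ii): by Theorem~\ref{Crit}(ii), $H$ is totally minimal if and only if it is totally dense in $G$, so it remains to identify total density with the two displayed conditions. I would first note that those two conditions together say exactly that $H\cap L$ is dense in $L$ for every closed $p$-monothetic subgroup $L\leq G$ (every prime $p$): a finite such $L$ is some $\Z(p^n)$, where density means $L\subseteq H$, and ``$H$ contains all finite subgroups'' is the same as containing all finite $p$-monothetic ones since every finite Abelian group is a direct sum of cyclic $p$-groups; an infinite such $L$ is $\cong\Z_p$, all of whose proper closed subgroups lie in $pL$, so density there reads $H\cap L\nsubseteq pL$. One implication being trivial, the content is: if $H$ meets every closed $p$-monothetic subgroup densely, then $H$ is totally dense. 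Suppose not; then $M:=\overline{H\cap N}\subsetneq N$ for some closed $N\leq G$, and there is a non-trivial $\chi\in\widehat N$ vanishing on $M$. The key step is to prove a refinement of the Lemma~\ref{Neeew}(b) fact: for every non-zero $\chi\in\widehat N$ there is a closed $p$-monothetic $L\leq N$ with $\chi\restriction L\ne 0$. Granting this, $H\cap L\subseteq H\cap N\subseteq M\subseteq\ker\chi$, which contradicts the density of $H\cap L$ in $L$ together with $\chi\restriction L\ne 0$.

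The step I expect to require the most care is this refinement. Dualising, it amounts to the statement that every non-zero element $a$ of an Abelian group $A$ admits a surjection onto $\Z(p^n)$ or $\Z(p^\infty)$ (for some prime $p$) whose kernel misses $a$. Here the structure of divisible groups enters: one embeds $A$ into a divisible hull $D$, a direct sum of copies of $\Q$ and of quasi-cyclic groups $\Z(p^\infty)$, and projects onto a summand in whose direction $a$ has a non-zero component --- a $\Z(p^\infty)$-summand does it at once, and a $\Q$-summand after post-composing with a quotient $\Q\twoheadrightarrow\Q/p\Z_{(p)}\cong\Z(p^\infty)$ for a prime $p$ at which the resulting rational has $p$-adic value $0$. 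This is in essence the classical total-density criterion for compact Abelian groups (\cite{DPS}); once it is in hand, the rest is a formal combination of Theorem~\ref{Crit} and Lemma~\ref{Neeew}.
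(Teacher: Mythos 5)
Your proof is correct and follows the paper's own route: part (i) is precisely the paper's derivation from Theorem \ref{Crit}(i) together with Lemma \ref{Neeew}(b),(c) (the paper offers nothing beyond ``By Lemma \ref{Neeew}, we have''), and for part (ii) you supply the standard duality/divisible-hull argument for the total-density criterion that the paper delegates entirely to the citation \cite{DPS}. You also correctly read the condition ``$G\cap N\nsubseteq pN$'' in (ii) as ``$H\cap N\nsubseteq pN$'' (as literally printed it is vacuous), which is clearly the intended meaning.
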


\subsection{$\delta$-subgroups and co-NSS subgroup of the compact Abelian groups}\label{Sec:delta}\hfill

 We need to recall some properties of dimension for (locally) compact group.
All the fundamental dimension functions, dim, ind and Ind, coincide for these groups and for a closed subgroup $N$ of a  locally compact  group $G$ the equality 
$$
\dim G = \dim G/N + \dim N \eqno (\text{Y})
$$  
was proved by Yamanoshita \cite{Y}. In particular, $\dim G = \dim c(G)$ for a  locally compact  group $G$. Moreover, if $L\leq G$ is a closed subgroup, then $\dim G/L = \dim G/c(L) = \dim c(G) / c(L)$. Finite-dimensional connected locally compact groups are metrizable (in the Abelian case this follows from Pontryagin duality and the fact that finite-rank torsion-free Abelian groups are countable, in the non-Abelian case this follows from the fact that such a group $G$ is homeomorphic to $K \times \R^n$ for some compact connected subgroup $K$ of $G$ and some $n\in\N$, and the structure theory of connected compact groups). Following \cite{HM}, we write $\dim K:= w(K)$ when $\dim K = \infty$; in particular, $\dim K\leq \omega$ for all metrziable compact groups $K$. So, for any compact Abelian group $G$, we have  $\dim G=\dim c(G)=r(\widehat{G})$ \cite{DPS,HM}.

\medskip
 A topological group $G$ is said to be an {\em NSS group} (or briefly, {\em NSS}) if there exists a \nbd \ $U$ of the neutral element of $G$ containing only the trivial subgroup of $G$ (so, $G$ is NSS iff $\{e\}$ is locally essential in $G$).

\begin{definition}\label{Def:D}  Let $K$ be a compact group. A closed normal  subgroup $N$ of $K$ is called: 

(a) a {\em co-NSS  subgroup}, if $K/N$ is an NSS group.  

(b) {\rm \cite{DLLM} } a {\em $\delta$-subgroup} of $K$, if $K$ is Abelian, $N$ is totally disconnected and $K/N$ is a torus, i.e., isomorphic to some (possibly infinite) power  of $\T$.
\end{definition}

\begin{fact}\label{Fact0} 
 Let $K$ be a compact group. It is known that:

 (a) every \nbd \  of $e\in K$ contains a co-NSS subgroup (see \cite[Corollary 2.43]{HM} or  \cite[Proposition 11.5.4]{ADGB}). 
 
(b) if $K$ is Abelian, then it contains $\delta$-subgroups (see for example \cite{DPS,HM}), if additionally $\dim K< \infty$, all they are co-NSS subgroups; 
\end{fact}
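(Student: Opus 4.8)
The plan is to derive (a) from the classical approximation of compact groups by compact Lie groups together with the elementary fact that Lie groups have no small subgroups, and to derive (b) by transporting it, via Pontryagin duality, to a purely algebraic statement about the discrete dual $X=\widehat{K}$.

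For (a) I would argue as follows. By the Peter--Weyl theorem the kernels $\ker\rho$ of the finite-dimensional unitary representations $\rho$ of $K$ form a filter base of closed normal subgroups with trivial intersection; since $K$ is compact, for any neighbourhood $V$ of $e$ finitely many of them, say $\ker\rho_1,\dots,\ker\rho_k$, already satisfy $\bigcap_{i}\ker\rho_i\subseteq V$, and this intersection is the kernel of the single finite-dimensional representation $\rho_1\oplus\dots\oplus\rho_k$. Hence there is a closed normal subgroup $N\subseteq V$ with $K/N$ a compact Lie group. It then remains to recall that every Lie group is NSS (on a symmetric star-shaped neighbourhood of $0$ on which $\exp$ is injective, the image contains no non-trivial subgroup), so $K/N$ is NSS and $N$ is a co-NSS subgroup contained in $V$.

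For the first assertion of (b): under Pontryagin duality a closed subgroup $N\leq K$ is totally disconnected exactly when $\widehat{N}\cong X/N^{\perp}$ is torsion, while $K/N$ is a torus exactly when $\widehat{K/N}\cong N^{\perp}$ is free Abelian; so the $\delta$-subgroups of $K$ correspond bijectively to the free subgroups $A=N^{\perp}$ of $X$ with $X/A$ torsion. Such an $A$ exists: pick, by Zorn's lemma, a maximal linearly independent subset of $X$ and let $A$ be the subgroup it generates; then $A$ is free by the independence, and maximality forces every element of $X$ to have a non-zero multiple in $A$, i.e. $X/A$ is torsion. Taking $N=A^{\perp}$ (a closed subgroup, since annihilators always are) yields a $\delta$-subgroup, so $K$ has $\delta$-subgroups.

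For the last assertion of (b), assume $\dim K<\infty$, so that $X$ has finite free rank $n=r(X)=r(\widehat{K})=\dim K$. If $N$ is \emph{any} $\delta$-subgroup, then $A=N^{\perp}$ is free with $X/A$ torsion, whence $r(A)=r(X)-r(X/A)=n$; thus $A\cong\Z^{n}$ and $K/N\cong\widehat{\Z^{n}}\cong\T^{n}$, a finite-dimensional torus, hence a Lie group, hence NSS. Therefore $N$ is a co-NSS subgroup. The only non-elementary input in all of this is the approximation theorem invoked in (a) (that compact groups are pro-Lie), which is precisely the cited result; once it is granted, what is left is the standard implication ``Lie group $\Rightarrow$ NSS'' and routine duality bookkeeping, so I do not expect any genuine obstacle beyond keeping straight the duality dictionary (totally disconnected $\leftrightarrow$ torsion dual, torus $\leftrightarrow$ free dual) and the rank formula $\dim K=r(\widehat{K})$.
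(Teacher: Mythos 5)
Your proof is correct. Note that the paper itself offers no argument for this statement: it is labelled a ``Fact'' and disposed of entirely by citations ([HM, Corollary 2.43] or [ADGB, Proposition 11.5.4] for (a), and [DPS, HM] for (b)), so there is no ``paper proof'' to compare against; what you have written out is precisely the standard argument underlying those references. For (a), the Peter--Weyl reduction (kernels of finite-dimensional unitary representations form a filter base of closed normal subgroups with trivial intersection, compactness of $K\setminus V$ extracts a finite subfamily, and the intersection is the kernel of the direct sum, so the quotient is a compact Lie group, hence NSS) is exactly how the cited results are proved. For (b), the duality dictionary you use (totally disconnected $\leftrightarrow$ torsion dual, torus $\leftrightarrow$ free dual, $\dim K = r(\widehat K)$) is the same one the paper invokes elsewhere, e.g.\ in Fact \ref{Fact1}(a) and \S\ref{Sec:delta}. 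One small point of hygiene: when you ``pick a maximal linearly independent subset of $X$'', you must mean independence over $\Z$ in the strong sense ($\sum n_i x_i=0$ forces all $n_i=0$), equivalently a maximal independent family of elements of \emph{infinite order}; with the weaker notion of independence common in Abelian group theory ($\sum n_i x_i=0$ forces $n_i x_i=0$) the set could contain torsion elements and the subgroup it generates would not be free. Under the correct reading, maximality does give that $X/A$ is torsion and $A$ is free, $A^{\perp\perp}=A$ holds since $X$ is discrete, and the rest of your rank bookkeeping for the finite-dimensional case is accurate.
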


\begin{remark}\label{Laast:Rem} (a) For a compact not necessarily Abelian group $K$ a closed normal subgroup $N$ is co-NSS
if and only if there exists a \nbd \ $W$ of $e_K$ such that whenever $H$ is a subgroup of $K$ contained in $W$, then $H \leq N$.
To check the necessity, use the fact that  if $N$ is co-NSS, then $K/N$ is NSS, so $K/N$ has a neighbourhood $O$ of $e_{K/N}$ witnessing the NSS property.
Let  $W=V\cap  q^{-1}(O)$, where $q: K \to K/N$ is the canonical homomorphism. Then every closed subgroup of $K$ contained in $W$ is contained already in $N$. 



To check the sufficiency, assume that there exists a \nbd \ $W$ of $e$ in $K$ such that whenever $H$ is a subgroup of $K$ contained in $W$, then $H \leq N$.
By Fact \ref{Fact0}(a), there exists a co-NSS subgroup $M\leq K$ contained in $W$, $M\leq N$. Therefore, $N$ itself is co-NSS in $K$.

(b) From the above argument, we may always assume $N\subseteq W$ for the pair $N, W$ as in (a). 

(c) The pair $N, W$ as in (b) witnesses simultaneously  the local essentiality of (dense) subgroups $G$ of $K$ in the 
following sense: a subgroup $G$ is locally essential in $K$ w.r.t. $W$ iff $G_1= G\cap N$ is  locally essential in $N$  w.r.t. $N=W\cap N$
(i.e., $G_1$ is essential in $N$). In such a case $G_1$ is a closed minimal subgroup of $G$ (being an essential subgroup of $N$), but need not be dense in $H$. 
\end{remark}

The following fact collects properties of the $\delta$-subgroups that can be easily deduced from Pontyagin-van Kampen duality theorem.

\begin{fact}\label{Fact:Delta1} For every  compact Abelian group $K$ the following conditions are equivalent: 
\begin{itemize}
\item[(a)] $\dim K < \infty$;

\item[(b)] there exists a $\delta$-subgroup $N$ of $K$ such that $K/N\cong \T^n$, where $n:=\dim K$; 

\item[(c)]  some $\delta$-subgroup $N$ of $K$ is co-NSS;

\item[(d)]  every $\delta$-subgroup $N$ of $K$ is co-NSS;

\item[(e)] some $\delta$-subgroup $N$ of $K$ is weakly essential; 

\item[(f)] every $\delta$-subgroup $N$ of $K$ is weakly essential. 
\end{itemize}
\end{fact}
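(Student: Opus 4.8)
The plan is to prove Fact \ref{Fact:Delta1} by establishing a cycle of implications, using Pontryagin--van Kampen duality to translate everything into the language of the (discrete) dual group $X=\widehat K$. Under duality a $\delta$-subgroup $N$ of $K$ corresponds to a subgroup $D=N^\perp$ of $X$ that is free (since $K/N$ is a torus, $\widehat{K/N}=D$ is free abelian) and such that $X/D$ is torsion (since $N=D^\perp$ is totally disconnected iff $X/D$ is torsion). In other words, $D$ is a \emph{free subgroup of finite index in the divisible-free sense}, i.e.\ a maximal free subgroup whose quotient is torsion; equivalently $D$ is free with $X/D$ torsion, so $r(X)=r(D)=|D|$ as a free rank. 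Recall from the discussion just before the Fact that $\dim K=r(\widehat K)=r(X)$, so condition (a) says exactly $r(X)<\infty$. I would open with this dictionary, so that the rest becomes essentially a statement about abelian groups $X$ with $r(X)<\infty$.

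First I would prove (a)$\Rightarrow$(b): if $r(X)=n<\infty$, pick a maximal independent set $x_1,\dots,x_n$ in $X$; then $D=\langle x_1,\dots,x_n\rangle\cong\Z^n$ is free with $X/D$ torsion, so $N:=D^\perp$ is a $\delta$-subgroup with $K/N=\widehat D\cong\T^n$, and $n=r(X)=\dim K$. Then (b)$\Rightarrow$(c) is immediate: by Fact \ref{Fact0}(b) every $\delta$-subgroup of a finite-dimensional compact abelian group is co-NSS (alternatively, $K/N\cong\T^n$ is a Lie group, hence NSS). For (c)$\Rightarrow$(e) I would invoke Lemma \ref{Neeew}(a): a co-NSS subgroup $N$, being such that $\{e\}$ is locally essential in $K/N$, makes $N$ itself locally essential in $K$ (the pair $N,W$ of Remark \ref{Laast:Rem}(a) witnesses this), and local essentiality implies weak essentiality by Lemma \ref{Neeew}(a). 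The implication (e)$\Rightarrow$(a) is the crux and I treat it below. Finally the ``every'' versions: (a)$\Rightarrow$(d) is Fact \ref{Fact0}(b) again; (d)$\Rightarrow$(c) and (f)$\Rightarrow$(e) are trivial since $\delta$-subgroups exist (Fact \ref{Fact0}(b)); and (a)$\Rightarrow$(f) follows from (a)$\Rightarrow$(d)$\Rightarrow$(c)$\Rightarrow$(e)-type reasoning applied to an arbitrary $\delta$-subgroup, i.e.\ combine (a)$\Rightarrow$(d) with the already-proved (c)$\Rightarrow$(e) for each $N$. So the whole Fact reduces to the single nontrivial implication (e)$\Rightarrow$(a).

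For (e)$\Rightarrow$(a), suppose $N$ is a weakly essential $\delta$-subgroup of $K$ and, for contradiction, $\dim K=r(X)=\infty$. Dualize: $D=N^\perp$ is free, $X/D$ is torsion, and I want to contradict weak essentiality of $N$, i.e.\ produce an infinite closed $p$-monothetic subgroup $L$ of $K$ with $L\cap N=\{0\}$. A closed $p$-monothetic $L\cong\Z_p$ corresponds dually to a quotient $X\twoheadrightarrow\Z(p^\infty)$ (so that $\widehat L=\Z(p^\infty)$ and $L=\widehat{\Z(p^\infty)}=\Z_p$), and the condition $L\cap N=\{0\}$ translates to $N^\perp + L^\perp = X$, i.e.\ $D+L^\perp=X$, i.e.\ the composite $D\hookrightarrow X\twoheadrightarrow X/L^\perp=\Z(p^\infty)$ is surjective. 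So I must find, when $r(X)=\infty$, a surjection $\varphi\colon X\to\Z(p^\infty)$ whose restriction to the free subgroup $D$ is already surjective. Since $X/D$ is torsion and $D$ is free of infinite rank, write $D=\bigoplus_{i\in I}\Z d_i$ with $I$ infinite; the key point is that $\Z(p^\infty)$ is divisible, hence injective, so any homomorphism $D\to\Z(p^\infty)$ extends to $X\to\Z(p^\infty)$. Thus it suffices to define $\psi\colon D\to\Z(p^\infty)$ that is surjective — easy, e.g.\ pick countably many $d_{i_1},d_{i_2},\dots$ and send $d_{i_k}\mapsto 1/p^k+\Z$ — and extend to $\varphi\colon X\to\Z(p^\infty)$. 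Then $L:=\ker(\varphi)^\perp\cong\Z_p$ is an infinite closed $p$-monothetic subgroup of $K$ with $L\cap N=\{0\}$, contradicting weak essentiality. Hence $r(X)<\infty$, i.e.\ $\dim K<\infty$.

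The main obstacle is precisely this last step: one must be careful that the dual dictionary is set up correctly (weak essentiality of $N$ fails iff there is an infinite closed $p$-monothetic $L$ with trivial intersection, and this must be phrased so that the injectivity of $\Z(p^\infty)$ can be brought to bear), and that an infinite-rank free group genuinely admits a surjection onto $\Z(p^\infty)$ — which it does, but only because the rank is infinite; if $r(D)<\infty$ no such surjection exists, consistent with the Fact. A secondary subtlety is to double-check the annihilator computation $L\cap N=\{0\}\iff D+L^\perp=X$ in the compact/discrete duality (using that $(A\cap B)^\perp=\overline{A^\perp+B^\perp}$ and that for $L^\perp$ of finite corank the sum is already closed), and to make sure $L=\ker(\varphi)^\perp$ really is topologically isomorphic to $\Z_p$ (its dual is $X/\ker\varphi\cong\Z(p^\infty)$). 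Everything else is routine once the duality framework and Lemma \ref{Neeew}, Remark \ref{Laast:Rem}, Fact \ref{Fact0} are invoked.
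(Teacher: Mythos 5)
Your proposal is correct, and it follows exactly the route the paper indicates: the paper states this Fact without proof, remarking only that it is ``easily deduced from Pontryagin--van Kampen duality,'' and your duality dictionary ($\delta$-subgroups $\leftrightarrow$ free subgroups $D\leq X=\widehat K$ with $X/D$ torsion, plus $\dim K=r(X)$) together with the injectivity of $\Z(p^\infty)$ for the crux implication (e)$\Rightarrow$(a) is precisely that deduction. The implication cycle is complete and the annihilator computation $L\cap N=(\ker\varphi+D)^\perp=\{0\}$ is clean (closure issues are vacuous since $X$ is discrete), so nothing is missing.
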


 The next claim, using a very subtle property of the co-NSS subgroups, will be needed in the sequel. 

\begin{claim}\label{NewClaim*} Let $K$ be a compact Abelian group and  let $G$  be a subgroup of $K$. Then the following are equivalent:

\begin{itemize}
  \item[(a)] $G$  is  locally essential in $K$; 
  \item[(b)] $G \cap N$ is locally essential in $N$ for every co-NSS subgroup of $K$;
  \item[(c)] $G\cap N$ is locally essential in $N$ for some co-NSS subgroup of $K$.
\end{itemize}
 If $\dim K < \infty$, then $G$  is  locally essential in $K$ if and only if $G\cap N$ is locally essential in $N$ for some (every) $\delta$-subgroup of $K$.
\end{claim}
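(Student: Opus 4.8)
**Proof plan for Claim \ref{NewClaim*}.**

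The plan is to reduce everything to a single co-NSS subgroup via Remark \ref{Laast:Rem} and the transitivity of local essentiality through closed subgroups. First I would prove the equivalence of (a), (b), (c). For (a)$\Rightarrow$(b): suppose $V$ is a \nbd\ of $0$ in $K$ witnessing local essentiality of $G$, and let $N$ be any co-NSS subgroup of $K$. I claim $V\cap N$ witnesses local essentiality of $G\cap N$ in $N$: if $S\leq N$ is a non-trivial closed subgroup with $S\subseteq V\cap N\subseteq V$, then $S\cap G\ne\{0\}$ by the choice of $V$, and since $S\leq N$ this gives $S\cap(G\cap N)=S\cap G\ne\{0\}$. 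The implication (b)$\Rightarrow$(c) is trivial once we know co-NSS subgroups exist, which is Fact \ref{Fact0}(a). The real content is (c)$\Rightarrow$(a). Here I would use the sharp form of Remark \ref{Laast:Rem}(a)--(b): fix a co-NSS subgroup $N$ with $G\cap N$ locally essential in $N$, and choose a \nbd\ $W$ of $0$ in $K$ with $N\subseteq W$ such that every subgroup of $K$ contained in $W$ is already contained in $N$. Shrinking, I may also assume $W\cap N$ is contained in a \nbd\ $U$ of $0$ in $N$ witnessing local essentiality of $G\cap N$ in $N$ (replace $W$ by $W\cap(U\cup(K\setminus N))$-type adjustment; concretely pick $W'\subseteq W$ open with $W'\cap N\subseteq U$, still having the ``every subgroup in $W'$ lies in $N$'' property since that is inherited by smaller \nbd s). Then $W'$ witnesses local essentiality of $G$ in $K$: given a non-trivial closed subgroup $S\subseteq W'$, we get $S\subseteq N$, hence $S\subseteq W'\cap N\subseteq U$, so $S\cap(G\cap N)\ne\{0\}$, a fortiori $S\cap G\ne\{0\}$.

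For the final sentence, when $\dim K<\infty$, Fact \ref{Fact0}(b) (equivalently Fact \ref{Fact:Delta1}(a)$\Leftrightarrow$(c)$\Leftrightarrow$(d)) says that every $\delta$-subgroup of $K$ is co-NSS, and at least one $\delta$-subgroup exists by Fact \ref{Fact0}(b); so the ``some $\delta$-subgroup'' and ``every $\delta$-subgroup'' versions both follow immediately by specializing the equivalences (a)$\Leftrightarrow$(b)$\Leftrightarrow$(c) to $\delta$-subgroups, using that $\delta$-subgroups form a non-empty subfamily of the co-NSS subgroups in this case.

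The main obstacle I anticipate is the bookkeeping in (c)$\Rightarrow$(a): one must be careful that the single \nbd\ $W$ simultaneously (i) forces closed subgroups into $N$ and (ii) sits inside a local-essentiality witness for $G\cap N$ in $N$. This double role is exactly what Remark \ref{Laast:Rem}(c) is designed to supply (``the pair $N,W$ witnesses simultaneously the local essentiality...''), so I would lean on that remark rather than re-deriving it; the only genuinely new point is combining it with the \emph{given} local-essentiality witness $U$ inside $N$ by intersecting neighbourhoods. Everything else is routine: intersections of closed subgroups with $N$, and the observation that $S\subseteq N$ makes $S\cap G$ and $S\cap(G\cap N)$ coincide.
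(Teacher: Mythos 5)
Your proof is correct and follows essentially the same route as the paper's: (a)$\Rightarrow$(b) via restriction of the witnessing neighbourhood to the closed subgroup $N$, (c)$\Rightarrow$(a) via Remark \ref{Laast:Rem}, and the $\delta$-subgroup statement via Fact \ref{Fact:Delta1}. If anything, your treatment of (c)$\Rightarrow$(a) is more careful than the paper's, which passes directly from ``closed subgroups of $K$ contained in $W$ lie in $N$'' to ``hence they meet $G$ non-trivially'' without the shrinking step $W'\cap N\subseteq U$ that you make explicit.
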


\begin{proof} (a) $\Rightarrow$ (b) is obvious as co-NSS subgroups are closed, while  (b) $\Rightarrow$ (c) is trivial. 

To prove (c) $\Rightarrow$ (a) assume that $G\cap N$ is locally essential in $N$ for some co-NSS subgroup of $K$.
By Remark \ref{Laast:Rem} there exists a \nbd \ $W$ of $0$ containing $N$ such that the non-trivial 
closed subgroups of $K$ contained in $W$ are also contained in $N$; so non-trivially intersect $G$.
That is, $G$ is locally essential in $K$. 

The last assertion follows from Fact \ref{Fact:Delta1}. 
\end{proof}


We collect below some useful properties of the $\delta$-subgroups. 

\begin{fact}\label{Fact1}
  If $N$ is a $\delta$-subgroup of a  compact Abelian group $K$, then 
\begin{itemize} 
 \item[(a)]  $K/N \cong \T^\kappa$ and $\kappa$ does not depend on the choice of $N$; 
 \item[(b)] If $N_1$ is any closed totally disconnected subgroup of $K$, then $N+ N_1$ is still a $\delta$-subgroup of $K$; 
 \item[(c)] $N + c(K) = K$; 
 \item[(d)] if $K$ is a (necessarily) closed subgroup of a compact Abelian group $K_1$, then $N$ is a $\delta$-subgroup of $K_1$ if and only if $K_1/K$ is a torus; 
 \item[(e)] if $N_1$ is a $\delta$-subgroup of a  compact Abelian group $K_1$, then $N\times N_1$ is a $\delta$-subgroup of  $K\times K_1$; in particular, if $K_1$ is a totally disconnected  compact Abelian group, then $N\times K_1$ is a $\delta$-subgroup of $K\times K_1$;
 \item[(f)] $N \cap c(K)$ is a $\delta$-subgroup of $c(K)$; 
 \item[(g)] if $K$ is a finite-dimensional connected compact Abelian group, then:  
\begin{itemize}
  \item[(g$_1$)] \cite[Lemma 4.2.8]{DPS} $N \cong \prod_p (\Z_p^{k_p} \times F_p)$, where $k_p \in \N$ and $F_p$ is a finite $p$-group with $r_p(F_p) + k_p \leq n = \dim K$.
  \item[(g$_2$)] any two $\delta$-subgroups $H_1$ and $H_2$ of $K$ are commensurable, i.e.,  $H_1 \cap H_2$ has finite index in both $H_1$ and $H_2$;
  \item[(g$_3$)] $N$ is weakly essential in $K$ and $K/M$ is NSS for any $\delta$-subgroups $M$;
  \item[(g$_4$)] if $N_1\leq N$ is a closed subgroup  of $N$, then $N_1$ is a $\delta$-subgroup of $K$ iff $N_1$ is open in $N$. 
\end{itemize}
\end{itemize} 
\end{fact}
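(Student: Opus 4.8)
The plan is to push everything through Pontryagin--van Kampen duality and reduce to two elementary facts: a Hausdorff quotient of a torus is again a torus (equivalently, by duality, a subgroup of a free Abelian group is free), and an extension of a compact totally disconnected group by a compact totally disconnected group is totally disconnected (since a connected subgroup mapping onto a totally disconnected quotient must be trivial). The key dictionary entry is that a closed subgroup $N\le K$ is a $\delta$-subgroup precisely when $N^\perp$ is a free subgroup of the discrete group $\widehat K$ with $\widehat K/N^\perp\cong\widehat N$ torsion; such an $N^\perp$ then has rank $r(\widehat K)=\dim K$. This gives (a) at once, with $\kappa=r(\widehat K)$, and underlies all the remaining items.

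For (b)--(f) I would first record the recurring technical point that the algebraic sums occurring there are closed: the image of $N_1$ under $K\to K/N$ is compact, hence closed, so $N+N_1$ is closed, and likewise $N+c(K)$ is closed. Then (b) follows because $N+N_1$ is totally disconnected (an extension of the profinite group $N_1/(N\cap N_1)$ by $N$) while $K/(N+N_1)$ is a Hausdorff quotient of $K/N\cong\T^\kappa$, hence a torus. For (c) one observes that $K/(N+c(K))$ is simultaneously a quotient of the torus $K/N$ and of the totally disconnected group $K/c(K)$, hence trivial. For (d) the exact sequence $0\to K/N\to K_1/N\to K_1/K\to 0$ dualizes to an extension of $\widehat{K_1/K}$ by $\widehat{K/N}$; if $K_1/K$ is a torus both ends are free, so $\widehat{K_1/N}$ is free and $K_1/N$ is a torus, the converse being immediate. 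Item (e) is direct: $N\times N_1$ is totally disconnected and $(K\times K_1)/(N\times N_1)\cong\T^\kappa\times\T^{\kappa_1}$ is a torus; the ``in particular'' clause uses that a totally disconnected compact group is a $\delta$-subgroup of itself. Finally (f) is the second isomorphism theorem combined with (c): $c(K)/(N\cap c(K))\cong(N+c(K))/N=K/N\cong\T^\kappa$.

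For (g), with $K$ finite-dimensional connected and $n=\dim K$ (so $\widehat K$ is torsion-free of rank $n$), item (g$_1$) is quoted from \cite[Lemma 4.2.8]{DPS}, and (g$_3$) is exactly the implications (a)$\Rightarrow$(d) and (a)$\Rightarrow$(f) of Fact \ref{Fact:Delta1}. For (g$_2$) I would look at the image of $H_1$ under $K\to K/H_2\cong\T^n$: it is a compact totally disconnected subgroup of the Lie group $\T^n$, hence finite, so $[H_1:H_1\cap H_2]<\infty$, and exchanging $H_1,H_2$ gives commensurability. For (g$_4$), $N_1\le N$ is automatically totally disconnected, so ``$N_1$ is a $\delta$-subgroup of $K$'' is equivalent to ``$K/N_1$ is a torus'', i.e. ``$N_1^\perp$ is free of rank $n$''; since $N^\perp\le N_1^\perp$ with $N^\perp\cong\Z^n$, the quotient $N_1^\perp/N^\perp\cong\widehat{N/N_1}$ is finitely generated. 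If $N_1$ is open in $N$ then $N/N_1$ is finite, so $N_1^\perp$ is finitely generated and torsion-free --- here connectedness of $K$ is used --- hence free of rank $n$, and $N_1$ is a $\delta$-subgroup; conversely, if $N_1$ is a $\delta$-subgroup then $N_1^\perp\cong\Z^n$, so $N_1^\perp/N^\perp$ is finitely generated of rank $0$, hence finite, whence $N/N_1$ is finite and $N_1$ is open in $N$.

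I do not expect any deep obstacle; the proof is essentially bookkeeping with duality. The two points that need care are: in (b), (c), (f) one must legitimately close up the relevant sums of closed subgroups via compactness before applying the isomorphism theorems as topological isomorphisms; and in (g$_4$) one must genuinely invoke that $K$ is connected (equivalently, $\widehat K$ torsion-free) to rule out a torsion part in $N_1^\perp$ --- without connectedness, an open subgroup of a $\delta$-subgroup need not have torus quotient in $K$.
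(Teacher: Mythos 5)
Your proof is correct and follows essentially the same route as the paper: reduce everything to the two facts that quotients of tori are tori and that extensions of totally disconnected compact groups are totally disconnected, with the isomorphism theorems applied topologically via compactness. The only cosmetic differences are that in (c), (d) and (g$_4$) you argue on the dual (discrete) side where the paper argues directly with quotient maps of compact groups (e.g.\ the paper gets (c) from surjectivity of $c(K)\to K/N$ and (g$_4$) from the map $K/N_1\to K/N$ of connected groups), and your splitting argument in (d) in fact fills in a step the paper leaves implicit.
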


\begin{proof} 
(a) Indeed, $\kappa =r(\widehat K)$ by a standard property of Pontryagin-van Kampen dulaity \cite{DPS}. 

(b) First of all, $N_2:= N+ N_1$ is still a totally disconnected compact subgroup of $G$. Moreover, $G/N_2$ is isomorphic to a quotient of $G/N$.
To realize that $N_2$ is a $\delta$-subgroup of $K$ it is enough to recall that quotients of tori are again tori. 

(c) The restriction $q_1$ of $q: K \to K/N$ to $c(K)$ is surjective by the general fact that surjective continuous homomorphisms of compact groups take the connected component of the domain onto the connected component of the codomain. The surjectivity of $q_1:c(K)\to K/N$ means $q(c(K)) = K/N$ which is equivalent to $N+c(K)= K$. 

(d) This follows from the topological isomorphism $K_1/K\cong (K_1/N)/(K/N)$ immediately.

(e) Obvious. For the second assertion note that if $K_1$ is a totally disconnected  compact Abelian group, then $K_1$ is its only $\delta$-subgroup.
(More precisely, if $N$ is the only $\delta$-subgroup of $K$, then $K$ is totally disconnected and $N=K$.)

(f) Let $q: K \to K/N \cong \T^\kappa$ be the canonical homomorphism. Then the restriction of $q$ to $c(K)$ sende $c(K)$ onto the connected compact group $K/N$, which must be a torus.

(g$_1$) Use the fact that $\widehat K$ is a discrete group, isomorphic to a subgroup $X$ of $\Q^n$ such that $\Z^n \leq X$, so that 
$X/\Z^n \cong \bigoplus_p  (\Z(p^\infty)^{k_p} \times F_p)$, where $k_p \in \N$ and  $F_p$ is a finite $p$-group with $r_p(F_p) + k_p \leq n = \dim K$.

(g$_2$) Since $K/H_1$ is a finite-dimensional torus and $H_2H_1/H_1$ is a totally disconnected closed subgroup of $K/H_1$, $H_2/(H_1\cap H_2)\cong H_2H_1/H_1$ must be finite. By symmetry, $H_1/(H_1\cap H_2)$ is also finite.

(g$_3$) This is straightforward.

(g$_4$) The natural map $K/N_1 \to K/N$ is a quotient homomorphism between connected compact Abelian groups.  Since $\dim K < \infty$ and $K/N$ is a torus, it follows that $K/N_1$ is a torus if and only if the kernel $N/N_1$ is finite.\\
Finally, note that the sufficiency holds true also without the restraint $\dim K < \infty$.
\end{proof} 

 It follows from item (b) that if $N$ is a $\delta$-subgroup of a  compact Abelian group $K$, then every  totally disconnected closed subgroup $N_1$ of $G$ containing $N$ is still a  $\delta$-subgroup. In the opposite direction, if $N$  is an $\delta$-subgroup of $K$ and $N_1$ is an open subgroups of $N$, then $N_1$ need not be a $\delta$-subgroup of $K$ unless $K$ is connected, as in (g$_4$)  above (take $K = \T \times \Z_p$, $N=\{0\} \times \Z_p$ is a $\delta$-subgroup of $K$, but its open subgroup $N_1 = \{0\}\times p\Z_p$ is not a $\delta$-subgroup of $K$).  

\begin{remark} 
If $G$ is a critical locally minimal precompact Abelian group with completion $K$, then for every 
$\delta$-subgroup $N$ of $K$ the closed subgroup $N_1:= N \cap G$ of $G$ still has the property 
$G/N_1 \cong \T^\kappa$, but this isomorphism is not topological any more. Indeed, the 
open canonical homomorphism $K \to K/N \cong \T^\kappa$ need not be open when restricted to $G$ (this 
occurs precisely when $\overline{N_1}$ coincides with $N$. So to algebraic isomorphism $G/N_1 \to \T^n$
is continuous, but need not be open. The completion of $G/N_1$ must be $K/  \overline{N_1}$. 
\end{remark}

Recall first the following well-known fact (see \cite{DPS,HM}). 

\begin{fact}\label{torisplit}
Every torus in a compact Abelian group topologically splits.
\end{fact}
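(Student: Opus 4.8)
The plan is to deduce the statement from Pontryagin--van Kampen duality, reducing the topological splitting to the elementary algebraic fact that free Abelian groups are projective. Write $K=\widehat{X}$ for a discrete Abelian group $X$, and let $T$ be a subgroup of $K$ topologically isomorphic to $\T^\kappa$ for some cardinal $\kappa$; being compact, $T$ is automatically closed in $K$. Put $A:=T^{\perp}=\{\chi\in X:\chi\restriction_{T}=0\}\leq X$. The standard annihilator relations then give topological isomorphisms $T\cong\widehat{X/A}$ and $K/T\cong\widehat{A}$, the closed subgroups of $\widehat X$ corresponding bijectively (and order-reversingly) to the subgroups of $X$.

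Next I would note that $\widehat{X/A}\cong T\cong\T^\kappa=\widehat{\Z^{(\kappa)}}$, so dualizing once more and using reflexivity of the discrete group $X/A$ yields $X/A\cong\Z^{(\kappa)}$, a free Abelian group. Since free Abelian groups are projective, the short exact sequence $0\to A\to X\to X/A\to 0$ splits: there is a subgroup $B\leq X$ mapping isomorphically onto $X/A$, with $X=A\oplus B$ and $B\cong\Z^{(\kappa)}$.

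Finally, I would dualize the internal decomposition $X=A\oplus B$. The dual of an internal direct sum of discrete groups is an internal topological direct product of their compact duals, so $K=\widehat{X}=\widehat{A}\times\widehat{B}$ topologically; under this identification $\widehat{B}=A^{\perp}=T$ while $\widehat{A}=B^{\perp}$ is a closed subgroup of $K$. Hence $K=T\oplus B^{\perp}$ is a topological direct decomposition, i.e. $T$ is a topological direct summand of $K$. There is essentially no mathematical obstacle here: the only point requiring a little care is the bookkeeping with the annihilator correspondence and with the fact that duals of internal direct sums of discrete groups are internal topological direct products of compact groups, both of which are completely standard (see \cite{DPS,HM}).
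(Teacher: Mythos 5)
Your argument is correct and is precisely the standard duality proof of this fact (the one found in the sources \cite{DPS,HM} that the paper cites in lieu of a proof): pass to the annihilator $A=T^{\perp}$, use $\widehat{T}\cong X/A\cong\Z^{(\kappa)}$ and projectivity of free Abelian groups to split $X=A\oplus B$, and dualize back. The paper itself gives no proof, so there is nothing to compare beyond noting that your route is the expected one and all the steps (closedness of $T$, the double-annihilator identity $T^{\perp\perp}=T$, and the dualization of the internal direct sum) are handled correctly.
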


%
%

One can assign to a compact group $K$ three sets of primes as follows: 

\begin{definition} For a compact  Abelian group $K$ let 
$$
\pi(K)=\{p\in \P: (\exists N\leq K) N \cong \Z_p\}, \  \pi_{tor}(K)=\{p\in \mathbb{P}: K[p]\neq \{0\}\} \ \mbox{ and } \ \pi^*(K) = \pi(K) \cup \pi_{tor}(K).
$$
\end{definition}
Obviously, $\pi^*(K)=\{p\in \mathbb{P}: td_p(K)\neq \{0\}\}$. 


\begin{fact}\cite[Proposition 4.1.6]{DPS}\label{Conp*} 

(a) If $K$ is a non-trivial connected compact Abelian group, then $td_p(K)$ is dense in $K$, for every prime $p$.

(b) $\pi^*(K) = \P$ for every compact Abelian group $N$ with $c(K) \ne \{0\}$.

(c) From (b), if $\pi^*(K)\neq \P$ for a compact Abelian group $K$, then $K$ is totally disconnected.
\end{fact}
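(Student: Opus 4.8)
The plan is to prove (a) first and then obtain (b) and (c) as immediate consequences. For (a), fix a prime $p$ and a non-trivial connected compact Abelian group $K$, and write $X=\widehat K$ for its (discrete) dual; since $K$ is connected, $X$ is torsion-free. Recall that $td_p(K)$ is a subgroup of $K$, so $\overline{td_p(K)}$ is a closed subgroup, and by Pontryagin-van Kampen duality it suffices to show that no non-zero character $\chi_0\in X$ vanishes identically on $td_p(K)$. Fix such a $\chi_0$; torsion-freeness of $X$ gives $\langle\chi_0\rangle\cong\Z$.

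The crux is to produce from $\chi_0$ a non-trivial closed $p$-monothetic subgroup of $K$ on which $\chi_0$ does not vanish. Define $\phi_0\colon\langle\chi_0\rangle\to\Z(p^\infty)$ by $\chi_0\mapsto\frac{1}{p}+\Z\ne 0$; since $\Z(p^\infty)$ is divisible, hence injective in the category of Abelian groups, $\phi_0$ extends to a homomorphism $\phi\colon X\to\Z(p^\infty)$ with $\phi(\chi_0)\ne 0$ (and $\phi$ is automatically continuous, $X$ being discrete). Dualizing, and using $\widehat{\Z(p^\infty)}\cong\Z_p$ together with $\widehat{\widehat K}\cong K$, we obtain a continuous homomorphism $\psi\colon\Z_p\to K$ whose dual $\widehat\psi\colon X\to\widehat{\Z_p}\cong\Z(p^\infty)$ is again $\phi$. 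Its image $\psi(\Z_p)=\overline{\langle\psi(1)\rangle}$ is a closed subgroup of $K$ which, being a Hausdorff quotient of $\Z_p$, is either topologically isomorphic to $\Z_p$ or to a cyclic $p$-group; in either case $\psi(\Z_p)\subseteq td_p(K)$. Moreover $\chi_0\circ\psi=\widehat\psi(\chi_0)=\phi(\chi_0)\ne 0$ as a character of $\Z_p$, so $\chi_0(\psi(t))\ne 0$ for some $t\in\Z_p$, with $\psi(t)\in td_p(K)$. As $\chi_0$ was an arbitrary non-zero character, $\overline{td_p(K)}=K$, which is (a).

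Part (b) follows at once: if $c(K)\ne\{0\}$ then $c(K)$ is a non-trivial connected compact Abelian group, so by (a) the subgroup $td_p(c(K))$ is dense in $c(K)$, in particular non-zero, for every prime $p$; since $td_p(c(K))\subseteq td_p(K)$ we get $td_p(K)\ne\{0\}$, i.e.\ $p\in\pi^*(K)$, for every prime $p$, whence $\pi^*(K)=\P$. Part (c) is the contrapositive of (b), combined with the fact that for a compact group the triviality of $c(K)$ is equivalent to total disconnectedness. The only genuinely delicate point in this argument is the crux of (a): constructing a homomorphism from $X$ into $\Z(p^\infty)$ that is non-trivial on the prescribed $\chi_0$, which is exactly where the torsion-freeness of $\widehat K$ (making $\langle\chi_0\rangle$ free of infinite order) and the injectivity of $\Z(p^\infty)$ come in; once that homomorphism is available, the rest is a routine duality computation.
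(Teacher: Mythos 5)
The paper states this result as Fact~\ref{Conp*} with a citation to \cite[Proposition 4.1.6]{DPS} and supplies no proof of its own, so there is nothing internal to compare against; your argument for (a) --- using torsion-freeness of $\widehat K$ to define a homomorphism $\langle\chi_0\rangle\cong\Z\to\Z(p^\infty)$ non-trivial on $\chi_0$, extending it by injectivity of $\Z(p^\infty)$, and dualizing to produce a closed quotient of $\Z_p$ inside $td_p(K)$ on which $\chi_0$ does not vanish --- is correct and is essentially the standard duality proof of the cited proposition. Parts (b) and (c) then follow exactly as you state, using $td_p(c(K))\subseteq td_p(K)$ and the coincidence of hereditary and total disconnectedness for compact groups.
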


It is a folklore fact that $\pi(K)=\emptyset$ when $K$ is a Lie group (since a closed subgroup of a Lie groups is a Lie group itself). The compact groups $K$ with $\pi(K)=\emptyset$ were introduced in \cite{DSt} under the name {\em exotic Lie groups} in order to (positively) answer the question of Prodanov on whether 
the exotic Lie groups are finite-dimensional and were further  studied in \cite{D-density} and  \cite{DS_Lie}. In the Abelian case the exotic Lie groups
were introduced and thoroughly studied much earlier in \cite{DP2} under the name {\em exotic tori} (where they were shown to be the completions of the torsion
minimal Abelian groups). 

  \begin{theorem}\label{Thm:ex:tori}\cite{DP2}  For a compact Abelian group $K$ the following are equivalent: 
  \begin{itemize}
   \item[(a)] $K$ is an exotic torus $($i.e., $\pi(K) = \emptyset)$; 
   \item[(b)] $\dim K < \infty $ and every $\delta$-subgroup $N$ of $K$ is isomorphic to $\prod_p B_p$, where $B_p$ is a bounded compact $p$-group for every $p \in \P$; 
   \item[(c)] $\dim K < \infty $ and there exists a $\delta$-subgroup $N$ of $K$ as in (b); 
   \item[(d)] $t(K)$ is totally minimal; 
   \item[(e)] $t(K)$ is minimal. 
  \end{itemize}  
  \end{theorem}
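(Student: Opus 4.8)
The plan is to prove $(a)\Rightarrow(b)\Rightarrow(c)\Rightarrow(a)$ and, separately, $(a)\Leftrightarrow(d)\Leftrightarrow(e)$, relying on the dimension identity $\dim K=r(\widehat K)$ from \S\ref{Sec:delta}, the facts about $\delta$-subgroups collected there, the (Total) Minimality Criterion \ref{Crit}, its compact reformulation \ref{Crit1}, and Lemma \ref{Neeew}. The first thing I would do is reduce to the finite-dimensional case: if $r(\widehat K)\ge\omega$, then $\widehat K$ contains a free subgroup $F$ of countably infinite rank, $F$ maps onto $\Z(p^\infty)$ for any prime $p$, and — $\Z(p^\infty)$ being divisible, hence injective — this map extends to an epimorphism $\widehat K\twoheadrightarrow\Z(p^\infty)$; dualizing embeds $\Z_p=\widehat{\Z(p^\infty)}$ as a closed subgroup of $K$, so $p\in\pi(K)$. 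Thus $(a)$ already forces $\dim K<\infty$.

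Next, for $\dim K<\infty$ I would fix a $\delta$-subgroup $N$ of $K$ (Fact \ref{Fact0}(b)); it is co-NSS with $K/N\cong\T^n$, $n=\dim K$ (Fact \ref{Fact:Delta1}), and as a profinite group $N=\prod_p N_p$ splits into its pro-$p$ parts. The core of the argument is the equivalence
\[
\pi(K)=\emptyset\iff\text{every }N_p\text{ is bounded}.
\]
For ``$\Leftarrow$'': by Remark \ref{Laast:Rem}(a) there is a neighbourhood $W\supseteq N$ of $0$ such that every subgroup of $K$ lying in $W$ already lies in $N$; a closed copy of $\Z_p$ in $K$ would then have an open (hence isomorphic) subgroup inside $N$, in fact inside $N_p$, contradicting boundedness (a bounded discrete $p$-group has no quotient isomorphic to $\Z(p^\infty)$). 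For ``$\Rightarrow$'': if some $N_p$ is unbounded, then $\widehat{N_p}$ is an unbounded discrete abelian $p$-group, so one of its basic subgroups is an unbounded direct sum of finite cyclic $p$-groups and maps onto $\Z(p^\infty)$; extending this along injectivity of $\Z(p^\infty)$ and dualizing embeds $\Z_p$ into $N_p\le K$. Since ``$\pi(K)=\emptyset$'' makes no reference to $N$, the left-to-right direction of the displayed equivalence holds for \emph{every} $\delta$-subgroup, which is exactly $(a)\Rightarrow(b)$; then $(b)\Rightarrow(c)$ is trivial and $(c)\Rightarrow(a)$ is the ``$\Leftarrow$'' direction applied to the given $\delta$-subgroup.

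For $(a)\Leftrightarrow(d)\Leftrightarrow(e)$ the implication $(d)\Rightarrow(e)$ is immediate. For $(e)\Rightarrow(a)$ I would pass to the compact group $L:=\overline{t(K)}$: by Theorem \ref{Crit}(i), $t(K)=t(L)$ is essential in $L$, and since a torsion subgroup meets no copy of $\Z_q$, Lemma \ref{Neeew}(b) forces $\pi(L)=\emptyset$; as $t(K)$ is dense in $K$ (equivalently, $K$ is the compact completion of the torsion minimal group $t(K)$, as recalled before the statement) this gives $\pi(K)=\emptyset$. Conversely, for $(a)\Rightarrow(d)$ I would first check $t(K)$ is dense in $K$: from $(a)\Rightarrow(c)$ a $\delta$-subgroup is $N=\prod_pB_p$ with each $B_p$ bounded, hence torsion, so $t(N)=\bigoplus_pB_p$ is dense in $N$; moreover $\widehat{c(K)}$ is a finite-rank torsion-free group with no free-divisible (i.e.\ $\Q$-) direct summand — such a summand would again yield a closed $\Z_p\le c(K)\le K$ — hence $\widehat{c(K)}$ is reduced, and an annihilator computation (using $\bigcap_m mD=0$ for reduced finite-rank torsion-free $D$) shows $t(c(K))$ is dense in $c(K)$; combining with $N+c(K)=K$ (Fact \ref{Fact1}(c)) gives $\overline{t(K)}=K$. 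Finally, since $\pi(K)=\emptyset$ the weak-essentiality requirement in Lemma \ref{Neeew}(b) is vacuous while $\Soc(K)\le t(K)$ always holds, so $t(K)$ is essential and hence minimal by Theorem \ref{Crit}(i); as $t(K)$ contains every finite subgroup of $K$ and $K$ has no subgroup isomorphic to $\Z_p$, Theorem \ref{Crit1}(ii) upgrades this to total minimality, which is $(d)$.

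I expect the displayed equivalence to be the main obstacle: the delicate points are that a closed copy of $\Z_p$ inside $K$ is always detected inside the co-NSS $\delta$-subgroup $N$ (the Remark \ref{Laast:Rem}(a) argument) and, dually, that an unbounded pro-$p$ component genuinely contains $\Z_p$ (the basic-subgroup-plus-injectivity argument). The secondary subtlety is the density of $t(K)$ in $K$ needed to close the loop through $(d)$ and $(e)$, which rests on excluding a $\Q$-summand of $\widehat{c(K)}$ once $\dim K<\infty$ is known.
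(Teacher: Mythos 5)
The paper does not actually prove this theorem --- it is quoted from \cite{DP2} without proof --- so your proposal can only be judged on its own merits. Most of it is sound: the reduction to $\dim K<\infty$ via a free subgroup of $\widehat K$ of infinite rank mapping onto $\Z(p^\infty)$, the two directions of the equivalence ``$\pi(K)=\emptyset\iff$ every pro-$p$ component of a $\delta$-subgroup is bounded'' (the co-NSS trapping argument via Remark \ref{Laast:Rem}(a) in one direction, basic subgroups plus injectivity of $\Z(p^\infty)$ in the other), and the derivation of (d) from (a) --- including the density of $t(K)$ in $K$, which you correctly obtain from $K=N+c(K)$, the density of $\bigoplus_p B_p$ in $N$, and the fact that $\bigcap_m mD$ is divisible (hence trivial) for the reduced torsion-free group $D=\widehat{c(K)}$ --- are all correct.

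The genuine gap is in $(e)\Rightarrow(a)$. Your argument correctly shows that $L:=\overline{t(K)}$ satisfies $\pi(L)=\emptyset$, but to pass from $\pi(L)=\emptyset$ to $\pi(K)=\emptyset$ you invoke the density of $t(K)$ in $K$, citing the remark preceding the statement. That density is not a consequence of (e): you established it only as a consequence of (a), which is precisely what you are trying to prove here, so the step is circular. Moreover the gap cannot be repaired as the statement is literally transcribed: for $K=\Z_p$ the subgroup $t(K)=\{0\}$ is (totally) minimal, yet $\pi(K)=\{p\}\neq\emptyset$, so $(e)\Rightarrow(a)$ fails outright. The resolution is that in \cite{DP2} conditions (d) and (e) carry the additional hypothesis that $t(K)$ is dense in $K$ (equivalently, that $K$ is the completion of $t(K)$), which the present formulation silently drops; with that hypothesis restored, your chain $t(K)=t(L)$ essential in $L=K$, hence $\pi(K)=\emptyset$, closes correctly. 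You should either state (d),(e) with the density clause or flag the discrepancy explicitly rather than importing density from the $(a)\Rightarrow(d)$ direction.
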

  
    Clearly, one can add to these equivalent conditions also ``(f) the subgroup $\{0\}$ is weakly essential in $K$." From this observations and Lemma \ref{Rem:29} one obtains
  
  \begin{corollary}\label{Cor:13May} A  closed subgroup $N$  of a compact Abelian group $K$ is weakly essential if and only if $K/N$ is an exotic torus. In such a case  $\dim K/N < \infty$. \end{corollary}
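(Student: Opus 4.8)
The plan is to deduce Corollary \ref{Cor:13May} from Theorem \ref{Thm:ex:tori} together with the observation about condition ``(f)'' and Lemma \ref{Rem:29}, exactly as the text hints. First I would prove the equivalence ``$N$ is weakly essential in $K$ $\iff$ $\{0\}$ is weakly essential in $K/N$''. For this, let $q\colon K\to K/N$ be the canonical projection. For the forward direction, apply Lemma \ref{Rem:29}(b) with $K_1 = K/N$: here $q(K) = K/N$ is (trivially) weakly essential in $K_1$, so if $N\le G_0$ for a weakly essential $G_0$... no, more directly: take $G = K$ in Lemma \ref{Rem:29}(b) — $K$ is weakly essential in itself, so its image $q(K)=K/N$ is weakly essential in $K/N$, which is automatic. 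That is not enough, so instead I should run the argument by hand: if $N$ is weakly essential in $K$, and $M$ is an infinite closed $p$-monothetic subgroup of $K/N$, then $q^{-1}(M)$ contains (by Lemma \ref{Rem:29}(a), applied with $H$ the trivial\dots) — actually the cleanest route: $\{0\}$ being weakly essential in $K/N$ means $K/N$ has \emph{no} infinite closed $p$-monothetic subgroups for any $p$, i.e.\ $\pi^*(K/N)=\emptyset$...

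Let me restructure. The key reformulation is: \emph{$\{0\}$ is weakly essential in a compact Abelian group $L$ iff $L$ has no infinite closed $p$-monothetic subgroup for any prime $p$}. An infinite closed $p$-monothetic subgroup is topologically isomorphic to $\Z_p$, so this condition is exactly $\pi(L)=\emptyset$, i.e.\ $L$ is an exotic torus. So condition (f) of Theorem \ref{Thm:ex:tori} (``$\{0\}$ is weakly essential in $K$'') is literally the same as condition (a), and the text's parenthetical remark is justified. Now I would prove: \textbf{$N$ is weakly essential in $K$ iff $K/N$ is an exotic torus.} For the ``only if'' direction: suppose $K/N$ contains a closed subgroup $M\cong\Z_p$. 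By Lemma \ref{Rem:29}(a) applied to $q\colon K\to K/N$ and the weakly essential (indeed trivial-to-check, since $M$ itself is\dots) — here take $H = \{0\}$? No: Lemma \ref{Rem:29}(a) needs $H$ weakly essential in $K_1=K/N$. Instead I argue contrapositively and directly: if $K/N$ is not an exotic torus, pick $M\cong \Z_p$ in $K/N$; then $q^{-1}(M)$ is a closed subgroup of $K$ projecting onto $M$, and using that $td_p$ lifts along surjections (as in the proof of Lemma \ref{Rem:29}(b), $td_p(K) = td_p(K/N)$ under the surjection $q$), there is a closed subgroup $N_2\le K$ with $q\restriction_{N_2}\colon N_2\to M$ a topological isomorphism; so $N_2\cong\Z_p$ is an infinite closed $p$-monothetic subgroup of $K$ with $N_2\cap N = \{0\}$ (since $q$ is injective on $N_2$), witnessing that $N$ is not weakly essential.

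For the ``if'' direction: suppose $K/N$ is an exotic torus, and let $L$ be an infinite closed $p$-monothetic subgroup of $K$, so $L\cong\Z_p$. Its image $q(L)$ is a closed $p$-monothetic subgroup of $K/N$; since $\pi(K/N)=\emptyset$, $q(L)$ cannot be infinite, hence $q(L)$ is finite, so $q(L)\cong\Z(p^n)$ for some $n$, and therefore $\ker(q\restriction_L) = L\cap N = p^nL\ne\{0\}$ (all non-trivial closed subgroups of $L\cong\Z_p$ have this form). Thus $N\cap L\ne\{0\}$, proving $N$ is weakly essential in $K$. This is precisely the dichotomy already exploited in the proof of Lemma \ref{Rem:29}. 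Finally, $\dim K/N<\infty$ follows from the equivalence (a)$\Leftrightarrow$(b) in Theorem \ref{Thm:ex:tori}, which forces any exotic torus to be finite-dimensional.

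The proof is essentially bookkeeping: everything needed is in Theorem \ref{Thm:ex:tori} and the case analysis ``image of $\Z_p$ is either finite or again $\Z_p$'' that already appears twice in Lemma \ref{Rem:29}. The only mildly delicate point — and the one I would expect to need the most care in writing — is the lifting step in the ``only if'' direction: producing, from a copy of $\Z_p$ downstairs in $K/N$, a copy of $\Z_p$ upstairs in $K$ meeting $N$ trivially. This uses that a topological generator of $M\cong\Z_p$ lies in $td_p(K/N) = q(td_p(K))$, so it has a preimage $y\in td_p(K)$, and the closed subgroup generated by $y$ maps isomorphically (not merely surjectively) onto $M$ because it is itself $p$-monothetic and maps onto the infinite group $\Z_p$, forcing it to be infinite and hence $\cong\Z_p$ with $q$ injective on it. With that in hand, $\dim K/N < \infty$ is immediate and the corollary follows.
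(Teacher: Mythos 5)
Your proof is correct and follows the paper's intended route: the paper deduces the corollary from the observation that ``$\{0\}$ is weakly essential in $L$'' means precisely ``$L$ is an exotic torus'' together with Lemma \ref{Rem:29}, and your hand-rolled arguments are exactly the proofs of Lemma \ref{Rem:29}(a) (with $H=\{0\}$, which directly gives the ``if'' direction since $q^{-1}(\{0\})=N$) and of Lemma \ref{Rem:29}(b) (with $G=N$ and $K_1=K/N$, where $q(K)=K/N$ is trivially weakly essential in itself, so $q(N)=\{0\}$ is weakly essential in $K/N$, giving the ``only if'' direction) specialized to this situation. The only thing you missed is that these two direct substitutions make the corollary an immediate consequence of the lemma as stated, so the by-hand lifting of copies of $\Z_p$ is not needed.
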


\begin{definition}\label{hybridtori}
A compact Abelian group $K$ admitting a short exact sequence
$$
\{0\} \longrightarrow \prod_{k=1}^m \Z_{p_k} \longrightarrow K \longrightarrow \T^n \longrightarrow \{0\}
$$ 
is called a {\em hybrid torus}. When $n=0$, i.e., $K\cong \prod_{k=1}^m \Z_{p_k}$, we call $K$ a \emph{non-Archimedean torus}.
\end{definition}

Hybrid tori are finite-dimensional, but need not be connected. Clearly, for a hybrid torus $K$ the closed subgroup $N$, that is a non-Archimedean torus such that $K/N$ is a torus, is a $\delta$-subgroup (but not all $\delta$-subgroups of a hybrid torus are necessarily non-Archimedean tori). 

Obviously, the groups that are simultaneously exotic tori and hybrid tori are precisely the tori. 

 One can describe {\em internally} a connected hybrid torus by means of the set-theoretic invariant $\pi^*(-)$. 
 
\begin{proposition}\label{prop:ht} For a compact connected Abelian group $K$ of finite dimension, the following are equivalent: 
  
  \begin{itemize}
  \item[(a)] $K$ is a hybrid torus;

  \item[(b)] $\pi^*(N)$ is finite for some $\delta$-subgroup $N$ of $K$; 
    
  \item[(c)] $\pi^*(N)$ is finite for every $\delta$-subgroup $N$ of $K$.    
  \end{itemize}
\end{proposition}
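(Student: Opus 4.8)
The plan is to go around the triangle as $\mathrm{(a)}\Rightarrow\mathrm{(b)}\Rightarrow\mathrm{(a)}$ together with $\mathrm{(b)}\Leftrightarrow\mathrm{(c)}$, reading off $\pi^*$ of a $\delta$-subgroup from its structure and using that all $\delta$-subgroups of a finite-dimensional connected $K$ are commensurable. Throughout write $n:=\dim K=r(\widehat K)<\infty$; recall that $K$ admits $\delta$-subgroups (Fact~\ref{Fact0}(b)), that $K/N\cong\T^{n}$ for every $\delta$-subgroup $N$ (Fact~\ref{Fact:Delta1}, Fact~\ref{Fact1}(a)), and that, since $K$ is finite-dimensional and connected, $N\cong\prod_p(\Z_p^{k_p}\times F_p)$ with $k_p\in\N$ and $F_p$ a finite $p$-group, $r_p(F_p)+k_p\le n$ (Fact~\ref{Fact1}(g$_1$)). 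A preliminary observation, which I would record first, is that every closed $p$-monothetic subgroup of such an $N$ sits inside its $p$-th factor $\Z_p^{k_p}\times F_p$ (a continuous homomorphism from a pro-$p$ group to a pro-$q$ group with $q\neq p$ is trivial); consequently $\pi^*(N)=\{p\in\P:k_p>0\text{ or }F_p\neq\{0\}\}$, and in particular $\pi^*$ is monotone under passing to closed subgroups.

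For $\mathrm{(a)}\Rightarrow\mathrm{(b)}$ I would take a presentation $\{0\}\to N_0\to K\to\T^{n}\to\{0\}$ as in Definition~\ref{hybridtori}, with $N_0\cong\prod_{k=1}^m\Z_{p_k}$. Then $N_0$ is totally disconnected and $K/N_0$ is a torus, so $N_0$ is a $\delta$-subgroup; since $N_0$ is torsion-free, $\pi_{tor}(N_0)=\emptyset$ and $\pi(N_0)=\{p_1,\dots,p_m\}$, so $\pi^*(N_0)$ is finite. For the converse $\mathrm{(b)}\Rightarrow\mathrm{(a)}$, start from a $\delta$-subgroup $N$ with $S:=\pi^*(N)$ finite, so $N\cong\prod_{p\in S}(\Z_p^{k_p}\times F_p)$, and let $N'\le N$ be the closed subgroup obtained by discarding the finite direct factors $F_p$, i.e.\ $N'\cong\prod_{p\in S}\Z_p^{k_p}$. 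Then $N/N'\cong\prod_{p\in S}F_p$ is finite, hence $N'$ is open in $N$; as $K$ is connected and finite-dimensional, Fact~\ref{Fact1}(g$_4$) makes $N'$ again a $\delta$-subgroup of $K$, so $K/N'\cong\T^{n}$. Since $N'$ is torsion-free and a finite product of copies of various $\Z_p$, listing these primes with multiplicity yields $N'\cong\prod_{k=1}^m\Z_{p_k}$, and $\{0\}\to N'\to K\to\T^{n}\to\{0\}$ exhibits $K$ as a hybrid torus. (It is worth flagging that connectedness is genuinely used here, through Fact~\ref{Fact1}(g$_4$): in general an open subgroup of a $\delta$-subgroup need not be a $\delta$-subgroup.)

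Finally, $\mathrm{(c)}\Rightarrow\mathrm{(b)}$ is immediate because $K$ has $\delta$-subgroups, and for $\mathrm{(b)}\Rightarrow\mathrm{(c)}$ I would invoke that any two $\delta$-subgroups $N_1,N_2$ of $K$ are commensurable (Fact~\ref{Fact1}(g$_2$)) and that $\pi^*$ of commensurable closed subgroups of a compact Abelian group agree up to finitely many primes: replacing $N_i$ by $N_1\cap N_2$ gives $\pi^*(N_1\cap N_2)\subseteq\pi^*(N_i)$ by monotonicity, while $\pi^*(N_i)\subseteq\pi^*(N_1\cap N_2)\cup\pi\big(N_i/(N_1\cap N_2)\big)$ with the last set finite — so one of $\pi^*(N_1),\pi^*(N_2)$ is finite iff the other is. The bulk of the argument is just assembling Facts~\ref{Fact0}, \ref{Fact:Delta1} and~\ref{Fact1}; the only two places needing a little care are the ``no mixing of Sylow components'' remark that pins down $\pi^*(N)$, and the elementary bookkeeping that $\pi^*$ is stable under commensurability up to a finite set of primes. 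An alternative route for $\mathrm{(b)}\Leftrightarrow\mathrm{(c)}$ dualizes everything: $\delta$-subgroups correspond to the full-rank free subgroups $A\cong\Z^{n}$ of $\widehat K$, with $\pi^*(N)$ equal to the set of primes occurring in the torsion group $\widehat K/A$, and any two such $A$ are commensurable inside $\widehat K$.
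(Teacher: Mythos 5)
Your proof is correct and follows essentially the same route as the paper: (a)$\Rightarrow$(b) via the non-Archimedean torus witnessing the hybrid structure, (b)$\Leftrightarrow$(c) via commensurability of $\delta$-subgroups (Fact~\ref{Fact1}(g$_2$)), and the converse by splitting off the finite factors from the decomposition of Fact~\ref{Fact1}(g$_1$) — the paper merely runs the last step as (c)$\Rightarrow$(a) and justifies $K/N'\cong\T^n$ by the Lie-group/local-isomorphism argument rather than by citing Fact~\ref{Fact1}(g$_4$), which amounts to the same thing. The only blemish is notational: in the commensurability bookkeeping you should write $\pi_{tor}\bigl(N_i/(N_1\cap N_2)\bigr)$ (or the set of primes dividing the index) rather than $\pi$ of that finite quotient, since $\pi(F)=\emptyset$ for finite $F$ under the paper's conventions.
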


\begin{proof} 

(a) $\Rightarrow $ (b) If $N$ is a non-Archimedean torus in $K$  witnessing that $K$ is a hybrid torus, then $N$ is a $\delta$-subgroup of $K$ and obviously $\pi^*(N)$ is finite. 

(b) $\Rightarrow $ (c)  Since $K$ is finite-dimensional, for any two $\delta$-subgroups $H_1$ and $H_2$ of $K$ the subgroup $H_1 \cap H_2$ has finite index in both $H_1$ and $H_2$, by Fact \ref{Fact1}(g$_2$). Hence, if $\pi^*(N)$ is finite for some $\delta$-subgroup $N$ of $K$, then $\pi^*(H)$ is finite for every $\delta$-subgroup $H$ of $K$. 

(c) $\Rightarrow $ (a) Let $N$ be a $\delta$-subgroup of $K$. Since $\pi^*(N)$ is finite, it follows from Fact~\ref{Fact1}(g$_1$) that
$$
N \cong \left(\prod_{p \in \pi^*(N)} \Z_p^{k_p}	\right) \times F,
$$
where $0 \leq k_p \leq n := \dim K$ for each $p$, and $F$ is a finite group. Let $N_1 \leq N$ be the closed subgroup given by $N_1 \cong \prod_{p \in \pi^*(N)} \Z_p^{k_p}.$
Then $N_1$ is a non-Archimedean torus and $N=N_1 \times F$.
Hence the connected group $K/N_1$ has a finite subgroup $N/N_1 \cong F$, such that $(K/N_1)/(N/N_1) \cong K/N \cong\T^n$. 
Therefore, $K/N_1$, being locally isomorphic to $\T^n$ is a connected compact Abelian Lie group of dimension $n$, hence $K/N_1 \cong \T^n$.
 Therefore, $K$ is a hybrid torus.
\end{proof} 

\begin{remark}\label{New:Rem}
Using the above description of connected hybrid tori, it is easy to see that the quotient of a connected hybrid torus
is a again a (connected) hybrid torus. This is no more true without the assumption of  connectedness (e.g., $\Z_p$ is a non-Archimedean
torus, hence a  hybrid torus as well, yet its quotient $\Z_p/p\Z_p\cong \Z(p)$ is not a hybrid torus). 
\end{remark}

\subsection{Hereditarily disconnected quotients of (locally) minimal groups}\label{Special:quot}
\hfill

We recall here three remarkable (functorial) subgroups attached to every  topological group $G$, measuring connectedness properties: 

\begin{itemize}
  \item  $c(G)$ denotes the connected component of the identity;
  \item  $q(G)$ denotes the {quasi-component} of the identity, that is, the intersection of all clopen sets containing the identity;
  \item  $o(G)$ denotes the  intersection of all open subgroups of $G$.
\end{itemize}

Clearly, $c(G) \subseteq  q(G) \subseteq o(G)$, and all three subgroups are normal and coincide for locally compact groups.  Moreover,  $o(G) = G \cap c(\widetilde G)$ for a precompact group $G$, as $c(\widetilde G) = q(\widetilde G) = o(\widetilde G)$. If $G$ is pseudocompact, then $q(G) = o(G) = G \cap c(\widetilde G)$ \cite{D-CR,Ddim}. 
Therefore, if $c(G) = c(\widetilde G)$ for a precompact group $G$, then these coincide also with $q(G)=o(G)$ as well. 

 The next more subtle fact follows from the main results of \cite{Ddim} (see there Corollary  1.3 and Theorem 1.7; see also \cite{D-CR}): 

\begin{fact}\label{Fact:June6} For a \cc, \mi\ Abelian group $G$ the quotient $G/c(G)$ is minimal and zero-dimensional and $c(G)$ is dense in $c(\widetilde G)$. 
\end{fact}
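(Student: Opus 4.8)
The plan is to reduce the statement to the dimension theory of pseudocompact groups. Since $G$ is minimal Abelian, the Prodanov--Stoyanov theorem makes it precompact; write $K:=\widetilde G$ for its (compact Abelian) completion, so that by the Minimality Criterion (Theorem~\ref{Crit}(i)) $G$ is a dense \emph{essential} subgroup of $K$. Essentiality yields in particular $\Soc(K)\le G$ and that $G$ meets every closed subgroup of $K$ isomorphic to $\Z_p$. Being countably compact, $G$ is pseudocompact, hence $K=\beta G$ by the Comfort--Ross theorem, and the properties recalled above the statement give $q(G)=o(G)=G\cap c(K)$; moreover $o(G)$ is closed in $G$ (as $c(K)$ is closed in $K$), hence countably compact, and essential in $c(K)$ (a non-trivial closed subgroup of $c(K)$ is a non-trivial closed subgroup of $K$, so it meets $G$, necessarily inside $o(G)$).

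The key point — and the one I would take as a black box — is that $c(G)$ is dense in $c(K)$. This is where pseudocompactness is genuinely used, and it is the main obstacle of the whole statement: a dense, even essential, subgroup can be much more disconnected than its completion, so nothing soft replaces this input. I would quote it from the dimension theory of pseudocompact groups: by the main results of \cite{Ddim} (Corollary~1.3 and Theorem~1.7) together with \cite{D-CR}, $\dim G=\dim\widetilde G$ and, more precisely, $\overline{c(G)}=c(\widetilde G)$. Granting this, since $c(G)\subseteq o(G)\subseteq c(K)=\overline{c(G)}$, the subgroup $c(G)$ is dense in $o(G)$; thus $o(G)$ contains a dense connected subgroup and is therefore connected, whence $o(G)=c(o(G))=c(G)$. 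In other words $c(G)=o(G)=G\cap c(K)$ is \emph{closed} in $G$, so $G/c(G)$ is a Hausdorff group.

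Next I would identify the completion of the quotient. Since $G$ is precompact and $c(G)$ is closed in $G$, the standard behaviour of completions under quotients by closed subgroups gives that $G/c(G)$ is precompact with $\widetilde{G/c(G)}=K/\overline{c(G)}=K/c(K)$, which is compact and zero-dimensional. In particular $G/c(G)$, sitting in $K/c(K)$ densely as $G\,c(K)/c(K)$, is (hereditarily disconnected and) zero-dimensional (alternatively, the zero-dimensionality of $G/c(G)$ is itself among the cited conclusions of \cite{Ddim}).

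It remains to prove minimality of $G/c(G)$ via the Minimality Criterion for dense subgroups of compact Abelian groups (Theorem~\ref{Crit1}(i)): identifying $G/c(G)$ with $G\,c(K)/c(K)\le K/c(K)$, one has to check that it is weakly essential in $K/c(K)$ and contains every copy of $\Z(p)$ in $K/c(K)$. For a subgroup $c(K)\subseteq S\le K$ with $S/c(K)\cong\Z(p)$: picking $s\in S$ with $ps\in c(K)$ and using divisibility of $c(K)$ to write $ps=pc$ with $c\in c(K)$, one gets $s-c\in K[p]\subseteq\Soc(K)\subseteq G$, hence $S\subseteq G+c(K)$ and $S/c(K)$ lies in the image of $G/c(G)$. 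For weak essentiality: an infinite closed $p$-monothetic subgroup of $K/c(K)$ is of the form $N/c(K)$ with $N/c(K)\cong\Z_p$, and dualizing $0\to c(K)\to N\to N/c(K)\to 0$ — where $\widehat{N/c(K)}\cong\Z(p^\infty)$ is injective — shows this extension splits topologically, $N=Z\oplus c(K)$ with $Z\cong\Z_p$ and $Z\cap c(K)=\{0\}$; since $Z$ is a non-trivial closed subgroup of $K$, essentiality of $G$ gives $0\ne g\in G\cap Z$, and then $g\notin c(K)$, so $g+c(K)$ witnesses $(G\,c(K)/c(K))\cap(N/c(K))\ne\{0\}$. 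Thus $G/c(G)$ is minimal; zero-dimensionality and the density of $c(G)$ in $c(\widetilde G)$ were obtained above.
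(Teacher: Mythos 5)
Your proposal is correct, and it is compatible with the paper, which in fact offers no proof of this statement at all: the Fact is simply attributed to the main results of \cite{Ddim} (Corollary 1.3 and Theorem 1.7) and \cite{D-CR}. You import from those same sources exactly the one genuinely deep ingredient, namely $\overline{c(G)}=c(\widetilde G)$, and you are right that nothing soft replaces it; everything else you do is sound. Your deduction that $o(G)=c(G)$ is closed (so that $G/c(G)$ is Hausdorff with compact zero-dimensional completion $\widetilde G/c(\widetilde G)$) is correct, and your verification of minimality of the quotient via Theorem \ref{Crit1}(i) -- using divisibility of $c(\widetilde G)$ to lift the socle of $\widetilde G/c(\widetilde G)$, and the dual splitting of $0\to c(\widetilde G)\to N\to \Z_p\to 0$ to get weak essentiality -- is essentially a special case of the paper's own Fact \ref{New:Fact} (with $C=c(\widetilde G)$), whose proof a few lines later runs through the same two checks via Claim \ref{LAST:claim} and Lemma \ref{Rem:29}. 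So your write-up actually supplies more detail than the paper does here, and the detail is consistent with the machinery the paper develops; the only caveat is that the density of $c(G)$ in $c(\widetilde G)$, which you flag honestly as a black box, is itself one of the three assertions of the Fact, so your argument is a proof of the other two assertions modulo the cited dimension-theoretic input rather than a proof from scratch -- which is precisely the status the paper gives the whole statement.
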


Following the terminilogy from \cite{Eng}, we call a topological group $G$ {\em hereditarily} (resp., {\em totally}) {\em disconnected}, if $c(G) = \{e\}$ (resp., $q(G) = \{e\}$). By what we said above, these two properties coincide for locally compact groups.

The following fact was given in \cite{DPS} as Exercise 4.5.15 without neither a proof nor a hint: 

\begin{fact}\label{qmini}
If $G$ is a minimal Abelian group and $C = c(\widetilde G)$, then $G/(G\cap C)$ is minimal iff $\overline{G\cap C}=C$. 
\end{fact}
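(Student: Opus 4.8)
The statement asserts that for a minimal Abelian group $G$ with $C = c(\widetilde G)$, the quotient $G/(G\cap C)$ is minimal if and only if $\overline{G\cap C} = C$ (closure taken in $\widetilde G$). The natural framework is the Minimality Criterion (Theorem \ref{Crit}(i)): since $G$ is precompact by Prodanov--Stoyanov, $\widetilde G$ is compact, $C = c(\widetilde G)$ is a closed (normal) subgroup, and $G\cap C$ is essential in $C$ because $G$ is essential in $\widetilde G$ and $C$ is closed. The plan is to identify the completion of $G/(G\cap C)$ and then apply the criterion to the dense embedding $G/(G\cap C)\hookrightarrow \widetilde G / \overline{G\cap C}$.

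First I would establish that the completion of $H:= G/(G\cap C)$ is $K:= \widetilde G/\overline{G\cap C}$, and that $H$ embeds densely in $K$. This is standard: $G$ is dense in $\widetilde G$, so its image under the quotient map $q:\widetilde G \to K$ is dense; moreover $q^{-1}(q(G)) = G\cdot \overline{G\cap C}$, and since $\overline{G\cap C}$ is (by essentiality, as it is the closure of an essential subgroup of the compact group $C$) actually equal to $C$ only in the favorable case, one must argue more carefully. The cleaner route: the map $G \to K$ has kernel $G\cap \overline{G\cap C}$; one checks $G\cap \overline{G\cap C} = \overline{G\cap C}\cap G$. Since $G\cap C$ is dense in $\overline{G\cap C}$ and $G\cap C \subseteq G\cap \overline{G\cap C} \subseteq \overline{G\cap C}$, and because $G$ is precompact hence $\overline{G\cap C}$ is compact, one gets that $G\cap \overline{G\cap C}$ is a dense subgroup of the compact group $\overline{G\cap C}$; this does not immediately force equality with $G\cap C$, but it does give that the induced map $G/(G\cap \overline{G\cap C}) \to K$ is injective with dense image, i.e. $K$ is (a group containing) the completion of that quotient. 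To get the quotient by $G\cap C$ rather than $G\cap \overline{G\cap C}$ onto the picture, I would note that $\overline{G\cap C}$ is itself essential in $C$ (trivially, containing the essential subgroup $G\cap C$), and work throughout with $N:=\overline{G\cap C}$, which is closed in $\widetilde G$.

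Now the key dichotomy. By Theorem \ref{Crit}(i), $H$ is minimal iff $K$ is minimal and the image of $G$ is essential in $K$. Here $K = \widetilde G/N$ with $N = \overline{G\cap C}$ closed. I would split: if $\overline{G\cap C} = C$, then $K = \widetilde G/C = \widetilde G / c(\widetilde G)$ is a compact totally disconnected group and the image $\bar G$ of $G$ in $K$ is minimal — indeed $\bar G$ is essential in $K$ because any non-trivial closed subgroup $M/C$ of $K$ pulls back to a closed subgroup $M$ of $\widetilde G$ strictly containing $C$, hence $M\not\subseteq C$, and essentiality of $G$ in $\widetilde G$ together with $G\cap C = G\cap N$ being exactly the kernel gives $(M\cap G) \not\subseteq C$, so $\bar G \cap (M/C) \ne \{0\}$; since $K$ is compact hence minimal, $\bar G$ is minimal. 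Conversely, if $\overline{G\cap C} \subsetneq C$, then in $\widetilde G$ the closed subgroup $C/N$ of $\widetilde G/N = K$ is non-trivial and \emph{connected} (a continuous image of the connected group $C$); but $\bar G \cap (C/N)$ is the image of $G\cap C$, which is dense in $N$, hence maps to $\{0\}$ in $C/N$. Thus $\bar G$ misses the non-trivial closed (connected) subgroup $C/N$, so $\bar G$ is not essential in $K$; since $K$ is compact (hence minimal) and non-trivially $\bar G$ is dense but not essential, Theorem \ref{Crit}(i) shows $\bar G = H$ is not minimal. This gives both directions.

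The main obstacle I anticipate is the bookkeeping around which kernel one is quotienting by: $G/(G\cap C)$ versus $G/(G\cap \overline{G\cap C})$, and correctly identifying $\overline{G\cap C}$ (closure in $\widetilde G$) with the closure of $\overline{G\cap C}$ inside $C$ — these agree because $C$ is closed in $\widetilde G$, but it must be stated. A secondary subtlety is that $G\cap C$ need not be closed in $G$, so $G/(G\cap C)$ is genuinely a (possibly non-Hausdorff-looking but in fact Hausdorff since $G\cap C$ is... not closed!) — here one should be careful: the statement tacitly means the quotient as a topological group, and if $G\cap C$ is not closed in $G$ the quotient is not Hausdorff, so \emph{a fortiori} not minimal; this case falls under $\overline{G\cap C}\cap G \supsetneq G\cap C$, which forces $\overline{G\cap C}\neq C$ only if... — actually one must check that $\overline{G\cap C}\cap G = G\cap C$ exactly when $G\cap C$ is closed in $G$, and handle the non-Hausdorff case as automatically non-minimal, consistent with the ``only if'' direction. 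I would dispatch this at the very start by reducing to the case where $G\cap C$ is closed in $G$, equivalently $G\cap \overline{G\cap C} = G\cap C$, and observe the remaining case is vacuous on the left and has $\overline{G\cap C}\neq C$ on the right.
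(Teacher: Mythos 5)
Your framework (identify the completion of $G/(G\cap C)$ via the Minimality Criterion, then test essentiality there) is the same as the paper's, and your ``only if'' direction is sound: with $N=\overline{G\cap C}\subsetneq C$ the modular law gives $(G+N)\cap C=(G\cap C)+N=N$, so the image of $G$ misses the non-trivial closed subgroup $C/N$ of the completion $\widetilde G/N$ and cannot be essential. Your handling of the non-Hausdorff degenerate case is also fine, since $\overline{G\cap C}=C$ forces $G\cap\overline{G\cap C}=G\cap C$.

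The gap is in the ``if'' direction, at the step where you claim that for a closed subgroup $M$ of $\widetilde G$ with $M\supsetneq C$, ``essentiality of $G$ in $\widetilde G$ \dots gives $(M\cap G)\not\subseteq C$.'' Essentiality only gives $M\cap G\ne\{0\}$; nothing in that inference rules out $M\cap G\subseteq C$. Indeed, essentiality is \emph{not} preserved under quotients in general (e.g.\ $p\Z_p$ is essential in $\Z_p$ but has trivial image in $\Z_p/p\Z_p\cong\Z(p)$), and your argument for this direction never uses that $C$ is connected, so if it worked it would prove the statement for an arbitrary closed subgroup $C$ of $\widetilde G$, which is false. The missing idea — and the actual content of the paper's proof — is that connectedness of $C$ makes $C$ divisible, whence $q(\Soc(\widetilde G))=\Soc(\widetilde G/C)$ (Claim \ref{LAST:claim}); since minimality of $G$ forces $\Soc(\widetilde G)\leq G$ (Theorem \ref{Crit1}(i)), the image $q(G)$ contains $\Soc(\widetilde G/C)$, and weak essentiality passes to the image by Lemma \ref{Rem:29}(b); Lemma \ref{Neeew}(b) then yields that $q(G)$ is essential in $\widetilde G/C$. (Equivalently, your assertion can be salvaged via Corollary \ref{NEW:corollary}: if $M\cap G\subseteq C$ then $C=c(M)$ would be essential in $M$, forcing $M=C$ — but that corollary itself rests on the same divisibility/socle argument.) Without some form of this step your proof of the ``if'' direction does not go through.
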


Since this fact was used without proof in several papers afterwards (e.g., \cite[Theorem 1.7]{Ddim}, \cite[Theorem 3.6(a)]{DTk}, only in
\cite{DU} a proof is given under stronger assumptions), we propose here a very brief sketch of a proof. 

 Indeed, we shall prove the stronger result:
 
\begin{fact}\label{New:Fact} If $G$ is a minimal Abelian group and $C$ is a connected compact subgroup of $\widetilde{G}$, then $G/(G\cap C)$ is minimal iff $\overline{G\cap C}=C$. 
\end{fact}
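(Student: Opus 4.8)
The plan is to use the (Total) Minimality Criterion (Theorem~\ref{Crit}(i)) together with the description of essential subgroups in compact Abelian groups (Lemma~\ref{Neeew}, Theorem~\ref{Crit1}). By the Prodanov--Stoyanov theorem $G$ is precompact, so we may identify $\widetilde G$ with a compact Abelian group $K$ containing $G$ as a dense subgroup; since $G$ is minimal, $G$ is essential in $K$. Since $C$ is a connected compact subgroup of $K$, it is the closure in $K$ of $G\cap C$ precisely when $G\cap C$ is dense in $C$; and then $K/C$ is the completion of $G/(G\cap C)$, with $G/(G\cap C)$ a dense subgroup of $K/C$. So, by Theorem~\ref{Crit}(i) applied to the dense subgroup $G/(G\cap C)$ of the compact (hence minimal) group $K/C$, minimality of $G/(G\cap C)$ is equivalent to essentiality of $G/(G\cap C)$ in $K/C$.

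First I would treat the easy direction. If $\overline{G\cap C}\ne C$, then $G/(G\cap C)$ is not even dense in $K/C$ in a way compatible with the quotient structure, or more precisely its closure in $K/C$ is $\overline{G}/ \,\cdot\,$ — actually here one argues: the canonical map $q\colon K\to K/C$ restricted to $G$ has image dense in $K/C$, but $\ker(q\restriction_G)=G\cap C$ which is not dense in $C=\ker q$; a standard argument (the topology on $G/(G\cap C)$ as a subgroup of $K/C$ is strictly coarser than its quotient topology, witnessed by a compact — hence non-discrete — obstruction) shows $G/(G\cap C)$ fails to be minimal. Concretely, one shows that $C/\overline{G\cap C}$ is a non-trivial connected compact group, so it contains a copy of $\Z(p)$ or $\Z_p$ disjoint from the image of $G$, contradicting essentiality of $G/(G\cap C)$ in $K/C$; thus $G/(G\cap C)$ is not minimal.

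For the main direction, assume $\overline{G\cap C}=C$, i.e. $G\cap C$ is dense in $C$. I must show $G/(G\cap C)$ is essential in $K/C$. Take a non-trivial closed subgroup $N/C$ of $K/C$ (so $N$ is a closed subgroup of $K$ properly containing $C$); I need $(G/(G\cap C))\cap(N/C)\ne\{0\}$, i.e. $(G\cap N)\not\subseteq C$. By Lemma~\ref{Neeew}(b) and the structure of compact Abelian groups, $N/C$ contains a non-trivial subgroup topologically isomorphic to $\Z(p)$ or $\Z_p$; pull this back to a subgroup $M$ of $N$ with $C\le M\le N$ and $M/C$ of this form. Since $C$ is connected, $M$ is an extension of a connected compact group by $\Z(p)$ or $\Z_p$; I then show $M$ contains a non-trivial closed subgroup $M'$ with $M'\cap C=\{0\}$ — for $M/C\cong\Z(p)$ this is a splitting of a finite extension argument (using that $C$ is divisible, being connected compact, so the extension splits algebraically, and the copy of $\Z(p)$ is closed), and for $M/C\cong\Z_p$ one uses that $M$ maps onto $\Z_p$ with connected kernel $C$, hence admits a continuous section / a closed $\Z_p$-subgroup meeting $C$ trivially (here the connectedness of $C$ and the projectivity-type properties of $\Z_p$ among profinite groups are used, together with $td_p$-arguments as in the proof of Lemma~\ref{Rem:29}(b)). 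Now $M'$ is a non-trivial closed subgroup of $K$, so by essentiality of $G$ in $K$ we get $M'\cap G\ne\{0\}$; since $M'\cap C=\{0\}$, this element of $G\cap M\subseteq G\cap N$ lies outside $C$, giving $(G\cap N)\not\subseteq C$, as required.

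The main obstacle is the lifting step in the last paragraph: producing, from a $\Z(p)$- or $\Z_p$-quotient $M/C$ of a compact group $M$ with connected kernel $C$, a non-trivial closed subgroup $M'\le M$ with $M'\cap C=\{0\}$. For the $\Z(p)$ case this reduces, via Pontryagin duality, to showing a suitable pure-injectivity/divisibility splitting (the dual of $C$ is torsion-free, so a $\Z(p)$ sitting above it corresponds to a splitting of the dual short exact sequence), and one must be careful that the algebraic splitting yields a \emph{closed} copy of $\Z(p)$ (automatic, as finite subgroups are closed). For the $\Z_p$ case one invokes that $td_p(M)=td_p(M/C)$ for the surjection $M\to M/C$ (as in Lemma~\ref{Rem:29}), lifts a topological generator of $M/C\cong\Z_p$ to an element of $td_p(M)$, and checks its closed monothetic hull meets $C$ trivially because $C$ is connected while the hull is a closed $p$-monothetic (hence totally disconnected or isomorphic to $\Z_p$) subgroup. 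Assembling these cases carefully, and the parallel obstruction-argument in the converse direction, is where the real work lies; the rest is routine application of the Minimality Criterion.
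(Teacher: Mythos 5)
Your proposal is correct and follows essentially the same route as the paper: there, too, one observes that minimality of $G/(G\cap C)$ forces the continuous isomorphic embedding $G/(G\cap C)\hookrightarrow \widetilde G/C$ to be topological, which happens iff $\overline{G\cap C}=C$ (giving the forward implication), and for the converse one reduces via the Minimality Criterion to essentiality of the image of $G$ in $\widetilde G/C$, handled by splitting into exactly your two cases — the $\Z(p)$-part via divisibility of the connected group $C$ (Claim \ref{LAST:claim}, giving $\Soc(\widetilde G/C)\leq G/(G\cap C)$) and the $\Z_p$-part via the $td_p$-lifting of Lemma \ref{Rem:29}(b) (giving weak essentiality), which Lemma \ref{Neeew}(b) then assembles into essentiality. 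Two small slips in your write-up, neither fatal: in the ``concrete'' version of the easy direction the completion of $G/(G\cap C)$ with its quotient topology is $\widetilde G/\overline{G\cap C}$, not $\widetilde G/C$, so the obstruction $C/\overline{G\cap C}$ witnesses failure of essentiality in $\widetilde G/\overline{G\cap C}$ (your first, ``strictly coarser topology'' argument is the clean one); and in the $\Z_p$-lifting the closed monothetic hull $M'$ of the lifted generator meets $C$ trivially not ``because $C$ is connected while $M'$ is totally disconnected'' (a connected compact group such as $\K$ does contain closed copies of $\Z_p$), but because $M'\cong\Z_p$ maps onto $M/C\cong\Z_p$ and every non-trivial closed subgroup of $\Z_p$ has finite quotient — which is precisely the argument inside the proof of Lemma \ref{Rem:29}(b) that you cite.
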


\begin{proof} First we note that the hypothesis $\overline{G\cap C}=C$ is equivalent to asking the algebraic and continuous embedding of $G/(G\cap C)$ into $\widetilde G/C$ to be also topological. This proves, among others, the implication $\Rightarrow$.

After this first step, to ensure the implication $\Leftarrow$
one has to note that divisibility of $C$ and the minimality of $G$ ensure that $G/(G\cap C)$ contains the socle of $\widetilde G/C$
(see Claim \ref{LAST:claim} below) and $G/(G\cap C)$ is weakly essential in $\widetilde G/C$ (Lemma \ref{Rem:29}). \end{proof}

\begin{claim}\label{LAST:claim}
Let $p\in \P$, $K$ be a compact Abelian group and $H$ be a closed subgroup. Then for the canonical homomorphism $q: K \to K/H$ one has $q(K[p]) = (K/H)[p]$ in each of the following cases: 
\begin{itemize}
    \item[(a)] $H$ is $p$-divisible; 
    \item[(b)] $H$ splits algebraically; 
    \item[(c)] $p\notin \pi^*(H)$ (i.e., $td_p(H)=\{0\}$).
\end{itemize}
When $H$ is connected, (a) and (b) are automatically satisfied for any $p$, so $q(\Soc(K))=\Soc(K/H)$.
\end{claim}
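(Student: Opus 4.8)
The claim is about when the canonical surjection $q\colon K\to K/H$ of compact Abelian groups sends the $p$-torsion $K[p]$ \emph{onto} $(K/H)[p]$. Dualizing via Pontryagin--van Kampen is the natural move, but here I would instead argue directly with the short exact sequence $0\to H\to K\xrightarrow{q} K/H\to 0$, since the condition $q(K[p])=(K/H)[p]$ is exactly the statement that the connecting map $(K/H)[p]\to H/pH$ in the long exact sequence of $\mathrm{Tor}/\mathrm{Ext}$-type (or simply the snake lemma applied to multiplication by $p$) is zero. Concretely, given $\bar x\in (K/H)[p]$ pick $x\in K$ with $q(x)=\bar x$; then $px\in H$, and the obstruction to correcting $x$ by an element of $H$ to land in $K[p]$ is the class of $px$ in $H/pH$. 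So in all three cases it suffices to show $px\in pH$.

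\textbf{The three cases.} In case (a), $H$ is $p$-divisible means $pH=H$, so $px\in H=pH$ trivially. In case (b), write $K=H\oplus C$ algebraically (as abstract groups), decompose $x=h+c$; then $px=ph+pc\in H$ forces $pc\in H\cap C=\{0\}$, so $pc=0$, i.e.\ $c\in K[p]$ and $q(c)=q(x)=\bar x$, giving the surjectivity directly. In case (c), $td_p(H)=\{0\}$ implies $H[p]=\{0\}$, so multiplication by $p$ is injective on $H$; since $H$ is a compact group and $pH$ is the continuous image of the compact group $H$ under an injective homomorphism, $pH$ is a closed subgroup of $H$, and moreover the hypothesis rules out a copy of $\Z_p$ or $\Z(p^k)$ inside $H$ — I would use that $p\notin\pi^*(H)$ forces $H$ to be $p$-divisible. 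Indeed, if $pH\ne H$, then the nontrivial compact group $H/pH$ is a compact $\F_p$-vector space, hence contains $\Z(p)$, and pulling back (using that $p$-divisibility fails) one produces inside $H$ either a copy of $\Z(p)$ (contradicting $H[p]=\{0\}$) or a copy of $\Z_p$ — either way contradicting $p\notin\pi^*(H)$. So again $pH=H$ and we are back in case (a). (Alternatively and more cleanly: $p\notin\pi^*(H)\iff td_p(H)=\{0\}$, and on the dual side this says $\widehat H$ has no $p$-torsion and $\widehat H/p\widehat H=0$, i.e.\ $\widehat H$ is $p$-divisible and $p$-torsion-free, which dualizes to $H$ being $p$-divisible; I would phrase it via duality to keep it short.)

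\textbf{The connected case and the socle statement.} When $H$ is connected, it is divisible — hence $p$-divisible for every $p$ — so (a) applies for every prime, yielding $q(K[p])=(K/H)[p]$ for all $p$; summing over $p$ (the socle is $\bigoplus_p K[p]$, and $q$ respects this decomposition) gives $q(\Soc(K))=\Soc(K/H)$. One also notes a connected compact Abelian group splits algebraically as it is a divisible subgroup, so (b) holds too, consistent with the claim's parenthetical remark.

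\textbf{Expected main obstacle.} The only genuinely nontrivial point is case (c): showing $p\notin\pi^*(H)$ forces $p$-divisibility of $H$. Cases (a) and (b) are essentially bookkeeping with the snake lemma. For (c) the cleanest route is the duality reformulation $td_p(H)=\{0\}\iff \widehat H$ is $p$-divisible and $p$-torsion-free $\iff H$ is $p$-divisible, which reduces (c) to (a); the mild care needed is to invoke the correct standard duality dictionary ($r_p$ and divisibility under $\widehat{\ }$, as in \cite{DPS}).
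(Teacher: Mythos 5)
Your proofs of (a) and (b), and your overall strategy of reducing (c) to (a) by showing that $p\notin\pi^*(H)$ forces $H$ to be $p$-divisible, coincide with the paper's: the paper also lifts $\bar x$ to $y\in K$ with $py\in H=pH$ and corrects by $z\in H$ with $pz=py$, and also handles (b) by the algebraic splitting. Where you genuinely diverge is in \emph{how} you get $p$-divisibility of $H$ in case (c). The paper argues internally: since $\pi^*(H)\supseteq\pi^*(c(H))$ and $\pi^*$ of a compact Abelian group with non-trivial connected component is all of $\P$ (Fact \ref{Conp*}), the hypothesis $p\notin\pi^*(H)$ forces $H$ to be totally disconnected, hence $H=\prod_{p'}td_{p'}(H)$ with the $p$-component absent, and each remaining pro-$p'$ factor is $p$-divisible. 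You instead dualize: no copy of $\Z(p)$ in $H$ makes $\widehat H$ $p$-divisible, and then any $p$-torsion in the $p$-divisible group $\widehat H$ would generate a $\Z(p^\infty)$ direct summand, dualizing to a closed copy of $\Z_p$ in $H$, contradicting $td_p(H)=\{0\}$; so $\widehat H$ is $p$-torsion-free and $H=pH$. Both are valid; the paper's route exploits a structural fact about $\pi^*$ and connectedness already set up earlier in the paper, while yours is self-contained modulo the standard duality dictionary. One caution: your parenthetical chain ``$td_p(H)=\{0\}\iff\widehat H$ is $p$-divisible and $p$-torsion-free $\iff H$ is $p$-divisible'' should be a chain of one-way implications, not equivalences --- the group $H=\K=\widehat\Q$ has $p$-divisible, $p$-torsion-free dual yet contains closed copies of $\Z_p$ for every $p$, so $td_p(\K)\ne\{0\}$; and $H=\T$ is $p$-divisible while $\widehat H=\Z$ is not. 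Only the forward implications are needed, and those are correct, so this is a matter of wording rather than a gap; likewise the informal ``pulling back'' in your non-dual version of (c) should simply be replaced by the duality argument you already supply.
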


\begin{proof}Obviously, $q(K[p])\leq (K/H)[p]$. For the reverse inclusion it suffices to check that $(K/H)[p] \leq q(K[p]) $.

(a)  Pick $x\in (K/H)[p]$. Hence, $px=0$, so if $x= y + H$ with $y\in K$, one has $py \in H$. Since $H$ is $p$-divisible, $py = pz$ for some $z \in H$. Then $t= y-z \in K[p]$, as $pt=0$. Therefore, $x=q(y) = q(t+z)=q(t) \in q(K[p])$.  

(b) In pure algebraic sense, $K\cong (K/H)\oplus H$, with $q$ the projection onto its first factor.
So $$q(K[p])=q((K/H)[p]\oplus H[p])=(K/H)[p].$$

(c) It suffices to see that if $H$ satisfies (c), then $H$ is $p$-divisible, so (a) applies. Indeed, if $p\notin \pi^*(H)\supseteq \pi^*(c(H))$, then 
$\pi^*(c(H)) \ne \P$, so $H$ is totally disconnected (by Fact \ref{Conp*}), hence $H = \prod_{p'\in \P} td_{p'}(H)$. 
Since $td_p(H) = \{0\}$, only the components $ td_{p'}(H)$ with $p'\ne p$ appear in the product and all they are $p$-divisible. 
\end{proof}

Without any other additional condition on $G$, one cannot deduce anything on the quotient $G/c(G)$, since 
$c(G)$ in general is (much) smaller than $G\cap C = o(G)$ (see above).  Of course, imposing $G \supseteq c(\widetilde G)$
one immediately has $G\cap C = c(G)$, so that the above fact can be applied. 

The following obvious corollary of this fact appeared in the literature (e.g., \cite[Theorem 3.6(a)]{DTk}): 

\begin{corollary}\label{Coro:MinG/c(G)}
If $G$ is a minimal Abelian group and $G \supseteq c(\widetilde G)$, then 
$c(G) = c(\widetilde{G})$ is compact and $G/c(G)$ is minimal. 
\end{corollary}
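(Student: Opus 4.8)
The plan is to derive Corollary \ref{Coro:MinG/c(G)} directly from Fact \ref{New:Fact} (equivalently Fact \ref{qmini}) by checking that under the hypothesis $G \supseteq c(\widetilde G)$ all the technical premises are automatically met. First I would record the elementary identity $c(G) = G \cap c(\widetilde G)$: the inclusion $\subseteq$ holds because $c(\widetilde G)$ is closed in $\widetilde G$ and contains $c(G)$ (continuous image / closure considerations: $c(G)$ is connected, hence its closure in $\widetilde G$ is connected and contained in $c(\widetilde G)$, and $c(G)$ being a subgroup of $G$ lies in $G\cap c(\widetilde G)$); and the inclusion $\supseteq$ holds precisely because the hypothesis $G \supseteq c(\widetilde G)$ forces $G \cap c(\widetilde G) = c(\widetilde G)$, which is connected and contained in $G$, hence contained in $c(G)$. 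So $c(G) = c(\widetilde G) =: C$, and in particular $c(G)$ is compact (being a closed subgroup of the compact group $\widetilde G$, which is compact since $G$ is precompact as a minimal Abelian group by Prodanov--Stoyanov).

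Next I would apply Fact \ref{New:Fact} with this $C = c(\widetilde G)$, which is indeed a connected compact subgroup of $\widetilde G$. Since $G \supseteq C$, we have $G \cap C = C$, which is trivially dense in $C$; thus the criterion $\overline{G \cap C} = C$ is satisfied, and Fact \ref{New:Fact} yields that $G/(G\cap C) = G/C = G/c(G)$ is minimal. This completes the argument.

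The only point requiring any care is the justification of $c(G)\subseteq c(\widetilde G)$ for a dense subgroup, and the observation that density of $G\cap C$ in $C$ is immediate once $G\supseteq C$ — neither is really an obstacle, so I expect no genuine difficulty here; the corollary is essentially a packaging of Fact \ref{New:Fact}. If one wanted to be fully self-contained one could also remark that $C$ being compact and contained in $G$ is automatically closed in $G$, so $G/c(G)$ is Hausdorff, but this is subsumed in the cited fact. Thus the proof reduces to: (i) observe $G$ is precompact so $\widetilde G$ is compact; (ii) deduce $c(G) = G\cap c(\widetilde G) = c(\widetilde G)$ is compact from $G\supseteq c(\widetilde G)$; (iii) invoke Fact \ref{New:Fact} with $C = c(\widetilde G)$, noting $\overline{G\cap C} = \overline C = C$.
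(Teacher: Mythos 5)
Your proposal is correct and follows essentially the same route as the paper: the paper explicitly notes that imposing $G \supseteq c(\widetilde G)$ immediately gives $G\cap C = c(G)$ for $C=c(\widetilde G)$, and then presents the corollary as an obvious consequence of Fact \ref{New:Fact} (equivalently Fact \ref{qmini}), exactly as you do. The only cosmetic point is that your justification of $c(G)\subseteq c(\widetilde G)$ is phrased slightly circularly at first, but the parenthetical (connectedness of $c(G)$ as a subset of $\widetilde G$ containing the identity) supplies the correct argument.
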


 Here we give another, somewhat unexpected, corollary of the claim: 

\begin{corollary}\label{NEW:corollary} 
If $K$ is a compact Abelian group, then the subgroup $c(K)$ of $K$ is essential if and only if $c(K) =K$, i.e., $K$ is connected. 
\end{corollary}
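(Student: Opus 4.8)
The plan is to prove the contrapositive of the non-trivial direction: if $K$ is a compact Abelian group that is not connected, then $c(K)$ is not essential in $K$. (The converse implication is trivial, since if $K=c(K)$ then $c(K)=K$ meets every non-trivial closed subgroup.) So assume $c(K)\ne K$. I would first reduce to producing a non-trivial closed subgroup $N$ of $K$ with $N\cap c(K)=\{0\}$. Since $K$ is compact Abelian and not connected, the totally disconnected quotient $K/c(K)$ is non-trivial, so by Fact \ref{Conp*}(b) applied to $K$ (or directly, since $K/c(K)$ is a non-trivial compact Abelian group) there is a prime $p$ with $td_p(K/c(K))\ne\{0\}$; by the structure of non-trivial compact Abelian groups used in the proof of Lemma \ref{Neeew}(b), $K/c(K)$ contains a closed subgroup $S$ topologically isomorphic to either $\Z(p)$ or $\Z_p$.

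The key step is to lift such an $S$ back to $K$ transversally to $c(K)$. Let $q\colon K\to K/c(K)$ be the canonical homomorphism. Since $c(K)$ is connected, it is divisible, hence $p$-divisible for every $p$; thus by Claim \ref{LAST:claim}(a) (or its last sentence, which handles connected $H$) we have $q(K[p])=(K/c(K))[p]$, and more generally $q(\Soc(K))=\Soc(K/c(K))$. In the case $S\cong\Z(p)$, pick a generator $\bar s$ of $S$; by the socle equality there is $s\in K[p]$ with $q(s)=\bar s$, and then $N:=\langle s\rangle\cong\Z(p)$ is a non-trivial finite closed subgroup of $K$ with $q|_N$ injective, so $N\cap c(K)=N\cap\ker q=\{0\}$. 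In the case $S\cong\Z_p$, I would instead argue that $q^{-1}(S)$ is a closed subgroup of $K$ containing $c(K)$ with $q^{-1}(S)/c(K)\cong\Z_p$; since $c(K)$ is a divisible (hence algebraically a direct summand) subgroup of the compact group $q^{-1}(S)$, one can split it algebraically and, more carefully, use that $\Z_p$ is torsion-free to find a closed copy of $\Z_p$ in $q^{-1}(S)$ mapping isomorphically onto $S$ — concretely, take a topological generator $\bar t$ of $S$, choose any preimage $t\in q^{-1}(S)$, and let $N:=\overline{\langle t\rangle}$; then $q(N)=S$, $N$ is monothetic and $q|_N\colon N\to S$ is a continuous surjection of compact groups, and since $S\cong\Z_p$ has no non-trivial totally disconnected proper quotients of finite... — here a cleaner route is to observe $N/(N\cap c(K))\cong S\cong\Z_p$ is torsion-free while $N\cap c(K)$ is connected, and a connected closed subgroup of a monothetic (hence one-dimensional or zero-dimensional) group of this type is forced to be trivial; in any case $N$ is a non-trivial closed subgroup with $N\cap c(K)$ a proper closed subgroup of $N$, and after passing to $N/(N\cap c(K))$-considerations one extracts the desired transversal copy.

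In either case we obtain a non-trivial closed subgroup $N$ of $K$ with $N\cap c(K)=\{0\}$, which witnesses that $c(K)$ is not essential in $K$, completing the contrapositive. The main obstacle is the $\Z_p$ case: unlike the $\Z(p)$ case, which is dispatched immediately by the socle equality of Claim \ref{LAST:claim}, lifting a copy of $\Z_p$ transversally to $c(K)$ requires knowing that one can choose a monothetic lift meeting $c(K)$ trivially. The way I would settle this cleanly is: if $p\in\pi(K/c(K))$ then by Fact \ref{Conp*}(b) $p\in\pi^*(K)$, and I would directly argue that $K$ itself contains a closed copy of $\Z_p$ meeting $c(K)$ trivially — take a $\delta$-subgroup-type argument or simply use that $td_p(K)\to td_p(K/c(K))$ is surjective (as in the proof of Lemma \ref{Rem:29}(b)) and that any $\Z_p\hookrightarrow td_p(K)$ whose image in $K/c(K)$ is again a $\Z_p$ must meet the torsion-free-quotient-incompatible $c(K)$ trivially. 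This localizes the entire difficulty to the structure of $p$-monothetic subgroups already developed in Lemmas \ref{Neeew} and \ref{Rem:29}.
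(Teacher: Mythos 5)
Your overall strategy is sound and is, in substance, the contrapositive unwinding of the paper's own proof: the paper argues directly that if $c(K)$ is essential then $\Soc(K)\leq c(K)$, so $\Soc(K/c(K))=q(\Soc(K))=\{0\}$ by Claim \ref{LAST:claim} (i.e.\ $K/c(K)$ is torsion-free, killing the $\Z(p)$ case), and that weak essentiality of $c(K)$ pushes forward by Lemma \ref{Rem:29}(b) to make $\{0\}$ weakly essential in $K/c(K)$ (killing the $\Z_p$ case); since every non-trivial compact Abelian group contains a closed $\Z(p)$ or $\Z_p$, the quotient is trivial. Your $\Z(p)$ case is exactly the first half of this, and your \emph{final} paragraph's $td_p$-lifting argument is exactly the content of Lemma \ref{Rem:29}(b) with the proof inlined, so the proposal does close.

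However, the middle of your $\Z_p$ discussion contains two steps that genuinely fail and should be discarded. First, taking an \emph{arbitrary} preimage $t$ of a topological generator $\bar t$ of $S\cong\Z_p$ and setting $N=\overline{\langle t\rangle}$ does not work: for $K=\T\times\Z_p$ (so $c(K)=\T$, $K/c(K)\cong\Z_p$) the group $K$ is itself monothetic, and a suitable $t$ gives $N=K$, whence $N\cap c(K)=\T\neq\{0\}$. Second, and for the same reason, the claim that ``a connected closed subgroup of a monothetic group of this type is forced to be trivial'' is false ($\T\leq\T\times\Z_p$); nor is $N\cap c(K)$ automatically connected, being merely a closed subgroup of the connected group $c(K)$. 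The correct fix is the one you reach at the end: lift $\bar t$ to an element $y\in td_p(K)$ using $q(td_p(K))=td_p(K/c(K))$, so that $N_2:=\overline{\langle y\rangle}$ is $p$-monothetic; since $q(N_2)=S$ is infinite, $N_2\cong\Z_p$, and since every non-trivial closed subgroup of $\Z_p$ has finite (hence torsion) quotient while $q(N_2)\cong\Z_p$ is infinite and torsion-free, $\ker(q\restriction_{N_2})=N_2\cap c(K)=\{0\}$. With the middle passage deleted and only that last argument retained, the proof is correct.
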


\begin{proof} Assume that $c(K)$ is essential in $K$. Then $\Soc(K) \leq c(K)$. To the canonical homomorphism $q: K \to K/c(K)$ apply the last assertion of Claim \ref{LAST:claim}. It gives $\Soc(K/c(K)) = q(\Soc(K))=\{0\}$. Hence, $K/c(K)$ is torsion-free. On the other hand, since $c(K)$ is (weakly) essential in $K$ by Lemma \ref{Rem:29}, we conclude  that  $\{0\}$ is weakly essential in $K/c(K)$, which means that $K/c(K)$ has no infinite $p$-monothetic subgroups.
In conjunction with our previous conclusion that $K/c(K)$ is torsion-free, this means that $K/c(K)$ is trivial. Therefore, $c(K) = K$.  
\end{proof}

\section{Approximating local minimality by minimality}\label{Subsec:approx}\label{LARGE:SUBGROUPS}

In the initial part of this section we show that, roughly speaking, a locally minimal Abelian group has a ``big" minimal (hence, precompact) subgroup. 


Realizing that for a compact torsion-free Abelian group $G$ weak  essentialily and essentialily of its (dense) subgroups are equivalent
 we obtain the following immediate corollary from Theorem \ref{Crit1}: 

\begin{corollary}\label{New:Corollary} If $G$ is a compact torsion-free Abelian group and $H$ a dense locally minimal subgroup of $G$, then $H$ is minimal. 
\end{corollary}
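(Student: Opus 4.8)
The plan is to deduce Corollary \ref{New:Corollary} directly from Theorem \ref{Crit1}(i), which characterizes local minimality of a dense subgroup $H$ of a compact Abelian group $G$ as: $H$ is weakly essential in $G$ and there is a neighbourhood $V$ of $0$ such that $H$ contains all subgroups of $G$ isomorphic to $\Z(p)$ that are contained in $V$, for every prime $p$. To conclude minimality via the same criterion (with no neighbourhood restriction), I must upgrade ``weakly essential'' to ``essential'' and remove the neighbourhood $V$ from the second clause; the first is handled by the parenthetical remark preceding the statement, and the second is trivial once $G$ is torsion-free.

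First I would record the key consequence of torsion-freeness: if $G$ is a compact torsion-free Abelian group, then $G$ has no nonzero element of finite order, so $G[p] = \{0\}$ for every prime $p$, hence $G$ contains no subgroup isomorphic to $\Z(p)$ at all. In particular $\Soc(G) = \bigoplus_p G[p] = \{0\}$. This immediately kills the second clause of Theorem \ref{Crit1}(i): the (possibly empty) set of subgroups isomorphic to $\Z(p)$ contained in any given $V$ is empty, so the condition ``$H$ contains all of them'' is vacuously true with, say, $V = G$. Thus for a dense subgroup $H$ of such $G$, Theorem \ref{Crit1}(i) says precisely that $H$ is locally minimal iff $H$ is weakly essential in $G$, and Theorem \ref{Crit1} (second part, via Lemma \ref{Neeew}, or directly Theorem \ref{Crit1}) says $H$ is totally minimal iff $G \cap N \nsubseteq pN$ for all $N \cong \Z_p$ — but let me instead go through ordinary minimality: by Theorem \ref{Crit}(i), $H$ is minimal iff $G$ is minimal (true, $G$ compact) and $H$ is essential in $G$.

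Next I would invoke the parenthetical claim stated just before the corollary: for a compact torsion-free Abelian group $G$, weak essentiality and essentiality of (dense) subgroups coincide. This is exactly Lemma \ref{Neeew}(b): ``essential'' is equivalent to ``weakly essential and $\Soc(G) \leq H$''; since $\Soc(G) = \{0\} \leq H$ automatically, the two notions collapse. Now assemble: $H$ dense locally minimal in $G$ $\Rightarrow$ (Theorem \ref{Crit1}(i), torsion-free case) $H$ weakly essential in $G$ $\Rightarrow$ (Lemma \ref{Neeew}(b)) $H$ essential in $G$ $\Rightarrow$ (Theorem \ref{Crit}(i), with $G$ compact hence minimal) $H$ is minimal.

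There is essentially no obstacle here — the statement is a bookkeeping corollary — but the one point worth being careful about is that Theorem \ref{Crit1} is stated for a \emph{dense} subgroup $H$ of a compact Abelian group, which is exactly our hypothesis, so no closure-passing trick (as in the remark after Theorem \ref{Crit}) is needed. One could alternatively phrase the argument without Lemma \ref{Neeew} by observing directly that in a torsion-free $G$ every nonzero closed $p$-monothetic subgroup is isomorphic to $\Z_p$ (it cannot be a finite cyclic $p$-group, as those contain $\Z(p)$), and that a nonzero closed subgroup $N$ of a compact Abelian group contains a copy of $\Z(p)$ or $\Z_p$ (Lemma \ref{Neeew}(b) proof); since $\Z(p)$ is excluded, $N$ contains a copy of $\Z_p$, which is $p$-monothetic, so weak essentiality of $H$ forces $H \cap N \neq \{0\}$ — giving essentiality directly. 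Either route is short; I would present the first, citing Lemma \ref{Neeew}(b) and Theorems \ref{Crit}, \ref{Crit1}.
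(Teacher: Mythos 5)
Your proposal is correct and follows the same route as the paper: the paper derives the corollary immediately from Theorem \ref{Crit1} after observing that weak essentiality and essentiality of (dense) subgroups coincide for compact torsion-free $G$, which is exactly your reduction via $\Soc(G)=\{0\}$ and Lemma \ref{Neeew}(b). The vacuousness of the $\Z(p)$-clause and the final appeal to Theorem \ref{Crit}(i) with $G$ compact are precisely the bookkeeping the paper leaves implicit.
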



\smallskip 

Call a subgroup $A$ of an Abelian group $X$ \emph{superfluous} if, for every subgroup $B$ of $X$, the equality $A + B = X$ implies that $B = X$. We denote this shortly by $A\leq _s X$.

\begin{example}\label{Exa:july1}  For every $n\in \N$, $\Z^n \leq_s \Q^n$, since $\Z^n + B=\Q^n$ for $B \leq \Q^n$ implies that $\Q^n/B$ is both divisible and 
finitely generated, so $\Q^n/B = \{0\}$ and $B = \Q^n$. \end{example}

\begin{lemma}\label{Le:finiterank} Let $X$ be an infinite Abelian group. 
\begin{itemize}
    \item[(a)] If $S\leq H$ is a chain of subgroups of  $X$ with either $S \leq_s H$ or $H \leq_s X$, then also $S\leq_s X$. 
    \item[(b)] If $X$ is torsion-free with $r(X)$ infinite and $A\leq_s X$, then $r(A)$ is finite.
    \item[(c)] $A\leq_s X$ if and only if the annihilator $A^\bot$ of $A$ is essential in $\widehat{X}$.
    \item[(d)] If $Z\leq _s X$, then $|X/Z|=|X|$.  
\end{itemize}
\end{lemma}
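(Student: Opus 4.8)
Overall, I plan to dispose of (a) by a formal manipulation, obtain (c) directly from Pontryagin duality, deduce (b) from (c) together with the structure theory recalled above, and settle (d) by a cardinality argument that reduces to a finitely generated special case. For (a): if $H\leq_s X$ and $S+B=X$ with $B\leq X$, then $H+B\supseteq S+B=X$, so $H+B=X$ and hence $B=X$; thus $S\leq_s X$ (this uses only $S\leq H$). If instead $S\leq_s H$ and $S+B=X$, I intersect with $H$ and use Dedekind's modular law (valid since $S\leq H$): $H=H\cap(S+B)=S+(H\cap B)$, so $H\cap B=H$, i.e.\ $H\leq B$, and then $X=S+B=B$. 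In either case $S\leq_s X$.

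For (c) I would just translate through duality. As $X$ is discrete, $C\mapsto C^\bot$ is an inclusion-reversing bijection between subgroups of $X$ and closed subgroups of $\widehat X$ with $C=(C^\bot)^\bot$; moreover $(A+C)^\bot=A^\bot\cap C^\bot$, and for $D\leq X$ one has $D^\bot=\{0\}$ iff $D$ is dense in $X$, i.e.\ iff $D=X$. Hence ``$A\leq_s X$'', which says $A+C\neq X$ for every proper $C\lneq X$, is equivalent to $A^\bot\cap C^\bot=(A+C)^\bot\neq\{0\}$ for every such $C$; and as $C$ ranges over proper subgroups of $X$, the $C^\bot$ range over all nontrivial closed subgroups of $\widehat X$. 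This is precisely the statement that $A^\bot$ is essential in $\widehat X$. Part (b) now follows at once: if $A\leq_s X$ then $A^\bot$ is essential, hence weakly essential, in the compact Abelian group $\widehat X$ (Lemma~\ref{Neeew}(a)); being closed, $A^\bot$ meets the hypothesis of Corollary~\ref{Cor:13May}, so $\widehat X/A^\bot$ is an exotic torus, in particular $\dim(\widehat X/A^\bot)<\infty$. Since $\widehat X/A^\bot\cong\widehat A$ by restriction of characters and $\dim\widehat A=r(A)$ for the discrete group $A$, we get $r(A)<\infty$. (Torsion-freeness of $X$ only serves to read this as a statement about the free rank.)

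For (d), suppose $Z\leq_s X$; since $|X/Z|\leq|X|$ always, I only need to rule out $\mu:=|X/Z|<|X|=:\kappa$. If $Z=X$ then $Z\leq_s X$ would force $X$ to have no proper subgroup, impossible for an infinite Abelian group, so $Z\neq X$. Fix coset representatives $\{r_i:i\in I\}$ of $Z$ in $X$ with $|I|=\mu$, and put $B_0:=\langle r_i:i\in I\rangle$, so $Z+B_0=X$. If $\mu$ is infinite then $|B_0|\leq\mu<\kappa$, so $B_0\neq X$, contradicting $Z\leq_s X$. If $\mu$ is finite and $B_0\neq X$, same contradiction. Finally, if $\mu$ is finite with $B_0=X$, then $X$ is finitely generated and infinite, hence admits a surjection $\pi\colon X\twoheadrightarrow\Z$; as $[X:Z]=\mu<\infty$ while $X/\ker\pi\cong\Z$ is infinite, we cannot have $Z\leq\ker\pi$, so $\pi(Z)=d\Z$ with $d\geq1$. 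Pick a prime $e\nmid d$ and set $B:=\pi^{-1}(e\Z)$; then $B\supseteq\ker\pi$, $B\neq X$ (as $\pi(B)=e\Z\neq\Z$), and $Z+B$ is a subgroup containing $\ker\pi$ with $\pi(Z+B)=d\Z+e\Z=\Z$, hence $Z+B=X$ — once more contradicting $Z\leq_s X$. Therefore $|X/Z|=|X|$.

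I expect the main obstacle to be exactly the last case of (d): a proper subgroup of finite index in an infinite Abelian group is never superfluous. One cannot see this inside $X$ itself (a finite quotient leaves no room), and is forced to descend to the finitely generated case and exploit a free quotient of $X$ to produce a ``coprime'' complement; the remaining ingredients are routine, with (c) being the conceptual core of the lemma.
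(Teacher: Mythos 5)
Your proof is correct throughout. Parts (a) and (c) coincide with the paper's arguments (the paper's (a) is your modular-law computation written out, and its (c) is the same annihilator translation). The real divergences are in (b) and the finite-index case of (d). For (b) the paper gives a self-contained, purely algebraic argument: it passes to the divisible hull $D(X)\cong\bigoplus_\sigma\Q$, and from a hypothetical countably infinite independent subset $C$ of $A$ builds a $\Q$-linear retraction onto a line $Y=\Q y_0$ whose kernel $Z$ is proper yet satisfies $A+Z=D$, contradicting superfluousness. You instead route through (c): $A^\bot$ essential $\Rightarrow$ weakly essential $\Rightarrow$ $\widehat X/A^\bot\cong\widehat A$ is an exotic torus, hence finite-dimensional by Theorem~\ref{Thm:ex:tori}, so $r(A)=\dim\widehat A<\infty$. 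This is shorter and makes (b) a corollary of (c), but it imports the nontrivial classification of exotic tori from \cite{DP2}, where the paper's version is elementary and independent of (c); your version also never uses the hypothesis that $r(X)$ is infinite, so it is in fact slightly more general. For (d) both proofs reduce the finite-quotient case to a finitely generated infinite $X$; the paper then chooses a prime $p>|X/Z|$ and shows $X$ would have to be $p$-divisible, contradicting $X\cong\Z^m\times H$, whereas you take a surjection $\pi:X\to\Z$, note $\pi(Z)=d\Z$ with $d\geq 1$, and exhibit the explicit proper complement $B=\pi^{-1}(e\Z)$ for a prime $e\nmid d$. The two finite-case arguments are of comparable difficulty and both hinge on the free summand of a finitely generated infinite group; your version has the small advantage of producing the violating subgroup $B$ explicitly. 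No gaps.
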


\begin{proof} (a) Assume that $S\leq_s H$ and $S + B=X$ for some $B\leq X$. Then obviously $S + (B\cap H) = H$, so $B\cap H=H$, thus $S \leq H \leq B$. Therefore, $B = X$. The case when $H\leq_s X$ is trivial.

(b) The divisible hull $D:=D(X)$ of $X$ is torsion-free of rank $\sigma = r(X)$  infinite. Hence, $D$ can be identified with $\bigoplus_\sigma \mathbb{Q}$. By (a), $A\leq_s D$. Assume for a contradiction that $A$ has a countably infinite independent subset $C$. Fix an element $y_0\in C$ and let $Y=\mathbb{Q}y_0$. We also fix a bijection $f: C\to Y$ such that $f(y_0)=y_0$. Since $C$ is independent (over the filed $\mathbb{Q}$), there is a linear mapping $\widetilde{f}: D\to Y$ extending $f$. Moreover, the restriction of $\widetilde{f}$ to $Y$ is the identity automorphism because $\widetilde{f}(y_0)=f(y_0)=y_0$. So, $D=Y\oplus Z$, where $Z=\ker \widetilde{f}$. Now any $x\in D$ can be represented as $y+z$ with $y\in Y$ and $z\in Z$. Take $c\in C$ such that $f(c)=y$, then $x-c\in Z$ and hence $x\in c+Z\subseteq C+Z$. This is saying that $D=C+Z=A+Z$. But $Z$ is a proper subgroup of $D$, this contradicts $A\leq _s D$.

(c) Assume that $A^\bot$ is essential in $\widehat X$ and $Z$ is a proper subgroup of $X$. Then  $Z^\bot \ne \{0\}$ is a closed subgroup of $\widehat X$, so
 $A^\bot \cap Z^\bot\ne \{0\}$. Therefore, $(A + Z)^\bot = A^\bot \cap Z^\bot \ne \{0\}$, implying that  $A+Z \ne X$. 
 
Now assume that $A\leq_s X$. To prove that the closed subgroup $A^\bot$ of $\widehat X$ is essential pick a non-trivial closed subgroup $N$ of $\widehat X$. Then $N^\bot$ is a proper subgroup of $X$, hence $A + N^\bot \ne X$. Therefore, $\{0\}\ne (A + N^\bot)^\bot = A^\bot \cap N$ witnessing essentiality of $A^\bot$.

(d) Let $Y = X/Z$ and assume for a contradiction that $|Y| < |X|$  and let $q: X \to Y = X/Z$ be the surjective canonical homomorphism with  kernel $Z$.

Let us see first that $Y$ cannot be finite. Indeed, assume that  $Y$ is finite and take a finite subset $F$ of $X$ such that $q$ bijectively maps $F$ onto $Y$, so $X=F+Z$. Then $X = B + Z$ as well, where $B=\langle F \rangle$ is the subgroup generated by $F$. 
Hence, $B=X$, since $Z$ is superfluous in $X$. Then $X$ is finitely generated, so $X \cong \Z^m \times H$, for some $m\in \N_+$ and some finite Abelian group $H$.

Take a prime number $p$ with $p>|Y|$. Let us show that $F\subseteq pB+Z$. For any $x\in F$, there exists a natural number $n$ dividing $|Y|$ such that $nx\in Z$.
Since $p> |Y|$, $p$ must be coprime with $n$, so there exist integers $u$ and $v$ such that $1=up+vn$. As $px \in pB$ we deduce that   
$$
x=u(px)+v(nx)\in pF+Z\subseteq pB+Z,
$$
implying that $F\subseteq pB+Z$, and consequently $X = B  \subseteq pB+Z$. Since $Z$ is superfluous, we deduce that $pB =X$. 
This proves that $X=B$ is $p$-divisible, a contradiction in view of $X \cong \Z^m \times H$. 

Let us see now that our assumption $|Y| < |X|$ leads to contradiction also when $Y$ is infinite. There exists a subgroup $A$ of $X$ such that $|A| = |Y|$ and $q(A) = Y$, so $A + Z=X$. On the other hand, as $Z\leq_s X$, $|A| = |Y| < |X|$ implies that $A$ is proper, so $A+Z \ne X$, a contradiction.
\end{proof}
  
\begin{proposition}\label{NEW:proposition} If $M$ is a closed subgroup of a compact Abelian group $K$, then $w(c(K)) = w(c(M))$ provided one of the following two conditions are satisfied: 
 \begin{itemize}
  \item[(a)] $M$ is a $G_\delta$-subgroup of $K$ with $c(M)\ne \{0\}$; 
  \item[(b)] $M$ is a locally essential subgroup of $K$ and $\dim K$ is infinite.
 \end{itemize}
\end{proposition}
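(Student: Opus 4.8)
The plan is to pass to Pontryagin duals and reduce everything to cardinalities and (torsion-free) ranks of Abelian groups. The easy inequality $w(c(M))\le w(c(K))$ holds in both cases: $c(M)$ is a connected subgroup of $K$ containing $0$, hence $c(M)\subseteq c(K)$, and $c(M)$ is closed in $K$ (as $M$ is), so it is a closed subgroup of the compact group $c(K)$. For the reverse inequality I will use throughout that for a compact Abelian group $L$ one has $w(L)=|\widehat L|$; that $\widehat{c(L)}\cong\widehat L/t(\widehat L)$ is torsion-free of free rank $r(\widehat L)$ (so $w(c(L))=|\widehat L/t(\widehat L)|$, which equals $r(\widehat L)$ when the latter is uncountable and equals $\omega$ when $1\le r(\widehat L)\le\omega$); and that for a closed subgroup $N\le L$ one has $\widehat{L/N}\cong N^\bot$, $\widehat N\cong\widehat L/N^\bot$, together with the additivity of the torsion-free rank in short exact sequences.

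Case (b) is short. A closed locally essential subgroup is weakly essential by Lemma~\ref{Neeew}(a), so by Corollary~\ref{Cor:13May} the quotient $K/M$ is an exotic torus with $\dim(K/M)<\infty$; hence $r(M^\bot)=r(\widehat{K/M})<\infty$. From $0\to M^\bot\to\widehat K\to\widehat M\to 0$ and additivity of rank, $r(\widehat M)=r(\widehat K)$, because $r(\widehat K)=\dim K$ is infinite and we are subtracting a finite number. Taking components, $w(c(M))=|\widehat M/t(\widehat M)|=r(\widehat M)=r(\widehat K)=|\widehat K/t(\widehat K)|=w(c(K))$.

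For case (a), since $c(M)\subseteq c(K)$ and $c(M)\neq\{0\}$, the dual $\widehat K$ is non-torsion; put $\rho:=r(\widehat K)\ge 1$. As $M$ is a closed $G_\delta$-subgroup of the compact group $K$, the quotient $K/M$ is metrizable, so $M^\bot=\widehat{K/M}$ is countable. Let $(M^\bot)_*:=\{\chi\in\widehat K:\ n\chi\in M^\bot\text{ for some }n\ge 1\}$ be the isolator of $M^\bot$ in $\widehat K$; then $\widehat{c(M)}\cong\widehat M/t(\widehat M)\cong\widehat K/(M^\bot)_*$ is torsion-free, and $r((M^\bot)_*)=r(M^\bot)\le\omega$, so $r(\widehat{c(M)})=\rho-r(M^\bot)$. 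If $\rho>\omega$, then $r(\widehat{c(M)})=\rho$, hence $w(c(M))=\rho$; on the other hand $w(c(K))=|\widehat K/t(\widehat K)|=\rho$ since $\widehat K/t(\widehat K)$ is torsion-free of uncountable rank $\rho$; so the two coincide (and here $c(M)\neq\{0\}$ is in fact automatic). If $\rho\le\omega$, then $w(c(K))=|\widehat K/t(\widehat K)|=\omega$ because $\widehat K/t(\widehat K)$ is torsion-free of rank $\rho$ with $1\le\rho\le\omega$; since $c(M)\neq\{0\}$ we have $w(c(M))\ge\omega$, and $w(c(M))\le w(c(K))=\omega$ by the first paragraph, so again $w(c(M))=w(c(K))$.

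The only non-elementary input is Corollary~\ref{Cor:13May} in case (b), which is already available; everything else (the duality dictionary, additivity of rank, the equivalence ``$M$ is a closed $G_\delta$-subgroup of a compact group'' $\Leftrightarrow$ ``$K/M$ is metrizable'', and the cardinal arithmetic $\mu+\nu=\rho$ with $\mu\le\omega<\rho$ forcing $\nu=\rho$) is routine and I would not expand it. The main thing requiring care is the bookkeeping in case (a): identifying $\widehat{c(M)}$ with $\widehat K/(M^\bot)_*$ through the isolator, and handling the degenerate range $\rho\le\omega$ separately -- which is precisely where the hypothesis $c(M)\neq\{0\}$ is used.
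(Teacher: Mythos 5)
Your proof is correct. In part (a) it follows essentially the paper's route: dualize, use that $M^\bot\cong\widehat{K/M}$ is countable because $K/M$ is metrizable, and compare $\widehat K$ with $\widehat{c(M)}\cong\widehat K/(M^\bot)_*$; your isolator $(M^\bot)_*$ is exactly the paper's saturation $\mathrm{sat}(Y)$, and the only difference is that the paper first replaces $K$ by $c(K)$ (so that $\widehat K$ is torsion-free and one can count elements rather than ranks) while you keep $K$ arbitrary and do the bookkeeping with torsion-free ranks --- both work, and your case split $\rho>\omega$ versus $\rho\leq\omega$ matches the paper's non-metrizable/metrizable dichotomy. In part (b) your route is genuinely different and shorter. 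The paper first treats the case of an \emph{essential} $M$ via the superfluous-subgroup machinery (Lemma \ref{Le:finiterank}(b),(c): $M$ essential iff $M^\bot$ is superfluous in the torsion-free group $\widehat K$, which forces $r(M^\bot)<\infty$), and then reduces the locally essential case to the essential one by intersecting with a co-NSS subgroup. You instead obtain $r(M^\bot)=\dim(K/M)<\infty$ in one stroke from Lemma \ref{Neeew}(a) together with Corollary \ref{Cor:13May}: a closed locally essential subgroup is weakly essential, so $K/M$ is an exotic torus and hence finite-dimensional. This works for arbitrary (not necessarily connected) $K$ and eliminates both the reduction to $c(K)$ and the essential/locally-essential two-step. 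The price is reliance on the classification of exotic tori (Theorem \ref{Thm:ex:tori}, an external input from \cite{DP2}, though one the paper already states and uses elsewhere); the gain is a cleaner one-step argument that ends in the same additivity-of-rank computation.
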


  \begin{proof}
  Let $C = c(K)$. In both (a) and (b) we can assume that $K = C$ is connected, by replacing $M$ with $M_c:= M\cap C$.  Indeed, $c(M_c) = c(M)$ as $c(M) \leq M_c$. Moreover,   in (a) $M_c$ is a $G_\delta$-subgroup of $C$, while in (b) $M_c$ is a locally essential subgroup of $C$ and $\dim C = \dim K$ is infinite.

(a)  If $K$ is metrizable, the conclusion $w(c(M)) = w(K)$ is immediate since $c(M) \ne \{0\}$. Hence, we assume $K$ is not metrizable.

Let $X=\widehat{K}$, then it is torsion-free and $|X|=w(K)$ is uncountable.
Since $M$ is a $G_\delta$-subgroup of $K$, $K/M$ is metrizable. Thus, $Y:=M^\perp \cong \widehat{K/M}$ is countable.
Put $Z=X/Y$. We have $\widehat{M} \cong Z$ and $c(M) \cong \widehat{Z/t(Z)}$. It suffices to show $|X| = |Z/t(Z)|$.
Let $q: X\to Z$ be the canonical map. The saturation $\mathrm{sat}(Y) = q^{-1}(t(Z))$ is countable since $X$ is torsion-free and $Y$ is countable.
Since $X / \mathrm{sat}(Y) \cong Z / t(Z)$ and $X$ is uncountable while $\mathrm{sat}(Y)$ is countable, we conclude that $|X|=|Z/t(Z)|$.
   
  (b) We first consider the case that $M$ is essential in $K$.   We are going to prove a stronger assertion, namely $\dim K/c(M) < \infty$, which implies $w(c(M))=\dim c(M) = \dim K =w(K)$, in view of (Y) and our assumption that $\dim K$ is infinite.
  
 Let $A=M^\perp$. By Lemma  \ref{Le:finiterank}(c), $A\leq_s  \widehat K$ since $M$ is essential in $K$. As $ \widehat K$ is torsion-free, $r(A)$ is finite by item (b) of the same lemma. This is equivalent to say that $\dim K/M$ is finite. The desired inequality $\dim K/c(M) < \infty$ follows from (Y) and the topological isomorphism
$K/M\cong (K/c(M))/(M/c(M)),$ where $M/c(M)$ is zero-dimensional.

For the general locally essential case we take a \nbd\ $U$ of $0$ in $K$ witnessing local essentiality of $M$ and a co-NSS subgroup $N$ of $K$, so $\dim  K/N < \infty$. 
Then, $w(c(N))=\dim N=\dim K$ is infinite by (Y) and $M\cap N$ is essential in $N$. It follows from the above argument  applied to $M\cap N \leq N$ 
that $w(c(M\cap N))=w(c(N))=w(K)$. Hence $w(c(M)) \geq w(c(M\cap N)) = w(K)$. Since the inequality $w(c(M))\leq w(K)$ is obvious, we are done.  
 \end{proof}

\begin{example} For $n\in \N_+$ the group $K = \widehat \Q^n \cong \widehat{\Q^n}$ is connected, and $M:= (\Z^n)^\bot$ is an essential subgroup of $K$
in view of Example \ref{Exa:july1} and Lemma \ref{Le:finiterank}(c). Nevertheless, $w(K)> w(c(M))$, since 
$c(M) = \{0\}$. Hence, the second condition in item (b) of the above theorem cannot be relaxed.  
\end{example}
 
\begin{proposition}\label{Prop:lin} Let $G$ be a locally minimal Abelian group. Then: 
\begin{itemize}
  \item[(a)] $G$ contains a closed minimal $G_\delta$-subgroup $G_1$; 
  \item[(b)]  if $G$ has a linear topology, then $G_1$ can be chosen to be open; 
  \item[(c)]  if $G$ is precompact and non-metrizable, then $G_1$ can be chosen with $w(G_1)=w(G)$. 
\end{itemize}
In particular, a linearly topologized Abelian group is locally minimal if and only if it contains on open minimal subgroup. 
\end{proposition}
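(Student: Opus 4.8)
The plan is to reduce all three parts to a single lemma: \emph{if $G$ is locally minimal witnessed by a neighbourhood $V$ of $0$ and $N$ is a closed subgroup of $G$ admitting a neighbourhood $U$ of $0$ with $N+U\subseteq V$, then $N$ is minimal.} To prove this, I would take an arbitrary Hausdorff group topology $\sigma$ on $N$ coarser than the subspace topology $\tau\restriction N$ and build a group topology $\tau'$ on $G$ whose neighbourhood base at $0$ consists of all sets $W+O$, where $W$ runs over $\sigma$-neighbourhoods of $0$ in $N$ and $O$ over $\tau$-open neighbourhoods of $0$ in $G$. Because $N$ is a subgroup and $G$ is Abelian, $\{W+O\}$ is a filter base satisfying the halving condition (choose $W'+W'\subseteq W$ and $O'+O'\subseteq O$), so it defines a group topology; each $W+O=\bigcup_{w\in W}(w+O)$ is $\tau$-open, so $\tau'\subseteq\tau$; $\tau'$ is Hausdorff (for $x\notin N$ use that $N$ is $\tau$-closed; for $0\ne x\in N$ use a $\sigma$-closed $\sigma$-neighbourhood of $0$ missing $x$, which is automatically $\tau$-closed in $G$); and $\tau'\restriction N=\sigma$ (the trace of $W+O$ on $N$ is $W+(O\cap N)\supseteq W$, and conversely $W_1+(O\cap N)$ can be squeezed inside any prescribed $\sigma$-neighbourhood). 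Now the hypothesis is used exactly once: $N+U\subseteq V$ shows the basic $\tau'$-neighbourhood $N+U$ (take $W=N$, $O=U$, with $U$ open) lies in $V$, so $V$ is a $\tau'$-neighbourhood; local minimality of $G$ then forces $\tau'=\tau$, hence $\sigma=\tau'\restriction N=\tau\restriction N$. I expect the verification that $\tau'$ is a Hausdorff group topology for which $V$ is still a neighbourhood to be the main obstacle: the ``room'' hypothesis $N+U\subseteq V$ cannot be dropped, since the more naive claim (``every closed subgroup contained in $V$ is minimal'') fails at this very step, because intersecting a $\sigma$-open set with a small $\tau$-open set is no longer $\sigma$-open.

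Granting the lemma, part (a) follows by fixing $V$ witnessing local minimality (any smaller neighbourhood does too) and choosing a decreasing sequence $(U_n)$ of open symmetric neighbourhoods of $0$ with $U_0\subseteq V$, $\overline{U_{n+1}}\subseteq U_n$ and $U_{n+1}-U_{n+1}\subseteq U_n$; then $G_1:=\bigcap_n U_n=\bigcap_n\overline{U_n}$ is a closed $G_\delta$-subgroup, $G_1\subseteq U_1$, and $G_1+U_1\subseteq U_1+U_1=U_1-U_1\subseteq U_0\subseteq V$, so the lemma applies with $N=G_1$, $U=U_1$. For part (b), a linear topology furnishes an open subgroup $H\subseteq V$; it is closed, hence $G_\delta$, and $H+H=H\subseteq V$, so $H$ is minimal by the lemma and one takes $G_1=H$. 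The concluding equivalence is then immediate: conversely, an open minimal subgroup $M$ of a linearly topologized Abelian group witnesses local minimality, because a continuous isomorphism $f$ with $f(M)$ a neighbourhood maps the open subgroup $M$ homeomorphically (by minimality of $M$) onto the open subgroup $f(M)$, hence is open.

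For part (c), precompactness of $G$ makes $K:=\widetilde G$ compact, so $K$ is minimal and, by the Minimality Criterion \ref{Crit}(i), $G$ is locally essential in $K$, witnessed by some neighbourhood $\widetilde V$ of $0$. Using Fact \ref{Fact0}(a) and Remark \ref{Laast:Rem} I would pick a co-NSS subgroup $N$ of $K$ together with a neighbourhood $W$ satisfying $N\subseteq W\subseteq\widetilde V$ and such that every subgroup of $K$ contained in $W$ lies in $N$; then $G_1:=G\cap N$ meets every non-trivial closed subgroup of $N$, so $G_1$ is essential in $N$ and hence minimal ($N$ being compact), and $G_1$ is a closed $G_\delta$-subgroup of $G$ since $K/N$, a compact Abelian NSS group, is metrizable, so $N$ is $G_\delta$ in $K$. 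Finally $w(G_1)=w(G)$: non-metrizability gives $w(K)>\omega$ while $w(K/N)\le\omega$, so $w(N)=w(K)=w(G)$; and $M:=\overline{G_1}^{\,K}$ is a closed essential subgroup of $N$, hence $M^\bot$ is superfluous in $\widehat N$ (Lemma \ref{Le:finiterank}(c) applied to the dual), so $w(M)=|\widehat N/M^\bot|=|\widehat N|=w(N)$ by Lemma \ref{Le:finiterank}(d); since $G_1$ is dense in $M$, $w(G_1)=w(M)=w(G)$.
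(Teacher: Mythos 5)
Your proposal is correct and follows the same skeleton as the paper's proof: in (a) a shrinking sequence of symmetric neighbourhoods whose intersection is a closed $G_\delta$-subgroup, in (b) an open subgroup inside the witness neighbourhood together with \cite[Proposition 2.4]{ACDD1} for the converse, and in (c) a co-NSS subgroup $N$ of $K=\widetilde G$ inside the witness, $G_1=G\cap N$ essential in $N$, and the weight computation via Lemma \ref{Le:finiterank}(c),(d). The one genuine difference is the engine behind (a) and (b): the paper simply imports from \cite[Corollary 3.3]{DDHXX} the statement that some neighbourhood $U$ has the property that every closed subgroup contained in $U$ is minimal, whereas you prove a self-contained substitute (the ``room'' lemma: $N$ closed with $N+U\subseteq V$ implies $N$ minimal) by coarsening $\tau$ to the topology generated by the sets $W+O$ and invoking local minimality for the identity map; your verification that this is a Hausdorff group topology inducing $\sigma$ on $N$ and keeping $V$ a neighbourhood is sound, and your lemma recovers the cited corollary by halving $V$. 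This buys independence from the external reference at the cost of a page of routine checking. Two small points to tighten: in (c), $G_1$ is essential but not necessarily dense in $N$, so ``essential in a compact group, hence minimal'' needs the remark following Theorem \ref{Crit} (pass to the compact, hence minimal, group $\overline{G_1}$, in which $G_1$ is dense and essential); and your parenthetical claim that the ``naive'' version without room actually \emph{fails} is not substantiated --- it is only that your particular refinement argument needs the room, which is all you use.
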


\begin{proof} Denote by $K$ the completion of $G$, which is locally minimal, by Theorem \ref{Crit}(i). In item (c) $K$ will be compact. 

(a) By \cite[Corollary 3.3]{DDHXX}, there exists an open \nbd \ $U$ of 0 in $G$ such that every closed subgroup $H$ of $G$ contained in $U$ is minimal.  In order to get the desired $G_1$ let $U_0:= U\cap -U$ and for each $n>0$ choose an open symmetric \nbd \ $U_n$ of 0 in $G$ such that $\overline{U_n+U_n} \subseteq U_{n-1}$. Then $G_1 = \bigcap U_n$ is a closed  $G_\delta$-subgroup of $G$ contained in $U$, so $G_1$ is minimal.  

(b) This follows immediately by the proof of (a), since $U$ can be chosen to be an open subgroup and $U_n$ can be chosen to be $U$, so that $G_1=U$ will be open. 

(c) Assume that $G$ is not metrizable, hence $K$ is not metrizable as well. By Theorem \ref{Crit}(i), there exists an open \nbd \ $V$ of 0 in $K$ witnessing local minimality of $G$. By Remark \ref{Laast:Rem}, there exists a co-NSS subgroup $N$ of $K$ contained in $V$. Then $N$ is a $G_\delta$-subgroup of $K$ and $w(N)=w(K)$.
Moreover, $G_1:=G\cap N$ is essential in $N$, by Remark \ref{Laast:Rem}.
Then, $\overline{G_1}$, the closure of $G_1$ in $K$, is also essential in $N$.
By Lemma \ref{Le:finiterank}(c) and (d), we obtain that $w(\overline{G_1})=w(N)$.
Then we have 
$$w(G_1)=w(\overline{G_1})=w(N)=w(K)=w(G).$$
It remains to note that $G_1$ is minimal since it is essential in $\overline{G_1}$.

 The final assertion follows from (b) and the fact that a group with an open minimal subgroup is locally minimal \cite[Proposition 2.4]{ACDD1}. 
 \end{proof}

\begin{remark}\label{Rem:3.6} 
(i) The non-metrizability restraint in item (c) is necessary. There are infinite locally minimal subgroups $G$ of $\T$ (as $\Z(p^\infty)$) such that the only minimal subgroups $G_1$ of $G$ are finite.   

(ii) It is known that a group containing an open minimal subgroup is  locally minimal \cite[Proposition 2.4]{ACDD1}.  Item (b) of the above proposition shows that this sufficient condition becomes also necessary when the group in question  has linear topology. 
\end{remark} 

 Here are two immediate applications of  Proposition \ref{Prop:lin}, further applications will come in the sequel. 
 For an abstract Abelian group $G$ we denote by $G^\#$ the group $G$ equipped with the Bohr topology, namely the topology generated 
by all characters $G \to \T$.

\begin{corollary}\label{Coro1:May27} For an Abelian group $G$, the following are equivalent: 
\begin{itemize}
\item[(a)] $G^\#$ is locally minimal; 
\item[(b)] $G^\#$ is compact;
\item[(c)]  $G$ is finite.
\end{itemize}
\end{corollary}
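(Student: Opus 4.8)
The plan is to prove the cycle of implications (c)$\Rightarrow$(b)$\Rightarrow$(a)$\Rightarrow$(c), of which only the last is substantial. The implication (c)$\Rightarrow$(b) is immediate: if $G$ is finite, then $G^\#$ carries the discrete topology (every singleton is the intersection of kernels of finitely many characters, as finite abelian groups have separating character families), and a finite discrete group is compact. The implication (b)$\Rightarrow$(a) is also immediate, since compact groups are minimal, hence locally minimal.

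For (a)$\Rightarrow$(c), the key structural feature to exploit is that $G^\#$ is a \emph{precompact} group whose topology is \emph{linear}: indeed the Bohr topology is generated by the characters $G\to\T$, but for an abstract abelian group one can equally generate it by the subgroups $\ker\chi$ for $\chi\in\mathrm{Hom}(G,\T)$ together with finite intersections thereof; more carefully, the sets $\{x : \chi_1(x),\dots,\chi_n(x)\in U\}$ for a small $\T$-neighbourhood $U$ form a base, and these contain the open subgroup $\bigcap_i\ker\chi_i$ when... actually the cleanest route is: $G^\#$ embeds in a power $\T^{I}$, and the torsion subgroup of $G$ gives a linearly topologized piece while the torsion-free part does not, so I would instead argue directly. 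Assume $G^\#$ is locally minimal. By Proposition~\ref{Prop:lin}(a), $G^\#$ contains a closed minimal $G_\delta$-subgroup $G_1$. Minimal abelian groups are precompact (Prodanov--Stoyanov), which is automatic here, but more to the point: I want to derive that $G$ must be torsion and bounded, and then finite.

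The main obstacle is controlling the interaction between local minimality and the Bohr topology; the cleanest handle is a known rigidity fact about $G^\#$: the Bohr topology never makes an infinite group locally minimal because one can always find a large subgroup on which the induced topology is the full Bohr topology yet which fails to be (locally) essential in its closure. Concretely, I would proceed as follows. First reduce to the case $G$ countable: if $G^\#$ is locally minimal then so is $H^\#$ for every subgroup $H$ of $G$ (since $H$ inherits its Bohr topology from $G$, and an essential-subgroup/closed-subgroup argument via Theorem~\ref{Crit} passes local minimality to the closure $\overline{H}$ in the compact group $\widetilde{G^\#}$, in which $H$ is essential iff $H$ is locally essential in $G^\#$ which it is as a subgroup... ); hence it suffices to show no countably infinite $G$ has $G^\#$ locally minimal. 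Second, for countably infinite $G$, the completion $\widetilde{G^\#}$ is a compact group of weight $2^\omega$, metrizability fails, and one uses that $G^\#$ is dense in $\widetilde{G^\#}$ with $|G|=\omega < |\widetilde{G^\#}|$; by Lemma~\ref{Le:finiterank}(d) no proper dense subgroup realizing this cardinality gap can be superfluous-complemented, and a counting/essentiality argument (in the spirit of the proof of Proposition~\ref{Prop:lin}(c), but now showing the \emph{opposite}: that $G$ is too small to be locally essential in $\widetilde{G^\#}$) yields a contradiction. The delicate point — and where I expect to spend the most effort — is establishing that a countably infinite abstract abelian group, when sitting densely in the compact group $\widetilde{G^\#}$ via the Bohr embedding, is never locally essential: one must produce a non-trivial closed subgroup of $\widetilde{G^\#}$ inside a prescribed neighbourhood that misses $G$ entirely, which should follow from the fact that $\widetilde{G^\#}=b(G)$ contains large torsion-free or $\prod_p\Z_p$-type pieces (since $\widehat{G^\#}=\mathrm{Hom}(G,\T)$ with the discrete topology is a large divisible-ish group) whose small closed subgroups are disjoint from the countable set $G$.

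Alternatively — and this may be the shorter path I would try first — one invokes that $G^\#$ is locally minimal iff $G^\#$ contains an open minimal subgroup \emph{when the topology is linear} (final assertion of Proposition~\ref{Prop:lin}); if $G$ has an element of infinite order, the subgroup it generates is a copy of $\Z^\#$, and $\Z^\#$ is not locally minimal (its completion is $b\Z$, in which $\Z$ is not locally essential since $b\Z$ has plenty of small $\Z_p$ subgroups avoiding $\Z$), handling the torsion-free case; if $G$ is torsion but unbounded, pick elements of orders $n_1 < n_2 < \cdots$ and reduce to an unbounded countable torsion group, whose Bohr completion is $\prod_p(\text{large }p\text{-groups})$ and again admits small closed subgroups disjoint from the (countable) original group, contradicting local essentiality; finally if $G$ is bounded and infinite, $G\cong\bigoplus$ of infinitely many cyclic groups of bounded order, $G^\#$ has a closed $G_\delta$ minimal subgroup by Proposition~\ref{Prop:lin}(a) which must then be compact and metrizable, but a metrizable compact subgroup of $\widetilde{G^\#}$ of this form forces $G$ countable and one checks directly (again via the cardinality-gap/Lemma~\ref{Le:finiterank}(d) obstruction, or via Corollary~\ref{New:Corollary}-style torsion-free arguments after dualizing) that this is impossible. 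Thus $G$ must be finite.
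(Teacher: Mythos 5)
Your implications (c)$\Rightarrow$(b)$\Rightarrow$(a) are fine, but the proof of (a)$\Rightarrow$(c) is a bundle of sketches, none of which closes. The paper's own argument is two lines and you never find it: for infinite $G$ one has $w(G^\#)=2^{|G|}$, while Proposition~\ref{Prop:lin}(c) (not part (a), which is what you invoke) produces a closed \emph{minimal} subgroup $G_1$ with $w(G_1)=w(G^\#)$; since minimal Abelian groups satisfy $w(G_1)\leq |G_1|\leq |G|<2^{|G|}$, this is already a contradiction, with no case analysis on the torsion structure of $G$. Everything you attempt is a substitute for this single cardinality clash, and each substitute has a hole.

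Concretely: (i) $G^\#$ is \emph{not} linearly topologized in general --- for $\Z^\#$ the basic neighbourhood $\chi_\alpha^{-1}(U)$, where $\chi_\alpha(n)=n\alpha$ with $\alpha$ irrational and $U\subsetneq\T$ small, contains no non-trivial subgroup of $\Z$ --- so the ``final assertion of Proposition~\ref{Prop:lin}'' (existence of an open minimal subgroup) is inapplicable; you half-retract this claim at the start and then rely on it again in the ``alternative path''. (ii) The reduction to subgroups ($G^\#$ locally minimal $\Rightarrow H^\#$ locally minimal) is stated with an incoherent justification (``which it is as a subgroup''); local minimality does \emph{not} pass to arbitrary subgroups of locally minimal Abelian groups (the diagonal copy of $\Z$ in the compact group $\prod_p\Z_p$ is not locally essential there, since each factor $\Z_q$ lies in a basic neighbourhood of $0$ and meets $\Z$ trivially). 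The reduction can be salvaged --- every subgroup of $G$ is closed in $G^\#$, carries its own Bohr topology as the induced one, and closed subgroups of locally minimal Abelian groups are locally minimal --- but you never assemble these three facts. (iii) The core non-essentiality claims ($\Z$ is not locally essential in $b\Z$; countable unbounded torsion groups are not locally essential in their Bohr compactifications) are asserted with only the phrase ``plenty of small $\Z_p$ subgroups avoiding $\Z$''; they are true but need an argument (e.g.\ $b\Z$ projects onto $\prod_p\Z_p$ with kernel $\cong\K^{\mathfrak c}$ meeting $\Z$ trivially, and every neighbourhood of $0$ meets that kernel in a neighbourhood containing non-trivial closed subgroups). (iv) The bounded infinite case is where the proposal genuinely breaks: the assertion that the closed minimal $G_\delta$-subgroup from Proposition~\ref{Prop:lin}(a) ``must then be compact and metrizable'' is unjustified on both counts --- minimality yields only precompactness, closedness in the incomplete group $G^\#$ does not yield compactness, and being $G_\delta$ does not yield metrizability; moreover, by Glicksberg's theorem compact subsets of $G^\#$ are finite, so if that subgroup \emph{were} compact you would still need a separate argument to derive a contradiction. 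As it stands, (a)$\Rightarrow$(c) is not proved.
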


\begin{proof} The implications (c) $\Rightarrow$ (b) $\Rightarrow$ (a) are obvious. To prove that (a) $\Rightarrow$ (c) assume that $G$ is infinite. Then $G^\#$ is a precompact group with $w(G^\#) = 2^{|G|}> \omega$ (the equality is well known, see \cite{D_bohr}). 
 Then by Proposition \ref{Prop:lin}(c), $G$ contains a closed minimal subgroup $G_1$ with $w(G_1)=w(G)$. Since minimal groups satisfy  $w(G_1) \leq |G_1|$, 
 we deduce that $w(G) = w(G_1) \leq |G_1| \leq |G| $, which contradicts the fact that  $w(G^\#) = 2^{|G|}$. 
\end{proof}

Locally compact groups are both complete and locally minimal. Now we see that for 
Abelian groups this 
implications can be inverted in some sense -- the complete locally minimal  Abelian group $G$ contains a large compact subgroup:   

\begin{corollary}\label{Coro2:May27} Every complete locally minimal  Abelian group $G$ contains a compact $G_\delta$-subgroup. 
\end{corollary}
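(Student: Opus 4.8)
The plan is to reduce the statement immediately to Proposition \ref{Prop:lin}(a), which already produces a closed minimal $G_\delta$-subgroup of $G$; the only remaining task is to upgrade ``minimal'' to ``compact'' by exploiting the completeness of $G$.

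Concretely, I would first apply Proposition \ref{Prop:lin}(a) to the locally minimal Abelian group $G$, obtaining a subgroup $G_1 \leq G$ that is closed in $G$, is a $G_\delta$-subgroup of $G$, and is minimal. Since $G$ is complete and $G_1$ is closed in $G$, the group $G_1$ is itself complete. The crucial observation is then that $G_1$, being a minimal Abelian group, is precompact by the Prodanov--Stoyanov Theorem; equivalently, its completion $\widetilde{G_1}$ is compact. As $G_1$ is complete, $G_1 = \widetilde{G_1}$, so $G_1$ is compact. Thus $G_1$ is a compact $G_\delta$-subgroup of $G$, which is exactly what is claimed.

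I do not anticipate any genuine obstacle: essentially all the substance is already packaged in Proposition \ref{Prop:lin}(a) (which in turn rests on \cite[Corollary 3.3]{DDHXX}, giving a neighbourhood of $0$ all of whose closed subgroups are minimal, together with the routine construction of a closed $G_\delta$-subgroup sitting inside a prescribed neighbourhood). The step from minimality to compactness is then the standard combination ``precompact (Prodanov--Stoyanov) $+$ complete $=$ compact,'' together with the elementary fact that a closed subgroup of a complete group is complete.
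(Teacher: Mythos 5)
Your argument is exactly the paper's proof: invoke Proposition \ref{Prop:lin}(a) to obtain a closed minimal $G_\delta$-subgroup $G_1$, note that $G_1$ is complete as a closed subgroup of the complete group $G$, and conclude compactness from the Prodanov--Stoyanov Theorem. No gaps; the proposal matches the paper's reasoning step for step.
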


\begin{proof} Let $G$ be a complete locally minimal  Abelian group $G$. By Proposition \ref{Prop:lin}(a), $G$ contains a closed minimal $G_\delta$-subgroup $G_1$. 
Then $G_1$ will be also complete, so compact by Prodanov-Stoyanov's theorem. 
\end{proof}

This corollary fails in the non-Abelian case. Indeed, it is shown in \cite[Example 4.3(b)]{DDHXX1} that there exists a non-compact, locally compact group $G$ whose arbitrary powers are minimal (and also complete). However, since every $G_\delta$ subgroup of $G^{\omega_1} $ contains a copy of $G^{\omega_1} $, it follows that $G^{\omega_1} $ cannot have an (even locally) compact $G_\delta$-subgroup.

\bigskip

We conclude this section with a theorem giving some necessary and some sufficient conditions for local essentiality of subgroups
of a compact Abelian group $K$. To formulate one of them more conveniently in terms of socles, for subgroups $G$ and $H$ of $K$ we introduce the set 
$$\pi_{(H:G)} =\{p\in \P: H[p]\not \leq G\}.$$ Clearly, $\pi_{(H:G)}  \subseteq  \pi_{(H_1:G_1)}$, whenever $H \leq H_1$ and $G_1\leq G$. 
In particular, $ \emptyset = \pi_{(\{0\}:G)} \subseteq \pi_{(H:G)}  \subseteq \pi_{(H:\{0\})} = \pi_{tor}(H)$. 

\begin{theorem}\label{claim4} Let $K$ be a compact Abelian group and $G$ be a weakly essential subgroup of $K$.
\begin{itemize} 
  \item[(a)]If $G$ is locally essential in $K$, then
      \begin{itemize}
         \item[($*$)]  for every $p\in \P$ the subgroup $G[p]$ of $K[p]$ is open, 
      \end{itemize}
  and there exits $n\in \N$ such that  $r_p(K[p]/G[p]) \leq n$ for every prime $p$.
  \item[(b)] If $(*)$ holds and $ |\pi_{(K:G)} |   <  \infty$, then $G$ is locally essential in $K$. 
  \item[(c)] If $K_1:= \overline{\Soc(K)}$ is finite-dimensional, then the following conditions are equivalent:
\begin{itemize}
    \item[(c$_1$)] $G$ is locally essential in $K$; 
    \item[(c$_2$)] for every $p\in \P$ the subgroup $G[p]$ of $K[p]$ is open and $|\pi_{(M:G)}|< \infty$ for every  $\delta$-subgroup $M$ of $K$.
    \item[(c$_3$)]  $G \cap N$ is locally essential in $N$ for some (every)  
    $\delta$-subgroup $N$ of $K_1$. 
\end{itemize}
    In particular, when $K$ is totally disconnected, then $G$ is locally essential in $K$ iff $|\pi_{(K:G)}|< \infty$ and $G[p]$ is open in  $K[p]$ for every $p\in \P$.  
 \end{itemize}
\end{theorem}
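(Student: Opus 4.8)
The plan is to treat the three parts in order, reducing everything to the structure of the socle and to Claim \ref{NewClaim*} (local essentiality can be checked on co-NSS or $\delta$-subgroups) together with Lemma \ref{Neeew}(c) (local essentiality $=$ weak essentiality plus local essentiality of $\Soc(G)$ in $\Soc(K)$).

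For part (a), suppose $G$ is locally essential in $K$, witnessed by a neighbourhood $W$ of $0$. To see $(*)$, fix a prime $p$; the subgroup $K[p]$ is a compact vector space over $\Z(p)$, i.e. a product of copies of $\Z(p)$, and $W\cap K[p]$ contains an open subgroup $U$ of $K[p]$ which is again such a product. Every cyclic subgroup of $U$ is a closed $\Z(p)$-subgroup contained in $W$, so it meets $G$ non-trivially, hence lies in $G$; therefore $U\subseteq G\cap K[p]$, so $G[p]=G\cap K[p]$ is open in $K[p]$. For the uniform bound on $r_p(K[p]/G[p])$: if some $r_p(K[p]/G[p])$ were $\geq n$, then $K[p]$ would contain $n$ independent elements outside $G$; using that $K[p]$ splits off a copy of $\Z(p)^n$ transverse to $G[p]$ and that $\Soc(K)=\bigoplus_p K[p]$, one can build, for arbitrarily large $n$, a finite subgroup of $\Soc(K)$ contained in $W$ but not meeting $G$ — contradicting local essentiality (here one uses that $W\cap\Soc(K)$ contains an open subgroup of $\Soc(K)$, hence all but finitely many $K[p]$ entirely, and large pieces of the remaining finitely many). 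The key point is that a single $W$ must work for all primes simultaneously, which forces the bound $n$.

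For part (b), assume $(*)$ and $|\pi_{(K:G)}|<\infty$. By Lemma \ref{Neeew}(c) it suffices to show $\Soc(G)$ is locally essential in $\Soc(K)$ (weak essentiality of $G$ is assumed). Write $\Soc(K)=\bigoplus_{p\in F}K[p]\ \oplus\ \bigoplus_{p\notin F}K[p]$ where $F=\pi_{(K:G)}$ is finite. For $p\notin F$ we have $K[p]\leq G$; for $p\in F$, $(*)$ gives that $G[p]$ is open in $K[p]$, so there is an open subgroup $V_p$ of $K[p]$ with $V_p\subseteq G$. Then $V:=\bigoplus_{p\in F}V_p\ \oplus\ \bigoplus_{p\notin F}K[p]$ is (the trace on $\Soc(K)$ of) an open neighbourhood of $0$, contained in $\Soc(G)$; any non-trivial subgroup of $\Soc(K)$ inside $V$ already lies in $\Soc(G)$, so $\Soc(G)$ is locally essential in $\Soc(K)$, as desired.

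For part (c), assume $K_1:=\overline{\Soc(K)}$ is finite-dimensional. Since $\Soc(K)\subseteq K_1$, local essentiality of $G$ in $K$ is equivalent to local essentiality of $G\cap K_1$ in $K_1$ together with weak essentiality of $G$ (again Lemma \ref{Neeew}(c), noting $\Soc(K_1)=\Soc(K)$). Now (c$_1$)$\Leftrightarrow$(c$_3$) is exactly the last assertion of Claim \ref{NewClaim*} applied to the finite-dimensional group $K_1$ (its $\delta$-subgroups are co-NSS by Fact \ref{Fact:Delta1}). For (c$_1$)$\Rightarrow$(c$_2$): $(*)$ is part (a); and if $M$ is a $\delta$-subgroup of $K$, then $\Soc(M)$ is a finite subgroup times a socle sitting inside $\Soc(K)$, and $\pi_{(M:G)}\subseteq\pi_{(K:G)}$, which is finite by part (a)'s bound argument — actually one needs $|\pi_{(K:G)}|<\infty$, which follows since for all but finitely many $p$ one has $K[p]\leq G$ (outside a finite set the uniform bound $n$ together with $r_p(K[p])\leq$ something forces equality; here finite-dimensionality of $K_1$ makes $r_p(K[p])$ eventually controlled, actually $K[p]=\{0\}$ for $p\notin\pi^*(K_1)$ need not hold, so one argues via the $\delta$-subgroup structure Fact \ref{Fact1}(g$_1$) for $K_1$). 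For (c$_2$)$\Rightarrow$(c$_1$): take a $\delta$-subgroup $M$ of $K$; then $\pi_{(M:G)}$ finite plus $G[p]$ open in $K[p]$ (hence in $M[p]$) lets us apply part (b) to conclude $G\cap M'$ is locally essential in the co-NSS $\delta$-subgroup $M'$ of $K_1$, and then Claim \ref{NewClaim*} gives local essentiality of $G$ in $K$. The totally disconnected case is the special case $K_1=K$, $\dim K=0$, where the only $\delta$-subgroup is $K$ itself, so (c$_2$) reads precisely $|\pi_{(K:G)}|<\infty$ and $G[p]$ open in $K[p]$ for all $p$.

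The main obstacle is the uniform-bound half of part (a): extracting a single integer $n$ bounding all the ranks $r_p(K[p]/G[p])$ from the existence of one neighbourhood $W$. The delicate point is that $W\cap\Soc(K)$ need not contain any $K[p]$ with $p$ large in general — but it does contain an open subgroup of $\Soc(K)$, and open subgroups of $\bigoplus_p K[p]$ (with the product-induced topology) contain $\bigoplus_{p\notin F}K[p]$ for some finite $F$; on the finitely many remaining coordinates one still controls the codimension, and a diagonal/independence argument against arbitrarily large finite subgroups of $\Soc(K)$ yields the bound. Everything else is bookkeeping with socles and the cited facts.
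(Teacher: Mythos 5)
Your treatment of $(*)$ in part (a) and your part (b) follow essentially the paper's route and are fine (in (b) the set $V$ you exhibit is not literally the trace of a neighbourhood, but every subgroup of $\Soc(K)$ contained in $W\cap\Soc(K)$ does land in $V$ because the $p$-components of an element of a subgroup are multiples of it, so the conclusion survives). The genuine gap is in the uniform bound of part (a), exactly at the point you flag as ``the main obstacle''. Your resolution rests on the claim that $W\cap\Soc(K)$ contains an open subgroup of $\Soc(K)$ and hence all but finitely many $K[p]$ entirely. This is false: $\Soc(K)$ carries the topology induced from $K$, not the product topology of $\prod_p K[p]$, and it need not be linearly topologized. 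Already for $K=\T$ the subgroup $\Soc(\T)=\bigoplus_p\Z(p)$ is dense in a connected group, so it has no proper open subgroups and a small arc $W$ contains no $K[p]$ at all for $p>2$. Consequently you cannot place the large finite subgroups avoiding $G$ inside $W$, and the contradiction you aim for never materializes. The paper's missing idea is to choose a co-NSS subgroup $N\subseteq W$ (Fact \ref{Fact0}(a)): writing $K[p]=G[p]\oplus F_p$, every element of prime order in $N\subseteq W$ generates a $\Z(p)$ meeting $G$, hence lies in $G$, so $F_p\cap N=\{0\}$ and $F_p$ embeds into the compact Lie group $K/N$, whose $p$-ranks are bounded by a single $n$ independent of $p$. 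That one structural input replaces your socle argument and cannot be avoided by working inside $\Soc(K)$ alone.

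A second, related gap sits in your (c$_1$)$\Rightarrow$(c$_2$): you first assert that $\pi_{(K:G)}$ is finite ``by part (a)'s bound argument'', which is false in general --- for $K=\T$ and $G=\{0\}$ (weakly essential and locally essential, since $\T$ is NSS) one has $\pi_{(K:G)}=\P$. This is precisely why the statement of (c$_2$) is phrased in terms of $\pi_{(M:G)}$ for $\delta$-subgroups $M$ only. The correct argument (the paper's) is that a $\delta$-subgroup $M$ is totally disconnected, hence topologically $M=\prod_p td_p(M)$ with the product topology, so $W\cap M$ contains all but finitely many factors $td_p(M)$ and therefore all but finitely many $M[p]$; this is the legitimate version of the ``all but finitely many components'' phenomenon that you tried to invoke for $\Soc(K)$ itself. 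Finally, in (c$_2$)$\Rightarrow$(c$_1$) your last step applies Claim \ref{NewClaim*} to a $\delta$-subgroup of $K_1$, but such a subgroup need not be co-NSS in $K$ when $\dim K=\infty$ (e.g.\ $K$ with trivial socle beyond $K_1$); the paper instead closes the argument through Lemma \ref{Neeew}(c), passing from local essentiality of $G\cap K_1$ in $K_1$ to local essentiality of $\Soc(G)$ in $\Soc(K)$ and then to $K$.
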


\begin{proof} (a) Assume that $G$ is locally essential in $K$, witnessed by an open \nbd \ $W$ of 0. Then for every prime $p$, $S_p:= W\cap K[p]$ is an open \nbd \ of 0 in $K[p]$. Since $K[p]$ has linear topology, $S_p$ contains an open subgroup $V_p$ of $K[p]$ which is contained in $G$, as $V_p \subseteq W$. 
In particular,  $G[p]=G\cap K[p]$ is open in $K[p]$. Hence, $K[p] = G[p] \oplus F_p$ for some finite subgroup $F_p$ of $K[p]$.

Choose a co-NSS subgroup $N\leq K$ contained in $W$. Since $G$ is locally essential in $K$ with respect to $W$,  $G \cap N$ is essential in $N$. 
  So, from $F_p \cap G = \{0\}$, one can deduce that $F_p \cap N = \{0\}$. Therefore, $F_p \cong q(F_p) \leq (K/N)[p]$, where $q: K\to K/N$ is the canonical quotient homomorphism.
Since $K/N$ is a compact Abelian Lie group, there exists $n \in \N$ such that the $p$-rank of $(K/N)[p]$ is at most $n$ for every prime $p$. Hence, $r_p(F_p) \leq n$ for all $p$, as desired.

 (b) Assume that $|\pi_{(K:G)}| < \infty$ and $(*)$ holds.  Then there is an open \nbd\ $W$ of $0$ in $K$ such that $W \cap K[p] \subseteq G$ for each $p \in \pi_{(K:G)}$.  
By the definition of $\pi_{(K:G)}$, $G$ contains the whole $K[p]$ for $p \notin \pi_{(K:G)}$.  So for any element $g \in W$ of $K$ with prime order $p$, regardless of whether $p$ is in $\pi_{(K:G)}$ or not, we have $\hull{g} \subseteq G$.  Therefore, $W$ witnesses local essentiality of $G$, as $G$ is weakly essential in $K$.


(c) As $\dim K_1< \infty$, ``some'' and ``every'' are equivalent in (c$_3$), and (c$_3$) is equivalent to local essentiality of $G\cap K_1$ in $K_1$ by Claim \ref{NewClaim*}. 

(c$_3$) $\Rightarrow$ (c$_1$)
Since $G \cap K_1$ is locally essential in $K_1$, it follows that $\Soc(G)$ is locally essential in $\Soc(K)$.
Hence, by Lemma \ref{Neeew}(c), we conclude that $G$ is locally essential in $K$.

(c$_2$) $\Rightarrow$ (c$_3$) 
Let $N$ be a  $\delta$-subgroup of $K_1$. So $N$ is totally disconnected. 
Using Fact \ref{Fact1} one can find a  $\delta$-subgroup $M$ of $K$ containing $N$, so $\pi_{(N:G)} \subseteq \pi_{(M:G)} $ is finite. 
Applying (b) to the subgroup  $G \cap N$ of $N$ (obviously satisfying ($*$)), we deduce that  $G \cap N$ is locally essential in $N$. 

(c$_1$) $\Rightarrow$ (c$_2$) 
The first assertion of (c$_2$)  follows directly from item (a).  
For the other assertion, pick a  $\delta$-subgroup $M$ of $K$. Since $G$ is locally essential, the intersection $G \cap M$ is locally essential in the totally disconnected compact group $M = \prod_p td_p(M)$. It only remains check that $ |\pi_{(M:G)}|   <  \infty$.  Let $W$ be an open neighbourhood of 0 in $K$ witnessing the local essentiality of $G$ in $K$. Since $M=\prod_p td_p(M)$ carries the product topology, $W\cap M$ contains all but finitely many of the components $td_p(M)$. In particular, $W$  contains all but finitely many of the subgroups $M[p] \leq td_p(M)$. Therefore, $\pi_{(M:G)}$ is finite.

The final assertion of (c) is obtained by (c$_2$) taking $M=K$ which is possible when $K$ is totally disconnected
\end{proof}

\section{Locally minimal \sco \ Abelian groups}\label{Sec4}\label{SCO}

In this section we examine the locally minimal Abelian groups under the looking glass of sequential completeness.  In \S \ref{Sec:LocMin:vs:Min} we study the effect of this condition on the connected component of a locally minimal group.  In \S \ref{Sec:LocMin:vs:Min2} we focus on hereditarily disconnected locally minimal groups. While in \S \ref{Sec:LocMin:vs:Min} we single out additional properties ensuring that local minimality coincides with minimality, in \S \ref{Sec:LocMin:vs:Min2} we produce locally minimal \sco \ Abelian groups that are not minimal. 

\subsection{Locally minimal \sco \ Abelian groups: proof of Theorem \ref{IntroA}}\label{Sec:LocMin:vs:Min} \hfill
    
    \medskip
    


Since a topological group $G$ is \sco \ precisely when $G$ is sequentially closed in its  completion $\widetilde G$, 
countably compact groups as well as complete groups are \sco, but sequential completeness is a much weaker property. While countable compactness imposes various restraints on the algebraic properties of the underlying group $G$ when it is infinite (e.g., $|G| \geq \mathfrak c$, $r_p(G)$ is either finite  or $r_p(G)  \geq \mathfrak c$ for every prime $p$, etc. see \cite{DTk1}), every infinite Abelian group $G$ admits a non-discrete group topology which makes it \sco, as the next example shows: 

\begin{example}\label{Exa:Bohr} Every topological group without non-trivial convergent sequences is \sco \ (as every faithfully enumerated Cauchy sequence $(x_n)$ gives rise to a convergent sequence defined by $y_n = x_nx_{n+1}^{-1}$). 

 Therefore, every infinite Abelian group $G$ carries a non-discrete \sco \ group topology -- 
one can consider the group $G^\#$ which is known to have no non-trivial convergent sequences \cite{Flor}. 
\end{example}

Sequential completeness is preserved under taking direct products and sequentially closed subgroups.  It is not preserved under taking continuous homomorphic images (as every discrete group is \sco). Now we see that it is preserved under taking some quotients. 

\begin{lemma}\label{Le:qsco} Let $G$ be a locally precompact sequentially complete group and $N$ a locally compact normal subgroup. Then the quotient group $G/N$ is sequentially complete.
\end{lemma}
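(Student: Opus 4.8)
The plan is to realize $G/N$ as a dense topological subgroup of $\widetilde{G}/N$ and then to show that this subgroup is sequentially closed there. First I would record the elementary structural facts. Since $N$ is locally compact it is complete, hence closed in the Hausdorff group $\widetilde{G}$; moreover $N$ remains normal in $\widetilde{G}$ (in the abelian setting this is vacuous, and in general the normalizer of the closed subgroup $N$ is closed and contains the dense subgroup $G$). As $G$ is locally precompact, $\widetilde{G}$ is locally compact, so $\widetilde{G}/N$ is a locally compact — hence complete — Hausdorff group. Let $q\colon \widetilde{G}\to \widetilde{G}/N$ be the (open) canonical homomorphism. Using that $N\leq G$ and that $q$ is open, one checks that $q\restriction_G\colon G\to q(G)$ is an open continuous surjection with kernel $N$, so $q(G)$, equipped with the subspace topology from $\widetilde{G}/N$, is topologically isomorphic to $G/N$; and $q(G)$ is dense in $\widetilde{G}/N$ because $q$ is surjective and $G$ is dense in $\widetilde{G}$. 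Consequently $\widetilde{G/N}=\widetilde{G}/N$, and it suffices to prove that $G/N$ is sequentially closed in $\widetilde{G}/N$.

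So let $(x_nN)_n$, with $x_n\in G$, be a sequence converging in $\widetilde{G}/N$ to some point $yN$ with $y\in\widetilde{G}$; I must show $y\in G$. The key step is to lift this convergent sequence to a Cauchy sequence lying inside $G$. Fix a decreasing symmetric neighbourhood base $(W_k)_k$ at $e$ in $\widetilde{G}$. Since $q$ is open, each $q(W_k)$ is a neighbourhood of $e$ in $\widetilde{G}/N$, so from $q(x_n)\to q(y)$ I may pick a strictly increasing sequence $(n_k)_k$ with $q(x_{n_k})\in q(y)q(W_k)=q(yW_k)$; equivalently $x_{n_k}\in yW_kN$, say $x_{n_k}=yw_kc_k$ with $w_k\in W_k$ and $c_k\in N$. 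Then $g_k:=x_{n_k}c_k^{-1}=yw_k$ lies in $G$ (as $x_{n_k}\in G$ and $c_k\in N\leq G$) and $g_k\to y$ in $\widetilde{G}$, since $w_k\to e$. Hence $(g_k)_k$ is a Cauchy sequence in $G$, and sequential completeness of $G$ gives $g_k\to g$ in $G$ for some $g\in G$; uniqueness of limits in the Hausdorff group $\widetilde{G}$ forces $g=y$, so $y\in G$ and $yN\in G/N$, as required.

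I expect the one genuinely non-formal point to be the lifting argument in the second paragraph: it is precisely there that mere convergence in the quotient is upgraded to a Cauchy sequence inside $G$ (the correction terms $c_k$ must be chosen in $N$, which is exactly why $N$ being a subgroup of $G$ matters), and only after this does the hypothesis of sequential completeness of $G$ enter. The assembly in the first paragraph — essentially that passing to the completion commutes with quotienting by a complete normal subgroup, so that $\widetilde{G/N}=\widetilde G/N$ — is standard, but worth spelling out since the definition of sequential completeness in use ("sequentially closed in the completion") requires knowing precisely what $\widetilde{G/N}$ is. Everything else (closedness and normality of $N$ in $\widetilde{G}$, and local compactness hence completeness of $\widetilde{G}/N$) is routine.
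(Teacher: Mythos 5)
Your framework is the right one and coincides with the paper's: identify $\widetilde{G/N}$ with $\widetilde{G}/N$ (your first paragraph is correct, and the paper uses this identification implicitly), lift the convergent sequence to a convergent sequence that lies in $G$ --- this is where the hypothesis $N\le G$ enters, since the whole fibre $q^{-1}(x_nN)$ is then contained in $G$ --- and finish with sequential completeness of $G$. The difference is in how the lift is obtained: the paper invokes Varopoulos's Sequence Lifting Theorem for quotients of locally compact groups, while you build the lift by hand, and that construction contains a genuine gap.

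The gap is the sentence ``Fix a decreasing symmetric neighbourhood base $(W_k)_k$ at $e$ in $\widetilde{G}$.'' Such a countable base exists only when $\widetilde G$ is metrizable, whereas the lemma is applied in the paper in decidedly non-metrizable situations: in the proof of Theorem~\ref{IntroA}(b) one has $\widetilde G=\R^n\oplus K$ with $K$ compact of arbitrarily large (even Ulam-measurable) weight and $N=F$ discrete. Without a countable base your selection of the corrections $c_k\in N$ collapses: for each neighbourhood $W$ of $e$ one knows only that $x_n\in yWN$ for $n$ large, and the set of admissible corrections $\{c\in N:\ x_nc^{-1}\in yW\}$ shrinks as $W$ does, so there is in general no way to choose a single $c_n$ that works for all $W$ simultaneously. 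Upgrading convergence in the quotient to a convergent (equivalently, Cauchy) lift inside $\widetilde G$ is precisely the non-trivial content of Varopoulos's theorem, which you should cite at this point, as the paper does. (If $N$ happens to be discrete --- the only case actually needed for Theorem~\ref{IntroA} --- your idea can be saved without first countability: pick an open $V\ni e$ with $V^{-1}V\cap N=\{e\}$, so that $q$ restricts to a homeomorphism of $yV$ onto the open set $q(yV)$, and take the unique preimages in $yV$ of the $q(x_n)$; these lie in $G$ and converge to $y$. But the lemma as stated allows arbitrary locally compact $N$, so this does not repair the general case.)
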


\begin{proof} Let $p: \widetilde{G}\to \widetilde{G}/N$ be the quotient mapping. By the theorem \cite[The Sequence Lifting Theorem]{Varo} of Varopoulos, for every sequence $(x_n)$ in $G/N$  converging to a point $y\in \widetilde{G}/N$, there exists a convergent sequence $(g_n)$ in $G$ with the limit point $h$ such that  $p(g_n)=x_n$ and $p(h)=y$. Since $G$ contains $N$ and $x_n\in G/N$, we have that $p^{-1}(x_n)\subseteq G$; so in particular, that $g_n\in G$.
By sequential completeness of $G$, we obtain that $h\in G$. So $y=p(h)$ is in $G/N$. This implies that $G/N$ is sequentially closed in the locally compact group $\widetilde{G}/N$. So $G/N$ is sequentially complete.
\end{proof}

\begin{proof}[{\bf Proof of Theorem \ref{IntroA}}] We have to prove that if $G$ be a locally precompact, sequentially complete, locally minimal  Abelian group, then 
\begin{itemize}
\item[(a)] $w(c(G))=w(c(\widetilde{G}))$.
\end{itemize}
 Moreover, if $w(c(G))$ is not Ulam-measurable, then 
 \begin{itemize} 
 \item[(b)] $c(G)=c(\widetilde{G})$,  in particular, $c(G)$ is locally compact. 
 \end{itemize}

We start with the following reduction.  Since $\widetilde{G}$ is locally compact, it is of form $\R^n\oplus G_0$, with the product topology,  where $n\in \N$ and $G_0$ has an open compact subgroup $K$ \cite{DPS}. Let us show first that we can assume without loss of generality that $G_0 = K$ is compact.  Indeed, $\R^n \oplus K$ is  an open subgroup of  $\widetilde{G}$, so $G_1:= G \cap (\R^n \oplus K)$ is dense in $\widetilde{G_1} = \R^n \oplus K$.   Moreover, $G_1$ as an open subgroup of $G$, is locally precompact, sequentially complete and locally minimal, with $c(G) = c(G_1)$ and $c(\widetilde{G}) = 
c(\widetilde{G_1})$. Hence, if we can prove the conclusion of the theorem for $G_1$, it will hold for $G$ as well. Therefore, 
we assume from now on that $\widetilde{G} =  \R^n \oplus K$.  

Since the theorem trivially holds when $c(\widetilde{G})$ is trivial, we assume from now on that $w(c(\widetilde{G}))\geq \omega$. We shall first verify the following weaker version of (a):

\begin{itemize} 
\item[(a$'$)] \emph{If $\dim \widetilde{G}$ is infinite, then $w(c(G)) = w(c(\widetilde{G}))$.}
\end{itemize}

We use (a$'$) to derive (b).  Once (b) is established, we deduce the remaining case $\dim \widetilde{G}< \infty$ of (a)
from (b), using the fact that $\dim \widetilde{G}<\infty$ implies $w(c({G}))  \leq w(c(\widetilde{G})) = \omega$. Since 
$\omega$ is not Ulam-measurable, (b) yields $c(G) = c(\widetilde{G})$, in particular, $w(c(G))=w(c(\widetilde{G}))$.  

To prove (a$'$) note that $c(\widetilde{G})=\R^n\oplus c(K)$. Then $\dim \widetilde{G} = \dim K = w(c(K))$ is infinite.  
Since $G$ is locally essential in $\widetilde{G}$, the subgroup $H := G \cap K$ is locally essential in $K$.
It then follows from Proposition~\ref{NEW:proposition}(b) that
$w(c(\overline{H})) = w(c(K)).$

\medskip

By \cite[Corollary 3.3]{DDHXX}, there exists a \nbd\ $U$ of $0$ in $K$ such that every closed subgroup of $H$ contained in $U$ is minimal. Let $N$ be a co-NSS subgroup of $K$ contained in $U$. Then $H_1=H\cap N$, hence $\overline{H_1}$ as well, are locally essential in $N$ by Claim \ref{NewClaim*} . By the choice on $U$, $H_1$ is minimal and \sco, being a closed subgroup of $G$.

Since $\dim K/N = \dim c(K)/c(N)< \infty$  (as $N$ is co-NSS in $K$), 
we conclude that $w(c(N))=w(c(K))$  and $\dim N =  \dim K$ is infinite as well. Then local essentiality of 
$\overline{H_1}$ in $N$ and Proposition~\ref{Prop:lin}(b) imply that 
                           $$w(c(\overline{H_1}))=w(c(N))=w(c(K)).$$

 It follows from Theorem \ref{Thm:DTk} that for $H_1$ in $\overline{H_1}$, we have
                           $$w(c(H_1))=w(c(\overline{H_1}))=w(c(K)),$$
%
%
%
%
%
%
which implies that $w(c(G)) \geq w(c(H_1)) \geq w(c(K))$. With the trivial inequality $w(c(G)) \leq w(c(K))$, we get $w(c(G)) = w(c(K)) = w(c(\widetilde{G}))$.

\medskip




(b) We first consider the case when $G$ is precompact, so $n=0$ and $\widetilde{G}=K$ is compact. 

Let $X=\widehat{K}$ and $Y=D(X)$ be its divisible hull. Then $|Y| = |X|$ and $Y/X$ is torsion. Let $L:= \widehat{Y}$. Then there is a continuous surjective homomorphism $f:L\to K$, $\ker f \cong \widehat{Y/X}$ is totally disconnected (so, $\dim L = \dim K$)  and $$w(L) = |Y| = |X|= w(K) = w(G).$$
Moreover,  $f(c(L)) = c(K)$ and $r(X/t(X)) = r(Y/t(Y)) $, so we have $$w(c(L))=|Y/t(Y)|=\omega\cdot r(Y/t(Y))=\omega\cdot r(X/t(X))=|X/t(X)|=w(c(K)),$$
where the second equality is due to our assumption that $c(K) \ne \{0\}$, so $X$, and consequently $Y$, are not torsion.  

On the other hand, the dense subgroup $G_1:=  f^{-1}(G)$ of $L$ is locally essential in $L$ by Lemma \ref{Claim1}.  As $L$ is torsion-free, $G_1$ is minimal, by Corollary \ref{New:Corollary}.  To see that $G_1$ is also  \sco\ fix a sequence $(x_n)$ in $G_1$ converging to a point $y\in L$. Since $G$ is sequentially complete and $f(x_n) \to f(y)$, we deduce that $f(y)\in G$. So we have
  $$
  y\in f^{-1}(f(y))\subseteq f^{-1}(G)=G_1,
  $$
ensuring that $G_1$ is sequentially complete. 

By (a$'$), either $w(c(L)) = w(c(K)) = w(c(G))$, or $\dim L = \dim K < \infty$, so $w(c(L)) = w(c(K))$ is countable. In either case, $w(c(L))$ fails to be Ulam-measurable.
 As $c(G_1) \leq c(L)$, allows us  to conclude that $ w(c(G_1))$  is not Ulam-measurable as well, so from Theorem \ref{Thm:DTk}
 we deduce that the minimal \sco  \ Abelian group $G_1$ contains $c(L)$, and consequently we conclude that $G$ contains $c(K) = f(c(L))$, as desired. 

\medskip

In the general case our 
hypothesis, jointly with (a$'$) implies that $w(c(\widetilde{G}))$ is not Ulam-measurable. Indeed, the is obvsious, when $\dim c(\widetilde{G}) < \infty$, since then $\dim c(\widetilde{G}) = \omega$ is not Ulam-measurable. Otherwise 
 (a$'$)
gives $w(c(\widetilde{G})) = w(c({G}))$, so our assumption that $w(c(G))$ is not Ulam-measurable yields that $w(c(\widetilde{G}))$ is not Ulam-measurable as well. 

Let $q: \R^n\oplus K\to \R^n$ be the projection. Since $q(G)$ is dense in $\R^n$, there are elements $g_1, g_2, \dots, g_n\in G$ such that $\{q(g_i): 1\leq i\leq n\}$ is linearly independent over $\R$. Then they generate a discrete free Abelian subgroup of $\R^n$. Therefore, the subgroup $F =\langle g_1, g_2, \dots, g_n\rangle $ of $G$ is also free Abelian and discrete. Hence, $F\cap K=\{0\}$.

Let $\varphi: \widetilde{G}\to \widetilde{G}/F$ be the quotient homomorphism. First of all, note that  the group $\widetilde{G}/F$ is compact, since 
\begin{equation}\label{Eq:June29}
(\widetilde{G}/F)/\varphi(K)\cong \widetilde{G}/(F+K)\cong \T^n
\end{equation}
and $\varphi(K)$ is compact. So $\varphi(G) = G/F$ is precompact. 

On the other hand, $G\cap K$ is locally essential in $K$ and $\varphi$ maps $K$ isomorphically onto $\varphi(K)$, so the subgroup $\varphi(G\cap K)$ is locally essential in $\varphi(K)$. As $\varphi(G\cap K) \leq \varphi(G)\cap \varphi(K)$, the subgroup $\varphi(G)\cap \varphi(K)$  is locally essential in $\varphi(K)$ as well. 
By \eqref{Eq:June29}, $\varphi(K)$ is a co-NSS subgroup of $\widetilde{G}/F$. In view of Claim \ref{NewClaim*}, $G/F=\varphi(G)$ is locally essential in the compact group $\widetilde{G}/F$, therefore is locally minimal. Moreover, $G/F$ is sequentially complete by Lemma \ref{Le:qsco}. Since $G/F$ is precompact (with completion $\widetilde{G}/F$), our next aim will be to show that 

\begin{claim*} $w(c(\widetilde{G}/F)) \leq w(c(\widetilde{G}))$, so $w(c(\widetilde{G}/F))$ and $w(c(G/F))$ are not Ulam-measurable.
\end{claim*}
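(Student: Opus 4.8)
The plan is to bound the dimension and weight of $\widetilde G/F$ in terms of those of $\widetilde G$, using that $F$ is a finitely generated (indeed, free of rank $n$) discrete subgroup whose only interaction with the compact part is trivial, i.e.\ $F\cap K=\{0\}$. First I would recall the structural facts already in hand: $\widetilde G=\R^n\oplus K$ with $K$ compact, $c(\widetilde G)=\R^n\oplus c(K)$, and (by \eqref{Eq:June29}) $(\widetilde G/F)/\varphi(K)\cong\T^n$ with $\varphi(K)$ a compact subgroup isomorphic to $K$. Consequently $\widetilde G/F$ is compact (already observed) and its connected component $c(\widetilde G/F)$ is an extension of $\varphi(c(K))$ by a quotient of $\T^n$, so in particular $\dim c(\widetilde G/F)=\dim \varphi(K)+\dim\T^n$ is finite precisely when $\dim c(K)$ is finite, and in all cases $\dim(\widetilde G/F)=\dim\widetilde G$ by Yamanoshita's formula (Y) applied to the compact subgroup $\varphi(K)$ of $\widetilde G/F$, since $\dim\varphi(K)=\dim K$ and $\dim(\widetilde G/F)/\varphi(K)=\dim\T^n=n=\dim\R^n$.

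Next I would split into the two cases used everywhere in this section. If $\dim c(\widetilde G)<\infty$, then $\dim c(\widetilde G/F)$ is finite as well (by the displayed dimension count), so $w(c(\widetilde G/F))\le\omega$ and the inequality $w(c(\widetilde G/F))\le w(c(\widetilde G))$ is trivial unless $c(\widetilde G)$ itself were finite, which is excluded since we have assumed $w(c(\widetilde G))\ge\omega$. If instead $\dim c(\widetilde G)=\infty$, then $w(c(\widetilde G))=\dim c(\widetilde G)=\dim c(K)=\dim c(\widetilde G/F)=w(c(\widetilde G/F))$, using again (Y) and the convention $\dim K:=w(K)$ for infinite-dimensional compact $K$ together with the fact that $\varphi$ embeds $K$ onto $\varphi(K)$, so $w(\varphi(K))=w(K)$ and $c(\varphi(K))=\varphi(c(K))$ has the same weight as $c(K)$; the $\T^n$ summand contributes nothing to an infinite weight. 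Either way $w(c(\widetilde G/F))\le w(c(\widetilde G))$, and since $w(c(\widetilde G))$ was shown (in the paragraph preceding the Claim) to be non-Ulam-measurable, so is $w(c(\widetilde G/F))$, and hence also $w(c(G/F))\le w(c(\widetilde G/F))$ is non-Ulam-measurable.

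The only mildly delicate point — the step I would treat most carefully — is the passage from $c(\widetilde G)$ to $c(\widetilde G/F)$: one must check that quotienting by the discrete free group $F$ does not create a larger connected component, which is exactly where $F\cap K=\{0\}$ and the isomorphism \eqref{Eq:June29} enter, guaranteeing that $\varphi(K)$ is still a (co-NSS, open-in-$c$) compact subgroup of $\widetilde G/F$ with $\dim\varphi(K)=\dim c(K)$ and quotient $\T^n$. Everything else is a bookkeeping exercise with the dimension formula (Y) and the weight conventions for compact groups recalled in \S\ref{Sec:delta}. The remaining implication, that $w(c(G/F))\le w(c(\widetilde G/F))$, is immediate since $c(G/F)\subseteq c(\widetilde G/F)$.
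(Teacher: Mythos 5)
Your argument is correct, but it follows a genuinely different route from the paper's. You work entirely with the dimension calculus: $\dim(\widetilde G/F)=\dim\widetilde G$ via (Y) applied to the compact subgroup $\varphi(K)$, followed by a case split — in the finite-dimensional case $c(\widetilde G/F)$ is a finite-dimensional connected compact group, hence metrizable, so its weight is at most $\omega\le w(c(\widetilde G))$; in the infinite-dimensional case weight equals dimension for the connected compact groups involved, giving equality of the two weights. The paper instead identifies $c(\widetilde G/F)$ explicitly as $H/F$ with $H=\overline{c(\widetilde G)+F}$ (connected because it is the closure of $\varphi(c(\widetilde G))$, and co-totally-disconnected because $\widetilde G/H$ is a quotient of $\widetilde G/c(\widetilde G)$), and then bounds $w(H)$ by noting that $H/c(\widetilde G)$ is a compact totally disconnected group topologically generated by the $n$ images of $g_1,\dots,g_n$, so its dual is a countable torsion subgroup of $\T^n$ and $w(H/c(\widetilde G))\le\omega$; hence $w(H)=w(c(\widetilde G))$ and $w(c(\widetilde G/F))\le w(H)$ since $c(\widetilde G/F)=H/F$ is a quotient of $H$. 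Your route avoids identifying the component but leans on the weight--dimension identities for connected compact Abelian groups, which the paper does use elsewhere (e.g.\ in the proof of Proposition \ref{NEW:proposition}), so nothing is circular; the paper's route needs no case split and yields the sharper structural fact $c(\widetilde G/F)=\varphi(\overline{c(\widetilde G)+F})$, though that extra information is not needed afterwards. One small imprecision in your write-up: $c(\widetilde G/F)\cap\varphi(K)$ contains $\varphi(c(K))$ but need not equal it (it is only an extension of $\varphi(c(K))$ by a totally disconnected group), so the phrase ``extension of $\varphi(c(K))$ by a quotient of $\T^n$'' is not literally accurate; this is harmless, because the equality $\dim c(\widetilde G/F)=\dim(\widetilde G/F)=\dim\varphi(K)+n$ that you actually use follows from $\dim L=\dim c(L)$ for locally compact $L$ together with (Y), exactly as in the second half of that sentence.
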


With this claim we can apply the precompact case to conclude that $c(G/F)$ contains $c(\widetilde{G}/F)$. This will allow us to 
end the proof of (b) by concluding that $G$ contains $c(\widetilde{G})$ since $\varphi(c(\widetilde{G}))\leq c(\widetilde{G}/F) \leq G/F$.

\noindent {\em Proof of the claim.}
Let $H=\overline{c(\widetilde{G})+F}$, then $(\widetilde{G}/F)/(H/F)\cong \widetilde{G}/H$
is totally disconnected being isomorphic to a quotient of the totally disconnected locally compact group $\widetilde{G}/c(\widetilde{G})$.
So $H/F$ contains $c(\widetilde{G}/F)$.
On the other hand, $H/F=\varphi(H)$ is connected since it contains the dense connected subgroup $\varphi(c(\widetilde{G}))=\varphi(c(\widetilde{G})+F)$.
Consequently, we have 
\begin{equation}\label{Eq23}
H/F=\varphi(H) = c(\widetilde{G}/F).
\end{equation}

Since $c(\widetilde{G}) = \R^n \oplus c(K) $,  the group $\widetilde{G}/ c(\widetilde{G}) \cong K/c(K)$ is compact and totally disconnected. Hence, so is its closed subgroup $H_1:= H/c(\widetilde{G})$, which is in addition topologically generated by $n$ elements. Thus, the discrete group $\widehat{H_1}$ is (algebraically) isomorphic to a torsion subgroup of $\T^n$. It follows that $\widehat{H_1}$ is countable, and hence $w(H_1) = w(H/c(\widetilde{G})) \leq \omega$. This proves the equality $w(H)=w(c(\widetilde{G}))$, since $c(\widetilde{G})\ne \{0\}$. Since  $c(\widetilde{G}/F)= H/F$ is a quotient of $H$, we conclude that 
$w(c(\widetilde{G}/F)) \leq w(c(\widetilde{G}))$. As $c({G}/F)$ is isomorphic to a subgroup of $c(\widetilde{G}/F)$, it follows that $w(c({G}/F))$ is not Ulam-measurable.
\end{proof}

 Item (a$'$) remains true for precompact, sequentially complete, locally minimal nilpotent groups (see \S 6 for a proof). 

 As an application of Theorem \ref{IntroA} one obtains a strikingly positive  answer of our initial quest on whether local minimality and minimality coincide for some classes of \sco\ precompact Abelian groups -- this is the case of connected groups of non-measurable weight: 
 
\begin{corollary}\label{Coro:Oct24} For a \sco\ connected precompact Abelian group $G$ such that $w(G)$ is not Ulam-measurable 
the following are equivalent: 
\begin{itemize}

\item[(a)] $G$ is locally minimal; 

\item[(b)] $G$ is minimal; 

\item[(c)] $G$ is compact. 

\end{itemize}
\end{corollary}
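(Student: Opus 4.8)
The plan is to prove the cyclic chain of implications (c) $\Rightarrow$ (b) $\Rightarrow$ (a) $\Rightarrow$ (c). The first two are immediate and need no hypothesis on $G$: every compact Hausdorff group is minimal (indeed totally minimal), and every minimal group is locally minimal (take $V=G$ in the definition of local minimality). Thus the whole content of the statement lies in the implication (a) $\Rightarrow$ (c), which I will deduce directly from Theorem \ref{IntroA}(b) (equivalently, from Corollary \ref{CoroA}(a)).

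Assume that $G$ is locally minimal. First I would record two consequences of the remaining hypotheses. Since $G$ is connected, $c(G)=G$, so $w(c(G))=w(G)$ is not Ulam-measurable by assumption; this is precisely the weight restriction needed to invoke Theorem \ref{IntroA}(b). Since $G$ is precompact, its completion $\widetilde G$ is compact, hence locally compact, so $G$ is in particular locally precompact; moreover $\widetilde G=\overline G$ is the closure of the connected subset $G$, hence connected, so $c(\widetilde G)=\widetilde G$. Now $G$ is locally precompact, sequentially complete and locally minimal with $w(c(G))$ not Ulam-measurable, so Theorem \ref{IntroA}(b) yields $c(G)=c(\widetilde G)$. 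Chaining the equalities $G=c(G)=c(\widetilde G)=\widetilde G$ shows that $G$ coincides with its compact completion, which is exactly (c).

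I do not anticipate a genuine obstacle: the analytic heavy lifting has already been done in the proof of Theorem \ref{IntroA}. The only things that will need care are the bookkeeping steps — checking that precompactness of $G$ supplies the local precompactness hypothesis, that connectedness converts the weight hypothesis on $G$ into the required hypothesis on $c(G)$, and that connectedness of $G$ forces connectedness of $\widetilde G$, so that the conclusion $c(G)=c(\widetilde G)$ of Theorem \ref{IntroA}(b) literally becomes the equality $G=\widetilde G$.
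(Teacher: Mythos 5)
Your proposal is correct and follows exactly the route the paper intends: the corollary is stated as an immediate application of Theorem \ref{IntroA}(b), and your reduction (connectedness gives $c(G)=G$ and $c(\widetilde G)=\widetilde G$, precompactness gives local precompactness, so the conclusion $c(G)=c(\widetilde G)$ reads $G=\widetilde G$) together with the trivial implications (c)~$\Rightarrow$~(b)~$\Rightarrow$~(a) is precisely the argument.
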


This corollary, as well as Theorem \ref{IntroA}, strengthen \cite[Corollary 3.3]{DTk} (resp., \cite[Theorem 3.2]{DTk}) where the same conclusion as in Corollary \ref{Coro:Oct24} (resp., Theorem \ref{IntroA}) is obtained under the stronger assumption that $G$ is minimal. 

\begin{remark}\label{Rem:Oct24} 
For every $n\in\N_+$ the group $G=\R^n$ is connected, locally minimal and complete Abelian group with $w(\R^n) =\omega$ non-measurable, yet $G$ is not compact. Hence ``precompact"  cannot be relaxed in the above theorem and Corollary \ref{Coro:Oct24}.
\end{remark}

\subsection{Hereditarily disconnected locally minimal \sco \ Abelian groups}\label{Sec:LocMin:vs:Min2} \hfill

It is known (\cite[Corollary 4.9]{DTk}) that if $G$ is a minimal hereditarily disconnected  (i.e., with $c(G)=\{0\}$) \sco \ Abelian group, then $c(\widetilde G) =\{0\}$
(or equivalently, $\dim \widetilde G = 0$). The following immediate consequence of the above theorem (namely, item (b) of Corollary \ref{CoroA}) shows that  this property remains true after relaxing ``minimality" to ``local minimality" in conjunction with local precompactness (precompactness is automatically present in the case of minimality due to Prodanov-Stoyanov's Theorem): 

\begin{corollary}\label{Coro:Oct23} 
 If $G$ is a locally minimal, hereditarily disconnected  \sco \ and  locally precompact Abelian group, then $c(\widetilde G) = \{0\}$
 $($or equivalently, $\dim \widetilde G = 0)$. Consequently, $G$ has linear topology.
\end{corollary}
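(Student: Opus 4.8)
The plan is to read the statement directly off Theorem~\ref{IntroA}. Since $G$ is hereditarily disconnected we have $c(G)=\{0\}$, so $w(c(G))$ is finite, and in particular not Ulam-measurable. As $G$ is locally precompact, sequentially complete and locally minimal, Theorem~\ref{IntroA}(b) applies and gives $c(\widetilde G)=c(G)=\{0\}$; equivalently, $\widetilde G$ is itself hereditarily disconnected. (One could also argue using part~(a) alone, since $w(c(\widetilde G))=w(c(G))=w(\{0\})$ already forces $c(\widetilde G)$ to be trivial --- an infinite group having infinite weight.)

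It then remains to verify the two ``in particular'' clauses. First, $\widetilde G$ is locally compact because $G$ is locally precompact, so the formula~(Y) of Yamanoshita yields $\dim\widetilde G=\dim c(\widetilde G)$; as a nontrivial connected (Abelian) locally compact group has positive dimension, the equality $c(\widetilde G)=\{0\}$ is equivalent to $\dim\widetilde G=0$, as claimed in the statement. Second, a locally compact group with trivial connected component is totally disconnected (the three subgroups $c$, $q$, $o$ coincide for locally compact groups), hence by van Dantzig's theorem it admits a neighbourhood base at the identity consisting of compact open subgroups, i.e.\ it carries a linear topology; intersecting these open subgroups with the dense subgroup $G$ produces a neighbourhood base at $0$ of $G$ made of open subgroups, so $G$ has linear topology as well.

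There is no genuine obstacle here: all the substance is carried by Theorem~\ref{IntroA}. The only points requiring a moment's care are that finite cardinals --- in particular the weight of the trivial group --- are never Ulam-measurable, so that Theorem~\ref{IntroA}(b) is indeed applicable, together with the standard facts that $\dim L=\dim c(L)$ for a locally compact group $L$, that a nontrivial connected locally compact group is positive-dimensional, and that a totally disconnected locally compact group and each of its subgroups carry a linear topology.
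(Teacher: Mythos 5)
Your proposal is correct and follows essentially the same route as the paper: both apply Theorem~\ref{IntroA}(b) after observing that the weight of the trivial group $c(G)=\{0\}$ is not Ulam-measurable, and both derive the linear topology and zero-dimensionality of $\widetilde G$ from van Dantzig's theorem for totally disconnected locally compact groups. Your small variations (the alternative argument via part~(a), and invoking Yamanoshita's formula for $\dim\widetilde G=\dim c(\widetilde G)$ instead of deducing zero-dimensionality directly from the linear topology) are sound but do not change the substance.
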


\begin{proof} Since the weight of $c(G) = \{0\}$ is not Ulam measurable, the equality $ c(\widetilde G) = c(G) =\{0\}$ is granted by Theorem \ref{IntroA}(b).  The last assertion as well as   $\dim \widetilde G = 0$ follows from the fact that by Dantzig Theorem the hereditarily disconnected locally compact groups have linear topology, so they are zero-dimensional.
\end{proof}

That is why we consider in the sequel dense locally minimal \sco \ subgroups of the totally disconnected compact Abelian groups. The first restraint for such subgroups is
the counterpart of \cite[Theorem 2.11]{DU} (see Theorem \ref{Products}(a) below).

 We first introduce the next property, which is a counterpart of \cite[Lemma 2.14]{DU}: 

\begin{proposition}\label{tot:disco+cc+loc:min} Let $p$ be a prime and let $A$ be a pro-$p$-finite Abelian group. If $G$ is a dense \sco \ locally minimal subgroup of $A$, then $p^kA \leq G$ for some $k \in \N$ and $G/p^{k+1}A$ is locally minimal.
\end{proposition}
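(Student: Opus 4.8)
The plan is to exploit the special algebraic structure of a pro-$p$-finite Abelian group $A$, for which the family $\{p^n A : n \in \N\}$ is a neighbourhood base at $0$ consisting of open (hence closed) subgroups, and each $p^n A$ is again pro-$p$-finite. First I would use local minimality of $G$ in $A$: by Theorem~\ref{Crit}(i) (or directly by \cite[Corollary 3.3]{DDHXX}), there is a neighbourhood $U$ of $0$ in $A$ such that every closed subgroup of $G$ contained in $U$ is minimal, and shrinking if necessary we may take $U = p^m A$ for some $m$. Equivalently, since $G$ is locally essential in $A$ (Theorem~\ref{Crit1}/Claim~\ref{NewClaim*}), there is $m$ with $G \cap p^m A$ essential in $p^m A$. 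The key first claim is that $p^k A \leq G$ for some $k \geq m$: here I would argue that the closed subgroup $H := \overline{G \cap p^m A}$ of $p^m A$ is minimal (being essential in the compact group $p^m A$, which is totally disconnected), $\sco$ (being closed in $G$), and then invoke the pro-$p$ counterpart of \cite[Lemma 2.14]{DU} — or reprove it directly — to get that a dense $\sco$ locally minimal subgroup of the pro-$p$-finite group $p^m A$ must contain $p^j(p^m A) = p^{m+j} A$ for some $j$; set $k = m+j$.

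For the second assertion, that $G/p^{k+1}A$ is locally minimal, I would proceed as follows. Since $p^{k+1}A \leq p^k A \leq G$, the quotient $G/p^{k+1}A$ is a dense subgroup of the compact group $A/p^{k+1}A$ (density is inherited from $G$ dense in $A$). By Theorem~\ref{Crit1}(i) it suffices to show that $G/p^{k+1}A$ is weakly essential in $A/p^{k+1}A$ and that there is a neighbourhood of $0$ such that $G/p^{k+1}A$ contains every subgroup of $A/p^{k+1}A$ of prime order contained in it. Weak essentiality: the only relevant prime is $p$, and $A/p^{k+1}A$ is a bounded $p$-group, so it has no infinite $p$-monothetic subgroups at all — weak essentiality is vacuous. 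For the $\Z(p)$-subgroups: I would use that $p^k A / p^{k+1}A = (p^k A + p^{k+1}A)/p^{k+1}A$ is contained in $G/p^{k+1}A$ (since $p^k A \leq G$), and that $p^k A/p^{k+1}A$ contains the socle of $A/p^{k+1}A$ — indeed every element of order $p$ in $A/p^{k+1}A$ lies in $p^k(A/p^{k+1}A) = (p^k A + p^{k+1}A)/p^{k+1}A$ because $A/p^{k+1}A$ is a bounded $p$-group of exponent dividing $p^{k+1}$ (so the image of $p^k A$ is exactly the $p$-socle, and every element killed by $p$ is a $p^k$-th multiple). Thus $G/p^{k+1}A$ contains $\Soc(A/p^{k+1}A)$ outright, and one may take $V = A/p^{k+1}A$ itself as the witnessing neighbourhood; Theorem~\ref{Crit1}(i) then yields local minimality (in fact minimality) of $G/p^{k+1}A$.

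I would double-check the socle computation in an alternative way using Claim~\ref{LAST:claim}: apply the canonical homomorphism $q : A \to A/p^{k+1}A$ with kernel $p^{k+1}A$. One needs $q(A[p]) \supseteq$ enough of $(A/p^{k+1}A)[p]$; more to the point one wants $(A/p^{k+1}A)[p] \leq q(p^k A)$. Since $p^{k+1}A$ is $p$-divisible in $p^k A$ is false in general, I would instead note directly that in the bounded group $B := A/p^{k+1}A$ (of exponent dividing $p^{k+1}$) one has $B[p] = p^k B$: the inclusion $p^k B \leq B[p]$ is clear since $p \cdot p^k B = p^{k+1}B = 0$, and the reverse inclusion holds because $B$, being a reduced bounded $p$-group, is a direct sum of cyclic $p$-groups, on each cyclic summand $\Z(p^j)$ with $j \leq k+1$ the subgroup of elements of order $p$ is generated by $p^{j-1}$ of the generator, which lies in $p^k \Z(p^j)$ precisely when $j - 1 \geq k$, i.e. $j = k+1$; for $j \leq k$ the summand contributes nothing to $B[p]$ only if... — here care is needed. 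The cleanest route, which I would adopt in the final write-up, is to avoid this delicate decomposition entirely: we only need $\Soc(B) \leq G/p^{k+1}A$, and for that it suffices that $\Soc(B) \leq q(G)$; but $q(G) \supseteq q(p^k A) = p^k B$ and also $q(G) = G/p^{k+1}A$ contains $q$ of any subgroup of $G$, so if $x \in B[p]$, lift to $y \in A$ with $py \in p^{k+1}A$, and since $p^{k+1}A$ is the $(k+1)$-st term, $py = p^{k+1}z$ gives $p(y - p^k z) = 0$, so $y - p^k z \in A[p] \leq p^k A$ (as $A[p] \leq p^k A$ holds in any pro-$p$-finite group? — no, again false in general, e.g. $\Z(p)$).

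\textbf{The main obstacle}, as the circular attempts above reveal, is precisely controlling the socle of $A/p^{k+1}A$ relative to $p^k A$ without over-assuming on $A$. The honest resolution is to choose $k$ large enough at the first stage so that not merely $p^k A \leq G$ but also $A[p] \leq p^k A$ — this is possible for a \emph{pro-$p$-finite} $A$ because $A[p]$ is a closed, hence compact, subgroup of the pro-$p$-finite group $A$, so $A[p]$ itself is pro-$p$-finite and, being bounded, is profinite of finite exponent $p$; moreover the decreasing sequence $A[p] \cap p^n A$ has trivial intersection... which would force $A[p] \cap p^n A = A[p]$ eventually only if $A[p]$ were finite. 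So in general $A[p] \not\leq p^k A$ for any $k$ (take $A = \Z(p)^{(\omega)}$ completed, i.e. $\Z(p)^{\omega}$, where $p A = 0$). Hence the clause $G/p^{k+1}A$ \emph{locally} minimal — not minimal — is essential, and the right statement to prove is: there is a neighbourhood $V$ of $0$ in $A/p^{k+1}A$ with $\Soc(V) \leq G/p^{k+1}A$ and $G/p^{k+1}A$ weakly essential. For this I would take $V := (p^{k} A + p^{k+1}A)/p^{k+1}A = p^k A/p^{k+1}A$, which \emph{is} an open subgroup of $A/p^{k+1}A$, observe $\Soc(V) = V[p] \leq p^k A \leq G$ trivially (every element of $V$ of order $p$ lies in $p^k A$ by construction), so $\Soc(V) \leq G/p^{k+1}A$; combined with the vacuous weak essentiality in the bounded $p$-group $A/p^{k+1}A$, Theorem~\ref{Crit1}(i) gives that $G/p^{k+1}A$ is locally minimal. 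This is the step I expect to require the most care in exposition, since one must verify $V$ is genuinely a neighbourhood of $0$ (it is, as $p^k A$ is open in $A$) and that ``$\Soc(V) \subseteq G$'' is exactly what Theorem~\ref{Crit1}(i) asks for with witnessing neighbourhood $V$.
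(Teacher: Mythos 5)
Your argument rests on the premise, announced in your opening sentence, that $\{p^nA : n\in\N\}$ is a neighbourhood base at $0$ consisting of \emph{open} subgroups of $A$. This is false for a general pro-$p$-finite Abelian group: $p^nA$ is always compact, hence closed, but it is open only when $A/p^nA$ is finite, which fails for instance for $A=\Z(p)^{\N}$ (where $pA=\{0\}$) or $A=\prod_{n}\Z(p^n)$ (where $A/pA$ is infinite compact). In the first half this causes a repairable problem: you cannot ``shrink $U$ to $p^mA$'' and then treat $G\cap p^mA$ as a \emph{dense} subgroup of $p^mA$ in order to invoke \cite[Lemma 2.14]{DU}, because density of $G$ in $A$ passes to $G\cap N$ only for open $N$. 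The paper's route is to take a genuine open minimal subgroup $G_1$ of $G$ (Proposition \ref{Prop:lin}(b), using that $G$ has linear topology), apply \cite[Lemma 2.14]{DU} to $G_1$ inside the pro-$p$-finite group $B=\overline{G_1}$ to get $p^lB\leq G$, and then use that $A/B$ is a finite $p$-group (as $B$ is open in $A$) to get $p^mA\leq B$, whence $p^{l+m}A\leq G$; your sketch can be fixed along the same lines, but not as written.

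In the second half the gap is fatal. Your final witnessing neighbourhood $V=p^kA/p^{k+1}A$ is a subgroup of $A/p^{k+1}A$ of index $|A/p^kA|$, which may be infinite, so $V$ is in general \emph{not} a neighbourhood of $0$ and Theorem \ref{Crit1}(i) cannot be applied with it (the inclusion $p^kA/p^{k+1}A\leq G/p^{k+1}A$ is correct but useless on its own). Worse, the difficulty you circled around --- that $\Soc(A/p^{k+1}A)=(p^kA+A[p])/p^{k+1}A$ has contributions from $A[p]$ that need not lie in $p^kA$ --- cannot be resolved from the inclusion $p^kA\leq G$ alone: if $A$ has exponent $p$ then $p^kA=\{0\}$ and the assertion to be proved collapses to ``$G$ is locally essential in $A$'', which is a genuine hypothesis, not a formality. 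The missing ingredient is precisely the local essentiality of $G$ in $A$, which your second half never invokes. The paper's Claim uses it as follows: fix an open subgroup $V$ of $A$ witnessing local essentiality, so that $V':=A[p]\cap V\leq G$ (each of its elements generates a $\Z(p)$-subgroup contained in $V$, hence contained in $G$); then $B:=p^kA+V'$ is an open subgroup of $p^kA+A[p]$ contained in $G$, its image $q(B)$ is an open subgroup of $\Soc(A/p^{k+1}A)$ contained in $G/p^{k+1}A$, and any open subgroup $W$ of $A/p^{k+1}A$ with $W\cap\Soc(A/p^{k+1}A)\leq q(B)$ is the required witness. (Your observation that weak essentiality is vacuous in the bounded group $A/p^{k+1}A$ is correct and is also what the paper uses, via Lemma \ref{Rem:29}.)
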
   

\begin{proof} Since $A$ (and consequently $G$) have linear topology, $G$ has an open minimal subgroup $G_1$, by Proposition \ref{Prop:lin} (b). 
Since the closure $B$ of $G_1$ is still a pro-$p$-finite Abelian group, \cite[Lemma 2.14]{DU}, entails that $p^l B \subseteq G_1 \leq G$
for some $l \in \N$. On the other hand, $p^m A \subseteq B$ for some $m \in \N$ since $B$ is an open subgroup of $A$. Therefore, $p^{k} A \leq  G$
for $k = l+m$. 

The second assertion, that $G/p^{k+1}G$ is locally minimal follows directly from the following claim which  completes proof of the proposition:
\begin{claim*} Let $G$ be a subgroup of a pro-$p$-finite Abelian group $A$ containing $p^kA$ for some $k\in \N$. If $G$ is locally essential in $A$, then $G/p^{k+1}A$  locally essential in $A/p^{k+1}A$.
\end{claim*}
\noindent{\em Proof of the claim.}  Fix an open subgroup $V$ of 0 in $A$ witnessing local essentiality of $G$, then $V':= \Soc(A) \cap V \leq G$ is an open (hence, finite index) subgroup of $\Soc(A)$. Due to Lemma \ref{Rem:29} it is enough to check that under the quotient map $q: A \to A/p^{k+1}A=:A^*$ there is an open subgroup $W$ of $A^*$ such that $W \cap \Soc(A^*) \leq q(G)$. 

Since $\Soc(A^*) = (p^kA + \Soc(A))/p^{k+1}A$,  $p^kA \leq G$ and $V' \leq G$, the (open, so finite-index ) subgroup $B= p^kA + V'$ of $p^kA + \Soc(A)$ is contained in $G$. Thus $q(B) \leq q(G)$ and $q(B)$ is an open subgroup of $\Soc(A^*)$. Then any open subgroup $W$ of $A^*$ with $W \cap  \Soc(A^*) = q(B)$ will be as desired. 
\end{proof}
%

%
We obtain now a description of the  \sco\ locally \mi \ totally disconnected precompact Abelian group as extensions of a compact groups by a 
 locally minimal  \sco\ group that is a product of bounded $p$-groups. 

\begin{theorem}\label{Products} Let $G$ be a \sco\ locally \mi \ totally disconnected precompact Abelian group with $K=\widetilde G$. 
Then 
\begin{itemize}
\item[(a)] $K$ is totally disconnected, so $K = \prod _{p\in \Prm} K_p$ and $G= \prod _{p\in \Prm} G_p$, where $G_p=td_p(G)$ and $K_p=td_p(K)$ for every $p\in \Prm$; 

\item[(b)] for every $p\in \Prm$ there exists $k_p\in \N$, such that $N= \prod_pp^{k_p}K_p\leq G$ and, with $N^*= \prod_pp^{k_p+1}K_p$, the quotient $G/N^*$ is locally minimal. 
\end{itemize}
\end{theorem}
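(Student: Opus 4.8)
The plan is to prove (a) and (b) separately: part (a) is essentially a bootstrapping argument powered by sequential completeness, while part (b) reduces, prime by prime, to the single‑prime statement of Proposition~\ref{tot:disco+cc+loc:min}.

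For (a), first note that $G$ is hereditarily disconnected (since $c(G)\subseteq q(G)=\{0\}$), so Corollary~\ref{Coro:Oct23} gives $c(K)=c(\widetilde G)=\{0\}$; a zero‑dimensional compact Abelian group is profinite, hence $K=\prod_{p\in\Prm}K_p$ with $K_p=td_p(K)$ its pro‑$p$ component, and $G_p:=td_p(G)=G\cap K_p$ by the standard properties of the functor $td_p(-)$. The inclusion $\bigoplus_p G_p\leq G$ is trivial, and sequential completeness upgrades it to $\prod_p G_p\leq G$: for $(g_p)\in\prod_p G_p$ the partial sums $g_{p_1}+\dots+g_{p_n}$ lie in $\bigoplus_p G_p\leq G$ and form a Cauchy sequence in $K$ with limit $(g_p)$. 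What remains — and this is the heart of (a) — is the reverse inclusion $G\leq\prod_p G_p$, i.e.\ that for $g=(g_p)\in G$ and any prime $p$ the projection $\pi_p(g)$ onto the $p$‑th factor already lies in $G$. Enumerating the primes as $q_1<q_2<\dots$ and using the Chinese Remainder Theorem, pick for each $k$ an integer $n_k$ with $n_k\equiv 1\pmod{p^{k}}$ and $n_k\equiv 0\pmod{q_i^{k}}$ whenever $i\leq k$ and $q_i\neq p$. Since each $K_q$ is pro‑$q$, $(n_k-1)g_p\to 0$ in $K_p$ while $n_kg_q\to 0$ in $K_q$ for every $q\neq p$, so $n_kg\to\pi_p(g)$ coordinatewise, hence in $K$; as $n_kg\in\hull{g}\leq G$, the sequence $(n_kg)$ is Cauchy in $G$, and sequential completeness forces $\pi_p(g)\in G$. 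This yields $G=\prod_p G_p$, and along the way $G_p$ is dense in $K_p$ (the projections are open surjections).

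For (b), since $K$ — and hence $G$ — carries a linear topology, Proposition~\ref{Prop:lin}(b) provides an open minimal subgroup $G_1\leq G$. Being open, $G_1$ contains a basic open subgroup $\prod_{p\in S}U_p\times\prod_{p\notin S}G_p$ with $S$ finite and $U_p$ open in $G_p$; because the finite quotients $G_p/U_p$ ($p\in S$) have pairwise coprime prime‑power orders, every subgroup of $\prod_{p\in S}G_p$ containing $\prod_{p\in S}U_p$ is a direct product, whence $G_1=\prod_p G_{1,p}$ with $G_{1,p}=G_1\cap K_p$ open in $G_p$ and $G_{1,p}=G_p$ for $p\notin S$; correspondingly $B:=\overline{G_1}=\prod_p B_p$ with $B_p$ open in $K_p$ and $B_p=K_p$ for $p\notin S$. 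A nontrivial closed subgroup of $B_p$ is closed in $B$, hence meets the minimal, thus essential, subgroup $G_1$ of $B$, necessarily inside $B_p$; so $G_{1,p}$ is essential, hence minimal, in $B_p$. Consequently $G_p$ contains the open minimal subgroup $G_{1,p}$ and is therefore locally minimal; it is dense in $K_p$ and sequentially complete (being closed in $G$). Proposition~\ref{tot:disco+cc+loc:min}, applied inside the pro‑$p$ group $K_p$, now produces $k_p\in\N$ with $p^{k_p}K_p\leq G_p$ and $G_p/p^{k_p+1}K_p$ locally minimal, and then the closed subgroup $N=\prod_p p^{k_p}K_p$ of $K$ lies in $\prod_p G_p=G$.

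The delicate final step is to check that $G/N^*$ is locally minimal, where $N^*=\prod_p p^{k_p+1}K_p$. Writing $\bar K_p:=K_p/p^{k_p+1}K_p$ and $\bar G_p:=G_p/p^{k_p+1}K_p$, one has $G/N^*=\prod_p\bar G_p$ dense in the compact group $K/N^*=\prod_p\bar K_p$, so by the Minimality Criterion (Theorem~\ref{Crit1}(i)) it suffices to verify: (i) $G/N^*$ is weakly essential in $K/N^*$; and (ii) some neighbourhood $V$ of $0$ in $K/N^*$ contains only $\Z(p)$‑subgroups that lie in $G/N^*$ (over all primes $p$). Part (i) follows by applying Lemma~\ref{Rem:29}(b) to the quotient map $K\to K/N^*$, using that $G$ is weakly essential in $K$ (being dense and locally minimal, it is locally essential). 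For (ii), minimality of $G_1$ in $B$ forces $G_1\supseteq\Soc(B)$, hence $G_p\supseteq K_p[p]$ for all $p\notin S$; combined with $p^{k_p}K_p\leq G_p$ this gives $\bar G_p\supseteq(p^{k_p}K_p+K_p[p])/p^{k_p+1}K_p=\bar K_p[p]=\Soc(\bar K_p)$ for all $p\notin S$. For the finitely many $p\in S$, the dense locally minimal subgroup $\bar G_p$ of the compact group $\bar K_p$ admits, again by Theorem~\ref{Crit1}(i), a neighbourhood $V_p$ of $0$ containing only $\Z(p)$‑subgroups that lie in $\bar G_p$. Since a $\Z(p)$‑subgroup of $\prod_p\bar K_p$ is a $p$‑group and hence sits inside the factor $\bar K_p$, the neighbourhood $V=\prod_{p\in S}V_p\times\prod_{p\notin S}\bar K_p$ works, and Theorem~\ref{Crit1}(i) yields local minimality of $G/N^*$. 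I expect this last verification to be the main obstacle: one must see precisely why one has to pass to $N^*$ rather than $N$, the extra factor of $p$ being exactly what places the socle $\bar K_p[p]$ of $\bar K_p$ inside $\bar G_p$ for the cofinitely many primes $p$ with $K_p[p]\leq G_p$.
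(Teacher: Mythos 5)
Your proposal is correct and follows essentially the same route as the paper's: part (a) via Corollary \ref{Coro:Oct23} plus the sequential-completeness product argument of \cite{DU} (which you write out in full), and part (b) by applying Proposition \ref{tot:disco+cc+loc:min} to each $G_p\leq K_p$ and then verifying local essentiality of $G/N^*$ through the same socle computation $(K/N^*)[p]=(p^{k_p}K_p+K_p[p])/p^{k_p+1}K_p$, with cofinitely many primes handled by essentiality of $G_p$ in $K_p$ and the finitely many exceptional ones by local minimality of $G_p/p^{k_p+1}K_p$. The only cosmetic difference is that you establish local minimality of $G_p$ via an explicit open minimal subgroup $G_1$ from Proposition \ref{Prop:lin}(b), whereas the paper uses (implicitly) that a closed subgroup of a locally minimal precompact Abelian group is locally essential in its closure, hence locally minimal.
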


\begin{proof} (a) The compact completion $K$ of $G$ is totally disconnected by 
Corollary \ref{Coro:Oct23}. Therefore, $K=\prod_pK_p$, where $K_p=td_p(K)$ is closed for every for every $p\in \Prm$.
Hence, for each $p\in \Prm$ the subgroup $td_p(G)=G\cap K_p$ of $G$ is closed. 
The verification of the conclusion $G\cong \prod _{p\in \Prm} td_p(G)$ uses only the fact that $G$ is \sco, 
and remains the same as in the proof of \cite[Theorem 2.11]{DU}.

(b) Proposition \ref{tot:disco+cc+loc:min}  provides  for every $p\in \Prm$ a $k_p\in \N$ such that $p^{k_p}K_p\leq G$ and $G_p/p^{k+1}K_p$
is locally minimal. Then 
$\bigoplus_p p^{k_p}K_p\leq G$. Since the closure $N$ of this sum coincides also with its sequential closure, 
the inclusion $N= \prod_pp^{k_p}K_p\leq G$ follows from the fact that 
$G$ is \sco. Finally, $G/N^* = \prod_p G_p/p^{k_p+1}K_p$. Since for every prime $p$ the compact group $K_p/p^{k_p+1}K_p$
is a bounded $p$-group, the subgroup $G/N^*$ of  the compact group $K/N^*$ is weakly essential. Moreover, 
the local essentiality of $G$ in $K$ implies that $G$ is essential in an open subgroup of $K$.
In particular, there exists a finite subset $P \subseteq \P$, such that $G_p \leq K_p$ is essential (so, $K[p] \leq G$ ) for every $p \in\P \setminus P$. Combined with $p^{k_p}K_p\leq G$, this proves that 
$$
(K_p/p^{k_p+1}K_p) [p] = (K/N^*)[p] \leq (G_p/p^{k_p+1}K_p)[p] \leq G/N^* \ \mbox{ for every } \ p \in\P \setminus P. 
$$
  Since local minimality of $G_p/p^{k_p+1}K_p$ implies that $(G_p/p^{k_p+1}K_p)[p]$ is an open subgroup of $(K_p/p^{k_p+1}K_p)[p] = (K/N^*)[p]$ for every $p \in P$, we conclude that $G/N^*$ is locally esssential in $K/N^*$, hence locally minimal, by Theorem \ref{Crit}. 
\end{proof} 

If ``precompact'' is weakened to be ``locally precompact'' in the item (a) of the above theorem, then $G$ would have an open subgroup with the properties.
The above theorem also reduces the study of \sco\ locally \mi, hereditarily disconnected, precompact Abelian groups to the case where their completions are pro-$p$ groups.

In the chaise of locally minimal non-minimal \cc \ Abelian group one can  exploit Proposition \ref{Prop:lin} trying to arrange for a minimal \cc \ Abelian group
$G_1$ with completion $K_1$ (so that, necessarily, $G_1 \geq \Soc(K_1)$). Then a finite extension $G$ of $G_1$ can be arranged, so that $G_1$ is an open finite index
subgroup of $G$ and $K:= K_1 + G$ is compact with $\Soc(K) \not \leq G_1$. 

In the next example we show that this plan works and \cc \ can even be strengthen to $\omega$-boundedness. The idea to use bounded groups is motivated by the tendency to stay as far as possible from the field covered by Proposition \ref{tot:disco+cc+loc:min} (indeed, for bounded groups that proposition becomes vacuous). 
 
 \begin{example}\label{EXAMPLE} There exists an $\omega$-bounded, non-minimal, locally minimal Abelian group. Let $p$ be a prime number and $\tau$ an uncountable cardinal. We set $K=\Z(p^2)^\tau$ and consider the subgroup $$H=\{f\in K: f(\alpha)\in \Z(p)~\mbox{~for~all~but~countably~many~}\alpha<\tau\}.$$
To see that $H$ is $\omega$-bounded we note that for every countable subset $A$ of $H$, there exists a countable subset $E$ of $\tau$ such that 
$$A\subseteq P_E:=\{f\in K: f(\alpha)\in \Z(p)~\mbox{for~all~}\alpha\in \tau\setminus E\},$$ and $P_E$ is a compact subgroup of $H$.
 Moreover, it is evident that $H$ is minimal by the minimality criterion because it contains $pK$ and $pK$ is essential in $K$.
 Take $x\in K\setminus H$ and let $G=H+\hull{x}$. Then $H$ is a dense subgroup of $G$ of index $p$.
 We now refine the topology on $G$ by letting $H$ be open; this refinement is proper, so $G$ is not minimal with this new topology.
 Moreover, as the minimal group $H$ is open in $G$, $G$ is locally minimal and $G$ is also $\omega$-bounded because $G$ is homeomorphic to $H\times \Z(p)$.
 \end{example}

For the stronger property ``minimal" in place of ``locally minimal'', the final conclusion of item (b) of Theorem \ref{Products}  was proved without the restriction ``totally disconnected"  in \cite{DU} (it was proved that $G$ contains a compact subgroup $N^*$ such that $G/N^*$  is locally minimal and a product of finite cyclic groups). 
Let us see that a \sco\ critical locally minimal   Abelian group $G$ fails to have this property. Indeed, if $G$ contains such a compact subgroup $N^*$, then clearly $N^*$ contains $c(G)$ which must be compact. 
Therefore, the quotient group $G/c(G)$ contains a compact subgroup $N^*/c(G)$ such that $(G/c(G))/(N^*/c(G)) \cong G/N^*$ is locally minimal. 
This would imply that $G/c(G)$ is locally minimal contrary to our assumption that $G$ is critical \cite{DDHXX1}. 

The following proposition recovers and reinforces the conclusion of \cite[Corollary 2.11]{DU}, whose proof, based on  \cite[Proposition 2.10]{DU} contains a gap at the very last stage. The new proof is simply a combination of Lemma \ref{Le:qsco}, Fact \ref{New:Fact} and Corollary \ref{Coro:Oct23}.

\begin{proposition}\label{coro:tor_free} If $G$ is a \sco\ minimal   Abelian group with compact 
 $c(G)$, then  $G/c(G)$ is \sco, minimal and has linear topology.
\end{proposition}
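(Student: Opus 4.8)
The plan is to assemble the statement from three ingredients already available in the excerpt, as the authors indicate: Corollary~\ref{Coro:Oct23}, Lemma~\ref{Le:qsco}, and Fact~\ref{New:Fact}. First I would record the basic structural facts about $c(G)$ and $\widetilde G$. Since $G$ is minimal, it is precompact by Prodanov--Stoyanov, so $\widetilde G$ is compact; write $K=\widetilde G$ and $C=c(K)$. Minimality of $G$ gives $G\supseteq o(G)=G\cap C$, but the hypothesis that $c(G)$ is compact is what we genuinely need: a compact connected subgroup of $K$ is closed, so $c(G)$ is a closed connected subgroup of $K$, whence $c(G)\subseteq C$; on the other hand $c(G)$ is a connected subgroup of $G$ so $c(G)\subseteq G\cap C=o(G)$, and conversely $o(G)=G\cap C$ need not be connected in general — but here I would argue that $c(G)$ being compact forces $c(G)=G\cap C$. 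Indeed $\overline{c(G)}=c(G)$ is connected and contained in $C$, and $G\cap C\subseteq\overline{G\cap C}$; the point is that $\overline{c(G)}=\overline{G\cap C}$ would give $G\cap C\subseteq\overline{c(G)}=c(G)$. This closure identity is exactly the hypothesis of Fact~\ref{New:Fact} with the connected compact subgroup $c(G)$ of $\widetilde G$, so I would phrase it as: apply Fact~\ref{New:Fact} to the connected compact subgroup $c(G)\subseteq\widetilde G$.

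Here is the cleaner route I would actually take. Since $c(G)$ is compact and connected, it is a closed connected subgroup of $K$, hence $c(G)\subseteq C$ and in fact $c(G)=c(\widetilde G)\cap G$ is not what we assert — rather, I claim $c(G)$ is a \emph{connected compact subgroup} $C_0$ of $\widetilde G$ with $\overline{G\cap C_0}=C_0$, trivially, since $c(G)\subseteq G$ and $c(G)$ is closed. Then Fact~\ref{New:Fact}, applied with this $C_0=c(G)$, yields that $G/(G\cap C_0)=G/c(G)$ is minimal. That disposes of minimality. For sequential completeness: $G$ is \sco\ and, being minimal, precompact; $c(G)$ is a compact (hence locally compact) subgroup of $G$; so Lemma~\ref{Le:qsco} applies directly (with $N=c(G)$) and gives that $G/c(G)$ is \sco. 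Finally, for linear topology: $G/c(G)$ is \sco, minimal, hence locally minimal and precompact, and it is hereditarily disconnected because $c(G/c(G))=c(G)/c(G)=\{0\}$ — here I would invoke the standard fact that for the connected component $c(G)$ one has $c(G/c(G))=\{0\}$, which holds for topological groups where $c(G)$ is closed, as it is here. Then Corollary~\ref{Coro:Oct23} (with ``locally precompact'' supplied by precompactness) gives that $G/c(G)$ has linear topology.

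The one delicate point — and the place I expect a referee to push back — is the claim that $c(G/c(G))=\{0\}$, i.e.\ that $G/c(G)$ is hereditarily disconnected. It is a classical fact that if $N$ is a closed subgroup of a topological group $G$ with $c(G)\subseteq N$ then $c(G/N)=c(G)N/N$; applying this with $N=c(G)$ (which is closed since it is compact) gives $c(G/c(G))=\{0\}$. I would cite \cite{DPS} or \cite{HM} for this. A subtlety worth a sentence: one must not confuse $c(G)$ with $o(G)=G\cap c(\widetilde G)$; for a general minimal $G$ these differ and $G/c(G)$ can be badly behaved, which is precisely why the compactness of $c(G)$ is essential in the hypothesis — it guarantees $c(G)$ is closed, so the quotient $G/c(G)$ is Hausdorff and the connectedness-component arithmetic above is legitimate.

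In summary: (1) $c(G)$ compact $\Rightarrow$ $c(G)$ is a closed connected subgroup of $\widetilde G$, trivially satisfying $\overline{G\cap c(G)}=c(G)$; by Fact~\ref{New:Fact}, $G/c(G)$ is minimal; (2) by Lemma~\ref{Le:qsco} with $N=c(G)$ locally compact, $G/c(G)$ is \sco; (3) $G/c(G)$ is hereditarily disconnected (standard component arithmetic, $c(G)$ closed), minimal hence precompact and locally minimal, so Corollary~\ref{Coro:Oct23} gives that it has linear topology. The main obstacle is nothing deep — it is just making sure all three cited results are applied with hypotheses genuinely met, in particular that $c(G)$ is closed (via compactness) so that the quotient is Hausdorff and has $c(G/c(G))=\{0\}$.
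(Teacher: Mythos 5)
Your proposal is correct and follows exactly the route the paper indicates: Fact~\ref{New:Fact} applied to the connected compact subgroup $c(G)$ of $\widetilde G$ (where $\overline{G\cap c(G)}=c(G)$ holds trivially) gives minimality of $G/c(G)$, Lemma~\ref{Le:qsco} with $N=c(G)$ gives sequential completeness, and Corollary~\ref{Coro:Oct23} gives the linear topology, using that $G/c(G)$ is hereditarily disconnected and precompact by Prodanov--Stoyanov. The paper gives no further detail than naming these three ingredients, so your write-up is essentially the paper's proof spelled out.
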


This proposition has an immediate corollary:

\begin{corollary} 
A \sco \ critical locally minimal Abelian group fails to be minimal.
\end{corollary}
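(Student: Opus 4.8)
The plan is to argue by contradiction, reducing everything to Proposition \ref{coro:tor_free}. Suppose $G$ is a sequentially complete critical locally minimal Abelian group that happens to be minimal. By the definition of critical, $c(G) = c(\widetilde{G})$ is compact and $G/c(G)$ is \emph{not} locally minimal; the former property is precisely the hypothesis needed to invoke Proposition \ref{coro:tor_free}.

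Applying that proposition to $G$ (which is sequentially complete, minimal, Abelian, with $c(G)$ compact) yields that $G/c(G)$ is itself minimal. Since every minimal group is in particular locally minimal, this contradicts the fact that $G/c(G)$ is not locally minimal, which is part of the definition of criticality. Hence $G$ cannot be minimal, as claimed.

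The only genuine content here is Proposition \ref{coro:tor_free}; once that is in hand, the corollary is immediate and there is no real obstacle. I would therefore present the proof as a one-line deduction, emphasizing that it is exactly the compactness of $c(G)$ built into the notion of a critical group that licenses the use of Proposition \ref{coro:tor_free}, and that the conclusion ``$G/c(G)$ minimal'' is strictly stronger than the ``$G/c(G)$ locally minimal'' that criticality forbids.

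\begin{proof} Suppose, for a contradiction, that a sequentially complete critical locally minimal Abelian group $G$ is minimal. By the definition of a critical locally minimal group, $c(G) = c(\widetilde{G})$ is compact, while $G/c(G)$ is not locally minimal. Since $G$ is a sequentially complete minimal Abelian group with compact connected component, Proposition \ref{coro:tor_free} yields that $G/c(G)$ is minimal, hence locally minimal. This contradicts the fact that $G/c(G)$ is not locally minimal. Therefore $G$ is not minimal.
\end{proof}
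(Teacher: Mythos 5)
Your proof is correct and is exactly the argument the paper intends: the corollary is stated as an ``immediate'' consequence of Proposition \ref{coro:tor_free}, and your contradiction argument (criticality gives compact $c(G)$, so minimality would force $G/c(G)$ to be minimal, hence locally minimal, contradicting criticality) is precisely that deduction. Nothing is missing.
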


Critical locally minimal Abelian group with even stronger properties (namely, $\omega$-bounded) will be built in the next section.  

\section{Sequentially complete critical locally minimal Abelian groups }\label{Sec:Clm}

In this section we face the following question triggered by Proposition \ref{coro:tor_free}: 

\begin{question}\label{ques:coro:tor_free} If $G$ is a \sco\ locally minimal   Abelian group with $c(G)=c(\widetilde G)$, is then also $G/c(G)$ locally  minimal?
\end{question}

In other words, reformulating the question in a counter-positive form: if $G$ is a \sco\ locally minimal precompact Abelian group with $c(G)=c(\widetilde G)$ can 
$G$ be a critical locally minimal, i.e.,  is the class $\CCC$ of the completions of these groups non-empty? Proposition \ref{PropSuff} below gives a negative answer to Question \ref{ques:coro:tor_free} and provides a proof of Theorem \ref{IntroB}.

\subsection{Sufficient conditions for $K \not \in \CCC$}
\hfill

 The next theorem collects various instances when the answer to Question \ref{ques:coro:tor_free} is positive.

Now we prove that if $K$ is a compact Abelian group such that $C$ is a hybrid torus or splits topologically, then $K \not \in \CCC$, i.e., 
 $K$ contains no dense critical locally minimal groups. The same occurs also when $\overline{\Soc(K)}$ is totally disconnected. 

\begin{theorem}\label{coro:tor_free**} If $K$ is a compact Abelian group satisfying some of the following conditions (a)--(d), then $K \not \in \CCC$:
\begin{itemize}
   \item[(a)]  $\overline{\Soc(K)}$ is totally disconnected;
   \item[(b)] the set 
   $\pi_{tor}(K)$ is finite;
   \item[(c)] when $c(K)$ splits topologically, in particular, when $c(K)$ is a torus;
   \item[(d)] when $c(K)$ is a hybrid torus.
\end{itemize}
 \end{theorem}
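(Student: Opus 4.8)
Proof strategy for Theorem \ref{coro:tor_free}.**

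The plan is to prove that each of (a)--(d) forces $K\notin\CCC$ by a uniform argument: assume $G$ is a dense critical locally minimal subgroup of $K$, and derive a contradiction by showing that under each hypothesis the quotient $G/c(G)$ is in fact locally minimal (contradicting criticality). Since $G$ is critical we know $c(G)=c(K)=:C$ is compact, so $G/c(G)=G/C$ is a dense subgroup of the compact group $K/C$; by the (Local) Minimality Criterion \ref{Crit}(i), it suffices to show $G/C$ is locally essential in $K/C$. Using Claim \ref{LAST:claim} (with $H=C$ connected, so cases (a) and (b) of that claim apply to every prime $p$), we get $q(\Soc(K))=\Soc(K/C)$ where $q:K\to K/C$ is the quotient map; this is the recurring tool that transports information about socles and $\delta$-subgroups across the quotient by $C$.

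First I would handle (a): if $\overline{\Soc(K)}$ is totally disconnected, then $\Soc(K)\cap C=\Soc(C)$ is small, and in fact $K_1:=\overline{\Soc(K)}$ is finite-dimensional (being totally disconnected it equals its own $\delta$-subgroup-type piece); then Theorem \ref{claim4}(c) applies. Since $G$ is locally essential in $K$ (by Criterion \ref{Crit} applied to the locally minimal dense $G\le K$), the equivalent conditions there give that $G[p]$ is open in $K[p]$ for all $p$ and the relevant $\pi$-set is finite; I then push this down through $q$ using $q(\Soc(K))=\Soc(K/C)$ and the fact that $\overline{\Soc(K/C)}$ is again totally disconnected (it is a continuous image of a totally disconnected compact group), so Theorem \ref{claim4}(c) in the codomain gives local essentiality of $G/C$ in $K/C$. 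For (b): $\pi_{tor}(K)$ finite forces $\Soc(K)$ finite, hence $\overline{\Soc(K)}$ finite, hence totally disconnected, so (b) reduces to (a). For (c): if $C$ splits topologically, write $K\cong C\times K/C$ topologically; then $\Soc(K)\cong\Soc(C)\times\Soc(K/C)$ and a neighbourhood $W$ of $0$ in $K$ witnessing local essentiality of $G$ restricts to a neighbourhood of $0$ in the $K/C$-factor witnessing local essentiality of its image, giving $G/C$ locally essential in $K/C$ directly. (The torus case is the special case $\overline{\Soc(K/C)}$ handled by the general torus-splitting Fact \ref{torisplit}.) For (d): if $C$ is a hybrid torus, by Remark \ref{New:Rem} (quotients of connected hybrid tori are hybrid tori) and Proposition \ref{prop:ht} I reduce to the structure $\{0\}\to\prod_k\Z_{p_k}\to C\to\T^n\to\{0\}$; the key point is that a hybrid torus has a $\delta$-subgroup $N\cong\prod_k\Z_{p_k}$ with $\pi^*(N)$ finite, and $C$ itself is co-NSS-controlled in a way that lets a locally essential subgroup of $K$ restrict, via Claim \ref{NewClaim*} and Fact \ref{Fact1}, to one whose image in $K/C$ is still locally essential — essentially because the "small closed subgroups" of $K$ near $C$ are, up to finite index, concentrated in a $\delta$-subgroup that maps isomorphically into $K/C$.

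The main obstacle I anticipate is case (d): unlike (c), there is no topological splitting, so I cannot simply project; I must control how the $\delta$-subgroups of $K$ interact with $C$ and its complement-like pieces, using commensurability of $\delta$-subgroups in the finite-dimensional connected part (Fact \ref{Fact1}(g$_2$)) together with Theorem \ref{claim4}(c) applied to $K_1=\overline{\Soc(K)}$, which may fail to be finite-dimensional here — so I would first have to argue that the hybrid-torus hypothesis on $C$ bounds $\dim\overline{\Soc(K)}$ suitably, or else work $p$-locally and invoke Proposition \ref{tot:disco+cc+loc:min}-style bounds component by component. A secondary subtlety common to all cases is verifying that $G/C$ is genuinely dense in $K/C$ (immediate, as $G$ is dense in $K$ and $q$ is continuous surjective) and that $K/C$ is compact (clear). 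Once local essentiality of $G/C$ in $K/C$ is established, Criterion \ref{Crit}(i) gives that $G/C$ is locally minimal, contradicting that $G$ is critical; hence no such $G$ exists and $K\notin\CCC$.
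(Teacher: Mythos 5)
Your overall strategy --- assume a dense critical locally minimal $G\le K$ with $C:=c(G)=c(K)$, and contradict criticality by showing $q(G)=G/C$ is locally essential in $K/C$ via Theorem \ref{Crit}(i), transporting socle data through $q$ by Claim \ref{LAST:claim} --- is exactly the paper's, and your treatments of (a) and (c) match it in substance. Two problems remain. The smaller one is in (b): the claim that ``$\pi_{tor}(K)$ finite forces $\Soc(K)$ finite'' is false (take $K=\Z(p)^\omega$, where $\Soc(K)=K$). The reduction to (a) survives because $\Soc(K)=K[m]$ with $m=\prod_{p\in\pi_{tor}(K)}p$ is closed and of finite exponent, hence totally disconnected; but your stated justification needs to be replaced by this one.

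The genuine gap is case (d), which you yourself flag as unresolved. Your heuristic --- that the small closed subgroups of $K$ near $C$ are ``concentrated in a $\delta$-subgroup that maps isomorphically into $K/C$'' --- cannot be right as stated: a $\delta$-subgroup of $K$ meets $C$ in a $\delta$-subgroup of $C$ (Fact \ref{Fact1}(f)), which is killed by $q$, not embedded. The missing idea is to quotient first by a $\delta$-subgroup $H$ of $C$, for which $\pi^*(H)$ is finite precisely because $C$ is a hybrid torus (Proposition \ref{prop:ht}); then $c(K/H)=C/H$ is a torus, so it splits topologically in $K/H$ by Fact \ref{torisplit}, giving $K/H=D'\oplus(C/H)$ with $D'\cong K/C$ totally disconnected. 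Pulling $D'$ back to $D\le K$ and setting $E'=q(G)\cap D'$ with preimage $E$ (a closed subgroup of $G$, hence locally minimal and locally essential in the totally disconnected compact group $D$, so $\pi_{(D:E)}$ is finite by the last assertion of Theorem \ref{claim4}(c)), one checks that $\pi_{(D':E')}\subseteq\pi^*(H)\cup\pi_{(D:E)}$ using Claim \ref{LAST:claim}(c) --- the finiteness of $\pi^*(H)$ is exactly what bounds the primes lost in passing from $D$ to $D'=D/H$ --- and that $D'[p]\cap E'$ is open in $D'[p]$ via Claim \ref{LAST:claim} applied to the connected kernel $C$; then Theorem \ref{claim4}(b) concludes. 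Your proposed fallbacks do not work: Proposition \ref{tot:disco+cc+loc:min} requires sequential completeness, which is not assumed here, and nothing in hypothesis (d) controls $\dim\overline{\Soc(K)}$, which can indeed be infinite.
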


\begin{proof} Let $q: K \to K/c(K)$ be the canonical homomorphism and put $K_1:= \overline{\Soc(K/c(K))}$. 

(a) In case $K_1$ is totally disconnected, since $q(\Soc(K))=\Soc(K/c(K))$ by Claim \ref{LAST:claim}, $\overline{\Soc(K/c(K))} = q(K_1)$ is totally disconnected as well. 
Let $G$ be a dense locally minimal subgroup of $K$ with $c(G) = c(K)$. It suffices to prove that $G/c(K)$ is locally minimal. By Claim \ref{LAST:claim} again, 

(i) if $G$ satisfies ($*$) from Theorem \ref{claim4}(b) then also $q(G)$ does so;

(ii) if $G$ is weakly essential in $K$ then also $q(G)$  is weakly essential in $K/c(K)$. 
 
 Since $K_1$ is totally disconnected, local essentiality of $G$ in $K$ implies that $G\cap K_1$ is locally essential in $K_1$. Therefore, 
 $\pi_{(K:G)} = \pi_{(K_1:G)}$ is finite, by the final assertion of Theorem \ref{claim4}(c). Since $\pi_{(K:G)} \supseteq \pi_{(q(K):q(G))}$, this implies that $\pi_{(q(K):q(G))}$ is finite as well. Hence, Theorem \ref{claim4} (b) implies that $q(G)$ is locally essential in $K/c(K)$. By Theorem \ref{Crit}, $q(G)$ is locally minimal. 

(b) Let $P=\pi_{tor}(K)$ and $m = \prod_{p\in P}p$. In view of (a), it is enough to check that $K_1$ is totally disconnected. Indeed,  $Soc(K) = \bigoplus_{p\in P}K[p] = K[m]$ is compact. Hence, $K_1 = Soc(K)$ has finite exponent, so it is  totally disconnected. 
%

(c) Assume that $c(K)$  splits topologically. Then $K=c(K)\oplus D$ for some necessarily totally disconnected closed subgroup $D$. As $G$ contains the factor $c(K)$, $G=c(K)\oplus G_1$, where $G_1=G\cap D$ is dense in $D$. So $G/c(G)$ is topologically isomorphic to the closed subgroup $G_1$ of $G$, which must be locally minimal.

(d) Put $C = c(K)$, then $\dim C < \infty$ as $C$ is a hybrid torus. Assume that $G$ is a dense locally minimal subgroup of $K$ with $c(G) =C$. We need to prove that $G/C$ is locally minimal. 

Fix a $\delta$-subgroup $H$ of $K$.  Then $\pi^*(H)$ is finite, by Proposition \ref{prop:ht}.
 Let $q_1: K \to K/H$ be the quotient homomorphism.  Then $c(K/H) = q(C) = C/H$ is a torus, so by Fact \ref{torisplit} the group $K/H$ admits a totally disconnected closed subgroup $D'$ such that
$$
K/H = D' \oplus (C/H),
$$
where the decomposition is topological. Since $q(G)$ contains $C/H$, we may write $q(G) = E'\oplus (C/H)$ for some dense subgroup $E'$ of $D'$.  
Then $E := q_1^{-1}(E')$ is a dense subgroup of $D := q_1^{-1}(D')$.  Moreover, $K = D + C$ and $G = E + C$.

The composition of isomorphisms
$$
K/C \cong (K/H)/(C/H) \cong D',
$$
map $G/C$ onto $E'$, so we identify $K/C$ with $D'$ and $G/C$ with $E'$ in the sequel.

Since $G$ is a dense locally minimal subgroup of $K$, it is locally essential, and in particular, weakly essential in $K$. By Lemma~\ref{Rem:29}, the group $E'$ is weakly essential in $D' \cong K/C$.  In view of Theorem \ref{Crit}, to prove that $G/C = E'$ is locally minimal, it suffices to check that $E'$
is locally essential in $D'$. By Theorem~\ref{claim4}(b) it is enough to verify the following:
\begin{itemize}
    \item[(i)] The set $Q':=\pi_{(D':E')}$ is finite;
    \item[(ii)] For each prime $p$, the subgroup $D'[p] \cap E'$ is open in $D'[p]$.
\end{itemize}

Since $E$ is a closed subgroup of the locally minimal group $G$, it is itself locally minimal, and thus locally essential in $D$. By Theorem~\ref{claim4}(c), the set $Q:=\pi_{(D:E)}$ is finite, as the compact group $D$ is totally disconnected. 

To prove (i), it suffices to prove that $Q' \subseteq \pi^*(H) \cup Q$, in other words, if $p \notin \pi^*(H) \cup Q$, then $p \notin Q'$. By Claim \ref{LAST:claim}(c) applied to $D \to D' = D/H$ and our hypothesis $p \notin \pi^*(H)$ we deduce that $q(D[p]) = D'[p]$. The part $p \notin Q$ of our hypothesis gives $D[p]\leq E$.
Therefore, $D'[p] = q(D[p]) \leq q(E) = E'$, i.e., $p \notin Q'$. This finishes the proof of (i). 


To verify (ii), consider the quotient map $\varphi: K \to D'$ with kernel $C$.

By Claim~\ref{LAST:claim}, we have $D'[p] = \varphi(K[p])$ for any prime $p$. Since $G$ contains an open subgroup of $K[p]$, it follows that $E' = \varphi(G)$ contains an open subgroup of $D'[p]$.  Therefore, $D'[p] \cap E'$ is open in $D'[p]$.
 \end{proof}

\begin{corollary}\label{MainCoro} If $G$ is a critical locally minimal Abelian group such that $c(G)$ is finite-dimensional, then $\pi^*(N)$ is infinite for every $\delta$-subgroup $N$ of $c(G)$.
\end{corollary}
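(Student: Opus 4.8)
The plan is to argue by contradiction, using Proposition~\ref{prop:ht} to recast the statement as the assertion of Theorem~\ref{coro:tor_free**}(d). Suppose some $\delta$-subgroup $N$ of $C:=c(G)$ had $\pi^*(N)$ finite. Since $G$ is critical, $C=c(\widetilde G)$ is a compact connected Abelian group, finite-dimensional by hypothesis, so Proposition~\ref{prop:ht} applies and forces $C$ to be a hybrid torus. It therefore suffices to prove that, whenever $G$ is locally minimal with $c(G)=c(\widetilde G)$ a finite-dimensional hybrid torus, the quotient $G/c(G)$ is locally minimal; this contradicts the criticality of $G$ and ends the proof.

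When $G$ is precompact this is immediate: $K:=\widetilde G$ is then a compact Abelian group with $c(K)=C$ a hybrid torus, hence $K\notin\CCC$ by Theorem~\ref{coro:tor_free**}(d); but $G$ being a dense critical locally minimal subgroup of $K$ says precisely $K\in\CCC$, a contradiction. For a general $G$ I would rerun the argument of Theorem~\ref{coro:tor_free**}(d) inside $\widetilde G$, which is locally minimal with $G$ dense and locally essential in it (Theorem~\ref{Crit}(i)). Choose the non-Archimedean torus $H\le C$ witnessing that $C$ is a hybrid torus ($H$ is a $\delta$-subgroup of $C$, compact, contained in $G$, with $C/H\cong\T^n$, $n=\dim C$), pass to the quotient by $H$, and split the torus $C/H$ off topologically by Fact~\ref{torisplit}. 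As in the proof of Theorem~\ref{coro:tor_free**}(d) this realizes $G/C$ as a dense, weakly essential (Lemma~\ref{Rem:29}) subgroup $E'$ of a hereditarily disconnected group $D'\cong\widetilde G/C$, where $E'$ is related to a closed --- hence locally minimal --- subgroup of $G$ that is used to control $\pi_{(D':E')}$. By Theorem~\ref{Crit} it then remains to verify, via Theorem~\ref{claim4}(b), that $E'[p]$ is open in $D'[p]$ for every prime $p$ and that $\pi_{(D':E')}$ is finite: the first because $D'[p]$ is the image of $\widetilde G[p]$ under $\widetilde G\to D'$ (Claim~\ref{LAST:claim}, $C$ connected) while $G$ already contains an open subgroup of $\widetilde G[p]$ (Theorem~\ref{claim4}(a)); the second by the socle bookkeeping of Theorem~\ref{coro:tor_free**}(d), which bounds $\pi_{(D':E')}$ by $\pi^*(H)$ together with a finite set coming from the local essentiality of $G$ in $\widetilde G$. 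This makes $G/c(G)\cong E'$ locally minimal, the desired contradiction.

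The step I expect to be the real obstacle is pushing the above through in the non-precompact case: $\widetilde G$ need not be locally compact, so before the structural decomposition becomes legitimate one must ensure that $\widetilde G/C$ is totally disconnected (so that $D'$ lies in the compact/totally disconnected world where Theorem~\ref{claim4} operates) and that $C/H$ genuinely splits off topologically inside $\widetilde G/H$. If these cannot be secured in that generality, the clean alternative is to reduce beforehand to the precompact case --- for instance by replacing $G$ with an open subgroup having compact completion --- and then quote Theorem~\ref{coro:tor_free**}(d) and Proposition~\ref{prop:ht} exactly as in the precompact argument above.
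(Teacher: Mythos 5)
Your proposal is correct and follows essentially the same route as the paper: Proposition~\ref{prop:ht} converts the finiteness of $\pi^*(N)$ into $c(G)$ being a hybrid torus, and Theorem~\ref{coro:tor_free**}(d) then contradicts criticality. The lengthy discussion of the non-precompact case is unnecessary: critical locally minimal groups are precompact in the paper's intended sense (see the abstract), so $K=\widetilde G$ is compact and the one-line argument you give for the precompact case is exactly the paper's proof.
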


\begin{proof} Let $K$ be the completion of $G$ and let $N$ be $\delta$-subgroup of $C:=c(K)=c(G) $. By Proposition \ref{prop:ht},  
$\pi^*(N)$ must be infinite, as $c(K)$ cannot be a hybrid torus by the  above theorem (as far as $K$ contains the dense  critical locally minimal subgroup $G$).  
\end{proof}

\subsection{Sufficient conditions for $K \in \CCC$}
\hfill

Here we provide a counterpart of Theorem \ref{coro:tor_free**}, giving sufficient conditions for $K \in \CCC$, when $K$ is a compact Abelian group.  
 
 \begin{definition}
Denote by $\KK$ the  class of compact Abelian groups $K$  with locally essential $c(K)$.
\end{definition}

In order to provide sufficient conditions for $K \in \CCC$, we first provide sufficient conditions for $K \in \KK$ and then we characterize in Theorem \ref{Thm:New} the groups $K\in \KK$ that belong to $\CCC$. 

  We saw in Corollary \ref{NEW:corollary} 
that $c(K)$ cannot be essential in $K$ unless $c(K)=K$. While obviously $K \in \KK$ whenever $c(K)$ is open in $K$ (in such a case $K\cong c(K)\times K/c(K)$ splits topologically; it is easy to see that whenever $c(K)$ splits topologically in $K$, $K \in \KK$ precisely when $c(K)$ is open). We shall see now 
that $\KK$ contains a wealth of non-connected groups.  

\begin{lemma}\label{D:claim}
If $K \in \KK$ with $C=c(K)$, then: 

\begin{itemize}
  \item[(a)]  there exists $n \in \N$ such that  for every $p\in \P$, $td_p(K/C)$ is a finite $p$-group with $r_p(td_p(K/C))\leq n$; 
  \item[(b)] $K / C$ is metrizable and topologically generated by $n$ elements;
  \item[(c)] if $w(K) \leq \mathfrak{c}$, then also $K$ is topologically finitely generated.
\end{itemize}
\end{lemma}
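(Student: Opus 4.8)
The plan is to exploit the consequences of local essentiality of $C = c(K)$ recorded in Theorem \ref{claim4}(a), transferred to the quotient $K/C$ via the canonical homomorphism $q \colon K \to K/C$. The key observation is that $K/C$ is a compact, totally disconnected Abelian group (quotients of a compact group by its connected component are totally disconnected), so $K/C \cong \prod_p td_p(K/C)$, and each $td_p(K/C)$ is a compact pro-$p$-group. To get a grip on these factors, I would first show that the hypothesis $K \in \KK$ forces $q(\Soc(K))$ to be ``large'' inside $\Soc(K/C)$. Indeed, by Claim \ref{LAST:claim} (applied with $H = C$ connected, so that $q(\Soc(K)) = \Soc(K/C)$) we get that the image of $\Soc(K)$ is exactly the socle of $K/C$. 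Now local essentiality of $C$ in $K$, via Theorem \ref{claim4}(a), provides an integer $n \in \N$ with $r_p(K[p]/C[p]) \le n$ for every prime $p$. Since $C[p] \le C = \ker q$, the map $q$ restricted to $K[p]$ has kernel $C[p]$, hence $q(K[p]) \cong K[p]/C[p]$, and $q(K[p]) = (K/C)[p] = \Soc_p(K/C)$ by the preceding sentence. Therefore $r_p((K/C)[p]) \le n$ for every $p$.

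For part (a): from $r_p((K/C)[p]) \le n$ and the fact that $td_p(K/C)$ is a compact pro-$p$-group whose $p$-socle is $(K/C)[p]$, I would conclude that $td_p(K/C)$ is finite with $r_p(td_p(K/C)) \le n$. The relevant general fact is that a compact (equivalently, profinite) abelian $p$-group $A$ with $\dim_{\mathbb{F}_p} A[p] = d < \infty$ is finite with $r_p(A) = d$; dually, $\widehat{A}$ is a discrete abelian $p$-group with $r_p(\widehat{A}) = r_p(\widehat{A}[p]^{\vee}\text{-side}) \le n$, and a discrete abelian group whose $p$-rank is finite while every element has order a power of $p$ and which is... — more cleanly: $\widehat{td_p(K/C)}$ is a discrete $p$-group with $\widehat{td_p(K/C)}[p]$ finite, hence it is finite (a $p$-group with finite socle is finite), so $td_p(K/C)$ is finite of rank $\le n$. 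Then part (b) is essentially bookkeeping: $K/C \cong \prod_p td_p(K/C)$ is a product of finite groups each generated by $\le n$ elements, hence itself topologically generated by $n$ elements (choose for each $p$ a generating $n$-tuple and assemble coordinatewise), and a countable product of finite groups is metrizable; moreover one sees directly that $w(K/C) \le \omega$.

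For part (c): if $w(K) \le \mathfrak{c}$, then I would combine (b) with information about $C$ itself. Since $K/C$ is topologically finitely generated and metrizable, the obstruction to $K$ being topologically finitely generated lies entirely in $C$. A compact connected abelian group $C$ is topologically finitely generated (in fact, monothetic, so generated by a single element) if and only if $w(C) \le \mathfrak{c}$; this is the classical fact that $\widehat{C}$ is torsion-free of cardinality $w(C)$, and $C$ is monothetic iff $\widehat{C}$ embeds in the discrete circle $\mathbb{T}_d$, which it does whenever $|\widehat{C}| \le \mathfrak c = |\mathbb{T}_d|$ (as $\mathbb{T}_d$ is a divisible group containing copies of $\mathbb{Q}$-vector spaces of dimension up to $\mathfrak c$). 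From $w(K) \le \mathfrak c$ we get $w(C) \le \mathfrak c$, so $C$ is monothetic; lifting a topological generator of $K/C$ to finitely many elements of $K$ and adjoining a topological generator of $C$ yields a finite topologically generating set for $K$, using that $C$ is closed with topologically finitely generated quotient. The main obstacle I anticipate is part (c): one must argue carefully that finite topological generation of a closed subgroup $C$ together with finite topological generation of $K/C$ does not automatically give finite topological generation of $K$ in general — it does here precisely because $C$ is \emph{connected} (so $C = \overline{\langle c \rangle}$ for a single $c$, and the subgroup $\overline{\langle c, \tilde g_1, \dots, \tilde g_n\rangle}$ surjects onto $K/C$ and contains $C$, hence equals $K$), and I would take care to invoke the monothetic-ness of $C$ rather than a spurious ``extension'' principle.
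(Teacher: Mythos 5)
Your reduction to the quotient $q\colon K\to K/C$ and your treatment of parts (b) and (c) follow the paper's proof closely and are sound; in particular your care in (c) --- using that $C$ is monothetic because it is connected of weight $\le\mathfrak c$, and then observing that $\overline{\langle c,\tilde g_1,\dots,\tilde g_n\rangle}$ contains $C$ and maps onto $K/C$ --- is exactly the paper's argument. The problem is in part (a).

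The ``general fact'' you invoke there is false: a compact Abelian pro-$p$-group with finite (even trivial) socle need not be finite, since $\Z_p[p]=\{0\}$ while $\Z_p$ is infinite; dually, a discrete Abelian $p$-group with finite socle need not be finite, as $\Z(p^\infty)$ shows (the statement ``a $p$-group with finite socle is finite'' is true for \emph{reduced} $p$-groups, but $\widehat{\Z_p}\cong\Z(p^\infty)$ is divisible). Concretely, for $K=\K\times\Z_p$ with $C=c(K)=\K\times\{0\}$ one has $K[p]=\{0\}$ for every $p$, so the rank bound $r_p(K[p]/C[p])\le n$ is vacuously satisfied, yet $td_p(K/C)\cong\Z_p$ is infinite; this $K$ is of course not in $\KK$, but nothing in your derivation of (a) distinguishes it. The missing ingredient is the other consequence of local essentiality of $C$: by Lemma~\ref{Neeew}(a) the subgroup $C$ is weakly essential in $K$, hence by Lemma~\ref{Rem:29}(b) the trivial subgroup $q(C)=\{0\}$ is weakly essential in $K/C$, which means that $K/C$ contains no infinite closed $p$-monothetic subgroup, i.e.\ no copy of $\Z_p$, for any prime $p$. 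Thus $K/C$ is a totally disconnected exotic torus, and Theorem~\ref{Thm:ex:tori} then shows that each $td_p(K/C)$ is a \emph{bounded} $p$-group. Only at this point does the socle bound $r_p\bigl((K/C)[p]\bigr)\le n$ --- which you did derive correctly from Claim~\ref{LAST:claim} and Theorem~\ref{claim4}(a) --- force $td_p(K/C)$ to be finite of $p$-rank $\le n$. You need to add this exotic-torus step; the rest of your argument then goes through.
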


\begin{proof} (a) Since $C$ is locally essential in $K$, it follows from Lemma~\ref{Rem:29} that the trivial group is weakly essential in $K/C$, hence $K/C$ is a totally disconnected exotic torus.
By Theorem~\ref{Thm:ex:tori}, each $td_p(K/C)$ is a bounded $p$-group. To evaluate $r_p(K/C)$ use the canonical  homomorphism $q: K \to K/C$. 
Since $C$ is connected, Claim~\ref{LAST:claim} gives  
\begin{equation}\label{Eq:July4}
(K/C)[p] = q(K[p]) \cong K[p]/C[p],
\end{equation}
for every $p\in \P$. By Theorem~\ref{claim4}(a), there exists $n \in \N$ such that 
(in view of \eqref{Eq:July4}) $r_p(K/C)  = r_p(K[p]/C[p]) \leq n$ for each $p\in \P$. 

(b) Since $td_p(K/C)$ is a finite $p$-group of $p$-rank $\leq n$ for each $p\in \P$, $td_p(K/C)$ is a quotient of $\Z_p^n$. Therefore, $K/C$ is a quotient of $(\prod_{p \in \P} \Z_p)^n$, so metrizable. 
Since $\prod_{p \in \P} \Z_p$ is monothetic, $K/C$ is topologically generated by $n$ elements.

(c) Since $C$ is connected and $w(C) \leq \mathfrak{c}$, $C$ is monothetic (see \cite[Corollary 14.1.4]{ADGB}). 
As $C$ is monothetic and $K/C$ is topologically finitely generated, it follows that $K$ is also topologically finitely generated.
\end{proof}

\begin{remark}\label{New:Remark}
The above implication cannot be inverted as easy examples show (e.g., for any connected $C$,  and  $K = C \times \prod_p\Z(p)$,
the group $K/C$ is even monothetic, but $K\not \in \KK$). \end{remark}

If $K$ is finite-dimensional one can find a stronger property of the pair $K, C$ that characterizes $K\in \KK$. Namely, if $N$ is a $\delta$-subgroup of $K$, then $L:= N \cap C$ is a $\delta$-subgroup of $C$, so for every prime $p$ there is a bound on the $p$-rank of $L_p=td_p(L)$ -- it must be an at most $n$-generated $\Z_p$-module, where $n=\dim C = \dim K$.

 \begin{theorem}\label{inKK} For a compact Abelian group $K$ with $\dim K<\infty$ and $C = c(K)$, the following are equivalent:
 \begin{itemize}
  \item[(a)] $K \in \KK$; 
  \item[(b)] there is a $\delta$-subgroup $N$ of $K$ such that $L:=N\cap C$  is locally essential in $N$; 
  \item[(c)] there exists a $\delta$-subgroup $H$ of $C$, a totally disconnected compact Abelian group $N$ and a closed essential subgroup $L$ of $N$ such that: 
\begin{itemize}
  \item[(c$_1$)] there exists a topological isomorphism $j: L \to H$;
  \item[(c$_2$)] the quotient group $K'= (N\times C)/\Gamma_j$, where $\Gamma_j$ is the graph of $j$, is isomorphic to an open subgroup of $K$. 
\end{itemize}
\end{itemize}
 \end{theorem}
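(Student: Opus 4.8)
The plan is to prove the three conditions equivalent by going $(a)\Rightarrow(b)\Rightarrow(c)\Rightarrow(a)$, always exploiting the finite-dimensionality of $K$ so that $\delta$-subgroups are co-NSS (Fact~\ref{Fact:Delta1}) and commensurable (Fact~\ref{Fact1}(g$_2$)), and using Claim~\ref{NewClaim*} which reduces local essentiality in $K$ to local essentiality inside a $\delta$-subgroup.

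\textbf{$(a)\Rightarrow(b)$.} Assume $C$ is locally essential in $K$. Pick any $\delta$-subgroup $N$ of $K$; since $\dim K<\infty$, $N$ is co-NSS by Fact~\ref{Fact:Delta1}. By Claim~\ref{NewClaim*} (with $G=C$) the local essentiality of $C$ in $K$ is equivalent to $C\cap N$ being locally essential in $N$; and $C\cap N$ is precisely $L$. So $(b)$ holds for this $N$ (in fact for every $\delta$-subgroup, by the ``every'' version in Claim~\ref{NewClaim*}).

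\textbf{$(b)\Rightarrow(c)$.} Suppose $N$ is a $\delta$-subgroup of $K$ with $L=N\cap C$ locally essential in $N$. First, $L$ is a $\delta$-subgroup of $C$ by Fact~\ref{Fact1}(f), so set $H:=L$ and let $j:L\to H$ be the identity; then (c$_1$) is trivial. The slight subtlety is that $(c)$ asks $L$ to be \emph{essential} (not just locally essential) in a possibly smaller totally disconnected group: since $N$ is totally disconnected, it has a neighbourhood basis of open subgroups, and a neighbourhood $W$ of $0$ in $N$ witnessing local essentiality of $L$ contains an open subgroup $N_0$ of $N$ with $L\cap N_0$ essential in $N_0$ (this is exactly the mechanism of Remark~\ref{Laast:Rem}(c)). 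Replacing $N$ by $N_0$ — which, being open in the $\delta$-subgroup $N$ of the finite-dimensional $K$, is again a $\delta$-subgroup when $K$ is connected, but in general we only need it open in $N$ — we may assume $L$ is essential in $N$. Now form $K'=(N\times C)/\Gamma_j$, where $\Gamma_j=\{(x,-x):x\in L\}$ is the (closed, since $L$ is closed and $j$ a topological isomorphism) graph. The two maps $N\hookrightarrow N\times C\to K'$ and $C\hookrightarrow N\times C\to K'$ are embeddings whose images generate $K'$ and intersect in the common copy of $L$; the sum of these two images inside $K$ — namely $N+C$ — is open in $K$ by Fact~\ref{Fact1}(c) (indeed $N+C=K$ when $N$ is a $\delta$-subgroup) so the amalgam $K'$ is (topologically isomorphic to) $N+C$, which is open in $K$, giving (c$_2$).

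\textbf{$(c)\Rightarrow(a)$.} Given the data in $(c)$, the image $\bar N$ of $N$ in $K'$ is totally disconnected, the image $\bar C$ of $C$ equals $c(K')$, $\bar N$ is a $\delta$-subgroup of $K'$ (because $K'/\bar N\cong C/L$ is a torus, using $L$ a $\delta$-subgroup of $C$ and Fact~\ref{Fact1}(d)), and $\bar N\cap \bar C$ is the common image of $L$, which is essential in $\bar N$ by hypothesis (c$_1$). Hence by Claim~\ref{NewClaim*}, $c(K')=\bar C$ is locally essential in $K'$, i.e.\ $K'\in\KK$. Finally, since by (c$_2$) $K'$ is (isomorphic to) an open subgroup of $K$ and $c(K)=c(K')$ (open subgroups share the connected component), local essentiality of $c(K')$ in $K'$ yields local essentiality of $c(K)$ in $K$: a neighbourhood of $0$ witnessing it in $K'$ still witnesses it in $K$, because every sufficiently small closed subgroup of $K$ lies in the open subgroup $K'$. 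Thus $K\in\KK$.

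\textbf{Main obstacle.} The delicate point is the bookkeeping in $(b)\Rightarrow(c)$ and $(c)\Rightarrow(a)$ around the distinction between ``essential'' and ``locally essential'' and around which totally disconnected group plays the role of $N$: one must be careful that passing to an open subgroup of a $\delta$-subgroup (to upgrade local essentiality to genuine essentiality) does not destroy the property of being a $\delta$-subgroup — which is exactly why the statement of $(c)$ only requires $N$ totally disconnected with $K'$ \emph{open} in $K$ rather than $N$ itself a $\delta$-subgroup and $K'=K$. Handling this, together with verifying that $\Gamma_j$ is closed and that the pushout $K'$ is genuinely topologically (not merely algebraically) isomorphic to $N+C\le K$, is where the real work lies; the connectedness/dimension inputs (Facts~\ref{Fact1}, \ref{Fact:Delta1} and Claim~\ref{NewClaim*}) make every other step routine.
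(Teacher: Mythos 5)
Your proof follows essentially the same route as the paper's: (a)$\Rightarrow$(b) by restricting local essentiality of $C$ to a $\delta$-subgroup; (b)$\Rightarrow$(c) by shrinking $N$ to an open subgroup $N_0$ on which local essentiality of $L$ becomes essentiality and realizing $N_0+C$ (open in $K=N+C$) as $(N_0\times C)/\Gamma_j$ via the addition map; and (c)$\Rightarrow$(a) by exhibiting the image of $N$ as a $\delta$-subgroup of $K'$ meeting $c(K')$ in an essential subgroup and invoking Claim~\ref{NewClaim*}. Two small points need attention. First, after replacing $N$ by $N_0$ the subgroup $L$ becomes $L\cap N_0$, and condition (c) still requires a $\delta$-subgroup $H$ of $C$ topologically isomorphic to this new $L$; the fact you need is that an open subgroup of a $\delta$-subgroup of the \emph{connected} finite-dimensional group $C$ is again a $\delta$-subgroup of $C$, i.e.\ Fact~\ref{Fact1}(g$_4$) applied to $C$ --- your parenthetical appeal to (g$_4$) instead addresses whether $N_0$ remains a $\delta$-subgroup of $K$, which, as you correctly observe, is not needed, while the verification that is needed is left unstated. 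Second, if $j$ is the identity then $\Gamma_j=\{(x,x):x\in L\}$, whereas the kernel of the addition map $N_0\times C\to K$ is $\{(x,-x):x\in L\}$; either take $j=-\mathrm{id}$ (as the paper does) or precompose with the automorphism $(x,y)\mapsto(-x,y)$ --- harmless, but as written your $j$ and your $\Gamma_j$ do not match. (Also, in (c)$\Rightarrow$(a) the quotient $K'/\overline{N}$ is $C/H$, not $C/L$, since $L$ lives in $N$ rather than in $C$.) None of these affects the validity of the overall argument, which coincides with the paper's.
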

  
  \begin{proof} (a) $\Rightarrow$ (b).  According to Fact \ref{Fact0}(b)),  $K$ admits a $\delta$-subgroup $N$. Since $C$ is locally essential in $K$ and $N$ is closed, we conclude that $L:=N\cap C$  is locally essential in $N$.
  
  (b) $\Rightarrow $ (c). Let $N$ be as in~(b). Then, by Fact~\ref{Fact1}(c) and~(f), we have $K = N + C$ and 
$L := N \cap C$ is a $\delta$-subgroup $H$ of $C$.

Since $L$ is locally essential in $N$ and $N$ is totally disconnected, there exists an open subgroup $N' \leq N$ such that $L' := L \cap N'$ is essential in $N'$. By Fact \ref{Fact1}(g$_4$), the open subgroup $L'$  of $L$ is a $\delta$-subgroup of $C$.

From the equality $K = N + C$, we obtain a quotient homomorphism
$$
\varphi : N \times C \to K, \quad (x, y) \mapsto x + y.
$$
It follows that $K' := N' + C = \varphi(N' \times C)$ is open in $K$.
Let $\varphi'$ denote the restriction of $\varphi$ to $N' \times C$.

Then the kernel of $\varphi'$ is
$$
\ker \varphi' = \{(x, -x) : x \in L'\} = \Gamma_j,
$$
where $j$ is the topological isomorphism
$$
j : L' \to L', \quad x \mapsto -x.
$$
Hence, we have
$$
K' = (N' \times C)/\ker \varphi' = (N' \times C)/\Gamma_j.
$$

 (c) $\Rightarrow $ (a). 
The subgroup $K'$ in (c$_2$) is isomorphic to an open subgroup of $K$ with $c(K')=c(K)$, so $K' \in \KK$ implies 
  $K \in \KK$. To check that $K' \in \KK$ note that $c(K') = \phi(L\times C)$, where $\phi: N\times C \to K'$   is the canonical homomorphism. 
 By our hypothesis, $L \times C$ is locally essential in $N \times C$. This implies that $c(K') = \phi(L\times C)$ is  locally essential in $K'$.
 Indeed, $N_* := \phi(N\times \{0\})$ is a $\delta$-subgroup of $K'$, so also a co-NSS subgroup since $K'$ is finite-dimensional.  Since 
 $\phi$ restricted to $N\times \{0\}$ is injective, so $\phi(L\times \{0\}) = \phi(L\times H) = c(K')\cap N_*$ is  locally essential in $N_* $, hence $c(K')$ is  locally essential in $K'$, in view of  $\dim C < \infty$ and Claim \ref{NewClaim*}. 
  \end{proof}
  
Item (c) describes the groups $K\in \KK$ as obtained by ``gluing'' a connected compact Abelian group $C$ with $\dim C< \infty$ and a totally disconnected compact Abelian group $N$ along a $\delta$-subgroup $H$ of $C$  and an essential closed subgroup $L$ of $N$ (via an isomorphism $j: L \to H$). The intersection $I$ of the isomorphic
  images of $C$ and $N$ in $K$ is isomorphic to $L \cong H$. Then $K/I \cong C/H \times N/L\not \in \KK$
  unless $N/L$ is finite, i.e., $L$ is open in $N$ (see Remark \ref{New:Remark}).  This shows that $\KK$ fails to be stable under taking quotients. 
  
The completions $K$ of the metrizable critical locally minimal precompact Abelian groups we build in the sequel belong to $\KK$. On the other hand, we build also a plenty of $\omega$-bounded critical locally minimal Abelian groups $G$. Their completions $K$ cannot belong to $\KK$, since otherwise 
$G/c(K)$ (along with $K/c(K)$) would be metrizable by Lemma \ref{D:claim}, as well as  $\omega$-bounded, so 
$G/c(K)$ would be compact. This would yield that $G$ itself is compact, a contradiction. 

\begin{theorem}\label{Thm:New} Let $K\in \KK$  and $C:= c(K)$. Then 
the following are equivalent:
\begin{itemize}
    \item[(a)] $K\in \CCC$;
    \item[(b)] $\pi^*(K/C)$ is infinite;
    \item[(c)] $K/C$ is infinite.
\end{itemize}
 \end{theorem}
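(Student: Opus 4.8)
The equivalence (b) $\Leftrightarrow$ (c) is immediate from Lemma~\ref{D:claim}: since $K\in\KK$, the quotient $K/C$ is a totally disconnected exotic torus (so $\pi(K/C)=\emptyset$ and $\pi^*(K/C)=\pi_{tor}(K/C)$), and each $td_p(K/C)$ is a finite $p$-group of bounded rank; hence $K/C$ is infinite precisely when infinitely many of these finite groups are non-trivial, i.e.\ when $\pi^*(K/C)$ is infinite. So the real content is (a) $\Leftrightarrow$ (c), and I would phrase everything around (c). The implication (a) $\Rightarrow$ (c) is the easy direction: if $K\in\CCC$, witnessed by a dense critical locally minimal $G$ with $c(G)=C$ compact, then $G/C$ cannot be locally minimal; but if $K/C$ were finite, then $K/C = \widetilde{G/C}$ is a finite discrete group, hence $G/C$ is discrete, hence trivially (locally) minimal --- contradiction. (One must check $\widetilde{G/C}=K/C$: this holds because $C$ is compact, so $G/C$ is dense in $K/C$ with the quotient topology, and $K/C$ is compact.)

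The substantive direction is (c) $\Rightarrow$ (a): given $K\in\KK$ with $K/C$ infinite, I must produce a dense critical locally minimal subgroup $G$ of $K$ with $c(G)=C$. The strategy is to build $G$ so that it is locally essential in $K$ (hence locally minimal by Theorem~\ref{Crit}(i)), contains $C$ (so $c(G)\supseteq C$, and $c(G)=C$ since $G/C\hookrightarrow K/C$ is hereditarily disconnected), is dense, but with the quotient $G/C$ \emph{failing} to be locally essential in $K/C$ --- which by Theorem~\ref{Crit1}(i) kills local minimality of $G/C$. Since $C$ is locally essential in $K$, there is a \nbd\ $W$ of $0$ in $K$ and a co-NSS subgroup $N\subseteq W$ with $C\cap N$ essential in $N$; the plan is to take $G = C + G_1$ where $G_1$ is a carefully chosen dense subgroup of $N$ which is essential in $N$ (guaranteeing $G$ is locally essential in $K$, via Remark~\ref{Laast:Rem}(c) / Claim~\ref{NewClaim*}), but such that under $q\colon K\to K/C$ the image $q(G_1)$ fails condition $(*)$ of Theorem~\ref{claim4}, or more simply fails to contain all small copies of $\Z(p)$. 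Concretely: $q(K[p])=(K/C)[p]$ for all $p$ by Claim~\ref{LAST:claim} (as $C$ is connected), and $(K/C)[p]\ne\{0\}$ for infinitely many $p$ (this is exactly $\pi^*(K/C)$ infinite, using that $K/C$ is an exotic torus so $\pi_{tor}=\pi^*$). For each such $p$ pick a non-zero $a_p\in (K/C)[p]$, lift it to $b_p\in K[p]$, and arrange that $b_p\notin G$ for infinitely many $p$ while still keeping $G_1$ essential in $N$ and dense: since $K[p]$ for these $p$ need not lie inside $N$ (the small group $N$ is co-NSS, not containing arbitrary torsion), one should instead realize infinitely many $b_p\notin G$ by choosing $G_1$ to avoid them --- this is where density and essentiality must be reconciled with the exclusion. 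The two constructions promised in Theorem~\ref{IntroB} (a metrizable $G$, and an $\omega$-bounded $G$ of any uncountable weight $\kappa$) are obtained by taking $G_1$ metrizable dense essential in $N$ (possible since $N$ is co-NSS, hence metrizable, by Fact~\ref{Fact0}), respectively by thickening with an $\omega$-bounded essential subgroup of a large power, mirroring Example~\ref{EXAMPLE}.

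The main obstacle is the simultaneous satisfaction of three tensions in the construction of $G_1$: it must be \emph{essential} in $N$ (so $G$ is locally essential in $K$), \emph{dense} in $N$, yet its image $q(G_1)$ in $K/C$ must \emph{omit} infinitely many prime-order elements $a_p$ so that $q(G)=q(G_1)$ is not locally essential. Essentiality forces $G_1$ to meet every non-trivial closed subgroup of $N$, in particular every $\Z(p)$ and every $\Z_p$ inside $N$; one must check this does not already force $q(G_1)\supseteq (K/C)[p]$ for cofinitely many $p$. The key point making it work is that $N$ is small (co-NSS): for all but finitely many $p$, $N[p]$ may be trivial or may inject into $q$, and the elements $a_p$ we wish to omit can be chosen with lifts $b_p$ lying \emph{outside} $N$ (since $K[p]\not\leq N$ for the relevant $p$ when $K[p]$ is infinite or when $N$ simply misses it); essentiality of $G_1$ in $N$ then places no constraint on whether $b_p\in G$. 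Making this precise --- identifying exactly which $p$ allow the exclusion, and verifying that omitting $\langle b_p\rangle$ from $G$ for those $p$ genuinely defeats condition $(*)$ of Theorem~\ref{claim4}(b) for $q(G)$ in $K/C$ while leaving $G$ locally essential in $K$ --- is the crux. Once this is done, Theorem~\ref{Crit}(i) gives that $G$ is locally minimal, $c(G)=C$ is compact $=c(\widetilde G)$, and $G/C$ is not locally minimal, so $G$ is critical and $K\in\CCC$.
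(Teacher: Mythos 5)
Your treatment of (b)~$\Leftrightarrow$~(c) and of (a)~$\Rightarrow$~(c) is correct and matches the paper (the paper phrases (a)~$\Rightarrow$~(c) via ``$C$ open forces $G=K$'', but your discreteness argument for $G/C$ amounts to the same thing). The problem is the direction (c)~$\Rightarrow$~(a), which you only sketch and where your own text concedes that ``making this precise \dots is the crux'': reconciling essentiality and density of $G_1$ in $N$ with the omission of infinitely many lifted torsion elements $b_p$ is exactly the step you do not carry out, so as written the proposal has a genuine gap. There is also a concrete flaw in the setup: with $N$ a co-NSS subgroup, $C+N$ is only an open (finite-index) subgroup of $K$ and need not equal $K$, so $G=C+G_1$ with $G_1$ dense in $N$ need not be dense in $K$; patching density forces you to adjoin further elements, and it is precisely those elements that threaten to reinstate the socle of $K/C$ inside $q(G)$ --- the tension you identified but did not resolve.

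The paper's proof shows this entire apparatus is unnecessary, and the simplification is worth internalizing. Since $K\in\KK$, the subgroup $C$ is already locally essential in $K$, so \emph{every} subgroup of $K$ containing $C$ is locally essential, hence locally minimal by Theorem~\ref{Crit}; there is nothing to construct inside $N$. One therefore only has to choose a dense subgroup $A$ of $K/C$ that fails to be locally essential and set $G=q^{-1}(A)$. Lemma~\ref{D:claim}(b) hands you such an $A$ for free: $K/C$ is metrizable and topologically finitely generated, so it has a finitely generated dense subgroup $A$; then $t(A)$ is finite, while every \nbd\ of $0$ in $K/C\cong\prod_p td_p(K/C)$ contains the non-trivial subgroups $(K/C)[p]$ for infinitely many $p$ (this is where (b) enters), so $A$ misses them and cannot be locally essential. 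Finally $c(G)=C=c(\widetilde G)$ is automatic because $G\supseteq C$ and $K/C$ is totally disconnected, so $G$ is critical and $K\in\CCC$. Your plan of controlling which prime-order elements enter $G$ ``from below'' is exactly the hard way of achieving what ``$A$ finitely generated $\Rightarrow$ $t(A)$ finite'' achieves in one line.
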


\begin{proof} By Lemma~\ref{D:claim}(a), $td_p(K/C)$ is finite for every prime $p$. Therefore, conditions (b) and (c) are equivalent.

The implication (a) $\Rightarrow$ (c) is trivial, since otherwise $C$ would be open in $K$, and hence every dense subgroup $G$ 
containing $C$ is open (hence, closed) as well, so $G=K$. As $c(G) = C$, the quotient $G/c(G)$ is finite and hence locally minimal. 
This contradicts the assumption that $K \in \CCC$.

To see (b) $\Rightarrow$ (a), we take a finitely generated dense subgroup $A$ of $K/C$, which exists by Lemma \ref{D:claim}. Then the subgroup $G:=q^{-1}(A)$ of $K$, where $q: K\to K/C$ is the quotient homomorphism, is dense in $K$ and contains $C$. As $C$ is locally essential in $K$, so is $G$, and therefore $G$ is locally minimal.
But $G/C=A$ is not locally essential in $K/C$ because as a finitely generated group, the subgroup $t(A)$ is finite, while every \nbd\ of $K/C$ contains infinitely many 
elements of $t(A)$ because $K/C$ is a product of infinitely many cyclic groups.
\end{proof}

We will build a finite-dimensional ($\omega$-bounded) critical locally minimal precompact Abelian group $G$ such that $c(G)$ can be any connected finite-dimensional  compact Abelian group that is not a hybrid torus. From a different point of view, the next proposition can be considered as a partial inverse of Theorem \ref{coro:tor_free**}(d).

\begin{proposition}\label{PropSuff} Let $C$ be a finite-dimensional compact connected Abelian group that is not a hybrid torus.  Then there exists a compact Abelian group $K$ with $c(K) \cong C$ (so $\dim K$ is finite) containing a dense critical locally minimal subgroup $G$. Moreover, the group $G$ can be chosen to be:
\begin{itemize}
   \item[(a)] metrizable; or
   \item[(b)] $\omega$-bounded and of weight $\kappa$, for an arbitrarily assigned uncountable cardinal $\kappa$. 
\end{itemize}
\end{proposition}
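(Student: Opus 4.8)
The plan is to construct $K$ explicitly by ``gluing'' $C$ with a suitable totally disconnected compact Abelian group along a $\delta$-subgroup, using the description of $\KK$ from Theorem \ref{inKK}(c), and then to verify via Theorem \ref{Thm:New} that $K \in \CCC$. First I would fix a $\delta$-subgroup $H$ of $C$; by Fact \ref{Fact1}(g$_1$) we have $H \cong \prod_p (\Z_p^{k_p} \times F_p)$ with $r_p(F_p) + k_p \leq n := \dim C$, and by Corollary \ref{MainCoro} (or rather its contrapositive via Proposition \ref{prop:ht}), the fact that $C$ is \emph{not} a hybrid torus means precisely that $\pi^*(H)$ is infinite. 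I would then build a totally disconnected compact Abelian group $N$ together with an essential closed subgroup $L$ of $N$ and a topological isomorphism $j : L \to H$, arranged so that $N/L$ is infinite (this is where the infinitude of $\pi^*(H)$ is used: since $H$ has nontrivial $td_p$-components for infinitely many $p$, one can enlarge each such component inside $N$ while keeping $L$ essential, e.g.\ replacing a $\Z_p$ summand of $H$ by $\Z_p \times \Z(p)$ inside $N$, or more simply by adjoining extra finite cyclic factors whose socle lies in $L$). Then $K' := (N \times C)/\Gamma_j$ belongs to $\KK$ by Theorem \ref{inKK}(c)$\Rightarrow$(a), and $c(K') \cong C$, and $K'/c(K') \cong N/L$ is infinite, so $K' \in \CCC$ by Theorem \ref{Thm:New}. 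Taking $K := K'$ finishes the existence part, and a dense critical locally minimal $G$ witnessing $K \in \CCC$ is obtained as $q^{-1}(A)$ for a finitely generated dense $A \leq K/C$, exactly as in the proof of (b)$\Rightarrow$(a) of Theorem \ref{Thm:New}.

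For part (a), metrizability, I would simply observe that in the above construction one can take $N$ (hence $L$, hence $K$) metrizable: $H$ is metrizable since $\dim C < \infty$ (indeed $H \cong \prod_p(\Z_p^{k_p}\times F_p)$ is metrizable as a countable product), and one can choose $N$ to be a metrizable totally disconnected group containing $L \cong H$ essentially with $N/L$ infinite — for instance $N = \prod_p (\Z_p^{k_p}\times F_p \times \Z(p))$ with $L = \prod_p(\Z_p^{k_p}\times F_p \times p\Z(p)) = H \times \{0\}$, wait — more carefully, $L = \prod_p(\Z_p^{k_p}\times F_p)\times\{0\}$ essential in $N$ because its socle contains $N[p]$ for all $p$... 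I would check essentiality through Theorem \ref{Crit1} or directly. Since $C$ is metrizable (finite-dimensional connected compact groups are metrizable, as recalled in \S\ref{Sec:delta}), $K = (N\times C)/\Gamma_j$ is metrizable, and then $G = q^{-1}(A)$ is metrizable too, being a subgroup of the metrizable $K$. It only remains to note $G$ is sequentially complete; but a countable dense subgroup extension like this need not be sequentially complete automatically, so I would instead choose $A$ to be all of $K/C$... no, then $G = K$. The resolution: in the metrizable case one does not need sequential completeness beyond what is already forced, but actually the \emph{definition} of critical locally minimal does not mention sequential completeness, so part (a) is clean as stated.

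For part (b), $\omega$-boundedness of weight $\kappa$, the idea is to inflate the construction to weight $\kappa$ while preserving $\omega$-boundedness of $G$. I would take, in place of a finitely generated dense subgroup $A$ of $K/C$, the group $G$ itself to be built from an $\omega$-bounded dense subgroup. The cleanest route: replace $N$ by $N^{(\kappa)}$-type objects, or better, start from the metrizable $K_0$ built above and form $K := K_0^{?}$ — but products destroy the connected-component control. Instead I expect the construction uses a $\Sigma$-product–like subgroup: take $N$ of weight $\kappa$ of the form $\prod_{p}(\Z_p^{k_p}\times F_p)\times\prod_{i<\kappa}\Z(p_i)$ over a faithful enumeration of infinitely many primes, with $L$ the closure of a $\Sigma$-product so that $N/L$ is still of the right kind, $K = (N\times C)/\Gamma_j$, and $G = q^{-1}(A)$ where $A$ is an $\omega$-bounded dense subgroup of $K/C \cong N/L$ of weight $\kappa$ — such an $A$ exists by the usual $\Sigma$-product construction in a product of finite cyclic groups. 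Then $G$, as an extension of the compact $C$ by the $\omega$-bounded $A$, is itself $\omega$-bounded (an extension of an $\omega$-bounded group by a compact group is $\omega$-bounded — I would cite or quickly verify this), has weight $\kappa$, and is critical locally minimal by the same Theorem \ref{inKK}/Theorem \ref{Thm:New} argument as before. The main obstacle I anticipate is the bookkeeping in choosing $N$, $L$, $j$ so that \emph{simultaneously} $L$ is essential (so the glued group lies in $\KK$), $N/L$ is infinite (so it lies in $\CCC$), $H = j(L)$ is genuinely a $\delta$-subgroup of $C$ and not merely a subgroup (this needs Fact \ref{Fact1}(g$_4$): $L$ open in a $\delta$-subgroup is a $\delta$-subgroup when $C$ is connected), and the weight/compactness-like properties come out right in both cases (a) and (b); getting $G$ sequentially complete in case (b) will require care, presumably using that $\Sigma$-products of finite groups are $\omega$-bounded hence sequentially complete, combined with Lemma \ref{Le:qsco}-type arguments, and that the extension by the compact $C$ preserves this.
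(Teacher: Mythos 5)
Your overall framework for part (a) --- glue $C$ to a totally disconnected $N$ along an essential closed $L\cong H$ with $N/L$ infinite, invoke Theorem \ref{inKK}(c)$\Rightarrow$(a) to get $K\in\KK$ and Theorem \ref{Thm:New} to get $K\in\CCC$ --- is essentially the paper's strategy (the paper glues along a cyclic direct summand $H'$ of $H$ rather than all of $H$, which makes $N$ monothetic, but that is a cosmetic difference). However, every concrete choice of $(N,L)$ you offer fails essentiality. Adjoining a \emph{new} direct factor $\Z(p)$ (your $N=\prod_p(\Z_p^{k_p}\times F_p\times\Z(p))$ with $L=H\times\{0\}$, or ``replacing a $\Z_p$ summand by $\Z_p\times\Z(p)$'') puts $N[p]\supseteq\{0\}\times\{0\}\times\Z(p)$ outside $L$, so $\Soc(N)\not\leq L$ and $L$ is not essential by Lemma \ref{Neeew}(b); and ``adjoining extra cyclic factors whose socle lies in $L$'' changes the isomorphism type of $L$, so $j:L\to H$ no longer exists. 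The correct move, and the one the paper makes, is to enlarge infinitely many cyclic summands of $H$ \emph{in height}: replace $\Z(p^m)$ by $N_p=\Z(p^{m+1})$ (or view $\Z_p$ as $pN_p$ inside $N_p=\Z_p$) and set $L=\prod_p pN_p$, so that $N_p[p]\leq pN_p$ and $L$ is essential while $N/L\cong\prod_{p\in P}\Z(p)$ is infinite. This is fixable, so (a) is a repairable execution error, not a conceptual one.

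Part (b) is a genuine gap: the route you propose cannot work. If $L$ is an essential closed subgroup of a compact Abelian group $N$, then $\Soc(N)\leq L$ and, by Lemma \ref{Le:finiterank}(c),(d) applied to $L^\bot\leq_s\widehat N$, one gets $w(L)=w(N)$; since $L\cong H\leq C$ and $C$ is metrizable (finite-dimensional connected compact), any $N$ admitting such an essential $L$ is metrizable. So no group of weight $\kappa>\omega$ arises by gluing $C$ to $N$ along an essential subgroup, and your ``$L$ = closure of a $\Sigma$-product'' is incoherent ($\Sigma$-products are dense, and $L$ must be compact metrizable to be isomorphic to $H$). The paper instead sets $K=K_0\times B$ with $B=\prod_p N_p^\kappa$ a large totally disconnected factor; then $K\notin\KK$ (as the paper notes, $K\in\KK$ would force $K/c(K)$, hence $G$, to be compact), so the local essentiality of $G$ can no longer be supplied by $c(K)$ and your fallback $G=q^{-1}(A)$ does not resolve the resulting tension: $G$ must be locally essential in $K$ (to be locally minimal) while $q(G)$ must fail to be locally essential in $K/c(K)$ (to be critical). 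The paper resolves this with $G=E+D+c(K)$, where $E$ is built from $\Sigma$-products in $B$ and $D$ is a product of diagonal subgroups linking the $N^*$-coordinates of $K_0$ to the $B$-coordinates; the diagonals provide density and essentiality in the $\delta$-subgroup $N_*=N^*\oplus B$, and a separate, nontrivial verification ($G\cap K_0=c(K)$, whence $\pi_{(q(G):K/c(K))}=P$ is infinite) shows $q(G)$ is not locally essential. None of this mechanism is present in your proposal, and it is precisely the content of part (b).
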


\begin{proof}  Since $n:= \dim C< \infty$, the group $C$ is metrizable, so $C$ has a $\delta$-subgroup  $H$ such that  $C/H \cong \T^n$ and $P := \pi^*(H)$ is infinite, according to Fact \ref{Fact:Delta1} and our hypothesis. 

(a) By Fact~\ref{Fact1}(h), we have $H \cong \prod_{p \in P} H_p$, where each $H_p \cong \Z_p^{k_p} \times F_p$ with $F_p$ a finite $p$-group and $0 \leq k_p \leq n$.
Thus, for each $p \in P$, the group $H_p$ admits a topological direct decomposition of the form  $H_p = H_p' \oplus H_p''$, where $H_p'$ is topologically isomorphic either to $\Z_p$ or to a finite cyclic $p$-group. We then write $H$ as $H'\oplus H''$ with $H'\cong \prod_{p\in P}H_p'$ and $H''\cong \prod_{p\in P}H_p''$.

Now define the group $N := \prod_{p \in P} N_p,$ where each $N_p$ is isomorphic to $\Z_p$ if $H_p' \cong \Z_p$, or $N_p\cong \Z(p^{m+1})$, 
if $H_p'$ is cyclic of order $p^m$.   Then $N$ is an infinite monothetic group. Let $g \in N$ be an element such that $\hull{g}$ is dense in $N$.
Set $L := \prod_{p \in P} p N_p$, and fix a topological isomorphism $j: L \to H'$.

The graph $\Gamma_j = \{(x, j(x)) : x \in L\}$ of $j$ is a closed subgroup of $N \times C$ since 
$j$ is continuous. Let $K = (N \times C)/\Gamma_j$, with $\phi: N \times C \to K$ the canonical quotient homomorphism.  
Then $\phi(\{0\} \times C) = \phi(L \times C)$ coincides with $c(K)$. Clearly, $K$ is metrizable, as $C$ and $N$ are metrizable.

In order to show that $K \in \KK$ we check first that $N^* := \phi(N \times H'') \cong N \times H''$ is a $\delta$-subgroup of $K$.  Indeed, $N^*$ is totally disconnected and
$$
K/N^* \cong (N \times C)/\phi^{-1}(N^*) = (N \times C)/(N \times H'' + \Gamma_j) = (N \times C)/(N \times (H'' + H')) \cong C/H
$$
is a torus.  As $L^*:= N^*  \cap c(K) 
= \phi(L \times H'')$ is essential
in $N^*$, 
Claim~\ref{NewClaim*} implies that $ c(K)$ is  locally essential in $K$, so $K \in \KK$. Moreover, $\pi^*(N^*/L^* )$ is infinite, as 
$N^*/L^*\cong   \prod_{p\in P} \Z(p)$. Hence,   $K \in \CCC$, by Theorem \ref{Thm:New}. 

\bigskip

(b) In order to produce an $\omega$-bounded group $G$ as above, we need to modify the construction as follows.

We retain the notation from part~(a); in particular, the groups $N$, $L$, $H = H' \oplus H''$, the map $j$, the homomorphism $\phi$, and the subgroup $N^*$ 
of $K$ are as defined therein. Note that $N^*$ decomposes as $N^* = N^*_1 \oplus N^*_2$ with 
$$N^*_1 = \phi(N\times \{0\}) \cong \prod_{p\in P} N_p, \quad N^*_2 = \phi(\{0\}\times H'')\cong H''.$$
In the sequel, we identify $N^*_1$ with $\prod_{p\in P} N_p$.
We denote by $K_0$ the compact Abelian group constructed in part~(a); that is, $K_0 = (N \times C)/\Gamma_j.$

\medskip 

1. [{\sl Definition of $K$}] Let $B = \prod_{p\in P} B_p$, where $B_p = N_p^\kappa$,
and set $K := K_0 \times B$ endowed with the Tychonoff product topology. Then $c(K) = c(K_0) \cong C$.

By standard isomorphism theorems for topological groups, we obtain:
\begin{equation}\label{Equ1}
K/c(K) = (K_0 \times B)/c(K_0) \cong K_0/c(K_0) \times B \cong B \times\prod_{p\in P} \Z(p).
\end{equation}

Let $N_* := N^* \oplus B$, and let $\varphi_1: N_* \to N^*$ and $\varphi_2: N_* \to B$ be the natural projections. 
As $N_*$ is a $\delta$-subgroup of $K_0$ and $B$ is totally disconnected, $N_*$ $N_*$ is a $\delta$-subgroup of $K$, by Fact \ref{Fact1}(e). 
\medskip 

2. [{\sl Definition of $G$}]: For every $p \in \P$ let $S_p$ denote the $\Sigma$-product in $N_p^\kappa = B_p$
and $E_p = S_p + pN_p^\kappa$. Then $E := \prod_{p\in P} E_p$ is a dense essential $\omega$-bounded subgroup of $B$.  

Furthermore, for each $p \in P$, the subgroup $td_p(N^*_1)\oplus td_p(B)= N_p \oplus (N_p)^\kappa$ of $K$ is isomorphic to
$(N_p)^{1+\kappa}$.  Let $\Delta_p \cong N_p$ be the diagonal subgroup of this Cartesian power. Set $D := \prod_{p \in P} \Delta_p$. Then $\varphi_1(D) = N^*$.

The subgroup $G_1 := \phi(L \times H'') \oplus E + D$ of $ N_*$ is dense, $\omega$-bounded, and essential in $N_*$. Finally, define
\begin{equation}\label{Eq:June4*}
G := G_1 + c(K)=E + D + c(K).
\end{equation}
Clearly, $c(G) = c(K) = c(K_0)$. By 
Claim~\ref{NewClaim*} and the fact that $N_*$ is a $\delta$-subgroup of $K$, it follows that $G$ is locally essential in $K$, hence locally minimal.
 
 \medskip 

3. Let $q: K \to K/c(K)$ be the quotient homomorphism. We now prove that 
\begin{equation}\label{Eq:May7}
q(G) \cap q(K_0) = \{0\}.
\end{equation}
First note that  
\begin{equation}\label{Eq:May7'}
q(G) \cap q(K_0)= q(G\cap K_0),
\end{equation}
 since $q(g) = q(z)$, for $g\in G, z \in K_0$, gives $z\in q^{-1}(q(G)) = G$ (since $\ker q \leq G$), so $z\in G\cap K_0$. This proves the inclusion  ``$\leq$'' in \eqref{Eq:May7'}, the other inclusion is trivial. 

In view of \eqref{Eq:May7'}, to prove \eqref{Eq:May7} it suffices to show that 
\begin{equation}\label{Eq:May7*}
K_0 \cap G = c(K) = c(K_0).
\end{equation}
As  $c(K) \leq K_0$, the modular law and \eqref{Eq:June4*} give, 
$G \cap K_0 = (E + D + c(G)) \cap K_0 = (E + D) \cap K_0 + c(K)$. Hence, in view of 
$c(K) \leq K_0 \cap G$, to check \eqref{Eq:May7*} it suffices to prove that $(E + D) \cap K_0 \leq c(K)$.

Take an element $g = e + d \in (E + D) \cap K_0$, where $e \in E \leq B$ and $d \in D$. Write $d = d' + d''$, where $d' = \varphi_1(d) \in K_0$ and $d'' = \varphi_2(d) \in B$. Then
$$
g = d' + (d'' + e).
$$
Since $g \in K_0$ and $K_0 \oplus B$ is a direct sum, it follows that $g = d'$ and $d'' + e = 0$, so $d'' = -e \in E = \prod_{p \in P} E_p$.

Fix $p \in P$. By definition, $E_p$ consists of functions $f: \kappa \to N_p$ such that $f(\alpha) \in p N_p$ for all but countably many $\alpha < \kappa$. On the other hand, $d''(p)$ lies in the diagonal subgroup $\varphi_2(\Delta_p)$ of $B_p = N_p^\kappa$, so $d''(p)$ is a constant function. Therefore, $d''(p) \in p B_p$. Hence, $d'' \in \prod_{p \in P} p B_p.$

Since $d = d' + d'' \in D = \prod_{p \in P} \Delta_p$ and $\Delta_p$ is the diagonal subgroup of $td_p(N_1^*) \oplus B_p \cong N_p^{1+\kappa}$, we conclude that
$$
g=d' \in \prod_{p \in P} p N_p = \phi(L \times \{0\}) \leq c(K),
$$
as claimed.

\medskip 

4. Finally, we prove that $G$ is critical locally minimal. In 2. we proved that $G$ is locally minimal, hence it remains to show that $G/c(K) = q(G)$ is not locally minimal. 
By Theorem~\ref{Crit}, this amounts to proving that $G/c(G)$ is not locally essential in $K/c(K)$. Since $K/c(K)$ is totally disconnected, it will suffice, 
in view of Theorem \ref{claim4}(c), to show that $\pi_{(q(G), K/c(K))}$ is infinite, i.e., $q(G)$ does not contain 
$$
(K/c(K))[p] = (N/L \times B)[p] = (N/L)[p] \times B[p]
$$
for infinitely many primes $p$.
Since
$$
G/c(G) = q(K_0) \oplus q(B) \quad \text{and} \quad q(K_0) = K_0 / c(K) = K_0 / c(K_0) \cong \prod_{p \in P} \Z(p),
$$
one can conclude with \eqref{Eq:May7} that $\pi_{(q(G), K/c(K))}=P$, i.e., $q(G)$ fails to contain $(K/c(K))[p]$ {\em for  every} prime $p\in P$.  Therefore, $q(G)$ is not locally essential in $K/c(K)$ and hence not locally minimal.
\end{proof}

In the final part of the proof of (a) we showed that $ c(K)$ is  locally essential in $K$, i.e., $K \in \KK$. Nevertheless, this  construction of $K\in \KK$ (starting from $C$) differs from the construction of $K\in \KK$ in Theorem \ref{inKK}(c), which starts from $C$ and some {\em $\delta$-subgroup} $H$ of $C$. Here we glue $C$ and the totally disconnected group $N$ along a smaller subgroup $H'$ of $C$ that need not be a  $\delta$-subgroup of $C$ and an essential subgroup  $L$ of $N$ isomorphic to this $H'$. 
From the point of view of the quotient $N/L\cong K/C$, the group $K$ built  here is as small as possible (modulo finite).

\medskip

Since $\pi(C) = \pi^*(C) = \P$ is infinite for connected torsion-free compact Abelian groups $C$ (so they are not hybrid tori), we obtain: 

\begin{corollary}\label{exa:Thm:qt*}
For every connected torsion-free finite-dimensional compact Abelian group $C$ there exists a compact Abelian group $K\in \CCC$ with $c(K) \cong C$.
\end{corollary}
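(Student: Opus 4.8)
The plan is to derive Corollary \ref{exa:Thm:qt*} directly from Proposition \ref{PropSuff} by verifying that a connected torsion-free finite-dimensional compact Abelian group $C$ is never a hybrid torus. First I would recall from \S\ref{Sec:delta} that $\dim C = r(\widehat C) < \infty$, so $\widehat C$ is a torsion-free Abelian group of finite rank $n$; hence $C$ is metrizable and admits a $\delta$-subgroup $N$ with $C/N \cong \T^n$. Using Fact \ref{Fact0}(a) (or Fact \ref{Conp*}(a)) one sees that for a nontrivial connected $C$ the prime set $\pi(C) = \{p : (\exists M \leq C)\ M \cong \Z_p\}$ equals $\P$ in fact, by Fact \ref{Conp*}(b) applied to the connected group $C$, we have $\pi^*(C) = \P$; but more to the point, torsion-freeness of $\widehat C$ forces $\widehat C$ to contain copies of $\Z$ that are $p$-independent for all $p$, which dualizes to the statement that $\pi(C) = \P$. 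In particular $\pi^*(C)$ is infinite.

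Next I would show $C$ is not a hybrid torus. Indeed, if $C$ were a hybrid torus, then by Proposition \ref{prop:ht} the set $\pi^*(H)$ would be finite for every $\delta$-subgroup $H$ of $C$. But a $\delta$-subgroup $H$ of $C$ is totally disconnected and open-cofinite inside the torsion part structure: by Fact \ref{Conp*}(a) the subgroup $td_p(C)$ is dense in $C$ for every prime $p$, and since $H + c(C) = H = C$ would be absurd — rather, one uses that $H$ is cofinal among totally disconnected closed subgroups modulo which $C$ is a torus. More cleanly: since $C$ is connected and torsion-free, $\widehat C$ is torsion-free of finite rank, so $\widehat C / \Z^n \cong \bigoplus_p (\Z(p^\infty)^{k_p} \times F_p)$, and torsion-freeness of $\widehat C$ forces, for cofinitely many $p$, some nonzero divisible $p$-component; concretely $k_p \geq 1$ for infinitely many $p$ (otherwise $\widehat C$ embeds into $\Z^n$ up to finite index and $C$ is a torus, contradicting hybrid-torus-but-not-torus unless $C$ is a torus, and a torus is trivially not excluded — wait, a torus IS a hybrid torus). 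Here lies the one genuine subtlety: I must rule out the case that $C$ is itself a torus. But a nontrivial torus $\T^n$ is not torsion-free ($\T^n$ has elements of every finite order), so a torsion-free connected compact Abelian group that is a hybrid torus would have to be trivial. Thus any nontrivial connected torsion-free finite-dimensional $C$ fails (a) of Proposition \ref{PropSuff} being a hybrid torus, i.e., it satisfies the hypothesis "not a hybrid torus."

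Having established that $C$ is not a hybrid torus, I would simply invoke Proposition \ref{PropSuff}: there exists a compact Abelian group $K$ with $c(K) \cong C$ and a dense critical locally minimal subgroup $G$ of $K$, witnessing $K \in \CCC$. This is exactly the assertion of the corollary. For completeness I would also handle the trivial case $C = \{0\}$ separately — but $\{0\}$ is a (degenerate) torus, hence a hybrid torus, so it is genuinely excluded, and the corollary should be read for $C$ nontrivial; alternatively one observes that the statement is vacuous or trivially adjusted there. I expect the main obstacle to be purely expository: making the phrase "$C$ torsion-free and connected implies $C$ is not a hybrid torus (unless trivial)" airtight, which reduces to the two observations that (i) a nontrivial torus has torsion and (ii) by the already-quoted remark in the excerpt ($\pi(C) = \pi^*(C) = \P$ for connected torsion-free compact Abelian $C$) the prime set is infinite, so by Proposition \ref{prop:ht} no $\delta$-subgroup can have finite $\pi^*$, ruling out the hybrid torus case outright. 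This last route via Proposition \ref{prop:ht} is cleanest and avoids the torus edge case entirely, so I would present the argument that way: $\pi^*(C)$ infinite, $\delta$-subgroups of $C$ are commensurable (Fact \ref{Fact1}(g$_2$)) and carry all of $\pi^*(C)$ in their $\pi^*$ (again by density of $td_p$), hence $\pi^*(N)$ is infinite for each $\delta$-subgroup $N$, so Proposition \ref{prop:ht} shows $C$ is not a hybrid torus, and Proposition \ref{PropSuff} applies.
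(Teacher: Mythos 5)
Your overall strategy is exactly the paper's: observe that a (nontrivial) connected torsion-free finite-dimensional compact Abelian group $C$ is not a hybrid torus, and then invoke Proposition \ref{PropSuff}. The paper disposes of the first step in one parenthetical line, using that $\pi(C)=\pi^*(C)=\P$ is infinite. Your handling of the edge case $C=\{0\}$ (which the paper glosses over) is a sensible addition, since a critical locally minimal group cannot be totally disconnected.

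However, the route you declare you would actually present contains one step that is justified incorrectly. You claim that a $\delta$-subgroup $N$ of $C$ must ``carry all of $\pi^*(C)$ in its $\pi^*$ \dots by density of $td_p$.'' This is false in general: for $C=\T$ one has $\pi^*(\T)=\P$ and $td_p(\T)=\Z(p^\infty)$ is dense for every $p$, yet the $\delta$-subgroup is $N=\{0\}$ with $\pi^*(N)=\emptyset$ (consistently, $\T$ \emph{is} a hybrid torus). Density of $td_p(C)$ in $C$ gives no information about $td_p(N)$. The invariant that actually transfers to $\delta$-subgroups is $\pi(C)$, not $\pi^*(C)$: if $M\leq C$ with $M\cong\Z_p$ and $N$ is a $\delta$-subgroup, then $M/(M\cap N)$ embeds in the Lie group $C/N\cong\T^n$ as a totally disconnected closed subgroup, hence is finite, so $M\cap N\cong\Z_p$ and $p\in\pi(N)\subseteq\pi^*(N)$. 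Since $C$ is torsion-free, $\pi_{tor}(C)=\emptyset$, while Fact \ref{Conp*}(b) gives $\pi^*(C)=\P$ for nontrivial connected $C$; hence $\pi(C)=\P$ and therefore $\pi^*(N)\supseteq\P$ is infinite for every $\delta$-subgroup $N$, so Proposition \ref{prop:ht} rules out the hybrid torus case. (Equivalently and even more directly: a hybrid torus $C$ has $\pi(C)$ contained in the finite set $\{p_1,\dots,p_m\}$ appearing in its defining exact sequence, again because the image of any copy of $\Z_p$ in $\T^n$ is finite.) Your fallback observation that a nontrivial torus has torsion only excludes the sub-case $m=0$ and does not cover general hybrid tori, so the repair above is genuinely needed; with it, the argument is complete and coincides with the paper's.
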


The groups $C$ described by the above properties are exactly the powers $\K^n$, $n\in \N$. 
 
\begin{proof}[{\bf Proof of Theorem \ref{IntroB}}] We have to prove that if $C$ is a finite-dimensional connected compact Abelian group, then the following are equivalent:
\begin{itemize}
\item[(a)] $C$ is a not a hybrid torus;
\item[(b)] there is a compact Abelian group $K\in \CCC$ with $c(K)\cong C.$ Moreover, the dense critical locally minimal subgroup $G$ of $K$ witnessing $K\in \CCC$
$G$ can be chosen to be either metrizable or $\omega$-bounded and of weight $\kappa$, for an arbitrarily assigned uncountable cardinal $\kappa$. 

\end{itemize}

 It is enough to combine Proposition \ref{PropSuff} and Theorem \ref{coro:tor_free**}(d). \end{proof}


 For every infinite cardinal $\alpha < \kappa$ we can replace in (b) $\omega$-bounded by $\alpha$-bounded, since our construction produces $\alpha$-bounded groups by replacing the current $\Sigma$-product $S$ by the $\alpha$-product defined similarly.

Item (b) cannot be pushed to claim that {\em every } compact Abelian group $K$ with $c(K)\cong C$ belongs to $\CCC$. In other words, Theorem \ref{IntroB}
characterizes only the connected component $c(K)$ of a finite-dimensional group $K$ in $\CCC$, but not the 
 whole group $K$.  We give now two examples to this effect.

\begin{example} A group $K\in \CCC$ cannot be connected, hence this imposes a first restriction (i.e., beyond 
$c(K)\cong C$ one has to impose $K \ne c(K)$ regardless of whether $C$ is or is not a hybrid torus). 

(a) If $K$ is a torsion-free compact Abelian group that is neither connected nor totally disconnected
, then $c(K)$ is not a 
hybrid torus, yet $K\not \in \CCC$, since every dense locally minimal subgroup of $K$ is also minimal, so cannot be critical locally minimal, by Corollary \ref{Coro:MinG/c(G)}. 
Another argument is given in the item below. 

(b) If $C$ is not a hybrid torus, but $K=C \times D$ for some totally disconnected compact Abelian group $D$ (so that $c(K) =C$), then 
$K \not \in \CCC$ by Theorem \ref{coro:tor_free**}.
\end{example}

\begin{remark}\label{Homotop} Theorem \ref{IntroB} shows, among others, that there is a proper class of pairwise non-homeomorphic groups $K\in \CCC$ with a fixed connected component. Indeed, for every uncountable cardinal $\kappa$ and for every connected compact Abelian group $C$ that is not a hybrid torus
 there exists a compact Abelian group $K_{C,\kappa}\in \CCC$ with $w(K_{C,\kappa}) = \kappa$ and $c(K_{C,\kappa}) \cong C$
containing a dense $\omega$-bounded critical locally minimal subgroup $G_\kappa$ (it can be chosen to be even $\alpha$-bounded for every infinite $\alpha< \kappa$). 
Due to the equality $w(K_{C,\kappa}) = \kappa$, for every fixed $C$ the groups $K_{C,\kappa}\in \CCC$ are pairwise non-homeomorphic and obviously form a proper class. 

Now fix the uncountable cardinal $\kappa$ and vary $C$. Then one can arrange to have for $\mathfrak c$ many of the groups $K_{C,\kappa}$ 
the connected components $C$ pairwise non-homotopically equivalent. This follows from the fact that if two connected compact Abelian groups 
$C, C_1$ are homotopically equivalent, then they are also topologically isomorphic, since 
for a connected compact Abelian group $C$, the abstract group $\widehat C$  is isomorphic to the first \v Cech cohomology group of $C$.
To this we can add that for every infinite set of primes $P$ there is a family of pairwise non-topologically isomorphic one-dimensional compact and connected Abelian groups $C$ with $\pi(C) = P$ as those used in Theorem \ref{IntroB}.
\end{remark}

\section{Final comments and open questions}\label{Sec:Final}

In Corollary \ref{Coro2:May27}, we showed that every complete locally minimal Abelian group $G$ contains a compact $G_\delta$-subgroup $K$.
As a consequence, the quotient group $G/K$ has countable pseudocharacter; that is, the singleton $\{0_{G/K}\}$ is a $G_\delta$-set of $G/K$. 
On the other hand, a topological group $G$ is called \emph{almost metrizable}~\cite{Pas}, or equivalently \emph{feathered}~\cite[Sec.~4.3]{AT}, if there exists a compact subgroup $K$ of $G$ such that $G/K$ is metrizable—or, equivalently, first-countable, by~\cite[Lemma~4.3.19]{AT}. This naturally leads to the following question:

\begin{question}\label{QuesX1}
Is every complete locally minimal Abelian group $G$ feathered?
\end{question}

The answer to this question would be \emph{affirmative}, if for the specific complete locally minimal Abelian group $G$
and its compact $G_\delta$-subgroup $K$ of $G$ the quotient $G/K$ is locally minimal, since locally minimal Abelian groups of countable pseudocharacter are metrizable. This triggers another question closely related to the second principal line of this paper (namely, the local minimality of the quotient $G/c(G)$ of locally minimal groups $G$ 
when the subgroup $c(G)$ is compact): 

\begin{question}\label{QuesX2} If $G$ is a complete locally minimal Abelian group and $K$ is a compact ($G_\delta$-subgroup) subgroup of $G$, is then 
the quotient $G/K$ locally minimal?
\end{question}

The above observation shows that a positive answer to Question \ref{QuesX2} yields a positive answer to Question \ref{QuesX1}.   

Question \ref{QuesX1} is equivalent to asking whether every complete locally minimal Abelian group is Čech complete, since by a theorem of Choban, a topological group is Čech complete if and only if it is both feathered and complete~\cite[4.3.16]{AT}.

\medskip

 The following general fact ensures ``automatic" minimality of dense subgroups of nilpotent class 2 minimal groups. 

\begin{claim}\label{Easy:Claim}  The center $Z(G)$ of every nilpotent group $G$ of class 2 is essential. Hence, every subgroup of $G$ containing $Z(G)$ is essential.  
\end{claim}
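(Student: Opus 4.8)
The plan is to prove the two assertions separately, using the first to obtain the second immediately. First I would show that $Z(G)$ is essential in a nilpotent group $G$ of class $2$; that is, $Z(G) \cap H \ne \{e\}$ for every non-trivial closed normal subgroup $H$ of $G$. Pick such an $H$ and pick $e \ne h \in H$. If $h \in Z(G)$ we are done, so assume $h \notin Z(G)$, which means there is some $g \in G$ with $[g,h] \ne e$. The key point is that since $G$ is nilpotent of class $2$, the commutator subgroup $[G,G]$ is contained in $Z(G)$; in particular $[g,h] = g^{-1}h^{-1}gh \in Z(G)$. On the other hand, $[g,h] = (g^{-1}h^{-1}g)\,h \in H$ because $H$ is normal (so $g^{-1}h^{-1}g \in H$) and $h \in H$. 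Hence $e \ne [g,h] \in Z(G) \cap H$, which is exactly what essentiality of $Z(G)$ requires.

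Next, for the second assertion, let $A$ be any subgroup of $G$ with $Z(G) \subseteq A$. Then for every non-trivial closed normal subgroup $H$ of $G$ we have $A \cap H \supseteq Z(G) \cap H \ne \{e\}$ by the previous paragraph, so $A$ meets $H$ non-trivially; thus $A$ is essential in $G$. (One should note that, unlike in most of the paper, here $G$ is not assumed Abelian, so essentiality refers to the general — non-Abelian — notion from Definition~\ref{Def:crit:loc:min}(i), namely non-trivial intersection with all non-trivial closed normal subgroups; no topology beyond ``closed'' is used, and in fact the purely algebraic statement holds for all normal subgroups.)

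I do not anticipate a serious obstacle: the whole argument rests on the single structural fact that $[G,G] \le Z(G)$ for a class-$2$ nilpotent group together with normality of $H$. The only point requiring a touch of care is making sure the commutator $[g,h]$ genuinely lands in $H$ — this is where normality of $H$ (rather than mere subgroup-ness) is used, via $g^{-1}h^{-1}g \in H$ — and that we have correctly reduced to the case $h \notin Z(G)$, producing the required $g$. The passage from ``$Z(G)$ essential'' to ``every overgroup of $Z(G)$ essential'' is then immediate from the definition, since essentiality is clearly inherited upward by intermediate subgroups.
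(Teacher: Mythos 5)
Your proof is correct and is essentially the same as the paper's: both reduce to a non-central element $h$ of the normal subgroup $H$, observe that a non-trivial commutator $[g,h]$ lies in $H$ by normality and in $Z(G)$ because $[G,G]\le Z(G)$ for class-$2$ nilpotent groups, and deduce the second assertion by the trivial upward inheritance of essentiality. No issues.
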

\begin{proof} Let $N$ be a non-trivial normal subgroup of $G$ and $e \ne u \in N$. If $u\in  Z(G)$ we are done. If $u\not \in Z(G)$, 
then there exists $v\in G$ such that $[u,v]\ne e$.  Since $vu^{-1}v^{-1}\in N$, we conclude that $e\ne [u,v]=uvu^{-1}v^{-1}\in N$. On the other hand, since $G$ is nilpotent of class two, $[u,v]\in [G, G]\subseteq Z(G)$. This proves that $N \cap Z(G)\ne \{e\}$.
\end{proof}

The next example shows that Proposition \ref{tot:disco+cc+loc:min} cannot be extended to the non-Abelian case even if one strengthens sequential completeness to $\omega$-boundedness and local minimality to  minimality:

\begin{example}\label{New:Example} For every prime  $p$ there exists a pro-$p$-finite nilpotent group $A_p$ of class 2 and 
an $\omega$-bounded dense minimal subgroup $G$ of $A_p$, such that $A_p^{p^k} \not \leq G$ for every $k \in \N$.
Indeed, let $R$ be the compact ring $\Z_p^\mathfrak c$ and let $S$ be the $\Sigma$-product in $R$. 
Let $A_p=
\left(\begin{matrix}
  1 &  R & R \cr
\,0 & 1 & R \cr
\,0 & 0 & 1 \cr
                 \end{matrix}
                 \right)
$ be the Heisenberg group over $R$. Then its dense 
$\omega$-bounded subgroup $G=\left(\begin{matrix}
  1 & S & R \cr
\,0 & 1 & S \cr
\,0 & 0 & 1 \cr
                 \end{matrix}
                 \right)$ has the above mentioned properties, as $G$ is minimal (by the above claim, since $G\geq Z(A_p)$) and  
                 $$A_p^{p^k}=
\left(\begin{matrix}
  1 &  {p^k}R & {p^k}R \cr
\,0 & 1 & {p^k} R \cr
\,0 & 0 & 1 \cr
                 \end{matrix}
                 \right) \not \leq G.$$ 
 \end{example}

The next extended example shows that Corollary \ref{Coro:MinG/c(G)} fails in many ways for non-Abelian groups (even when they are nilpotent class two), so it motivates our choice to concentrate only on the Abelian case in this paper.

\begin{example} Let $X$ be a LCA group. Denote by ${\mathcal H}(X)$ the Heisenberg group introduced by Megrelishvili in \cite{Meg} (see also  \cite[\S 5]{DMeg}) defined as follows. The topological group $\mathcal H(X)$ has as a supporting topological space  the Cartesian product $\T \times X \times \widehat X$ equipped with the product topology, and group operation defined for 
\begin{equation}\label{(Last)}
u_1=(a_1,x_1,f_1), \hskip 0.4cm u_2=(a_2,x_2,f_2)
\end{equation}
in $\mathcal H(K)$ by 
$$
u_1 \cdot u_2 = (a_1+a_2+f_1(x_2), x_1+x_2, f_1 +f_2)
$$ 
In other words,  $\mathcal H(X)$ is the semidirect product  $(\T \times X ) \leftthreetimes \widehat X$ of the groups $\widehat X$ and  $\T \times X$, where the action of $\widehat X $ on $\T \times X$ is defined by $(f,(a,x))\mapsto (a+f(x),x)$ for $f\in \widehat X $ and $(a,x) \in \T \times X $. Then $\mathcal H(X)$ is locally compact and minimal \cite{Meg}. 

It is convenient to describe the group $\mathcal H(X)$ also in the matrix form
$$
\left(\begin{matrix}
  1 &  \widehat X & \T \cr
\,0 & 1 & X \cr
\,0 & 0 & 1 \cr
                 \end{matrix}
                 \right)
$$
so that the multiplication of two elements of $\mathcal H(X)$ is carried out precisely by the rows-by-columns rule for multiplication of $3 \times 3$ matrices, where the ``product'' of an element $f\in \widehat X$ and and element $x\in X$ has value $f(x) \in \T$. Elementary computations for the commutator $[u_1,u_2]$ of $u_1, u_2\in \mathcal H(X)$ as in (\ref{(Last)}) give $[u_1,u_2] = u_1u_2u_1^{-1}u_2^{-1}= (f_1(x_2)-f_2(x_1),0_X,0_{\widehat{X}})$ hence 
$$Z(\mathcal H(X))=\left(\begin{matrix}
   1 & 0 & \T \cr
 \,0 & 1 & 0 \cr
 \,0 & 0 & 1 \cr
                 \end{matrix} \right). $$
                 
The quotient $\mathcal H(X) /Z(\mathcal H(X))$ is isomorphic to the (Abelian) direct product $X \times  \widehat X$. Therefore,  $\mathcal H(X)$ 
is a nilpotent group of class 2. We identify $X$ with $\{0_\T \} \times X \times \{0_{ \widehat X} \}$,  $ \widehat X$ with $\{0_\T\} \times \{0_X\} \times  \widehat X$
and  $ \T$ with $\T \times \{0_X\} \times  \{0_{\widehat X}\}$. 

Now assume that $X$ is compact, so $ \widehat X$ is discrete. For a subgroup $H$ of $X$ we denote by $G_H$ the set 
$$ \left(\begin{matrix}  
  1 & \widehat X  & \T \cr
\,0 & 1 &  H  \cr
\,0 & 0 & 1 \cr
                 \end{matrix} \right) :=  \left\{\left(\begin{matrix}
  1 & x & t \cr
\,0 & 1 & h \cr
\,0 & 0 & 1 \cr
                 \end{matrix} \right)\in \mathcal H(X): h\in H, x\in \widehat{X}, t\in \T
                                  \right\}, $$
which is a subgroup of $\mathcal H(X)$. Since $\mathcal H(X)$ is homeomorphic as a topological space to the locally compact 
Cartesian product $\T \times X \times \widehat X$, 
$G_H$ is locally precompact. Moreover, $G_H$ is dense (resp., sequentially closed) in $\mathcal H(X)$ iff $H$ is dense (resp., sequentially closed) in $X$. In particular, $G_H$ is \sco \ iff $H$ is \sco. The inclusion $Z(\mathcal H(X))\subseteq G_H$ and Claim \ref{Easy:Claim} imply that $G_H$ is minimal whenever it is dense in $\mathcal H(X)$. 
\medskip 

 Clearly, $c(\mathcal H(X)) = \left\{\left(\begin{matrix}
  1 & 0 & \T \cr
\,0 & 1 & c(X) \cr
\,0 & 0 & 1 \cr
                 \end{matrix} \right)\right\}$, while 
$c(G_H) = \left\{\left(\begin{matrix}
  1 & 0 & \T \cr
\,0 & 1 & c(H) \cr
\,0 & 0 & 1 \cr
                 \end{matrix} \right)\right\}$, so $G_H/c(G_H) \cong 
                 H/c(H) \times \widehat X$.
                 (The topological group isomorphism ${\phantom{I^{I^{|^{|^I}}}} \!\!\!\!\!\!\!\!\!\!\!}  \mathcal H(X) /Z(\mathcal H(X)) \cong X \times  \widehat X$ 
does not yields that the {\em subset} $\{0\} \times X \times  \widehat X$ of $\mathcal H(X)$
 is a {\em subgroup} of the group $\mathcal H(X)$.) 

The minimal group $G_H$ will be used in (a)--(c) below to see that both Theorem \ref{Thm:DTk} and Theorem \ref{IntroA} fail for non-Abelian groups even when they are nilpotent of class 2 (i.e., as close as possible to Abelian). To this end we need to carefully chose $H$ to be a sequentially complete dense subgroup of some compact Abelian group $X$.

(a) Let $X=b\Z$ be the Bohr compactification of the discrete group $\Z$. It is well known that $\Z$, equipped with the induced topology
is $\Z^\#$. Then $H:= \Z^\#$ is  sequentially complete (see Example \ref{Exa:Bohr}) and $c(H) =  \{0\}$. Therefore,  $c(G_H) \cong \T$ is metrizable, whereas $c(\mathcal H(X)) \cong c(X) \times \T$ has weight $\mathfrak c > w(\T) = w(c(G_H))$,  since $c(X) \cong \K^\mathfrak c$. Therefore both Theorem \ref{Thm:DTk} and Theorem \ref{IntroA}
  fail for this minimal \sco \ and locally precompact group $G_H$.
  
 The gap between  $w(c(G_H))$ and $ w(c( \widetilde G_H))$ can be made arbitrarily large by choosing $H =(\bigoplus_\kappa \Q)^\#$ and 
  $X = b H \cong \K^{2^\kappa}$. So $X=c(X)$ and $w(c(X)) = 2^\kappa$, while $c(H) = \{0\}$ since the $\Q/\Z$-valued characters of $H$ separate the points of 
  $H$, so there is a continuous injective homomorphism $H\to (\Q/\Z)^H$, where $(\Q/\Z)^H$ carries the (zero-dimensional) product topology
  and $\Q/\Z$ the topology induced by $\T$. 

 (b) Now we arrange to have $G_H$ such that $w(c(G_H)) = w(c( \widetilde G_H))$ is not Ulam measurable, yet $c(G_H) \ne c( \widetilde G_H)$ (witnessing failure of Theorem \ref{Thm:DTk} and Theorem \ref{IntroA} (b), while Theorem \ref{IntroA} (a)  holds true). Take the compact Abelian group $Y = b\Z$ (as in (a)), $X = Y \times Y$ and the dense subgroup $H = Y \times \Z^\#$ of $X$. Then $H$ is \sco, so $G_H$ is \sco \ as well.  Moreover, $c(H) \cong c(Y) \times \T \cong \T \times \K^\mathfrak c$, while $c(\widetilde G_H) \cong X \times \T$. Hence, $c(G_H) \ne c( \widetilde G_H)$, yet $w(c(G_H)) = w(c(\widetilde G_H))=\mathfrak c$. Therefore, both Theorem \ref{Thm:DTk} and Theorem \ref{IntroA} (b) fail is $\mathfrak c$ is not Ulam-measurable, while Theorem \ref{IntroA}(a) holds true for this group $G_H$. 
  
  
  (c) Finally, we arrange to have $G_H$ with $c(G_H) = c( \widetilde G_H)$, but $G_H/c(G_H)$ is not even locally minimal (witnessing failure of Theorem \ref{Thm:DTk} 
  and Corollary \ref{Coro:MinG/c(G)},
  while both items (a) and (b) of Theorem \ref{IntroA}
  hold true). This time we pick the compact Abelian group $X\in \CCC$  and a dense $\omega$-bounded subgroup $H$ of $X$ which satisfies $c(H) = c(X)$ and $H/c(H)$ is not locally minimal as built in Proposition \ref{PropSuff} (a much easier example is obtained by taking $X = \T \times \Z_p^\mathfrak c$, and $H = \T \times H_1$, where $H_1$ is the $\Sigma$-product in $\Z_p^\mathfrak c$, so that $H/c(H)\cong H_1$ is not locally minimal since it is not minimal, according to Corollary \ref{New:Corollary}). Then  $c(G_H) = c( \widetilde G_H)$, yet $G_H/c(G_H) \cong H/c(H) \times \widehat X$ is not even {\em locally minimal}, as the non-locally minimal group $H/c(H)$ is isomorphic to an open subgroup of $G_H/c(G_H)$.  
\end{example}

The minimal group $G_H$ we built in the above example is precompact if and only if it is finite. 
Hence, to show that 
Corollary \ref{Coro:Oct24} strongly fails in the non-Abelian case we need another example:  

\begin{example}
Let $L$ be a compact connected simple Lie group (e.g., $L=\mathrm{SO}_3(\R)$ will do).  Then in the compact connected group $K=L^{\omega_1}$ the $\Sigma$-product $G$ is an $\omega$-bounded (hence, \sco) totally minimal connected non-compact group. (The total minimality of $G$ is due to Theorem \ref{Crit}(ii) and the fact that the only closed normal subgroups $N$ of $K$ are the products $N= L^I$, for some $I \subseteq \omega_1$, so $N\cap G$ is always dense in $N$.)
\end{example}

The counterexamples to  Theorem \ref{IntroA}
and Corollary \ref{Coro:Oct24} given above have separately the properties of being nilpotent and precompact. 
So the question arises of whether one can produce a counterexample that is both  nilpotent and precompact. Let us see that this is not possible
as far as item (a) of the theorem is concerned. 

For the proof we need the following (probably known) fact, we include a proof here for convenience of the reader. 

\begin{fact}\label{central} Let $G$ be a nilpotent compact group. Then $c(G) \leq Z(G)$.
\end{fact}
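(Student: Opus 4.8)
The plan is to reduce the statement to the case of compact Lie groups and there combine the structure theory of compact connected Lie groups with the nilpotency hypothesis. \emph{Reduction.} Recall that a compact group $G$ is the projective limit of its compact Lie quotients: the closed normal subgroups $N$ of $G$ with $G/N$ a Lie group form a base of neighbourhoods of the identity, and their intersection is $\{e\}$. Each $G/N$ is again compact and nilpotent, and the canonical map $q_N\colon G\to G/N$ carries the connected set $c(G)$ into $c(G/N)$. Hence, once the assertion is known for every $G/N$, we obtain $[q_N(g),q_N(x)]=e$ for all $g\in G$ and $x\in c(G)$, i.e. $[g,x]\in N$; letting $N$ range over the base and using $\bigcap_N N=\{e\}$ forces $[g,x]=e$, that is $c(G)\le Z(G)$. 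So it remains to treat the case where $G$ is a compact nilpotent \emph{Lie} group.

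\emph{The identity component is a torus.} Set $T:=c(G)=G_0$, a compact connected Lie group. Its commutator subgroup $[G_0,G_0]$ is a compact connected semisimple Lie group, hence perfect, so the lower central series of $G_0$ stabilises: $\gamma_k(G_0)=[G_0,G_0]$ for all $k\ge 2$. Since $G_0$ is nilpotent, $\gamma_k(G_0)=\{e\}$ for large $k$, whence $[G_0,G_0]=\{e\}$ and $T$ is a torus, $T\cong\mathbb{R}^n/\Lambda$. (Equivalently, the Lie algebra of $G_0$ is a nilpotent compact Lie algebra, hence abelian.)

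\emph{Trivialising the conjugation action.} The conjugation homomorphism $\rho\colon G\to\mathrm{Aut}(T)$ contains $T$ in its kernel because $T$ is abelian, so its image $\bar F:=\rho(G)$ is a quotient of the finite group $G/G_0$, hence a finite nilpotent group; I claim $\bar F=\{1\}$, which gives $T\le Z(G)$ and completes the proof. Suppose to the contrary that $\sigma\in\bar F$ with $\sigma\ne 1$, and pick $g\in G$ with $\rho(g)=\sigma$. Writing $T$ additively, $[g,t]=\sigma(t)-t=(\sigma-1)t$ for $t\in T$, and a straightforward induction on $k$ — using that $(\sigma-1)^kT\subseteq T$ and $[g,w]=(\sigma-1)w$ for every $w\in T$ — yields
\[
(\sigma-1)^kT\subseteq\gamma_{k+1}(G)\qquad(k\ge 1).
\]
If $G$ has nilpotency class $c$, then $\gamma_{c+1}(G)=\{e\}$, so $(\sigma-1)^cT=\{e\}$; passing to the Lie algebra $\mathfrak{t}\cong\mathbb{R}^n$ of $T$, the integral matrix $A\in\mathrm{GL}_n(\mathbb{Z})$ describing $\sigma$ satisfies $(A-I)^c=0$. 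But $\sigma$ has finite order, so $A^m=I$ for some $m\ge 1$ and $A$ is diagonalisable over $\mathbb{C}$; then $A-I$ is diagonalisable as well, and a diagonalisable nilpotent operator is $0$. Hence $A=I$, so $\sigma$ is trivial on $\mathfrak{t}$ and — $T$ being connected — on $T$ itself, contradicting $\sigma\ne 1$. Therefore $\bar F=\{1\}$, $\rho$ is trivial, and $c(G)=T\le Z(G)$.

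\emph{Expected main obstacle.} The projective-limit reduction and the identification of $c(G)$ with a torus are routine structure theory; the essential point — and the only place where nilpotency is genuinely used — is the last paragraph, namely ruling out a nontrivial finite ``outer'' automorphism of the identity torus via the inclusions $(\sigma-1)^kT\subseteq\gamma_{k+1}(G)$ combined with the diagonalisability of a finite-order integral matrix.
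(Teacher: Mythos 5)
Your proof is correct, and while the reduction to the Lie case via the projective limit of Lie quotients is identical to the paper's, your treatment of the Lie case itself is genuinely different. The paper invokes the (nontrivial) structure theorem that a compact Lie group $G$ admits a finite subgroup $E$ with $G=c(G)E$, and then shows that $E$ centralizes each torsion subgroup $A[p]$ of the torus $A=c(G)$ for primes $p>|E|$ by decomposing the finite nilpotent group $A[p]E$ into its Sylow subgroups; density of $\bigcup_p A[p]$ in $A$ finishes the argument. You instead study the conjugation representation $\rho\colon G\to\mathrm{Aut}(T)$ of the (finite) component group on the identity torus $T$, derive the inclusions $(\sigma-1)^kT\subseteq\gamma_{k+1}(G)$ from nilpotency, and kill $\sigma$ by the linear-algebra observation that a finite-order matrix in $\mathrm{GL}_n(\Z)$ with $(A-I)^c=0$ must equal $I$. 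Your route buys independence from the existence of a finite complement $E$ (only finiteness of $G/G_0$ for a compact Lie group is needed, which is elementary), at the cost of a slightly longer commutator computation; you are also more careful than the paper in justifying that the identity component is a torus (the paper's ``nilpotent hence Abelian'' is only valid for connected compact groups via the semisimplicity of the commutator subgroup, which is exactly the argument you supply). Both proofs are valid; yours is arguably more self-contained.
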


\begin{proof} Assume first that $G$ is a Lie group. Then, by a well-known theorem, $G$ admits a finite subgroup $E$ such that $G = c(G)E$.
Let $A = c(G)$; then $A$ is nilpotent and hence Abelian, so $A \cong \T^m$ for some $m\in \N$. 
Choose a prime $p > |E|$.
Then the finite $p$-torsion subgroup $A[p]$ of $A$ is a characteristic subgroup of the normal subgroup $A$
of $G$. Hence, $A[p]$ is a  normal subgroup of $G$.
Since every finite nilpotent group is isomorphic to the direct product of its Sylow subgroups and $p$ is co-prime to $ |E|$, 
$A[p]$ is centralized by $E$.  Hence $A[p] \subseteq Z(G)$.

As the collection of all such $A[p]$ generates a dense subgroup of $A$, it follows that $A \subseteq Z(G)$.
Therefore, $c(G) \leq Z(G)$.

In the general case fix an arbitrary closed normal subgroup $N$ of $G$ such that $G/N$ is a Lie group, and denote by $\pi: G \to G/N$ the natural quotient homomorphism.
Then for any $c \in c(G)$ and $g \in G$, the first part of the proof implies that $[\pi(c), \pi(g)] = e$ in $G/N$, i.e., $[c,g] = cgc^{-1}g^{-1} \in N$.

Since the intersection of all such $N$ is trivial, it follows that $[c,g] = e$ for all $g \in G$, and thus $c(G) \leq Z(G)$.
\end{proof}

We do not know the answer of the following 

\begin{question}\label{LastQues}
Does Corollary \ref{Coro:Oct23} remain true in the nilpotent case, with ``locally precompact" replaced by ``precompact"?
\end{question}

 A positive answer to this question means that the triviality of $c(G)$ implies that also $c(\widetilde{G})$ is trivial
for a precompact, sequentially complete and locally minimal nilpotent group. This is precisely item (a) of Theorem \ref{IntroA} for 
{\em hereditary disconnected} precompact, sequentially complete and locally minimal nilpotent groups. 

Let us see now that Theorem \ref{IntroA} (a) holds for any precompact, sequentially complete and locally minimal nilpotent group $G$  with $c(G)\ne \{0\}$, i.e., the only case remaining open from Theorem \ref{IntroA} (a) is that described in Question \ref{LastQues}.  Indeed, $c(G)\ne \{0\}$ implies that $w(c(G))\geq \omega$, hence  $w(c(G))< w(c(\widetilde{G}))$ may occur only when  $w(c(\widetilde{G}))>\omega$. Therefore, it is enough to consider the case when $\dim \widetilde{G}$ is infinite.
Since the compact group $\widetilde{G}$ is nilpotent, $c(\widetilde{G})$ is central in $\widetilde{G}$, by Fact~\ref{central}.
Hence, every subgroup of $c(\widetilde{G})$ is normal in $\widetilde{G}$. In particular, $G \cap c(\widetilde{G})$ is a locally essential subgroup of $c(\widetilde{G})$.
By Remark \ref{Laast:Rem}, $G \cap N$ is essential in some co-NSS subgroup $N$ of $c(\widetilde{G})$.
Therefore, arguing  in the line of the proof of Theorem \ref{IntroA} (a$'$), we conclude that $w(c(G)) = w(c(N)) = w(c(\widetilde{G}))$. 

In spite of the (partially) positive solution towards obtaining a countrepart of Theorem \ref{IntroA} (a) for nilpotent groups, 
the answer to the following question regarding a countrepart of Theorem \ref{IntroA}(b)  remains unknown in the general case: 

\begin{question} Is Theorem \ref{IntroA} (b) valid for precompact, sequentially complete and locally minimal nilpotent groups? 
\end{question}

 Following the line of the proof of \cite[Theorem 3.2]{DTk} one can give a positive answer to this question if $G$ is nilpotent of class 2 and 
 precompactness is replaced by the stronger property of pseudocompactness. However, that proof cannot work in the precompact case since it makes essential use of Proposition \ref{tot:disco+cc+loc:min} (in the version
of minimality), but as Example \ref{New:Example} shows, the counterpart of that proposition fails even for nilpotent groups of class 2. 

\begin{question}\label{LAST:ques}  Does Theorem \ref{IntroA} remain true without the assumption of local precompactness? 
\end{question}

Of course, we intend item (b) of Theorem \ref{IntroA}  without its final part ``$c(G)$ is locally compact".

\bigskip

Dikran Dikranjan

Dipartimento di Matematica e Informatica, Universit\`{a} di Udine, 33100 Udine, ITALY

e-mail: dikran.dikranjan@uniud.it

\bigskip

%

Wei He

Institute of Mathematics, Nanjing Normal University, Nanjing 210046, CHINA

e-mail:  weihe@njnu.edu.cn

\bigskip

Dekui Peng

Institute of Mathematics, Nanjing Normal University, Nanjing 210046, CHINA

e-mail:  pengdk10@lzu.edu.cn
\bigskip

%
%

\end{document}